\newtheorem{theorem}{Theorem}[section]
\newtheorem{corollary}[theorem]{Corollary}
\newtheorem{lemma}[theorem]{Lemma}
\newtheorem{proposition}[theorem]{Proposition}
\newtheorem{definition}[theorem]{Definition}
\theoremstyle{definition}
\newtheorem{remark}[theorem]{Remark}
\newtheorem{example}[theorem]{Example}
\numberwithin{equation}{section}
\newcommand{\R}{\mathbb{R}}
\newcommand{\N}{\mathbb{N}}
\newcommand{\rd}{\mathrm{d}}
\newcommand{\diam}[1]{\textup{diam}({#1})}
\newcommand{\card}[1]{\textup{card}({#1})}
\DeclareMathOperator{\supp}{supp}
\DeclareMathOperator*{\essinf}{ess\,inf}
\DeclareMathOperator*{\esssup}{ess\,sup}
\newcommand{\abs}[1]{\vert #1\vert}
\newcommand{\Abs}[1]{\left\vert#1\right\vert}
\newcommand{\norm}[1]{\Vert #1 \Vert}
\newcommand{\Norm}[1]{\left\Vert #1 \right\Vert}
\newcommand{\Hel}[0]{\textup{H}}
\newcommand{\Lip}[0]{\textup{Lip}}
\newcommand{\KL}[0]{\textup{KL}}
\newcommand{\TV}[0]{\textup{TV}}
\newcommand{\Was}[0]{\textup{W}}
\newcommand{\opt}[0]{\textup{opt}}
\newcommand*{\todo}[1]{\bgroup\color{red}TODO: #1\egroup}
\begin{document}
\title{Upper and lower bounds for local Lipschitz stability of Bayesian posteriors}

\author{Nada Cvetkovi\'{c}, Han Cheng Lie\thanks{~han.lie@uni-potsdam.de}}
\affil{~Institut f\"ur Mathematik, Universit\"at Potsdam, Campus Golm, Potsdam OT Golm 14476, Germany}
\renewcommand\Affilfont{\small}

\date{}

\maketitle

\begin{abstract}
 The work of Sprungk (Inverse Problems, 2020) established the local Lipschitz continuity of the misfit-to-posterior and prior-to-posterior maps with respect to the Kullback--Leibler divergence and the total variation, Hellinger, and 1-Wasserstein metrics, by proving certain upper bounds.
 The upper bounds were also used to show that if a posterior measure is more concentrated, then it can be more sensitive to perturbations in the misfit or prior.
 We prove upper bounds and lower bounds that emphasise the importance of the evidence.
 The lower bounds show that the sensitivity of posteriors to perturbations in the misfit or the prior not only can increase, but in general will increase as the posterior measure becomes more concentrated, i.e. as the evidence decreases to zero.
 Using the explicit dependence of our bounds on the evidence, we identify sufficient conditions for the misfit-to-posterior and prior-to-posterior maps to be locally bi-Lipschitz continuous.
\end{abstract}

\textbf{Keywords:} Bayesian inference, posterior perturbation analysis, noninvertibility, total variation metric, Hellinger metric, Wasserstein metric, Kullback--Leibler divergence

%\ams{60B10, 62C10, 62F15, 62G35}
%

\section{Introduction}
\label{sec_introduction}

In the theory of inverse problems, one is given an observed data vector $y\in\mathcal{Y}$, an unknown parameter $\theta\in\Theta$, and a function $G:\Theta\to\mathcal{Y}$ such that $y=G(\theta)$, and the task is to infer $\theta$ given $y$. 
In the frequentist Bayesian approach to inverse problems, one assumes that the observation is a realisation of a $\mathcal{Y}$-valued random variable $Y=G(\theta^\dagger)+\varepsilon$, where $\theta^\dagger$ is fixed and $\varepsilon$ is observation noise, and one models the unknown $\theta^\dagger$ using a $\Theta$-valued random variable with prior law $\mu$. The solution to the Bayesian inverse problem is given by the posterior law $\mu_\Phi$ of $\theta$ given the observed quantity $y$, where $\mu_{\Phi}$ is absolutely continuous with respect to the prior $\mu$ and uniquely defined by the normalised likelihood $\ell_{\Phi,\mu}$:
\begin{equation}
\label{eq_generic_posterior}
 \ell_{\Phi,\mu}(\theta;y)\coloneqq \frac{\rd\mu_\Phi}{\rd\mu}(\theta)=\frac{\exp(-\Phi(\theta;y))}{Z_{\Phi,\mu}(y)} ,\quad Z_{\Phi,\mu}(y)\coloneqq \int \exp(-\Phi(\theta';y))\rd \mu(\theta').
\end{equation}
We shall refer to the negative logarithm $\Phi$ of the unnormalised likelihood and the normalisation constant $Z_{\Phi,\mu}$ as the `misfit' and the `evidence' respectively.
The misfit is determined by the `forward model' $G$ that maps unknowns to observed quantities and the distribution of the observation noise $\varepsilon$.
For example, if $\mathcal{Y}=\R^d$ for some $d\in\N$ and $\varepsilon$ is Gaussian with mean 0 and positive-definite covariance $C$, then $\Phi(\theta;y)=\tfrac{1}{2}\Norm{C^{-1/2}(G(\theta)-y)}^2$.

An important theme in the theory of Bayesian inverse problems is the stability of the posterior with respect to perturbations in one of the objects that define it.
Let $\mathfrak{d}_{\mathcal{P}(\Theta)}$ be some chosen metric or divergence on the space of probability measures on a fixed measurable space $(\Theta,\Sigma)$.
For a fixed prior $\mu$, the stability of the posterior $\mu_\Phi$ with respect to the misfit $\Phi$ can be described by the local Lipschitz continuity of the misfit-to-posterior mapping $L^p_\mu\ni \Phi\mapsto \mu_\Phi$, i.e. by an upper bound of the form
\begin{equation}
\label{eq_example_upper_bound}
 \mathfrak{d}_{\mathcal{P}(\Theta)}(\mu_{\Phi_1},\mu_{\Phi_2})\leq C(\mu,\Phi_1)\Norm{\Phi_1-\Phi_2}_{L^p_\mu}.
\end{equation}
Among the best-known instances of stability results in Bayesian inverse problems of the form \eqref{eq_example_upper_bound} are those from \cite{Stuart2010}.
For example, the Hellinger metric was used to measure the distance between a reference posterior measure and an approximate posterior measure associated to a perturbed misfit in \cite[Theorem 4.6]{Stuart2010}. 
In the case of Gaussian observation noise, this stability result can then be used to show stability of the posterior with respect to the data.
An important consequence of the stability results in \cite{Stuart2010} is that many Bayesian inverse problems of interest are well-posed in the sense of Hadamard, i.e. the posterior depends continuously on the data $y$.

An important reason for the importance of stability with respect to the misfit is that it implies stability with respect to the forward model, under certain assumptions on the observational noise.
Stability with respect to the forward model is important in the context of numerical methods for solving Bayesian inverse problems, and more generally when accounting for the consequences of using an incorrect forward model in inference tasks.
This is because such stability results ensure that more accurate approximations of the true forward model lead to more accurate approximations of the true posterior, when all other factors are held fixed. 
Stability with respect to the forward model establishes a link between numerical analysis and approximation theory on one hand, and robust statistical inference on the other hand, in the sense that one can use error bounds for a given numerical method or approximation in order to bound the error in the output of a statistical procedure based on that method or approximation.
Although the constants in these bounds are often difficult or costly to evaluate in practice, the bounds nevertheless provide a more precise characterisation of the continuous dependence of the inference outcome on numerical or modelling approximations.

An important milestone in the stability analysis of Bayesian inverse problems is given in the work \cite{Sprungk2020}, which presents stability results of the posterior with respect to misfit perturbations and prior perturbations. Perturbations in the posterior are measured in the Kullback--Leibler divergence and the Hellinger, total variation, and 1-Wasserstein metrics.
Stability with respect to the prior is important because the choice of prior determines the maximal support of the resulting posterior, and thus the set of statistically admissible parameter values.
Furthermore, it is well-known from the mathematical-statistical theory of nonparametric Bayesian inference that the prior plays a key role in posterior consistency and contraction; see e.g. \cite[Chapter 6]{GhosalvanderVaart2017}.

\subsection{Contents of this work}
\label{section_contents}

The goal of this work is to further investigate the bounds and ideas relating to local Lipschitz stability results for the misfit-to-posterior and prior-to-posterior maps that were presented in \cite{Sprungk2020}.
To this end, we prove upper and lower bounds on Kullback--Leibler divergences and Hellinger, total variation, and 1-Wasserstein metrics between posterior measures associated to different misfits and priors.

Recalling that $\mathfrak{d}_{\mathcal{P}(\Theta)}$ denotes a chosen metric or divergence on the space of probability measures on $\Theta$, the upper bounds on posteriors that we state below are of the form
\begin{subequations}
\label{eq_prototypical_upper_bounds}
\begin{align}
  \mathfrak{d}_{\mathcal{P}(\Theta)}(\mu_{\Phi_1},\mu_{\Phi_2})\leq &C_1 \mathcal{U}_M(\Phi_1-\Phi_2,Z_{\Phi_1,\mu},Z_{\Phi_2,\mu})
 \label{eq_prototypical_upper_bound_misfit_perturbation}
 \\
  \mathfrak{d}_{\mathcal{P}(\Theta)}((\mu_1)_{\Phi},(\mu_2)_{\Phi})\leq & C_2 \mathcal{U}_P(\mathfrak{d}_{\mathcal{P}(\Theta)}(\mu_1,\mu_2),Z_{\Phi,\mu_1},Z_{\Phi,\mu_2}),
  \label{eq_prototypical_upper_bound_prior_perturbation}
\end{align}
\end{subequations}
where $C_1=C_1(\mu,\Phi_1)$ and $C_2=C_2(\mu_1,\Phi)$ are positive scalars.
In \eqref{eq_prototypical_upper_bound_misfit_perturbation}, we consider posteriors resulting from perturbed misfits and the same prior $\mu$, while in \eqref{eq_prototypical_upper_bound_prior_perturbation}, we consider posteriors resulting from different priors $\mu_1$ and $\mu_2$ and the same misfit $\Phi$.
The nonnegative functions $\mathcal{U}_M$ and $\mathcal{U}_P$ depend on the choice of $\mathfrak{d}_{\mathcal{P}(\Theta)}$ and converge to zero as $\Phi_1-\Phi_2$ converges to zero in an appropriate norm and as $\mathfrak{d}_{\mathcal{P}(\Theta)}(\mu_1,\mu_2)$ converges to zero respectively.

If $\mathfrak{d}_{\mathcal{P}(\Theta)}$ satisfies the triangle inequality, then the upper bounds above can be combined to obtain an upper bound on $\mathfrak{d}_{\mathcal{P}(\Theta)}((\mu_1)_{\Phi_1},(\mu_2)_{\Phi_2})$, i.e. on the measure of discrepancy when both the prior and misfit are perturbed.

While some of our upper bounds appear implicitly in the proofs of results in \cite{Sprungk2020}, others do not.
We seek upper bounds with explicit dependence on the evidence, because the evidence plays a crucial role in determining the behaviour of the posterior, by Bayes' formula \eqref{eq_generic_posterior}.

The second main class of results that we state below are lower bounds on the above-mentioned metrics or divergences between posteriors, where perturbed misfits and perturbed priors are considered separately.
That is, we investigate inequalities of the form
\begin{subequations}
\label{eq_prototypical_lower_bounds}
\begin{align}
 \mathfrak{d}_{\mathcal{P}(\Theta)}(\mu_{\Phi_1},\mu_{\Phi_2})\geq & C_3\mathcal{L}_M(\Phi_1-\Phi_2,Z_{\Phi_1,\mu},Z_{\Phi_2,\mu}),
 \label{eq_prototypical_lower_bound_misfit_perturbation}
 \\
  \mathfrak{d}_{\mathcal{P}(\Theta)}((\mu_1)_{\Phi},(\mu_2)_{\Phi})\geq & C_4 \mathcal{L}_P(\mathfrak{d}_{\mathcal{P}(\Theta)}(\mu_1,\mu_2),Z_{\Phi,\mu_1},Z_{\Phi,\mu_2}),
  \label{eq_prototypical_lower_bound_prior_perturbation}
\end{align}
\end{subequations}
where $C_3=C_3(\mu,\Phi_1)$ and $C_4=C_4(\mu_1,\Phi)$ are positive.
The functions $\mathcal{L}_M$ and $\mathcal{L}_P$ are nonnegative functions whose values converge to zero as $\Phi_1-\Phi_2$ converges to zero in an appropriate norm and as $\mathfrak{d}_{\mathcal{P}(\Theta)}(\mu_1,\mu_2)$ converges to zero respectively.
Lower bounds of the type shown in \eqref{eq_prototypical_lower_bounds} do not appear in \cite{Sprungk2020}. 

The first motivation for considering lower bounds is to shed light on whether the upper bounds shown in \eqref{eq_prototypical_upper_bounds} are sharp up to local Lipschitz continuity prefactors, i.e. whether $\mathcal{L}_M=\mathcal{U}_M$ for misfit perturbations and $\mathcal{L}_P=\mathcal{U}_P$ for prior perturbations. 
For example, when $\mathfrak{d}_{\mathcal{P}(\Theta)}$ is the total variation metric and one considers misfit perturbations, then \Cref{proposition_TV_bounds_misfit_perturbations_via_Lipschitz_continuity} below implies that this is indeed the case, with
\begin{equation*}
\mathcal{U}_M(\Phi_1-\Phi_2,Z_{\Phi_1,\mu},Z_{\Phi_2,\mu})= \Norm{\Phi_1-\Phi_2+\log \tfrac{Z_{\Phi_1,\mu}}{Z_{\Phi_2,\mu}} }_{L^1_\mu}=\mathcal{L}_M(\Phi_1-\Phi_2,Z_{\Phi_1,\mu},Z_{\Phi_2,\mu}).
\end{equation*}
On the other hand, when $\mathfrak{d}_{\mathcal{P}(\Theta)}$ is the total variation metric and one considers prior perturbations, then \Cref{proposition_TV_bounds_prior_perturbation_via_triangle_inequality} below shows that 
\begin{align*}
\mathcal{U}_P(d_{\TV}(\mu_1,\mu_2),Z_{\Phi,\mu_1},Z_{\Phi,\mu_2})=& 
d_{\TV}(\mu_1,\mu_2)+\frac{\abs{Z_{\Phi,\mu_2}-Z_{\Phi,\mu_1}}}{2},
\\
\mathcal{L}_P(d_{\TV}(\mu_1,\mu_2),Z_{\Phi,\mu_1},Z_{\Phi,\mu_2})=& 
\Abs{d_{\TV}(\mu_1,\mu_2)- \Abs{ \frac{Z_{\Phi,\mu_2}-Z_{\Phi,\mu_1}}{2Z_{\Phi,\mu_2}} }},
\end{align*}
so that $\mathcal{U}_P\neq \mathcal{L}_P$ in this case.

The second motivation for considering lower bounds is to provide additional evidence for the increasing sensitivity of posteriors to perturbations in misfits or priors as the posteriors become more concentrated. 
A posterior $(\mu_1)_{\Phi_1}$ is said to be `more concentrated' than another posterior $(\mu_2)_{\Phi_2}$ if the corresponding evidence terms $Z_{\Phi_1,\mu_1}$ and $Z_{\Phi_2,\mu_2}$ satisfy $0<Z_{\Phi_1,\mu_1}<Z_{\Phi_2,\mu_2}$. 
A posterior $\mu_{\Phi_1}$ is said to be `more sensitive' with respect to a perturbation $\Delta \Phi$ in the misfit than another posterior $\mu_{\Phi_2}$, if $\mathfrak{d}_{\mathcal{P}(\Theta)}(\mu_{\Phi_1},\mu_{\Phi_1+\Delta \Phi})$ is larger than $\mathfrak{d}_{\mathcal{P}(\Theta)}(\mu_{\Phi_2},\mu_{\Phi_2+\Delta\Phi})$. Increased sensitivity with respect to perturbations of the prior is defined analogously.
The upper bounds in \cite{Sprungk2020} on the Kullback--Leibler divergence and the Hellinger, total variation, and 1-Wasserstein metrics show that increasing sensitivity \emph{can} occur. This is because the upper bounds involve division by the evidence, and thus increase as the values of the evidence become smaller; see \cite[Remark 9, Remark 18]{Sprungk2020}.
In contrast, lower bounds that involve division by the evidence will show that increasing sensitivity \emph{must} occur.

Below, we discuss conditions under which equality holds in the upper and lower bounds.
However, we do not expect our bounds to be tight in general, due to the inequalities that we apply in our proofs.
Instead, we aim for bounds that highlight the contributions from two sources: differences in evidences or functions thereof, and differences due to $\Phi_1-\Phi_2$ and $\mathfrak{d}_{\mathcal{P}(\Theta)}(\mu_1,\mu_2)$ for misfit and prior perturbations respectively.
In doing so, we obtain sufficient conditions for which the misfit-to-posterior map and the prior-to-posterior map are locally bi-Lipschitz continuous.

We believe it is natural to investigate sufficient conditions for local bi-Lipschitz continuity of the misfit-to-posterior and prior-to-posterior maps, given that sufficient conditions for the local Lipschitz continuity of these maps were presented in \cite{Sprungk2020}.
The lower bounds that we investigate below relate to the local Lipschitz continuity of the inverses of these maps, when the inverses exist.
Our results suggest that one in general cannot expect these maps to be locally bi-Lipschitz on their natural domains of definition.
This is because the evidence term can lead to noninjectivity.
On the other hand, our results show that one can partition the natural domains of each map according to level sets with respect to the evidence, such that the restriction of each map to each level set is bi-Lipschitz continuous.

Inverting the misfit-to-posterior map (resp., the prior-to-posterior map) may be relevant when one is interested in determining what misfit (resp. prior) leads to a posterior distribution for a given fixed prior (resp. misfit).
This idea is similar to the idea of finding a reverse or backward diffusion process to pair with a forward diffusion process, in the context of diffusion probabilistic models or score-based generative models. We defer the investigation of this idea to future work. 

\paragraph{Outline} After reviewing notation and some preliminary material in \Cref{sec_notation}, we state some lemmas concerning the noninjectivity of maps defined on misfits and the noninjectivity of maps defined on priors in \Cref{sec_noninjectivity}.
We then present bounds for the total variation metric between posteriors in \Cref{sectionTVbounds}.
In this section, we consider two approaches for obtaining bounds: one based on local Lipschitz continuity of the exponential function, and another based on the triangle inequality. We assess the relative merits of each approach depending on whether one considers misfit perturbations or prior perturbations.
In \Cref{sectionHellingerbounds}, \Cref{sectionKLbounds}, and \Cref{sectionW1bounds}, we present bounds for the Hellinger metric, Kullback--Leibler divergence, and 1-Wasserstein metric respectively.

\subsection{Related literature}

This work focuses on local Lipschitz stability of the posterior with respect to perturbations in the misfits and perturbations in the priors. 
As was shown in \cite{Sprungk2020}, well-posedness of Bayesian inverse problems with respect to the data is a consequence of the local Lipschitz stability of the posterior with respect to perturbations in the misfits.
While well-posedness with respect to the data is important in Bayesian inverse problems, it is not the main subject of investigation in this work, and we do not consider it further. Readers interested in well-posedness can see \cite[Section 4]{Stuart2010} for prototypical well-posedness results, as well as \cite{Latz2023} and the references therein for more recent results in this context.

The local Lipschitz stability bounds in \cite{Sprungk2020} of posteriors with respect to misfit perturbations have been applied in different contexts. 
The result \cite[Corollary 19]{Sprungk2020}, which gives conditions for well-posedness in 1-Wasserstein metric of the posterior with respect to perturbations in the data, was used to study the robustness of conditional generative models with respect to perturbations in the data; see e.g. \cite[Lemma 3]{Altekrueger2023}. 
The result \cite[Theorem 15]{Sprungk2020}, which concerns local Lipschitz stability in the 1-Wasserstein metric of posteriors with respect to prior perturbations, was used to prove error bounds for approximate posteriors that were obtained using a generative adversarial neural network \cite[Theorem 2]{Mucke2023}.
The results \cite[Theorem 5, Theorem 8]{Sprungk2020} give sufficient conditions for local Lipschitz stability in the total variation and Hellinger metrics with respect to perturbations in the misfit.
These results were used to carry out a quantitative error analysis of an exploratory policy improvement algorithm in the context of $q$-learning; see the proof of \cite[Lemma 3.5]{Tang2024}.
The result \cite[Theorem 11]{Sprungk2020}, which gives conditions for local Lipschitz stability in the Kullback--Leibler divergence with respect to misfit perturbations, was applied to study the experimental design problem of choosing observation operators to mitigate errors in the forward model \cite[Theorem 2.1]{Cvetkovic2024}.

The literature on Bayesian inverse problems contains local Lipschitz stability results that are distinct from those in \cite{Sprungk2020}.
For example, in \cite[Section 4]{Teckentrup2018}, stability results are given for the approximate posterior measures in the Hellinger metric, where the approximations result from Gaussian process approximations of the forward model or the misfit.
The expected mean squared Hellinger metric of random approximate posterior measures resulting from random approximations of the misfit and forward model was investigated in \cite{LieSullivanTeckentrup2018}, without the assumption that the approximations are given by Gaussian processes. 
In \cite{Habeck2020}, local Lipschitz stability in the total variation metric and 1-Wasserstein metric of a deterministic, doubly intractable distribution with respect to a normalising constant was investigated; random distributions were also considered.
In the context of Bayesian optimal experimental design, the stability of the design with respect to the likelihood was shown in \cite[Proposition 3.3]{Duong2023}, and convergence analysis was performed for the design under convergence hypotheses on the misfits and forward models in Theorem 3.5 and Theorem 4.4 of \cite{Duong2023} respectively.
Local Lipschitz stability of posteriors with respect to perturbations in misfits and priors was considered for a class of integral probability metrics associated to distance-like costs in \cite[Section 3]{Garbunoinigo2023}.

Lower bounds of the type shown in \eqref{eq_prototypical_lower_bounds} do not appear in any of the above-cited work.

\subsection{Notation and preliminaries}
\label{sec_notation}
Let $(\Theta,\Sigma)$ denote the measurable space of admissible parameter values. Denote by $\mathcal{M}(\Theta)$ and $\mathcal{P}(\Theta)$ the set of measures and probability measures on the measurable space $(\Theta,\Sigma)$ respectively. 
For $\mu\in\mathcal{M}(\Theta)$, $\supp{\mu}$ denotes the support of $\mu$.
Given $\mu,\nu\in\mathcal{M}(\Theta)$, we denote the absolute continuity of $\mu$ with respect to $\nu$ by $\mu\ll\nu$, the mutual absolute continuity of $\mu$ and $\nu$ by $\mu\sim\nu$, and the singularity of $\mu$ and $\nu$ by $\mu\bot \nu$.
For integrals, we will omit the domain of integration and the variable of integration when it is clear from the context, e.g. $\int f\rd \mu\equiv \int_{\Theta} f(\theta)\mu(\rd\theta)$.
We denote the cardinality of a set $S$ by $\card{S}$ when $S$ is countable.

We write $a\vee b\coloneqq \max\{a,b\}$, $a\wedge b\coloneqq \min\{a,b\}$, $a_+\coloneqq a \vee 0$ and $a_-\coloneqq (-a)\vee 0$ for $a,b\in\R$. In particular, for $\R$-valued functions, we have $f=f_+-f_-$ and $\abs{f}=f_++f_-$.

Let $0< t \leq s$. Since $\tfrac{x}{1+x}\leq \log{(1+x)} \leq x$ for all $x\geq 0$, it follows that
\begin{align*}
   \abs{\log t -\log s} =& \log \frac{s}{t} = \log \left(1+\left(\frac{s}{t}-1\right) \right) \leq \frac{s}{t} - 1 = \frac{1}{t}(s-t) = \frac{1}{t} \abs{ s-t}
   \\
      \abs{\log t -\log s} =& \log \frac{s}{t} = \log \left(1+\left(\frac{s}{t}-1\right) \right) \geq 1-\frac{t}{s} = \frac{1}{s}(s-t) = \frac{1}{s} \abs{ s-t}.
\end{align*}
Thus, for $s,t>0$ and $x,y\in\R$,
\begin{subequations}
\begin{align}
\label{eq_Lipschitz_continuity_log_function}
    \frac{1}{s\vee t} \abs{s- t}\leq& \abs {\log t -\log s}  \leq \frac{1}{s\wedge t} \abs{s- t}
    \\
    \label{eq_Lipschitz_continuity_exp_function}
    \Longleftrightarrow  [\exp(x)\wedge \exp(y)]\abs{x- y}\leq& \abs{\exp(x) -\exp(y)}  \leq [\exp(x)\vee \exp(y)]\abs{x- y},
\end{align}
\end{subequations}
where equality holds in \eqref{eq_Lipschitz_continuity_log_function} if and only if $s=t$ and equality holds in \eqref{eq_Lipschitz_continuity_exp_function} if and only if $x=y$.

For $\mu\in\mathcal{P}(\Theta)$, $0<p<\infty$ and a nonempty subset $I\subset\R$, let $\norm{f}_{L^p_\mu}\coloneqq (\int \abs{f}^p\rd \mu)^p$, and let $L^p_\mu(\Theta,I)\coloneqq \{f:\Theta\to I\ :\ \norm{f}_{L^p_\mu}<\infty\}$. Let $C(\Theta,I)\coloneqq \{f:\Theta\to I\ :\ \text{$f$ is continuous}\}$ and $C_b(\Theta,I)\coloneqq \{f\in C(\Theta,I)\ :\ \text{$f$ is bounded}\}$. We denote the support of $f$ by $\supp{f}$.
Let $L^0(\Theta,I)\coloneqq \{ f: \Theta\to I\ :\ \text{$f$ is $\Sigma$-measurable}\}$ and $L^\infty_\mu(\Theta,I)$ denote the vector space of $\mu$-essentially bounded functions from $\Theta$ to $I$. 
Consider the following maps on $L^0(\Theta,\R)\times \mathcal{P}(\Theta)$:
\begin{subequations}
\label{eq_maps}
 \begin{align}
 (\Phi,\mu)\mapsto Z_{\Phi,\mu}\coloneqq & \int \exp(-\Phi)\rd \mu
 \label{eq_normalisationConstant_function}
 \\
  (\Phi,\mu)\mapsto \ell_{\Phi,\mu}\coloneqq & \exp\left( -\Phi-\log Z_{\Phi,\mu}\right) 
  \label{eq_likelihood_function}
  \\
  (\Phi,\mu)\mapsto \mu_{\Phi}(\rd \theta) \coloneqq & \ell_{\Phi,\mu}(\theta)\mu(\rd \theta).
  \label{eq_posterior_function}
\end{align}
\end{subequations}
Note that for every $\epsilon>0$, $Z_{\Phi,\mu}\geq \int_{\{\Phi<\epsilon\}}\exp(-\Phi)\rd\mu >\exp(-\epsilon)\mu(\Phi<\epsilon)$.
Thus, if there exists some $\epsilon>0$ such that $\mu(\Phi<\epsilon)>0$, then $Z_{\Phi,\mu}>0$.
A sufficient condition for $Z_{\Phi,\mu}>0$ to also be finite is that $\int \Phi~\rd \mu$ is finite, since by Jensen's inequality applied to the convex function $x\mapsto -\log x$, we have
\begin{equation*}
-\log Z_{\Phi,\mu}=-\log\int \exp(-\Phi)\rd \mu \leq \int \Phi~\rd \mu.
\end{equation*}
Finally, the notation $a\leftarrow b$ means that we set $a$ to have the value $b$.

\section{Noninjectivity}
\label{sec_noninjectivity}

In this section, we show that the misfit-to-evidence map $\Phi\mapsto Z_{\Phi,\mu}$ and the prior-to-evidence map $\mu\mapsto Z_{\Phi,\mu}$ are in general not injective.
The proofs of the results in this section are given in \Cref{section_noninjectivity_proofs}.

We first consider the noninjectivity of functions of misfits.
\begin{restatable}{lemma}{LemmaNoninjectivityMisfitMaps}
 \label{lemma_noninjectivity_of_dataMisfit_maps}
Fix an arbitrary $\mu\in\mathcal{P}(\Theta)$.  
 \begin{enumerate}
  \item If $\supp{\mu}$ is uncountable or if $\card{\supp{\mu}}\geq 2$, then the map $\Phi\mapsto Z_{\Phi,\mu}$ is not injective on $L^p_\mu(\Theta,\R)$ for any $p\in [1,\infty]$.
  \label{item_lemma_noninjectivity_of_dataMisfit_maps_noninjective}
  \item If $\Phi$ is such that $Z_{\Phi,\mu} \in \R_{>0}$, then for every $c\in\R\setminus\{0\}$, $Z_{\Phi+c,\mu}=\exp(-c) Z_{\Phi,\mu}$.
   \label{item_lemma_noninjectivity_of_dataMisfit_maps_noninvariance_under_translations}
   \item If $\Phi_i\in L^0(\Theta,\R)$ is such that $Z_{\Phi_i,\mu}\in \R_{>0}$, $i=1,2$, then $\log \ell_{\Phi_1,\mu}=\log \ell_{\Phi_2,\mu}$ $\mu$-a.s. if and only if $\Phi_1-\Phi_2$ is $\mu$-a.s. constant. In turn, $\Phi_1-\Phi_2$ is $\mu$-a.s. constant if and only if it is equal to $\log Z_{\Phi_2,\mu}-\log Z_{\Phi_1,\mu}$.
 \label{item_lemma_noninjectivity_of_dataMisfit_maps_equivalent_condition_for_agreement}
 \end{enumerate}
\end{restatable}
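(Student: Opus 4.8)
The plan is to prove each of the three parts in turn, with part (i) requiring the only genuine construction and parts (ii)--(iii) following from direct computation.

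For part (i), the idea is to exhibit two distinct misfits with the same evidence. Since $\card{\supp{\mu}}\geq 2$, I can choose two disjoint measurable sets $A_1, A_2$ with $\mu(A_i)>0$ (if $\supp{\mu}$ is uncountable this is immediate; if it is finite with at least two atoms, take $A_i$ to be singletons carrying positive mass). First I would define $\Phi_1 \equiv 0$, so that $Z_{\Phi_1,\mu}=1$. Then I would look for $\Phi_2$ of the form $\Phi_2 = a\mathbbm{1}_{A_1} + b\mathbbm{1}_{A_2}$ with $a\neq 0$ or $b\neq 0$, and require $Z_{\Phi_2,\mu}=\int\exp(-\Phi_2)\rd\mu = 1$, i.e.
\begin{equation*}
\mu(A_1)e^{-a} + \mu(A_2)e^{-b} + \mu(\Theta\setminus(A_1\cup A_2)) = \mu(A_1)+\mu(A_2).
\end{equation*}
This is one equation in two unknowns $(a,b)$; fixing any $a>0$ small, solving for $b$ gives $e^{-b} = 1 + \tfrac{\mu(A_1)}{\mu(A_2)}(1-e^{-a}) > 1$, hence $b<0$, and $\Phi_2\not\equiv\Phi_1$ $\mu$-a.s. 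Since $\Phi_2$ is bounded, $\Phi_2\in L^p_\mu(\Theta,\R)$ for every $p\in[1,\infty]$, so $\Phi\mapsto Z_{\Phi,\mu}$ is not injective on $L^p_\mu(\Theta,\R)$. The main (minor) obstacle here is the measure-theoretic bookkeeping needed to produce $A_1,A_2$ uniformly across the "uncountable support" and "at least two atoms" cases; the algebra afterwards is routine. I expect one should also remark that $\Phi_1\equiv 0$ and $\Phi_2$ are not $\mu$-a.s. equal, which rules out the trivial nonuniqueness.

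For part (ii), since $Z_{\Phi,\mu}\in\R_{>0}$ is given, and $c$ is a constant, I would simply compute
\begin{equation*}
Z_{\Phi+c,\mu} = \int \exp(-\Phi-c)\rd\mu = \exp(-c)\int\exp(-\Phi)\rd\mu = \exp(-c)Z_{\Phi,\mu},
\end{equation*}
which is finite and positive, and differs from $Z_{\Phi,\mu}$ whenever $c\neq 0$; this is itself a second, cleaner witness to noninjectivity, although the statement only asserts the identity.

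For part (iii), I would unwind the definition \eqref{eq_likelihood_function}: $\log\ell_{\Phi_i,\mu} = -\Phi_i - \log Z_{\Phi_i,\mu}$, both well-defined $\mu$-a.s. since $Z_{\Phi_i,\mu}\in\R_{>0}$. Hence $\log\ell_{\Phi_1,\mu} = \log\ell_{\Phi_2,\mu}$ $\mu$-a.s. is equivalent to
\begin{equation*}
\Phi_1 - \Phi_2 = \log Z_{\Phi_2,\mu} - \log Z_{\Phi_1,\mu} \quad \mu\text{-a.s.},
\end{equation*}
which in particular shows $\Phi_1-\Phi_2$ is $\mu$-a.s. constant. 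For the converse direction, suppose $\Phi_1-\Phi_2 = k$ $\mu$-a.s. for some constant $k\in\R$; then by part (ii) applied with $\Phi\leftarrow\Phi_2$ and $c\leftarrow k$, we get $Z_{\Phi_1,\mu} = Z_{\Phi_2+k,\mu} = \exp(-k)Z_{\Phi_2,\mu}$, so $k = \log Z_{\Phi_2,\mu} - \log Z_{\Phi_1,\mu}$, which both identifies the constant and, substituted back, yields $\log\ell_{\Phi_1,\mu} = \log\ell_{\Phi_2,\mu}$ $\mu$-a.s. This closes the chain of equivalences and simultaneously proves the final sentence of the statement. There is no real obstacle in (ii)--(iii); the only care needed is to invoke part (ii) to pin down the value of the constant in the converse direction of (iii).
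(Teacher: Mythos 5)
Your proofs of parts (ii) and (iii) match the paper's approach essentially line for line: part (ii) is the same one-line computation, and part (iii) unwinds the definition of $\log\ell_{\Phi_i,\mu}$ to obtain the pointwise equivalence $\log\ell_{\Phi_1,\mu}(\theta)=\log\ell_{\Phi_2,\mu}(\theta)\Leftrightarrow \Phi_1(\theta)-\Phi_2(\theta)=\log Z_{\Phi_2,\mu}-\log Z_{\Phi_1,\mu}$. Your treatment of (iii) is in fact slightly more complete than the paper's: the paper proves only the combined equivalence ``$\log\ell_{\Phi_1,\mu}=\log\ell_{\Phi_2,\mu}$ $\mu$-a.s.\ iff $\Phi_1-\Phi_2=\log Z_{\Phi_2,\mu}-\log Z_{\Phi_1,\mu}$ $\mu$-a.s.,'' whereas you additionally close the loop by invoking part (ii) to show that \emph{any} $\mu$-a.s.\ constant value of $\Phi_1-\Phi_2$ is forced to equal $\log Z_{\Phi_2,\mu}-\log Z_{\Phi_1,\mu}$; this extra step is what justifies the lemma's chained phrasing.

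For part (i) your route is genuinely different. The paper argues by dimension counting: $L^\infty_\mu(\Theta,\R)$ has dimension at least two under the stated hypotheses, the codomain $\R_{>0}$ has dimension one, hence the evidence map cannot be injective. This is short, but as written it elides the point that $\Phi\mapsto Z_{\Phi,\mu}$ is nonlinear, and a set-theoretic argument alone does not rule out injectivity of a nonlinear map from a higher-dimensional space to a lower-dimensional one; one must additionally note that the map factors through the linear functional $g\mapsto\int g\,\rd\mu$ composed with the bijection $\Phi\mapsto e^{-\Phi}$, and that a linear functional injective on an open subset of a space of dimension $\geq 2$ cannot exist. Your construction avoids all of this by exhibiting concrete $\Phi_1\not\equiv\Phi_2$ with equal evidence: $\Phi_1\equiv 0$ and $\Phi_2=a\mathbbm{1}_{A_1}+b\mathbbm{1}_{A_2}$ with $e^{-b}=1+\tfrac{\mu(A_1)}{\mu(A_2)}(1-e^{-a})$, where $A_1,A_2$ are disjoint sets of positive measure. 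The algebra is correct, $\Phi_2$ is bounded and hence in every $L^p_\mu$, and $\Phi_2\neq\Phi_1$ on $A_1\cup A_2$ which has positive measure. The only point to be slightly careful about is the existence of the two disjoint sets $A_1,A_2$ with $\mu(A_i)>0$; you note this is immediate in both the uncountable-support case and the atomic case, which is right in the settings the paper considers (a metric or Polish parameter space), though it does implicitly use that the support carries a topology in which distinct points can be separated. On balance your construction is more elementary and self-contained, and does not require filling in the linearity observation that the paper's dimension argument tacitly needs; the paper's argument is shorter. Both are valid.
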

The significance of \Cref{lemma_noninjectivity_of_dataMisfit_maps}\ref{item_lemma_noninjectivity_of_dataMisfit_maps_equivalent_condition_for_agreement} is that the map $\Phi\mapsto \log \ell_{\Phi} $ is not invertible on $L^p_\mu(\Theta,\R)$ for any $p\in [1,\infty]$.
In particular, it is not in general true that for some $M>0$, there exists $C(M)>0$ such that for every $\Phi_i$ with $\norm{\Phi_i}_{L^\infty_\mu}\leq M<\infty$, 
\begin{equation}
\label{eq_lower_bound_desideratum}
 \norm{\Phi_1-\Phi_2}_{L^\infty_\mu}\leq C(M) \norm{\log \ell_{\Phi_1,\mu}-\log\ell_{\Phi_2,\mu} }_{L^\infty_\mu}.
\end{equation}
For example, fix $\Phi_1$ and let $\Phi_2=\Phi_1+c$ for some constant $c\in\R$. Then the left-hand side in \eqref{eq_lower_bound_desideratum} will be strictly positive, while the right-hand side will be equal to zero. 
\Cref{example_misfit_to_log_likelihood_map_not_locally_invertible} below shows that \eqref{eq_lower_bound_desideratum} can fail even if $\Phi_1-\Phi_2$ is not $\mu$-a.s. constant.
\begin{example}
\label{example_misfit_to_log_likelihood_map_not_locally_invertible}
Let $\mu\in\mathcal{P}(\Theta)$, $\Phi_1\in L^0(\Theta,\R_{\geq 0})$ be such that $Z_{\Phi_1,\mu}\in \R_{>0}$, and $M>0$ be arbitrary. 
Let $(S_n)_{n\in\N}\subset\Sigma$ be a collection of measurable subsets of positive $\mu$-measure that increase to $\Theta$, i.e. $\mu(S_n)>0$ and $S_n\subset S_{n+1}$ for every $n\in\N$, and $\lim_{n\to\infty}S_n=\Theta$.
For every $n\in\N$, define $\Phi_{n}\in L^0(\Theta,\R_{\geq 0})$ according to
 \begin{equation}
 \label{eq_Phi_n}
  \Phi_{n}\coloneqq \Phi_1+M\mathbb{I}_{S_n}=\begin{cases} 
              \Phi_1+M, & \text{on $S_n$}
              \\
              \Phi_1, & \text{on $S_n^\complement$.}
             \end{cases}
 \end{equation}
Then for every $n\in\N$, $\norm{\Phi_n-\Phi_1}_{L^\infty_\mu}=M$, and for every $p\in [1,\infty)$, $\norm{\Phi_n-\Phi_1}_{L^p_\mu}=M(\mu(S_n))^{1/p}$, so the left-hand side of \eqref{eq_lower_bound_desideratum} is bounded away from zero by $M$ if $p=\infty$ and by $M\min_n \mu(S_n)^{1/p}=M\mu(S_1)^{1/p}$ for $1\leq p<\infty$.
On the other hand, since $(S_n)_{n\in\N}$ increases to $\Theta$, it follows that
\begin{align*}
 Z_{\Phi_{n},\mu}=\exp(-M)\int_{S_n} \exp(-\Phi_1)\rd \mu+ \int_{S_n^\complement} \exp(-\Phi_1)\rd \mu\xrightarrow[n\to\infty]{} \exp(-M)Z_{\Phi_1,\mu},
\end{align*}
which is equivalent to $\lim_{n\to\infty}(-\log Z_{\Phi_1,\mu}+\log Z_{\Phi_{n},\mu})=-M$.
Thus, for any $p\in [1,\infty]$,
\begin{align*}
 \norm{\log \ell_{\Phi_1,\mu}-\log\ell_{\Phi_{n},\mu} }_{L^p_\mu}=&\Norm{-\Phi_1-\log Z_{\Phi_1,\mu}+\Phi_{n}+\log Z_{\Phi_{n},\mu}}_{L^p_\mu}
 \\
 =& \Norm{ M\mathbb{I}_{S_n} -\log Z_{\Phi_1,\mu}+\log Z_{\Phi_{n},\mu}}_{L^p_\mu}\xrightarrow[n\to\infty]{}  0,
\end{align*}
where the first equation follows from the definition \eqref{eq_likelihood_function} of $\ell_{\Phi_i,\mu}$, the second equation follows from \eqref{eq_Phi_n}, and the convergence statement follows from the hypothesis that $(S_n)_{n\in\N}$ increases to $\Theta$.
Thus, the right-hand side of \eqref{eq_lower_bound_desideratum} converges to zero for any $1\leq p\leq \infty$, and there does not exist some $0<C(M)<\infty$ such that \eqref{eq_lower_bound_desideratum} holds.
\end{example}

For an arbitrary $\nu\in\mathcal{M}(\Theta)$, define
\begin{equation}
 \label{eq_measures_dominated_by_nu}
 D(\nu)\coloneqq \{ \mu'\in\mathcal{P}(\Theta)\ :\ \mu'\ll\nu \}.
\end{equation}
We now describe the noninjectivity of the prior-to-evidence map on $D(\nu)$ and a sufficient condition on the misfit $\Phi$ for the prior-to-posterior map on $D(\nu)$ to be injective.
\begin{restatable}{lemma}{LemmaNoninjectivityPriorMaps}
\label{lemma_noninjectivity_of_prior_maps}
 Let $\nu\in\mathcal{M}(\Theta)$, $D(\nu)$ be as in \eqref{eq_measures_dominated_by_nu}, and $\Phi\in L^0(\Theta,\R)$ be such that $Z_{\Phi,\mu}\in \R_{>0}$ for every $\mu\in D(\nu)$.
 \begin{enumerate}
  \item \label{item_lemma_noninjectivity_of_prior_maps_noninjective}
  If $\supp{\nu}$ is uncountable or $\card{\supp{\nu}}\geq 3$, then the map $D(\nu)\ni \mu\mapsto Z_{\Phi,\mu}\in\R_{>0}$ is not injective.
  \item \label{item_lemma_noninjectivity_of_prior_maps_equivalent_condition_for_agreement}
  Suppose $\mu_i\in D(\nu)$, $i=1,2$ and $\exp(-\Phi)\in\R_{>0}$ $\nu$-a.s. 
 Then $(\mu_1)_\Phi=(\mu_2)_\Phi$ if and only if $\mu_1=\mu_2$.
 \end{enumerate}
\end{restatable}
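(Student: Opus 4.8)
For part~\ref{item_lemma_noninjectivity_of_prior_maps_noninjective}, the key point is that the evidence map $\mu\mapsto Z_{\Phi,\mu}=\int\exp(-\Phi)\,\rd\mu$ is \emph{affine} in $\mu$, and an affine function on a genuinely two-dimensional convex set cannot be injective; so the plan is to exhibit a two-dimensional family of distinct priors inside $D(\nu)$. I would first use the assumption on $\supp\nu$ to produce three pairwise disjoint sets $B_1,B_2,B_3\in\Sigma$ with $\nu(B_i)>0$ for each $i$ (when $\card{\supp\nu}\ge 3$, take $B_1,B_2,B_3$ separating three distinct points of $\supp\nu$; an uncountable support provides such sets a fortiori). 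Setting $\nu_i\coloneqq\nu(\,\cdot\cap B_i)/\nu(B_i)$, each $\nu_i$ lies in $D(\nu)$ and the $\nu_i$ are mutually singular; moreover, since $\exp(-\Phi)>0$ everywhere (as $\Phi$ is real-valued) and $Z_{\Phi,\cdot}$ is finite on all of $D(\nu)$ by hypothesis, the numbers $z_i\coloneqq Z_{\Phi,\nu_i}$ lie in $\R_{>0}$.

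Then, for $\vec p=(p_1,p_2,p_3)$ in the two-dimensional simplex $\Delta\coloneqq\{\vec p\in\R_{\ge 0}^{3}:\ p_1+p_2+p_3=1\}$, I would put $\mu_{\vec p}\coloneqq\sum_{i=1}^{3}p_i\nu_i$, which again lies in $D(\nu)$. Since the $B_i$ are disjoint, $\mu_{\vec p}(B_i)=p_i$, so $\vec p\mapsto\mu_{\vec p}$ is injective on $\Delta$; on the other hand, $Z_{\Phi,\mu_{\vec p}}=\sum_{i=1}^{3}p_i z_i$ is the restriction to $\Delta$ of an affine map $\R^{3}\to\R$, which cannot be injective on the two-dimensional set $\Delta$. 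Concretely, one chooses $\vec p$ in the relative interior of $\Delta$ and a nonzero $\vec d\in\R^{3}$ with $\sum_i d_i=0$ and $\sum_i d_i z_i=0$ (which exists, as these are two linear conditions on $\R^{3}$) and sets $\vec q\coloneqq\vec p+t\vec d$ for $t\ne 0$ small enough that $\vec q\in\Delta$; then $\mu_{\vec p}\ne\mu_{\vec q}$ are distinct elements of $D(\nu)$ with $Z_{\Phi,\mu_{\vec p}}=Z_{\Phi,\mu_{\vec q}}$, establishing non-injectivity.

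For part~\ref{item_lemma_noninjectivity_of_prior_maps_equivalent_condition_for_agreement}, the implication $\mu_1=\mu_2\Rightarrow(\mu_1)_\Phi=(\mu_2)_\Phi$ is immediate from \eqref{eq_posterior_function}. For the converse, write $g\coloneqq\exp(-\Phi)$, $c_i\coloneqq Z_{\Phi,\mu_i}\in\R_{>0}$, and introduce the finite measures $\rho_i(\rd\theta)\coloneqq g(\theta)\,\mu_i(\rd\theta)$, $i=1,2$. By \eqref{eq_posterior_function}, the hypothesis $(\mu_1)_\Phi=(\mu_2)_\Phi$ is equivalent to $c_1^{-1}\rho_1=c_2^{-1}\rho_2$, i.e.\ $\rho_1=(c_1/c_2)\rho_2$. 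Since $g>0$ holds $\nu$-a.s.\ and $\mu_i\ll\nu$, we have $g>0$ $\mu_i$-a.s., hence $\mu_i\ll\rho_i$ with $\rd\mu_i/\rd\rho_i=1/g$; therefore $\mu_1(A)=\int_A g^{-1}\,\rd\rho_1=(c_1/c_2)\int_A g^{-1}\,\rd\rho_2=(c_1/c_2)\,\mu_2(A)$ for every $A\in\Sigma$, so that $\mu_1=(c_1/c_2)\,\mu_2$. Evaluating at $A=\Theta$ and using $\mu_1,\mu_2\in\mathcal{P}(\Theta)$ forces $c_1=c_2$, whence $\mu_1=\mu_2$.

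I expect the only step requiring genuine care to be the first one in part~\ref{item_lemma_noninjectivity_of_prior_maps_noninjective}: deducing the three pairwise disjoint sets of positive $\nu$-measure from the assumption on $\supp\nu$. The rest of part~\ref{item_lemma_noninjectivity_of_prior_maps_noninjective} is elementary convex/linear geometry and part~\ref{item_lemma_noninjectivity_of_prior_maps_equivalent_condition_for_agreement} is a short Radon--Nikodym computation. It is also worth recording where the standing hypotheses enter: finiteness of $Z_{\Phi,\mu}$ for all $\mu\in D(\nu)$ is what gives $z_i\in\R_{>0}$, and the condition $\exp(-\Phi)>0$ $\nu$-a.s.\ is exactly what licenses the division by $g$ in part~\ref{item_lemma_noninjectivity_of_prior_maps_equivalent_condition_for_agreement}.
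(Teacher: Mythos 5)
Your argument is correct, and in both parts it is essentially the approach the paper takes. For part (ii), the paper equates the densities of $(\mu_1)_\Phi$ and $(\mu_2)_\Phi$ with respect to $\nu$, cancels the $\nu$-a.s.\ positive factor $\exp(-\Phi)$, integrates over $\Theta$ to obtain $Z_{\Phi,\mu_1}=Z_{\Phi,\mu_2}$, and concludes $\mu_1=\mu_2$; your version with the auxiliary measures $\rho_i=\exp(-\Phi)\,\mu_i$ is the same computation in different notation. For part (i), both you and the paper exploit that $\mu\mapsto Z_{\Phi,\mu}$ is the restriction of a linear functional to the convex set $D(\nu)$: the paper phrases this abstractly by identifying $D(\nu)$ with the set of nonnegative unit-mass densities in $L^1_\nu(\Theta,\R_{\geq 0})$, observing this has affine dimension at least two under the stated support hypothesis, and appealing to the impossibility of an injective affine map into the one-dimensional codomain $\R_{>0}$, whereas you make this concrete by constructing mutually singular priors $\nu_1,\nu_2,\nu_3\in D(\nu)$ and exhibiting two distinct convex combinations with the same evidence. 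Your version is somewhat more self-contained than the paper's sketch. The one point worth tightening: the normalization $\nu_i=\nu(\,\cdot\cap B_i)/\nu(B_i)$ presumes $0<\nu(B_i)<\infty$, which is not automatic when $\nu\in\mathcal{M}(\Theta)$ is an infinite measure; for $\sigma$-finite $\nu$ (implicit anyway in the Radon--Nikodym framework behind $D(\nu)$) you can first shrink each $B_i$ to a subset of finite positive $\nu$-measure before normalizing. A minor remark: you write that $\exp(-\Phi)>0$ everywhere because $\Phi$ is real-valued, which is literally consistent with $\Phi\in L^0(\Theta,\R)$, but elsewhere (e.g.\ Example~\ref{example_prior_to_posterior_noninjective}) the paper implicitly permits $\exp(-\Phi)$ to vanish, which is precisely why the positivity hypothesis in part (ii) is stated explicitly rather than taken for granted; your proof uses that hypothesis in the right place in any case.
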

The restriction to the set of priors that admit a common dominating measure is not new, and has an important advantage of ensuring that misspecification of the prior does not occur \cite[pp.6-7]{GhosalvanderVaart2017}.
By \Cref{lemma_esssup_essinf_different_measures_different_functions}\ref{item_esssup_essinf_different_measures}, if $\Phi\in L^\infty_\nu(\Theta,\R)$, then for every $\mu\in D(\nu)$, $\Phi \in L^\infty_\mu(\Theta,\R)$, which in turn implies that $Z_{\Phi,\mu}\in\R_{>0}$. Thus, the set of misfits $\Phi$ such that $Z_{\Phi,\mu}\in\R_{>0}$ for every $\mu\in D(\nu)$ contains $L^\infty_\nu(\Theta,\R)$, and the hypothesis on $\Phi$ of \Cref{lemma_noninjectivity_of_prior_maps} is not vacuous.
We comment on the assumption of bounded misfits in \Cref{remark_bounded_misfits} below.

We provide an example where the prior-to-evidence map $D(\nu)\ni\mu\mapsto Z_{\Phi,\mu}\in\R_{>0}$ is noninjective. 
\begin{example}
\label{example_prior_to_evidence}
 Let $\Theta=\R$, $\nu$ be Lebesgue measure, $\mu_1$ be the uniform measure on $[-2,1]$, $\mu_2$ be the uniform measure on $[-1,2]$, and 
$\Phi\coloneqq \mathbb{I}_{[-2,-1]\cup[1,2]}$. Then $\tfrac{\rd\mu_1}{\rd\nu}=\frac{1}{3}\mathbb{I}_{[-2,1]}$, $\tfrac{\rd\mu_2}{\rd\nu}=\frac{1}{3}\mathbb{I}_{[-1,2]}$, $\Phi\in L^\infty_{\mu_i}(\Theta,\R_{\geq 0})$ for $i=1,2$, and
\begin{equation*}
 \exp(-\Phi)(x)=\begin{cases}
              \exp(-1), & x\in[-2,-1]\cup [1,2]
              \\
              1, & x\notin [-2,-1]\cup [1,2],
             \end{cases}
\end{equation*}
 which imply
 \begin{align*}
  Z_{\Phi,\mu_1}=&\int \exp(-\Phi)\rd\mu_1=\frac{1}{3} \int_{-2}^{-1}\exp(-1) \rd x+\frac{1}{3}\int_{-1}^{1} 1 \rd x=\frac{\exp(-1)+2}{3}
  \\
  Z_{\Phi,\mu_2}=&\int \exp(-\Phi)\rd\mu_2= \frac{1}{3}\int_{-1}^{-1}1\rd x+\frac{1}{3}\int_{-1}^{2} \exp(-1) \rd x=\frac{2+\exp(-1)}{3}.
 \end{align*}
\end{example}
The following modification of \Cref{example_prior_to_evidence} shows that without the hypothesis that $\exp(-\Phi)\in\R_{>0}$ $\nu$-a.s., the conclusion of \Cref{lemma_noninjectivity_of_prior_maps}\ref{item_lemma_noninjectivity_of_prior_maps_equivalent_condition_for_agreement} need not hold, i.e. the prior-to-posterior map for a fixed misfit $\Phi$ need not be injective.
\begin{example}
 \label{example_prior_to_posterior_noninjective}
  Let $\Theta=\R$, $\nu$ be Lebesgue measure, $\mu_1$ be the uniform measure on $[-2,1]$, $\mu_2$ be the uniform measure on $[-1,2]$, and 
  \begin{equation*}
   \exp(-\Phi)(x)=\mathbb{I}_{[-1,1]}(x).
  \end{equation*}
Then
\begin{equation*}
 Z_{\Phi,\mu_1}=\frac{1}{3}\int_{-2}^{1}\exp(-\Phi)(x)\rd x=\frac{1}{3}\int_{-1}^{1}\rd x=\frac{1}{3}\int_{-1}^{2}\exp(-\Phi)(x)\rd x=Z_{\Phi,\mu_2}.
\end{equation*}
The above and \eqref{eq_likelihood_function} together imply that 
\begin{equation}
\label{eq_intermediate1}
 \ell_{\Phi,\mu_1}=\frac{\exp(-\Phi)}{Z_{\Phi,\mu_1}}=\frac{\exp(-\Phi)}{Z_{\Phi,\mu_2}}=\ell_{\Phi,\mu_2},\quad \text{$\nu$-a.s.}
\end{equation}
By the choices of $\mu_1$ and $\mu_2$,
\begin{equation}
\label{eq_intermediate2}
\frac{\rd\mu_1}{\rd\nu}\mathbb{I}_{[-1,1]}=\frac{1}{3}\mathbb{I}_{[-2,1]}\mathbb{I}_{[-1,1]} =\frac{1}{3}\mathbb{I}_{[-1,2]}\mathbb{I}_{[-1,1]} =\frac{\rd\mu_2}{\rd\nu}\mathbb{I}_{[-1,1]}.
\end{equation}
Now
\begin{align*}
 \frac{\rd(\mu_1)_\Phi}{\rd\nu}=\ell_{\Phi,\mu_1}\frac{\rd\mu_1}{\rd\nu}=& \ell_{\Phi,\mu_1}\frac{\rd\mu_1}{\rd\nu}\mathbb{I}_{[-1,1]} & \exp(-\Phi)=\mathbb{I}_{[-1,1]}
 \\
 =&\ell_{\Phi,\mu_2}\frac{\rd\mu_1}{\rd\nu}\mathbb{I}_{[-1,1]} & \text{by \eqref{eq_intermediate1}}
 \\
 =& \ell_{\Phi,\mu_2}\frac{\rd\mu_2}{\rd\nu}\mathbb{I}_{[-1,1]} & \text{by \eqref{eq_intermediate2}}
 \\
 =& \ell_{\Phi,\mu_2}\frac{\rd\mu_2}{\rd\nu} & \exp(-\Phi)=\mathbb{I}_{[-1,1]}
 \\
 =&\frac{\rd(\mu_2)_\Phi}{\rd\nu},
\end{align*}
which implies that $(\mu_1)_\Phi=(\mu_2)_\Phi$. Since $\mu_1\neq \mu_2$, it follows that $D(\nu)\ni \mu\mapsto \mu_\Phi\in D(\nu)$ is not injective for this choice of $\Phi$.
\end{example}

\section{Bounds in the total variation metric}
\label{sectionTVbounds}

Let $\nu,\mu_i\in\mathcal{P}(\Theta)$ be such that $\mu_i\ll\nu$ for $i=1,2$.
The total variation metric is defined by
\begin{equation}
 d_\TV(\mu_1,\mu_2)\coloneqq \sup_{A\in\Sigma}\abs{\mu_1(A)-\mu_2(A)}=\frac{1}{2}\int\Abs{\frac{\rd \mu_1}{\rd\nu}-\frac{\rd\mu_2}{\rd\nu}}\rd\nu.
 \label{eq_total_variation_metric}
 \end{equation}
 The value of $d_\TV(\mu_1,\mu_2)$ does not depend on the choice of $\nu$, and $d_{\TV}(\mu_1,\mu_2)$ takes values in $[0,1]$.
 The maximal value of 1 is attained if and only if the densities $\tfrac{\rd\mu_1}{\rd\nu}$ and $\tfrac{\rd\mu_2}{\rd\nu}$ have disjoint regions of positivity, i.e. $\mu_1$ and $\mu_2$ are singular; see e.g. \cite[Section B.1]{GhosalvanderVaart2017}. 
 If $\mu_1$ and $\mu_2$ are not singular, this does not imply that either $\mu_1\ll\mu_2$ or that $\mu_2\ll\mu_1$, as the example below shows.
 \begin{example}
  \label{example_translated_uniform_measure}
  Let $\nu$ be the Lebesgue measure on $\R$, $\mu$ be the uniform measure on $[0,1]$, and for $\tau\in (-1,1)$, let $\mu_\tau$ be the uniform measure on $[\tau,1+\tau]$. 
  If $\tau \neq 0$, then $[0,1]\setminus [\tau,1+\tau]$ and $[\tau,1+\tau]\setminus[0,1]$ are both nonempty, so neither $\mu\ll\mu_\tau$ nor $\mu_\tau\ll\mu$ hold.
  On the other hand, by \eqref{eq_total_variation_metric},
  \begin{equation*}
    d_{\TV}(\mu,\mu_\tau)=\frac{1}{2}\left(\int_{[0,1]\setminus[\tau,1+\tau]}\rd \nu+\int_{[\tau,1+\tau]\setminus[0,1]}\rd \nu\right)=\frac{1}{2}\left(\abs{\tau}+\abs{\tau}\right)=\abs{\tau}.
  \end{equation*}
  Thus as $\tau\to 0$, $\mu_\tau$ converges to $\mu$ in the total variation metric.
 \end{example}

\subsection{Perturbations of the misfit}
\label{sectionTVbounds_misfit_perturbation}

The proofs of the results in this section are given in \Cref{section_TVbounds_Misfitperturbations_proofs}.

The upper and lower bounds on the total variation metric in the next result follow from the local Lipschitz continuity \eqref{eq_Lipschitz_continuity_exp_function} of the exponential function.
\begin{restatable}{proposition}{TVboundsMisfitperturbationsLipschitz}
 \label{proposition_TV_bounds_misfit_perturbations_via_Lipschitz_continuity}
 Let $\mu\in\mathcal{P}(\Theta)$ and $\Phi_i\in L^0(\Theta,\R)$ be such that $\mu_{\Phi_i}\in \mathcal{P}(\Theta)$, $i=1,2$.
\begin{enumerate}
 \item  \label{proposition_TV_bounds_misfit_perturbations_via_Lipschitz_continuity_upper_bound}  If in addition $\Phi_i\in L^1_\mu(\Theta,\R_{\geq 0})$ for $i=1,2$, then
   \begin{equation*}
    d_{\TV}(\mu_{\Phi_1},\mu_{\Phi_2}) \leq \frac{1}{2}\frac{1}{Z_{\Phi_1,\mu}\wedge Z_{\Phi_2,\mu}}\Norm{\Phi_1-\Phi_2+\log \frac{Z_{\Phi_1,\mu}}{Z_{\Phi_2,\mu}}}_{L^1_\mu}.
  \end{equation*}
\item \label{proposition_TV_bounds_misfit_perturbations_via_Lipschitz_continuity_lower_bound}
If in addition $\Phi_i\in L^\infty_\mu(\Theta,\R)$, $i=1,2$, then
  \begin{equation*}
    d_{\TV}(\mu_{\Phi_1},\mu_{\Phi_2}) \geq \frac{1}{2}\left(\frac{\exp(-\Norm{\Phi_1}_{L^\infty_\mu})}{Z_{\Phi_1,\mu}}\wedge\frac{ \exp(-\Norm{\Phi_2}_{L^\infty_\mu})}{ Z_{\Phi_2,\mu}}\right)\Norm{\Phi_1-\Phi_2+\log \frac{Z_{\Phi_1,\mu}}{Z_{\Phi_2,\mu}}}_{L^1_\mu}.
  \end{equation*}
\end{enumerate}
If $\Phi_1-\Phi_2$ is constant $\mu$-a.s., then equality holds in both the upper and lower bounds.
\end{restatable}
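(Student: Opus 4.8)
The plan is to express both posterior densities with respect to the common dominating measure $\mu$ and apply the two-sided estimate \eqref{eq_Lipschitz_continuity_exp_function} for the exponential function pointwise, then integrate. Since $\mu_{\Phi_i}\ll\mu$, using the second representation of $d_{\TV}$ in \eqref{eq_total_variation_metric} with $\nu\leftarrow\mu$ gives
\begin{equation*}
 d_{\TV}(\mu_{\Phi_1},\mu_{\Phi_2})=\tfrac12\int\abs{\ell_{\Phi_1,\mu}-\ell_{\Phi_2,\mu}}\rd\mu ,
\end{equation*}
and by \eqref{eq_likelihood_function} we have $\ell_{\Phi_i,\mu}=\exp(x_i)$ with $x_i\coloneqq -\Phi_i-\log Z_{\Phi_i,\mu}$, where $\log Z_{\Phi_i,\mu}$ is finite because $\mu_{\Phi_i}\in\mathcal P(\Theta)$ forces $Z_{\Phi_i,\mu}\in\R_{>0}$. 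The key algebraic identity is
\begin{equation*}
 \abs{x_1-x_2}=\Abs{\Phi_1-\Phi_2+\log\tfrac{Z_{\Phi_1,\mu}}{Z_{\Phi_2,\mu}}} .
\end{equation*}

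For the upper bound in \ref{proposition_TV_bounds_misfit_perturbations_via_Lipschitz_continuity_upper_bound} I would apply the right-hand inequality in \eqref{eq_Lipschitz_continuity_exp_function}, namely $\abs{\exp(x_1)-\exp(x_2)}\le(\exp(x_1)\vee\exp(x_2))\abs{x_1-x_2}$ pointwise; since $\Phi_i\ge 0$ implies $\exp(x_i)=\exp(-\Phi_i)/Z_{\Phi_i,\mu}\le 1/Z_{\Phi_i,\mu}$, we obtain the uniform bound $\exp(x_1)\vee\exp(x_2)\le 1/(Z_{\Phi_1,\mu}\wedge Z_{\Phi_2,\mu})$, and integrating against $\mu$ yields the claim, the $L^1_\mu$-norm being finite because $\Phi_i\in L^1_\mu$ and $\log\tfrac{Z_{\Phi_1,\mu}}{Z_{\Phi_2,\mu}}$ is a finite constant. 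For the lower bound in \ref{proposition_TV_bounds_misfit_perturbations_via_Lipschitz_continuity_lower_bound} I would instead use the left-hand inequality in \eqref{eq_Lipschitz_continuity_exp_function} together with the $\mu$-a.s.\ lower bound $\exp(x_i)=\exp(-\Phi_i)/Z_{\Phi_i,\mu}\ge\exp(-\norm{\Phi_i}_{L^\infty_\mu})/Z_{\Phi_i,\mu}$, so that $\exp(x_1)\wedge\exp(x_2)$ is bounded below $\mu$-a.s.\ by the constant appearing in the statement; integrating gives the result, and $\Phi_i\in L^\infty_\mu$ on a probability space implies $\Phi_i\in L^1_\mu$, so the right-hand side is again finite.

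For the final equality assertion, suppose $\Phi_1-\Phi_2$ is $\mu$-a.s.\ constant. By \Cref{lemma_noninjectivity_of_dataMisfit_maps}\ref{item_lemma_noninjectivity_of_dataMisfit_maps_equivalent_condition_for_agreement} this constant must equal $\log Z_{\Phi_2,\mu}-\log Z_{\Phi_1,\mu}$, hence $\Phi_1-\Phi_2+\log\tfrac{Z_{\Phi_1,\mu}}{Z_{\Phi_2,\mu}}=0$ $\mu$-a.s.\ and the right-hand sides of both bounds vanish; on the other hand $\abs{x_1-x_2}=0$ $\mu$-a.s., so by the equality case of \eqref{eq_Lipschitz_continuity_exp_function} the integrand $\abs{\exp(x_1)-\exp(x_2)}$ vanishes $\mu$-a.s.\ and $d_{\TV}(\mu_{\Phi_1},\mu_{\Phi_2})=0$, giving equality in both inequalities.

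I do not expect a serious obstacle: the argument is essentially bookkeeping around \eqref{eq_Lipschitz_continuity_exp_function}. The only points requiring care are checking that the integrability hypotheses make the $L^1_\mu$-norm on the right-hand side well-defined and finite, and using the pointwise density bounds in the correct direction — the upper bound $\exp(x_i)\le 1/Z_{\Phi_i,\mu}$ (which is exactly where nonnegativity of $\Phi_i$ enters) for part \ref{proposition_TV_bounds_misfit_perturbations_via_Lipschitz_continuity_upper_bound}, and the $\mu$-a.s.\ lower bound $\exp(x_i)\ge\exp(-\norm{\Phi_i}_{L^\infty_\mu})/Z_{\Phi_i,\mu}$ for part \ref{proposition_TV_bounds_misfit_perturbations_via_Lipschitz_continuity_lower_bound}.
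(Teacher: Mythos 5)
Your proof is correct and takes essentially the same route as the paper: express $d_{\TV}$ as $\tfrac12\Norm{\ell_{\Phi_1,\mu}-\ell_{\Phi_2,\mu}}_{L^1_\mu}$, apply the two-sided estimate \eqref{eq_Lipschitz_continuity_exp_function} to $\exp(x_i)$ with $x_i=-\Phi_i-\log Z_{\Phi_i,\mu}$, bound $\exp(x_1)\vee\exp(x_2)$ (resp.\ $\exp(x_1)\wedge\exp(x_2)$) uniformly using $\essinf_\mu\Phi_i\ge 0$ (resp.\ $\Phi_i\in L^\infty_\mu$), and integrate. The only superficial difference is that the paper packages the pointwise step as \Cref{lemma_preliminary_bounds_on_pth_root_likelihoods} with $p=1$ (that lemma is stated for general $p$-th roots so it can be reused for the Hellinger bounds), whereas you inline the same computation.
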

 
\begin{remark}[Nonnegativity hypothesis for misfits bounded from below]
\label{remark_nonnegativity_hypothesis}
 If $\mu\in\mathcal{P}(\Theta)$ and $\Phi\in L^0(\Theta,\R)$ satisfies $\essinf_\mu\Phi>-\infty$, then we may assume that $\essinf_\mu \Phi=0$. 
There is no loss of generality, since if $-\infty<\essinf_\mu \Phi<0$, then by defining $\Psi\coloneqq \Phi-\essinf_\mu\Phi$, we have $\essinf_\mu \Psi=0$.
By \Cref{lemma_noninjectivity_of_dataMisfit_maps}\ref{item_lemma_noninjectivity_of_dataMisfit_maps_equivalent_condition_for_agreement} it follows that $\log \ell_{\Psi,\mu}=\log \ell_{\Phi,\mu}$, and by \eqref{eq_posterior_function} it follows that $\mu_{\Psi}=\mu_{\Phi}$. 
\end{remark}

\begin{remark}[Bounded misfits]
 \label{remark_bounded_misfits}
 The assumption that $\Phi_i\in L^\infty_\mu(\Theta,\R)$ for $i=1,2$ that we made in \cref{proposition_TV_bounds_misfit_perturbations_via_Lipschitz_continuity_lower_bound} appears to be restrictive.
 It is not satisfied in the case where $\Theta=\R^d$ for $d\in\N$ and $\Phi_i$ is generated by a linear forward map and Gaussian observation noise, for example. 
 However, similar boundedness assumptions have appeared in the theory of nonlinear Bayesian statistical inverse problems; see e.g. \cite[equation (1.17)]{Nickl2023}. 
 The boundedness assumption can hold if the space $\Theta$ of admissible parameters is itself bounded; such boundedness conditions were shown to be important in the Gaussian regression with random design setting in \cite{Birge2004}, for example. 
 Another sufficient condition for the boundedness assumption to hold is that the prior $\mu$ has bounded support.
 We discuss this condition in \Cref{remark_priors_with_bounded_support} below.
\end{remark}

The next result obtains upper and lower bounds for the total variation and misfit perturbations by using the triangle inequality.
The upper bound was stated in \cite[Theorem 8]{Sprungk2020}.
The lower bounds are new, to the best of our knowledge. 
However, for the lower bound to be nontrivial, one requires both $\mu(\Phi_1-\Phi_2>0)$ and $\mu(\Phi_1-\Phi_2<0)$ to be positive. 
The lower bound in \Cref{proposition_TV_bounds_misfit_perturbations_via_Lipschitz_continuity}\ref{proposition_TV_bounds_misfit_perturbations_via_Lipschitz_continuity_lower_bound} that was obtained using local Lipschitz continuity of the exponential function does not require this condition.

\begin{restatable}{proposition}{TVboundsMisfitperturbationsTriangle}
\label{proposition_TV_bounds_misfit_perturbations_via_triangle_inequality}
 Let $\Phi_i\in L^0_\mu(\Theta,\R)$ be such that $\mu_{\Phi_i}\in\mathcal{P}(\Theta)$, $i=1,2$.
 \begin{enumerate}
 \item \label{item_proposition_TV_bounds_misfit_perturbations_via_triangle_inequality_upper_bound}
If $\essinf_\mu\Phi_i= 0$ for $i=1,2$, then
\begin{align}
\abs{Z_{\Phi_2,\mu}-Z_{\Phi_1,\mu}}\leq &\norm{ e^{-\Phi_1}-e^{-\Phi_2}}_{L^1_\mu}\leq\Norm{\Phi_1-\Phi_2}_{L^1_\mu}
\label{eq_Lipschitz_continuity_evidence_unnormalisedLikelihood_wrt_misfit}
\\
  d_{\TV}(\mu_{\Phi_1},\mu_{\Phi_2})\leq& \frac{1}{2}\frac{1}{Z_{\Phi_1,\mu}\vee Z_{\Phi_2,\mu}}\left(\norm{ e^{-\Phi_1}-e^{-\Phi_2}}_{L^1_\mu}+ \abs{Z_{\Phi_2,\mu}-Z_{\Phi_1,\mu}}\right),
  \label{eq_TV_upper_bound_misfit_perturbation_triangle}
 \end{align}
 If $\Phi_1=\Phi_2$ $\mu$-a.s., then equality holds in the upper bound in \eqref{eq_Lipschitz_continuity_evidence_unnormalisedLikelihood_wrt_misfit} and  \eqref{eq_TV_upper_bound_misfit_perturbation_triangle}.
 Equality holds in the lower bound in \eqref{eq_Lipschitz_continuity_evidence_unnormalisedLikelihood_wrt_misfit} if and only if $\Phi_1-\Phi_2$ $\mu$-a.s. does not change sign.

 \item 
 \label{item_proposition_TV_bounds_misfit_perturbations_via_triangle_inequality_lower_bound}
 If $\Phi_i\in L^\infty_\mu(\Theta,\R)$ for $i=1,2$, and both $\mu(\Phi_1-\Phi_2>0)$ and $\mu(\Phi_1-\Phi_2<0)$ are positive, then
 \begin{equation*}
  d_{\TV}(\mu_{\Phi_1},\mu_{\Phi_2})>
  \begin{cases}
  \frac{1}{2}\frac{\exp(-\norm{\Phi_1}_{L^\infty_\mu})\wedge\exp(- \norm{\Phi_2}_{L^\infty_\mu})}{Z_{\Phi_1,\mu}\wedge Z_{\Phi_2,\mu}}\Norm{\Phi_1-\Phi_2}_{L^1_\mu} , & Z_{\Phi_1,\mu}=Z_{\Phi_2,\mu}
  \\
  \exp(-\Norm{\Phi_1}_{L^\infty_\mu})Z_{\Phi_2,\mu}^{-1}\int_{\{\Phi_1>\Phi_2\}} \abs{\Phi_1-\Phi_2}\rd \mu, & Z_{\Phi_1,\mu}>Z_{\Phi_2,\mu},
  \\
  \exp(-\Norm{\Phi_2}_{L^\infty_\mu})Z_{\Phi_1,\mu}^{-1}\int_{\{\Phi_2>\Phi_1\}} \abs{\Phi_1-\Phi_2}\rd \mu, & Z_{\Phi_1,\mu}<Z_{\Phi_2,\mu}.
  \end{cases}
 \end{equation*}
 \item \label{item_proposition_TV_bounds_misfit_perturbations_via_triangle_inequality_lower_bound_no_sign_change}
   If $\Phi_i\in L^\infty_\mu(\Theta,\R)$ for $i=1,2$ and if either $\mu(\Phi_1-\Phi_2\geq 0)=1$ or $\mu(\Phi_1-\Phi_2\leq 0)=1$, then 
 \begin{equation*}
  d_{\TV}(\mu_{\Phi_1},\mu_{\Phi_2})\geq 
  \begin{cases}
  \frac{1}{2}\frac{\exp(-\norm{\Phi_1}_{L^\infty_\mu})\wedge\exp(- \norm{\Phi_2}_{L^\infty_\mu})}{Z_{\Phi_1,\mu}\wedge Z_{\Phi_2,\mu}}\Norm{\Phi_1-\Phi_2}_{L^1_\mu} , & Z_{\Phi_1,\mu}=Z_{\Phi_2,\mu}
  \\
  0, & \text{otherwise.}
  \end{cases}
 \end{equation*}
 \end{enumerate}
\end{restatable}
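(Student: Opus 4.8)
The plan is to work directly with the integral representation of the total variation metric in \eqref{eq_total_variation_metric}, taking the common dominating measure to be $\mu$ itself, so that for $i=1,2$ the density of $\mu_{\Phi_i}$ with respect to $\mu$ is $\ell_{\Phi_i,\mu}=\exp(-\Phi_i)/Z_{\Phi_i,\mu}$. Then
\begin{equation*}
d_{\TV}(\mu_{\Phi_1},\mu_{\Phi_2})=\frac{1}{2}\int\Abs{\frac{e^{-\Phi_1}}{Z_{\Phi_1,\mu}}-\frac{e^{-\Phi_2}}{Z_{\Phi_2,\mu}}}\rd\mu.
\end{equation*}
The first step is to reduce to the case $Z_{\Phi_1,\mu}=Z_{\Phi_2,\mu}$ by a triangle-inequality-type argument that separates the contribution of the evidence mismatch from the contribution of $\Phi_1-\Phi_2$; concretely, write the integrand as $\tfrac{1}{Z_{\Phi_1,\mu}}(e^{-\Phi_1}-e^{-\Phi_2})+\big(\tfrac{1}{Z_{\Phi_1,\mu}}-\tfrac{1}{Z_{\Phi_2,\mu}}\big)e^{-\Phi_2}$ (or the symmetric variant) and use the reverse triangle inequality $|a+b|\ge |a|-|b|$ pointwise. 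The second term integrates to exactly $|Z_{\Phi_2,\mu}-Z_{\Phi_1,\mu}|/Z_{\Phi_2,\mu}$ since $\int e^{-\Phi_2}\rd\mu=Z_{\Phi_2,\mu}$, while the first term is bounded below using \eqref{eq_Lipschitz_continuity_exp_function}: $|e^{-\Phi_1}-e^{-\Phi_2}|\ge (e^{-\Phi_1}\wedge e^{-\Phi_2})|\Phi_1-\Phi_2|\ge (\exp(-\norm{\Phi_1}_{L^\infty_\mu})\wedge\exp(-\norm{\Phi_2}_{L^\infty_\mu}))|\Phi_1-\Phi_2|$ $\mu$-a.s.

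For part~\ref{item_proposition_TV_bounds_misfit_perturbations_via_triangle_inequality_lower_bound}, in the case $Z_{\Phi_1,\mu}>Z_{\Phi_2,\mu}$, I expect the sharper route is to choose the splitting so that the evidence-mismatch term and the $\Phi$-difference term reinforce rather than cancel on a set of positive measure. Restricting the integral to $\{\Phi_1>\Phi_2\}$, on that set $e^{-\Phi_1}<e^{-\Phi_2}$, and one arranges the signs so that both $\tfrac{e^{-\Phi_2}}{Z_{\Phi_2,\mu}}-\tfrac{e^{-\Phi_1}}{Z_{\Phi_1,\mu}}$ is positive there (using $Z_{\Phi_1,\mu}>Z_{\Phi_2,\mu}$ to push $\tfrac{e^{-\Phi_1}}{Z_{\Phi_1,\mu}}$ further down), giving a clean lower bound $\ge Z_{\Phi_2,\mu}^{-1}\int_{\{\Phi_1>\Phi_2\}}(e^{-\Phi_2}-e^{-\Phi_1})\rd\mu\ge \exp(-\norm{\Phi_1}_{L^\infty_\mu})Z_{\Phi_2,\mu}^{-1}\int_{\{\Phi_1>\Phi_2\}}|\Phi_1-\Phi_2|\rd\mu$, where the last step again invokes \eqref{eq_Lipschitz_continuity_exp_function}. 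The symmetric argument handles $Z_{\Phi_1,\mu}<Z_{\Phi_2,\mu}$. The strictness of the inequality comes from the hypothesis that \emph{both} $\mu(\Phi_1-\Phi_2>0)$ and $\mu(\Phi_1-\Phi_2<0)$ are positive: the complementary set $\{\Phi_1<\Phi_2\}$ (resp. $\{\Phi_2<\Phi_1\}$) then also contributes a strictly positive amount to the full integral that we discarded, so the restricted bound is strict.

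For part~\ref{item_proposition_TV_bounds_misfit_perturbations_via_triangle_inequality_lower_bound_no_sign_change}, when $Z_{\Phi_1,\mu}=Z_{\Phi_2,\mu}$ the evidence term vanishes and the bound is just the computation already carried out in the first step; the "otherwise" case is the trivial bound $d_{\TV}\ge 0$. The only subtlety is to note why, under a one-sided sign condition on $\Phi_1-\Phi_2$ together with $Z_{\Phi_1,\mu}\neq Z_{\Phi_2,\mu}$, one cannot in general do better than $0$: this is consistent with the noninjectivity phenomenon from \Cref{lemma_noninjectivity_of_dataMisfit_maps}, since adding a constant to $\Phi$ changes the evidence but not the posterior, so $d_{\TV}$ can be $0$ while the evidences differ. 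The main obstacle I anticipate is bookkeeping the signs in part~\ref{item_proposition_TV_bounds_misfit_perturbations_via_triangle_inequality_lower_bound}: one must verify that on the chosen restriction set the two competing terms genuinely have the same sign (so that no cancellation occurs and the reverse triangle inequality is applied in the favourable direction), and that the resulting constant is exactly the one claimed; the $L^\infty_\mu$ bound on the misfit that appears (only $\norm{\Phi_1}_{L^\infty_\mu}$ in the first case, only $\norm{\Phi_2}_{L^\infty_\mu}$ in the second) is dictated by which density's denominator is being bounded below on the restriction set, and getting that matching right is where care is needed.
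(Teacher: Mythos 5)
Your route is close to the paper's for part \ref{item_proposition_TV_bounds_misfit_perturbations_via_triangle_inequality_upper_bound} and the $Z_{\Phi_1,\mu}=Z_{\Phi_2,\mu}$ cases, but differs genuinely for part \ref{item_proposition_TV_bounds_misfit_perturbations_via_triangle_inequality_lower_bound} when the evidences differ, and there it has a factor-of-2 gap. You start from $d_{\TV}(\mu_{\Phi_1},\mu_{\Phi_2})=\tfrac{1}{2}\int\abs{\rho_1-\rho_2}\,\rd\mu$ (where $\rho_i = e^{-\Phi_i}/Z_{\Phi_i,\mu}$) and restrict to $\{\Phi_1>\Phi_2\}$. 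That gives $d_{\TV}\geq\tfrac{1}{2}\int_{\{\Phi_1>\Phi_2\}}(\rho_2-\rho_1)\,\rd\mu$, which after your pointwise estimates yields $\tfrac{1}{2}\exp(-\norm{\Phi_1}_{L^\infty_\mu})Z_{\Phi_2,\mu}^{-1}\int_{\{\Phi_1>\Phi_2\}}\abs{\Phi_1-\Phi_2}\rd\mu$ — half the claimed bound. To close this, you need either the identity $d_{\TV}(\mu_{\Phi_1},\mu_{\Phi_2})=\int_{\{\rho_2>\rho_1\}}(\rho_2-\rho_1)\,\rd\mu$ (\Cref{lemma_L1norm_difference_of_unnormalised_likelihoods_minus_abs_diff_normalisation_constants_prior_perturbation}\ref{lemma_L1norm_difference_of_unnormalised_likelihoods_minus_abs_diff_normalisation_constants_prior_perturbation_item1}), which lets you restrict to $\{\Phi_1>\Phi_2\}\subset\{\rho_2>\rho_1\}$ without the $\tfrac12$, or the paper's route: apply the triangle and reverse triangle inequalities to $e^{-\Phi_1}-e^{-\Phi_2}$ and $\ell_{\Phi_2,\mu}(Z_{\Phi_2,\mu}-Z_{\Phi_1,\mu})$ at the $L^1_\mu$-norm level, then convert the difference $\norm{e^{-\Phi_1}-e^{-\Phi_2}}_{L^1_\mu}-\abs{Z_{\Phi_1,\mu}-Z_{\Phi_2,\mu}}$ into exactly $2\int_{\{\Phi_1>\Phi_2\}}\abs{e^{-\Phi_1}-e^{-\Phi_2}}\rd\mu$ via \Cref{lemma_L1norm_difference_of_unnormalised_likelihoods_minus_abs_diff_normalisation_constants_misfit_perturbation}\ref{item3_lemma_L1norm_difference_of_unnormalised_likelihoods_minus_abs_diff_normalisation_constants_misfit_perturbation}; the factor $2$ there is precisely what you are missing.

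Two smaller points. First, the second term in your decomposition, $\bigl(\tfrac{1}{Z_{\Phi_1,\mu}}-\tfrac{1}{Z_{\Phi_2,\mu}}\bigr)e^{-\Phi_2}$, has $L^1_\mu$-norm $\abs{Z_{\Phi_2,\mu}-Z_{\Phi_1,\mu}}/Z_{\Phi_1,\mu}$, not $/Z_{\Phi_2,\mu}$ as you wrote. Second, your strictness argument (``the discarded complementary set contributes positively'') is not airtight as stated, since the sign of $\rho_1-\rho_2$ on $\{\Phi_1<\Phi_2\}$ is not determined; a cleaner source of strictness within your route is that $Z_{\Phi_1,\mu}>Z_{\Phi_2,\mu}$ makes the pointwise bound $\rho_2-\rho_1 \geq Z_{\Phi_2,\mu}^{-1}(e^{-\Phi_2}-e^{-\Phi_1})$ strict on $\{\Phi_1>\Phi_2\}$, which has positive $\mu$-measure by hypothesis. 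The paper instead obtains strictness from the $L^1$-level reverse triangle inequality, whose equality case is collinearity of the two functions, ruled out when $\Phi_1-\Phi_2$ changes sign on a positive-measure set.
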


\begin{remark}
\label{remark_L1_norm_unnormalised_likelihoods_and_absolute_difference_normalisation_constants}
\Cref{item_proposition_TV_bounds_misfit_perturbations_via_triangle_inequality_upper_bound} states that $\abs{Z_{\Phi_2,\mu}-Z_{\Phi_1,\mu}}\leq \Norm{\Phi_1-\Phi_2}_{L^1_\mu}$.
In general, one cannot bound $\abs{Z_{\Phi_1,\mu}-Z_{\Phi_2,\mu}}$ from below by $\norm{\Phi_1-\Phi_2}_{L^1_\mu}$.
If such a bound were possible, then it would imply that $\Phi\mapsto Z_{\Phi,\mu}$ is injective, thus contradicting \Cref{lemma_noninjectivity_of_dataMisfit_maps}\ref{item_lemma_noninjectivity_of_dataMisfit_maps_noninjective}.
One situation in which such a lower bound is possible is if $\Phi_i\in L^\infty_\mu(\Theta,\R)$ for $i=1,2$ and either $\mu(\Phi_1-\Phi_2\geq 0)=1$ or $\mu(\Phi_1-\Phi_2\leq 0)=1$.
Under these hypotheses,
\begin{equation*}
 \abs{Z_{\Phi_1,\mu}-Z_{\Phi_2,\mu}}= \norm{e^{-\Phi_1}-e^{-\Phi_2}}_{L^1_\mu}\geq \left[\exp(-\norm{\Phi_1}_{L^\infty_\mu})\wedge \exp(-\norm{\Phi_2}_{L^\infty_\mu})\right]\norm{\Phi_1-\Phi_2}_{L^1_\mu},
\end{equation*}
where the equation and inequality follow by \Cref{lemma_L1norm_difference_of_unnormalised_likelihoods_minus_abs_diff_normalisation_constants_misfit_perturbation}\ref{item2_lemma_L1norm_difference_of_unnormalised_likelihoods_minus_abs_diff_normalisation_constants_misfit_perturbation} and \eqref{eq_Lipschitz_continuity_exp_function} respectively.
\end{remark}

\paragraph{Comparison of approaches} 

The upper bounds on $d_{\TV}(\mu_{\Phi_1},\mu_{\Phi_2})$ obtained by Lipschitz continuity and by the triangle inequality are
 $\norm{-\Phi_1-\log Z_{\Phi_1,\mu}+\Phi_2+\log Z_{\Phi_2,\mu}}_{L^1_\mu}$ and $\norm{e^{-\Phi_1}-e^{-\Phi_2}}_{L^1_\mu}+\abs{Z_{\Phi_2,\mu}-Z_{\Phi_1,\mu}}$ respectively; see  \Cref{proposition_TV_bounds_misfit_perturbations_via_Lipschitz_continuity}\ref{proposition_TV_bounds_misfit_perturbations_via_Lipschitz_continuity_upper_bound}  and \Cref{proposition_TV_bounds_misfit_perturbations_via_triangle_inequality}\ref{item_proposition_TV_bounds_misfit_perturbations_via_triangle_inequality_upper_bound}.
  Both upper bounds hold if $\essinf_\mu\Phi_i\geq 0$ for $i=1,2$.
Equality can be attained in the upper bound obtained by Lipschitz continuity if $\Phi_1-\Phi_2$ is $\mu$-a.s. constant.
For the upper bound obtained by the triangle inequality, equality is attained in the more restrictive case where $\Phi_1=\Phi_2$ $\mu$-a.s.

Similarly, for the lower bounds on $d_{\TV}(\mu_{\Phi_1},\mu_{\Phi_2})$ obtained by Lipschitz continuity and the triangle inequality in  \Cref{proposition_TV_bounds_misfit_perturbations_via_Lipschitz_continuity} and \Cref{proposition_TV_bounds_misfit_perturbations_via_triangle_inequality} respectively, equality can be attained in the lower bound in \Cref{proposition_TV_bounds_misfit_perturbations_via_Lipschitz_continuity}, but not in the lower bound in \Cref{proposition_TV_bounds_misfit_perturbations_via_triangle_inequality}.
Indeed, the lower bound in \Cref{proposition_TV_bounds_misfit_perturbations_via_Lipschitz_continuity} is zero if and only if $\Phi_1-\Phi_2$ is $\mu$-a.s. constant, which is consistent with \Cref{lemma_noninjectivity_of_dataMisfit_maps}.
On the other hand, the lower bounds in \Cref{proposition_TV_bounds_misfit_perturbations_via_triangle_inequality} can be zero even if $\Phi_1-\Phi_2$ is not $\mu$-a.s. constant: for example, if $\Phi_1>\Phi_2$ $\mu$-a.s., then $Z_{\Phi_1,\mu}<Z_{\Phi_2,\mu}$ holds by \Cref{lemma_L1norm_difference_of_unnormalised_likelihoods_minus_abs_diff_normalisation_constants_misfit_perturbation}\ref{lemma_L1norm_difference_of_unnormalised_likelihoods_minus_abs_diff_normalisation_constants_prior_perturbation_item1}, and the lower bound on $d_{\TV}(\mu_{\Phi_1},\mu_{\Phi_2})$ in this case is 0, by \Cref{proposition_TV_bounds_misfit_perturbations_via_triangle_inequality}\ref{item_proposition_TV_bounds_misfit_perturbations_via_triangle_inequality_lower_bound_no_sign_change}.

The preceding observations indicate that for bounds on the total variation metric of posteriors resulting from perturbed misfits, the triangle inequality-based approach yields bounds that are not as sharp as the bounds obtained by Lipschitz continuity.

To conclude this section on upper and lower bounds in the total variation metric for misfit perturbations, we recall the motivations for the lower bounds given in \Cref{section_contents}. 
Since the lower bound in \Cref{proposition_TV_bounds_misfit_perturbations_via_Lipschitz_continuity}\ref{proposition_TV_bounds_misfit_perturbations_via_Lipschitz_continuity_lower_bound} involves division by the evidence terms $Z_{\Phi_1,\mu}$ or $Z_{\Phi_2,\mu}$, it follows that $d_{\TV}(\mu_{\Phi_1},\mu_{\Phi_2})$ is more sensitive to the perturbation $\Phi_1-\Phi_2$ as $Z_{\Phi_1,\mu}$ and $Z_{\Phi_2,\mu}$ approach zero from above. 
  This provides additional evidence for the increasing sensitivity that was deduced using upper bounds; see \cite[Remark 9]{Sprungk2020}.
 The inequalities in \Cref{proposition_TV_bounds_misfit_perturbations_via_Lipschitz_continuity}\ref{proposition_TV_bounds_misfit_perturbations_via_Lipschitz_continuity_upper_bound} and \ref{proposition_TV_bounds_misfit_perturbations_via_Lipschitz_continuity_lower_bound} give bounds of the form \eqref{eq_prototypical_upper_bound_misfit_perturbation} and \eqref{eq_prototypical_lower_bound_misfit_perturbation} respectively.
  In particular, \Cref{proposition_TV_bounds_misfit_perturbations_via_Lipschitz_continuity} implies that if $\Phi_i\in L^\infty_\mu(\Theta,\R)$ and $Z_{\Phi_1,\mu}=Z_{\Phi_2,\mu}$, then
 \begin{equation*}
  \frac{\exp(-\Norm{\Phi_1}_{L^\infty_\mu})\wedge \exp(-\Norm{\Phi_2}_{L^\infty_\mu})}{Z_{\Phi_1,\mu}}\Norm{\Phi_1-\Phi_2}_{L^1_\mu} \leq 2 d_{\TV}(\mu_{\Phi_1},\mu_{\Phi_2})\leq \frac{1}{Z_{\Phi_1,\mu}}\Norm{\Phi_1-\Phi_2}_{L^1_\mu}.
 \end{equation*}
Thus, for a suitable fixed prior $\mu$ and level $c>0$, the misfit-to-posterior map $\Phi\mapsto \mu_\Phi$ is locally bi-Lipschitz continuous on the evidence level set $\{\Phi\in L^\infty_\mu(\Theta,\R)\ :\ Z_{\Phi,\mu}=c\}$.

\subsection{Perturbations of the prior}
\label{sectionTVbounds_prior_perturbation}

The proofs of the results in this section are given in \Cref{section_TVbounds_Priorperturbations_proofs}.

To prove bounds on the total variation metric between the posteriors, we shall use the triangle inequality, in an approach similar to that used to prove \Cref{proposition_TV_bounds_misfit_perturbations_via_triangle_inequality}.
The upper bound in  \Cref{proposition_TV_bounds_prior_perturbation_via_triangle_inequality}\ref{item_proposition_TV_bounds_prior_perturbation_via_triangle_inequality_upper_bound} implies the upper bound stated in \cite[Theorem 8]{Sprungk2020} on the total variation metric between the posteriors.
The lower bound is new, to the best of our knowledge.
While the upper bound is invariant under permutation of $\mu_1$ and $\mu_2$, the lower bound is in general not invariant under this permutation.
\begin{restatable}{proposition}{TVboundsPriorperturbationTriangle}
 \label{proposition_TV_bounds_prior_perturbation_via_triangle_inequality}
 Let $\mu_i\in\mathcal{P}(\Theta)$ and let $\Phi\in L^0(\Theta,\R)$ be such that $(\mu_i)_\Phi\in\mathcal{P}(\Theta)$ for $i=1,2$. 
 \begin{enumerate}
  \item \label{item_proposition_TV_bounds_prior_perturbation_via_triangle_inequality_upper_bound} If $\Phi\in L^0(\Theta,\R_{\geq 0})$ is such that $\essinf_{\mu_i}\Phi=0$ for some $i\in\{1,2\}$, then 
  \begin{equation*}
   d_{\TV}((\mu_1)_\Phi,(\mu_2)_\Phi)\leq \frac{1}{Z_{\Phi,\mu_1}\vee Z_{\Phi,\mu_2}}\left(d_{\TV}(\mu_1,\mu_2)
+\frac{\abs{Z_{\Phi,\mu_2}-Z_{\Phi,\mu_1}}}{2} \right),
  \end{equation*}
   and $\abs{Z_{\Phi,\mu_2}-Z_{\Phi,\mu_1}}\leq 2d_{\TV}(\mu_1,\mu_2)$.
  \item \label{item_proposition_TV_bounds_prior_perturbation_via_triangle_inequality_lower_bound}
 If $\Phi\in L^\infty_{\mu_i}(\Theta,\R)$ for $i=1,2$, then
 \begin{equation*}
  d_{\TV}((\mu_1)_\Phi,(\mu_2)_\Phi)\geq  \frac{\exp(-\norm{\Phi}_{L^\infty_{\mu_1}})\wedge \exp(-\norm{\Phi}_{L^\infty_{\mu_2}})}{Z_{\Phi,\mu_1}} \Abs{d_{\TV}(\mu_1,\mu_2)- \Abs{ \frac{Z_{\Phi,\mu_2}-Z_{\Phi,\mu_1}}{2Z_{\Phi,\mu_2}} }}.
 \end{equation*}  
 \end{enumerate}
 For both the upper bound and lower bound, equality holds if and only if $d_{\TV}(\mu_1,\mu_2)=0$.
\end{restatable}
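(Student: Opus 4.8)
The plan is to reduce both estimates to a single algebraic decomposition of the posterior densities, and then to apply the triangle inequality for the upper bound and the reverse triangle inequality for the lower bound. Since $\mu_1,\mu_2$ are probability measures, they are both absolutely continuous with respect to the dominating measure $\nu$ (one may always take $\nu=\tfrac{1}{2}(\mu_1+\mu_2)$); write $p_i\coloneqq\tfrac{\rd\mu_i}{\rd\nu}$ and, as shorthand, $Z_i\coloneqq Z_{\Phi,\mu_i}$, which lies in $\R_{>0}$ because $(\mu_i)_\Phi\in\mathcal{P}(\Theta)$. Then $\int p_i\,\rd\nu=1$ and $\int e^{-\Phi}p_i\,\rd\nu=Z_i$, and by \eqref{eq_posterior_function} and \eqref{eq_total_variation_metric},
\[
2\,d_{\TV}\big((\mu_1)_\Phi,(\mu_2)_\Phi\big)=\int e^{-\Phi}\,\Abs{\frac{p_1}{Z_1}-\frac{p_2}{Z_2}}\,\rd\nu ,\qquad \int\abs{p_1-p_2}\,\rd\nu=2\,d_{\TV}(\mu_1,\mu_2).
\]
Throughout I would use the ``pivot on $\mu_2$'' identity $\tfrac{p_1}{Z_1}-\tfrac{p_2}{Z_2}=\tfrac{1}{Z_1}(p_1-p_2)+\big(\tfrac{1}{Z_1}-\tfrac{1}{Z_2}\big)p_2$; pivoting on $\mu_1$ instead interchanges the two indices, and it is precisely this choice that produces the non-invariance of the lower bound under permutation of $\mu_1,\mu_2$.

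For the upper bound, the nonnegativity of $\Phi$ gives $e^{-\Phi}\le 1$, so $\abs{Z_2-Z_1}=\big|\int e^{-\Phi}(p_1-p_2)\,\rd\nu\big|\le\int\abs{p_1-p_2}\,\rd\nu=2\,d_{\TV}(\mu_1,\mu_2)$, which is the auxiliary inequality. For the main bound I would pivot on whichever prior has the larger evidence, say $\mu_2$ (so that $Z_2\le Z_1=Z_1\vee Z_2$; the opposite case follows by pivoting on $\mu_1$ instead), apply the triangle inequality to the decomposition, then bound $e^{-\Phi}\le1$ in the first term and use $\int e^{-\Phi}p_2\,\rd\nu=Z_2$ exactly in the second. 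This yields $2\,d_{\TV}((\mu_1)_\Phi,(\mu_2)_\Phi)\le Z_1^{-1}\big(2\,d_{\TV}(\mu_1,\mu_2)+\abs{Z_2-Z_1}\big)$, i.e.\ exactly the claim.

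For the lower bound, put $c\coloneqq\exp(-\norm{\Phi}_{L^\infty_{\mu_1}})\wedge\exp(-\norm{\Phi}_{L^\infty_{\mu_2}})$, which is positive since $\Phi\in L^\infty_{\mu_i}$. The first step is to note that $e^{-\Phi}\ge c$ holds $\nu$-a.e.\ on $\{p_1>0\}\cup\{p_2>0\}$: indeed $\mu_i(\{\Phi>\norm{\Phi}_{L^\infty_{\mu_i}}\})=0$ together with $\mu_i\ll\nu$ forces $p_i=0$ $\nu$-a.e.\ on $\{\Phi>\norm{\Phi}_{L^\infty_{\mu_i}}\}$, while off $\{p_1>0\}\cup\{p_2>0\}$ the integrand vanishes; \Cref{lemma_esssup_essinf_different_measures_different_functions} can be used here. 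Hence $2\,d_{\TV}((\mu_1)_\Phi,(\mu_2)_\Phi)\ge c\int\abs{\tfrac{p_1}{Z_1}-\tfrac{p_2}{Z_2}}\,\rd\nu$. Applying the reverse triangle inequality $\int\abs{f+h}\,\rd\nu\ge\big|\int\abs{f}\,\rd\nu-\int\abs{h}\,\rd\nu\big|$ to the pivot-on-$\mu_2$ decomposition, with $\int p_2\,\rd\nu=1$ so that the second term contributes $\abs{Z_1^{-1}-Z_2^{-1}}=\abs{Z_2-Z_1}/(Z_1Z_2)$, gives $2\,d_{\TV}((\mu_1)_\Phi,(\mu_2)_\Phi)\ge \tfrac{c}{Z_1}\big|2\,d_{\TV}(\mu_1,\mu_2)-\abs{Z_2-Z_1}/Z_2\big|$, which is the stated inequality. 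I expect this to be the main obstacle: one must extract the constant $c$ — which mixes essential suprema with respect to two different measures, so absolute continuity has to be invoked carefully — \emph{before} applying the reverse triangle inequality, and one must pivot on $\mu_2$ (not $\mu_1$) so that the outer $Z_1$ and the inner $Z_2$ appear as in the statement.

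Finally, the equality characterisation: if $d_{\TV}(\mu_1,\mu_2)=0$ then $\mu_1=\mu_2$, hence $Z_1=Z_2$ and $(\mu_1)_\Phi=(\mu_2)_\Phi$, so both sides of both inequalities are zero. For the converse, one inspects when the (reverse) triangle inequality used above is an equality; this, together with $\int p_1\,\rd\nu=\int p_2\,\rd\nu=1$, forces $p_1=p_2$ $\nu$-a.e., i.e.\ $d_{\TV}(\mu_1,\mu_2)=0$.
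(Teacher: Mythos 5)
Your proposal is correct and follows essentially the same route as the paper: both use the pivot decomposition $\tfrac{p_1}{Z_1}-\tfrac{p_2}{Z_2}=\tfrac{1}{Z_1}(p_1-p_2)+\bigl(\tfrac{1}{Z_1}-\tfrac{1}{Z_2}\bigr)p_2$ with $\nu$ a convex combination of $\mu_1,\mu_2$, apply the triangle inequality after bounding $e^{-\Phi}\leq 1$ for the upper bound, and extract the constant $c=\exp(-\norm{\Phi}_{L^\infty_{\mu_1}})\wedge\exp(-\norm{\Phi}_{L^\infty_{\mu_2}})$ before applying the reverse triangle inequality for the lower bound; the paper handles the constant via $\norm{\Phi}_{L^\infty_\nu}=\norm{\Phi}_{L^\infty_{\mu_1}}\vee\norm{\Phi}_{L^\infty_{\mu_2}}$ (from \Cref{lemma_esssup_essinf_different_measures_different_functions}) while you argue the pointwise bound on $\{p_1>0\}\cup\{p_2>0\}$ directly, which is equivalent. (One small verbal slip: in the upper-bound step you write ``pivot on whichever prior has the larger evidence, say $\mu_2$'' but then assume $Z_2\leq Z_1$; the paper simply proves the bound with either denominator and takes the better one, which is cleaner.)
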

\begin{remark}
\label{remark_TV_bounds_noninjectivity_prior_based_maps}
\Cref{item_proposition_TV_bounds_prior_perturbation_via_triangle_inequality_upper_bound} states that $\abs{Z_{\Phi,\mu_2}-Z_{\Phi,\mu_1}}\leq 2d_{\TV}(\mu_1,\mu_2)$. In general, one cannot bound $\abs{Z_{\Phi,\mu_2}-Z_{\Phi,\mu_1}}$ from below by $d_{\TV}(\mu_1,\mu_2)$. If such a bound were possible, then it would imply that $\mu\mapsto Z_{\Phi,\mu}$ is injective, which contradicts \Cref{lemma_noninjectivity_of_prior_maps}\ref{item_lemma_noninjectivity_of_prior_maps_noninjective} in the case where the domain of this map is taken to be $D(\nu)$ for some common dominating measure $\nu$.
\Cref{item_proposition_TV_bounds_prior_perturbation_via_triangle_inequality_lower_bound} implies that if $\Phi\in L^\infty_{\mu_i}(\Theta,\R)$, then $(\mu_1)_\Phi=(\mu_2)_\Phi$ implies $d_{\TV}(\mu_1,\mu_2)=\Abs{ \tfrac{Z_{\Phi,\mu_2}-Z_{\Phi,\mu_1}}{2Z_{\Phi,\mu_2}}}$. The statement $d_{\TV}(\mu_1,\mu_2)=\Abs{ \tfrac{Z_{\Phi,\mu_2}-Z_{\Phi,\mu_1}}{2Z_{\Phi,\mu_2}}}$ is weaker than the statement $\mu_1=\mu_2$, which was shown in \Cref{lemma_noninjectivity_of_prior_maps}\ref{item_lemma_noninjectivity_of_prior_maps_equivalent_condition_for_agreement} to be equivalent to $(\mu_1)_\Phi=(\mu_2)_\Phi$.
\end{remark}

It is possible to obtain bounds on $d_{\TV}((\mu_1)_\Phi,(\mu_2)_\Phi)$ using the local Lipschitz continuity of the exponential function.
\begin{restatable}{proposition}{TVboundsPriorperturbationLipschitz}
Let $\Phi\in L^0(\Theta,\R)$ and $\mu_i\in\mathcal{P}(\Theta)$, $i=1,2$, be such that $(\mu_i)_\Phi\in\mathcal{P}(\Theta)$ and $c\leq \tfrac{\rd\mu_2}{\rd\mu_1}\leq C$, $\mu_1$-a.s. for some $0<c<C<\infty$.
 \label{proposition_TV_bounds_prior_perturbations_via_Lipschitz_continuity}
 \begin{enumerate}
  \item \label{proposition_TV_bounds_prior_perturbations_via_Lipschitz_continuity_upper_bound}
   If in addition $\essinf_{\mu_1}\Phi=0$, then
  \begin{equation*}
  d_{\TV}((\mu_1)_\Phi,(\mu_2)_\Phi)\leq \frac{1}{2}\frac{1}{Z_{\Phi,\mu_1}\wedge Z_{\Phi,\mu_2}}\Norm{\frac{\rd\mu_2}{\rd \mu_1}}_{L^\infty_{\mu_1}} \Norm{\log \frac{Z_{\Phi,\mu_2}}{Z_{\Phi,\mu_1}}-\log\frac{\rd \mu_2}{\rd \mu_1} }_{L^1_{\mu_1}},
  \end{equation*}
  and $\Abs{\log \tfrac{Z_{\Phi,\mu_2}}{Z_{\Phi,\mu_1}}}\leq 2(Z_{\Phi,\mu_1}\wedge Z_{\Phi,\mu_2})^{-1}d_{\TV}(\mu_1,\mu_2)$.
  \item \label{proposition_TV_bounds_prior_perturbations_via_Lipschitz_continuity_lower_bound}
  If in addition $\Phi\in L^\infty_{\mu_1}$, then
  \begin{equation*}
   d_{\TV}((\mu_1)_\Phi,(\mu_2)_\Phi)\geq\frac{1}{2}\frac{\exp(-\norm{\Phi}_{L^\infty_{\mu_1}}) }{Z_{\Phi,\mu_1}\vee Z_{\Phi,\mu_2}}\Norm{\left(\frac{\rd\mu_2}{\rd \mu_1}\right)^{-1}}_{L^\infty_{\mu_1}}\Norm{\log \frac{Z_{\Phi,\mu_2}}{Z_{\Phi,\mu_1}}-\log\frac{\rd \mu_2}{\rd \mu_1} }_{L^1_{\mu_1}}.
  \end{equation*}
 \end{enumerate}
\end{restatable}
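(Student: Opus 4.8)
The plan is to reduce both bounds to a pointwise application of the two-sided estimate \eqref{eq_Lipschitz_continuity_exp_function} for the exponential function. Since $c\le\tfrac{\rd\mu_2}{\rd\mu_1}\le C$ with $c>0$, the measures $\mu_1$ and $\mu_2$ are mutually absolutely continuous, so $(\mu_1)_\Phi$ and $(\mu_2)_\Phi$ are both absolutely continuous with respect to $\mu_1$, the density $\tfrac{\rd\mu_2}{\rd\mu_1}$ is $\mu_1$-a.s.\ strictly positive, and $\bigl(\tfrac{\rd\mu_2}{\rd\mu_1}\bigr)^{-1}\in L^\infty_{\mu_1}$. By \eqref{eq_posterior_function} and the chain rule,
\[
 \frac{\rd(\mu_1)_\Phi}{\rd\mu_1}=\frac{e^{-\Phi}}{Z_{\Phi,\mu_1}}=e^{x},\qquad \frac{\rd(\mu_2)_\Phi}{\rd\mu_1}=\frac{e^{-\Phi}}{Z_{\Phi,\mu_2}}\,\frac{\rd\mu_2}{\rd\mu_1}=e^{y},
\]
where $x\coloneqq-\Phi-\log Z_{\Phi,\mu_1}$ and $y\coloneqq-\Phi-\log Z_{\Phi,\mu_2}+\log\tfrac{\rd\mu_2}{\rd\mu_1}$ satisfy $x-y=\log\tfrac{Z_{\Phi,\mu_2}}{Z_{\Phi,\mu_1}}-\log\tfrac{\rd\mu_2}{\rd\mu_1}$. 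Hence, using the density representation of $d_\TV$ in \eqref{eq_total_variation_metric} with reference measure $\mu_1$, $d_\TV((\mu_1)_\Phi,(\mu_2)_\Phi)=\tfrac12\int\abs{e^{x}-e^{y}}\rd\mu_1$.

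For the upper bound I would apply the right-hand inequality in \eqref{eq_Lipschitz_continuity_exp_function} pointwise, so that the integrand is at most $(e^{x}\vee e^{y})\Abs{\log\tfrac{Z_{\Phi,\mu_2}}{Z_{\Phi,\mu_1}}-\log\tfrac{\rd\mu_2}{\rd\mu_1}}$. Using $\essinf_{\mu_1}\Phi=0$, so that $e^{-\Phi}\le1$ $\mu_1$-a.s., together with $\tfrac{\rd\mu_2}{\rd\mu_1}\le\Norm{\tfrac{\rd\mu_2}{\rd\mu_1}}_{L^\infty_{\mu_1}}$ and the elementary observation that $\Norm{\tfrac{\rd\mu_2}{\rd\mu_1}}_{L^\infty_{\mu_1}}\ge1$ (its $\mu_1$-integral equals $1$), one obtains the $\mu_1$-a.s.\ bound $e^{x}\vee e^{y}\le(Z_{\Phi,\mu_1}\wedge Z_{\Phi,\mu_2})^{-1}\Norm{\tfrac{\rd\mu_2}{\rd\mu_1}}_{L^\infty_{\mu_1}}$; pulling this constant out before integrating gives the claimed upper bound. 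For the lower bound I would instead use the left-hand inequality in \eqref{eq_Lipschitz_continuity_exp_function}, and bound $e^{-\Phi}\ge e^{-\Norm{\Phi}_{L^\infty_{\mu_1}}}$ and $\tfrac{\rd\mu_2}{\rd\mu_1}\ge\essinf_{\mu_1}\tfrac{\rd\mu_2}{\rd\mu_1}$ from below ($\mu_1$-a.s.), yielding a uniform lower bound on $e^{x}\wedge e^{y}$ in terms of $Z_{\Phi,\mu_1}\vee Z_{\Phi,\mu_2}$ and the essential infimum of the density ratio; integrating then gives the claimed lower bound. The auxiliary estimate $\Abs{\log\tfrac{Z_{\Phi,\mu_2}}{Z_{\Phi,\mu_1}}}\le2(Z_{\Phi,\mu_1}\wedge Z_{\Phi,\mu_2})^{-1}d_\TV(\mu_1,\mu_2)$ follows by writing $Z_{\Phi,\mu_1}-Z_{\Phi,\mu_2}=\int e^{-\Phi}\bigl(1-\tfrac{\rd\mu_2}{\rd\mu_1}\bigr)\rd\mu_1$, using $0\le e^{-\Phi}\le1$ $\mu_1$-a.s.\ and \eqref{eq_total_variation_metric} to get $\abs{Z_{\Phi,\mu_1}-Z_{\Phi,\mu_2}}\le\int\abs{1-\tfrac{\rd\mu_2}{\rd\mu_1}}\rd\mu_1=2d_\TV(\mu_1,\mu_2)$, and then applying \eqref{eq_Lipschitz_continuity_log_function} with $\{s,t\}=\{Z_{\Phi,\mu_1},Z_{\Phi,\mu_2}\}$.

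I expect the only genuine difficulty to be bookkeeping: tracking which of $Z_{\Phi,\mu_1}\wedge Z_{\Phi,\mu_2}$ and $Z_{\Phi,\mu_1}\vee Z_{\Phi,\mu_2}$ appears in the upper and the lower bound, and correctly passing between essential infima and suprema of the density ratios through identities such as $\essinf_{\mu_1}\tfrac{\rd\mu_2}{\rd\mu_1}=\Norm{(\tfrac{\rd\mu_2}{\rd\mu_1})^{-1}}_{L^\infty_{\mu_1}}^{-1}$, so that the sandwich hypothesis $c\le\tfrac{\rd\mu_2}{\rd\mu_1}\le C$ is used through the appropriate $L^\infty$-norms. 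Since $(\mu_i)_\Phi\in\mathcal{P}(\Theta)$, the normalisation constants $Z_{\Phi,\mu_1},Z_{\Phi,\mu_2}$ lie in $(0,\infty)$, so all logarithms, quotients and the pointwise values $e^{x},e^{y}$ are well defined, and no subtler obstacle arises.
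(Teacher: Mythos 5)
Your approach is the paper's approach: take $\mu_1$ as reference measure, write $d_{\TV}((\mu_1)_\Phi,(\mu_2)_\Phi)=\tfrac12\int\abs{e^{x}-e^{y}}\,\rd\mu_1$ with $x=\log\tfrac{\rd(\mu_1)_\Phi}{\rd\mu_1}$ and $y=\log\tfrac{\rd(\mu_2)_\Phi}{\rd\mu_1}$, apply the two-sided estimate \eqref{eq_Lipschitz_continuity_exp_function} pointwise, and then bound the prefactor $e^{x}\vee e^{y}$ or $e^{x}\wedge e^{y}$ uniformly. For the upper bound (using $\essinf_{\mu_1}\Phi=0$, $\tfrac{\rd\mu_2}{\rd\mu_1}\le\Norm{\tfrac{\rd\mu_2}{\rd\mu_1}}_{L^\infty_{\mu_1}}$, and $\Norm{\tfrac{\rd\mu_2}{\rd\mu_1}}_{L^\infty_{\mu_1}}\ge 1$) and for the auxiliary estimate on $\abs{\log\tfrac{Z_{\Phi,\mu_2}}{Z_{\Phi,\mu_1}}}$ (via $\abs{Z_{\Phi,\mu_1}-Z_{\Phi,\mu_2}}\le 2d_{\TV}(\mu_1,\mu_2)$ and \eqref{eq_Lipschitz_continuity_log_function}), your reasoning reproduces the paper's proof.

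For the lower bound, however, your own bookkeeping identity $\essinf_{\mu_1}\tfrac{\rd\mu_2}{\rd\mu_1}=\Norm{(\tfrac{\rd\mu_2}{\rd\mu_1})^{-1}}_{L^\infty_{\mu_1}}^{-1}$ does \emph{not} deliver the stated inequality. Replacing $e^{-\Phi}$ by $\exp(-\norm{\Phi}_{L^\infty_{\mu_1}})$ and $\tfrac{\rd\mu_2}{\rd\mu_1}$ by $\essinf_{\mu_1}\tfrac{\rd\mu_2}{\rd\mu_1}$ in $e^{x}\wedge e^{y}$ and then using that $\essinf_{\mu_1}\tfrac{\rd\mu_2}{\rd\mu_1}\le 1$ gives
\begin{equation*}
 d_{\TV}((\mu_1)_\Phi,(\mu_2)_\Phi)\geq \frac12\frac{\exp(-\norm{\Phi}_{L^\infty_{\mu_1}})}{Z_{\Phi,\mu_1}\vee Z_{\Phi,\mu_2}}\,\Norm{\left(\frac{\rd\mu_2}{\rd\mu_1}\right)^{-1}}_{L^\infty_{\mu_1}}^{-1}\Norm{\log\frac{Z_{\Phi,\mu_2}}{Z_{\Phi,\mu_1}}-\log\frac{\rd\mu_2}{\rd\mu_1}}_{L^1_{\mu_1}},
\end{equation*}
i.e.\ the $L^\infty$-norm of $(\tfrac{\rd\mu_2}{\rd\mu_1})^{-1}$ enters as a \emph{divisor}, whereas the proposition has it as a multiplier. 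Since $\int\tfrac{\rd\mu_2}{\rd\mu_1}\,\rd\mu_1=1$ forces $\essinf_{\mu_1}\tfrac{\rd\mu_2}{\rd\mu_1}\le 1$ and hence $\Norm{(\tfrac{\rd\mu_2}{\rd\mu_1})^{-1}}_{L^\infty_{\mu_1}}\ge 1$ (with equality iff $\mu_1=\mu_2$), the stated bound is strictly stronger than what this pointwise argument yields; your closing sentence ``integrating then gives the claimed lower bound'' therefore does not hold. For what it is worth, the paper's own proof of this item has the same slip: it asserts $\norm{\tfrac{\rd\mu_1}{\rd\mu_2}}_{L^\infty_{\mu_1}}\le 1$, but in fact $\norm{\tfrac{\rd\mu_1}{\rd\mu_2}}_{L^\infty_{\mu_1}}=(\essinf_{\mu_1}\tfrac{\rd\mu_2}{\rd\mu_1})^{-1}\ge 1$. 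The bound your argument actually supports has the constant $\essinf_{\mu_1}\tfrac{\rd\mu_2}{\rd\mu_1}$ where the proposition writes $\Norm{(\tfrac{\rd\mu_2}{\rd\mu_1})^{-1}}_{L^\infty_{\mu_1}}$; you should state that explicitly rather than asserting agreement with the displayed formula.
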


\paragraph{Comparison of approaches} The bounds obtained by the Lipschitz continuity approach in  \Cref{proposition_TV_bounds_prior_perturbations_via_Lipschitz_continuity} hold under the hypotheses that the Radon--Nikodym derivative $\tfrac{\rd\mu_2}{\rd\mu_1}$ is $\mu_1$-a.s. bounded away from zero and bounded from above, which implies that $\mu_1\sim\mu_2$.  
For the bounds in \Cref{proposition_TV_bounds_prior_perturbation_via_triangle_inequality} that are obtained by the triangle inequality approach, neither of these hypotheses are required.
Furthermore, the bounds in \Cref{proposition_TV_bounds_prior_perturbations_via_Lipschitz_continuity} that are obtained by the Lipschitz continuity approach for prior perturbation do not involve $d_{\TV}(\mu_1,\mu_2)$, and hence are not of the form \eqref{eq_prototypical_lower_bound_prior_perturbation} and \eqref{eq_prototypical_upper_bound_prior_perturbation}.

We conclude this section on upper and lower bounds in the total variation metric for prior perturbations by recalling the motivations for the lower bounds given in \Cref{section_contents}.  
Since the lower bound in \Cref{proposition_TV_bounds_prior_perturbation_via_triangle_inequality}\ref{proposition_TV_bounds_prior_perturbations_via_Lipschitz_continuity_lower_bound} involves division by the evidence terms $Z_{\Phi,\mu_1}$ or $Z_{\Phi,\mu_2}$, it follows that $d_{\TV}((\mu_1)_{\Phi},(\mu_2)_{\Phi})$ is more sensitive to the perturbation quantified by $d_{\TV}(\mu_1,\mu_2)$ as $Z_{\Phi,\mu_1}$ or $Z_{\Phi,\mu_2}$ approach zero from above. 
  This provides additional evidence for the increasing sensitivity that was deduced using upper bounds; see \cite[Remark 9]{Sprungk2020}.
 \Cref{proposition_TV_bounds_prior_perturbation_via_triangle_inequality} also gives bounds of the form \eqref{eq_prototypical_upper_bound_prior_perturbation} and \eqref{eq_prototypical_lower_bound_prior_perturbation} respectively.
  In particular, these bounds imply that if $\Phi\in L^\infty_{\mu_i}(\Theta,\R)$ for $i=1,2$ and $Z_{\Phi,\mu_1}=Z_{\Phi,\mu_2}$, then
 \begin{equation*}
  \frac{\exp(-\Norm{\Phi}_{L^\infty_{\mu_1}})\wedge \exp(-\Norm{\Phi}_{L^\infty_{\mu_2}})}{Z_{\Phi,\mu_1}}d_{\TV}(\mu_1,\mu_2) \leq d_{\TV}((\mu_1)_{\Phi},(\mu_2)_{\Phi})\leq \frac{1}{Z_{\Phi,\mu_1}}d_{\TV}(\mu_1,\mu_2).
 \end{equation*}
Thus, for a suitable fixed misfit $\Phi$ and level $c>0$, the prior-to-posterior map $\mu\mapsto \mu_\Phi$ is locally bi-Lipschitz continuous on the evidence level set $\{\mu\in\mathcal{P}(\Theta)\ :\ Z_{\Phi,\mu}=c\}$.

\section{Bounds in the Hellinger metric}
\label{sectionHellingerbounds}

Let $\nu,\mu_i\in\mathcal{P}(\Theta)$ be such that $\mu_i\ll\nu$ for $i=1,2$.
The Hellinger metric is defined by
\begin{equation}
 d_\Hel(\mu_1,\mu_2)\coloneqq  \left( \int \left(\sqrt{\frac{\rd\mu_1}{\rd\nu}}-\sqrt{\frac{\rd\mu_2}{\rd\nu}}\right)^2\rd \nu\right)^{1/2}.
 \label{eq_Hellinger_metric}
\end{equation}
 The value of $d_\Hel(\mu_1,\mu_2)$ does not depend on the choice of $\nu$, and $d_{\Hel}(\mu_1,\mu_2)$ as defined above takes values in $[0,\sqrt{2}]$.
 The maximal value of $\sqrt{2}$ is attained if and only if the densities $\tfrac{\rd\mu_1}{\rd\nu}$ and $\tfrac{\rd\mu_2}{\rd\nu}$ have disjoint regions of positivity, i.e. $\mu_1$ and $\mu_2$ are singular; see e.g. \cite[Section B.1]{GhosalvanderVaart2017}.
 By the inequalities 
 \begin{equation*}
  d_{\Hel}^2(\mu_1,\mu_2)\leq d_{\TV}(\mu_1,\mu_2)\leq 2d_{\Hel}(\mu_1,\mu_2),
 \end{equation*}
see e.g. \cite[Lemma B.1]{GhosalvanderVaart2017}, it follows that values of $d_{\Hel}(\mu_1,\mu_2)$ between 0 and $\sqrt{2}$ do not in general imply that either $\mu_1\ll\mu_2$ or $\mu_2\ll\mu_1$.
For example, if $d_{\TV}(\mu_1,\mu_2)<1$, then by the inequality $d_{\Hel}^2(\mu_1,\mu_2)\leq d_{\TV}(\mu_1,\mu_2)$ above, $d_{\Hel}(\mu_1,\mu_2)<1$ as well.
However, by \Cref{example_translated_uniform_measure}, $d_{\TV}(\mu_1,\mu_2)<1$ does not imply either that $\mu_1\ll\mu_2$ or $\mu_2\ll \mu_1$.

\subsection{Perturbations of the misfit}
\label{sectionHellingerbounds_misfit_perturbation}

The proofs of the statements in this section are given in \Cref{section_Hellingerbounds_Misfitperturbations_proofs}.

We first state upper and lower bounds on the Hellinger metric between posteriors resulting from different misfits using the Lipschitz continuity approach. 
\begin{restatable}{proposition}{HellingerboundsMisfitperturbationsLipschitz}
 \label{proposition_Hellinger_bounds_misfit_perturbations_via_Lipschitz_continuity}
 Let $\mu\in\mathcal{P}(\Theta)$ and $\Phi_i\in L^2_\mu(\Theta,\R)$ be such that $\mu_{\Phi_i}\in\mathcal{P}(\Theta)$ for $i=1,2$. 
 \begin{enumerate}
  \item \label{proposition_Hellinger_bounds_misfit_perturbations_via_Lipschitz_continuity_upper_bound}
  If in addition $\essinf_\mu\Phi_i\geq 0$ for $i=1,2$, then
  \begin{equation*}
  d_{\Hel}(\mu_{\Phi_1},\mu_{\Phi_2})\leq \frac{1}{2}\frac{1}{Z_{\Phi_1,\mu}^{1/2} \wedge Z_{\Phi_2,\mu}^{1/2}}\Norm{\Phi_1-\Phi_2+\log \frac{Z_{\Phi_2,\mu}}{Z_{\Phi_1,\mu}}}_{L^2_\mu}.
 \end{equation*}
 
  \item 
  \label{proposition_Hellinger_bounds_misfit_perturbations_via_Lipschitz_continuity_lower_bound}
  If in addition $\Phi_i\in L^\infty_\mu(\Theta,\R_{\geq 0})$ for $i=1,2$, then
  \begin{equation*}
  d_{\Hel}(\mu_{\Phi_1},\mu_{\Phi_2})\geq \frac{1}{2}\left(\frac{\exp(-\tfrac{1}{2}\norm{\Phi_1}_{L^\infty_\mu})}{Z_{\Phi_1,\mu}^{1/2}}\wedge \frac{\exp(-\tfrac{1}{2}\norm{\Phi_2}_{L^\infty_\mu})}{Z_{\Phi_2,\mu}^{1/2}}\right)\Norm{\Phi_1-\Phi_2+\log \frac{Z_{\Phi_2,\mu}}{Z_{\Phi_1,\mu}}}_{L^2_\mu}.
 \end{equation*}
 \end{enumerate}
 For both the upper and lower bounds, equality holds if and only if $\Phi_1-\Phi_2$ is $\mu$-a.s. constant.
\end{restatable}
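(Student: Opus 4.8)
The plan is to mirror the proof of \Cref{proposition_TV_bounds_misfit_perturbations_via_Lipschitz_continuity}, with $L^1_\mu$ replaced by $L^2_\mu$ and densities replaced by their square roots. Since $\mu_{\Phi_i}\ll\mu$ with $\tfrac{\rd\mu_{\Phi_i}}{\rd\mu}=\ell_{\Phi_i,\mu}$, taking $\nu=\mu$ in \eqref{eq_Hellinger_metric} gives $d_\Hel(\mu_{\Phi_1},\mu_{\Phi_2})^2=\int(\sqrt{\ell_{\Phi_1,\mu}}-\sqrt{\ell_{\Phi_2,\mu}})^2\rd\mu$. By \eqref{eq_likelihood_function}, $\sqrt{\ell_{\Phi_i,\mu}}=\exp(a_i)$ with $a_i\coloneqq-\tfrac12(\Phi_i+\log Z_{\Phi_i,\mu})$, so $2(a_1-a_2)=\log\ell_{\Phi_1,\mu}-\log\ell_{\Phi_2,\mu}$ and $2\abs{a_1-a_2}=\abs{\Phi_1-\Phi_2+\log(Z_{\Phi_1,\mu}/Z_{\Phi_2,\mu})}$ $\mu$-a.s., i.e. $2\abs{a_1-a_2}$ is $\mu$-a.s. the quantity whose squared $L^2_\mu$-norm appears on the right-hand side of both claimed bounds. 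Because $\mu\in\mathcal{P}(\Theta)$, the constants $\log Z_{\Phi_i,\mu}$ belong to $L^2_\mu$, and since $\Phi_i\in L^2_\mu(\Theta,\R)$ we get $a_1-a_2\in L^2_\mu$; in particular the right-hand sides of both inequalities are finite.

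Next I would apply the pointwise two-sided Lipschitz estimate \eqref{eq_Lipschitz_continuity_exp_function} for $\exp$ with $x=a_1(\theta)$, $y=a_2(\theta)$, square, and integrate against $\mu$. For part~\ref{proposition_Hellinger_bounds_misfit_perturbations_via_Lipschitz_continuity_upper_bound}, the right inequality in \eqref{eq_Lipschitz_continuity_exp_function} gives $\abs{\sqrt{\ell_{\Phi_1,\mu}}-\sqrt{\ell_{\Phi_2,\mu}}}\le(\exp(a_1)\vee\exp(a_2))\abs{a_1-a_2}$; the hypothesis $\essinf_\mu\Phi_i\ge 0$ forces $\exp(a_i)=\exp(-\Phi_i/2)Z_{\Phi_i,\mu}^{-1/2}\le Z_{\Phi_i,\mu}^{-1/2}$ $\mu$-a.s., so the prefactor is dominated by $(Z_{\Phi_1,\mu}^{1/2}\wedge Z_{\Phi_2,\mu}^{1/2})^{-1}$; squaring, integrating, and taking square roots gives the claim, the factor $\tfrac12$ coming from the $-\tfrac12$ in $a_i$. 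For part~\ref{proposition_Hellinger_bounds_misfit_perturbations_via_Lipschitz_continuity_lower_bound} I would instead use the left inequality $\abs{\sqrt{\ell_{\Phi_1,\mu}}-\sqrt{\ell_{\Phi_2,\mu}}}\ge(\exp(a_1)\wedge\exp(a_2))\abs{a_1-a_2}$ and bound $\exp(a_i)\ge\exp(-\tfrac12\norm{\Phi_i}_{L^\infty_\mu})Z_{\Phi_i,\mu}^{-1/2}$ $\mu$-a.s. using $\Phi_i\in L^\infty_\mu(\Theta,\R_{\ge0})$; this time the lower bound on the prefactor is a constant, which I pull out of the integral before taking square roots.

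For the equality statement, observe that in each of the two inequality chains the only step where pointwise equality can be lost is the application of \eqref{eq_Lipschitz_continuity_exp_function}, and equality there at $\theta$ holds iff $a_1(\theta)=a_2(\theta)$; hence equality in the integrated bound forces $a_1=a_2$ $\mu$-a.s., which makes both sides vanish. Now $a_1=a_2$ $\mu$-a.s. is equivalent to $\Phi_1-\Phi_2=\log Z_{\Phi_2,\mu}-\log Z_{\Phi_1,\mu}$ $\mu$-a.s., which by \Cref{lemma_noninjectivity_of_dataMisfit_maps}\ref{item_lemma_noninjectivity_of_dataMisfit_maps_equivalent_condition_for_agreement} is in turn equivalent to $\Phi_1-\Phi_2$ being $\mu$-a.s. constant; the converse direction uses the same lemma to identify that constant as $\log Z_{\Phi_2,\mu}-\log Z_{\Phi_1,\mu}$, again yielding $a_1=a_2$ $\mu$-a.s. and hence equality with both sides zero.

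I do not expect a genuine obstacle, since this is the Hellinger analogue of \Cref{proposition_TV_bounds_misfit_perturbations_via_Lipschitz_continuity}. The only points needing care are the bookkeeping that identifies $2(a_1-a_2)$ with the logarithmic term in the statement (the sign/orientation of the ratio of evidences inside the $L^2_\mu$-norm should be checked against \eqref{eq_likelihood_function}), and using the two hypotheses in the correct direction: nonnegativity of the $\Phi_i$ for the upper bound (so that $\exp(-\Phi_i/2)\le 1$) versus the $L^\infty_\mu$-bound for the lower bound (so that $\exp(-\Phi_i/2)\ge\exp(-\tfrac12\norm{\Phi_i}_{L^\infty_\mu})$), all such inequalities being understood $\mu$-a.s.
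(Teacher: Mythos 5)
Your approach is correct and matches the paper's own proof: both write $d_\Hel(\mu_{\Phi_1},\mu_{\Phi_2})=\norm{\ell_{\Phi_1,\mu}^{1/2}-\ell_{\Phi_2,\mu}^{1/2}}_{L^2_\mu}$, apply the two-sided Lipschitz estimate \eqref{eq_Lipschitz_continuity_exp_function} to the exponentials of $\tfrac12\log\ell_{\Phi_i,\mu}$, and then bound the resulting prefactor $\ell_{\Phi_1,\mu}^{1/2}\vee\ell_{\Phi_2,\mu}^{1/2}$ (resp.\ $\wedge$) by $\esssup_\mu$ (resp.\ $\essinf_\mu$) using $\essinf_\mu\Phi_i\geq 0$ for the upper bound and $\Phi_i\in L^\infty_\mu$ for the lower bound, with the equality characterisation via \Cref{lemma_noninjectivity_of_dataMisfit_maps}\ref{item_lemma_noninjectivity_of_dataMisfit_maps_equivalent_condition_for_agreement}.

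Your caution about the sign of the evidence ratio is warranted, and your own calculation actually resolves it. You correctly find $2\abs{a_1-a_2}=\abs{\Phi_1-\Phi_2+\log(Z_{\Phi_1,\mu}/Z_{\Phi_2,\mu})}$, which coincides with $\abs{\log\ell_{\Phi_1,\mu}-\log\ell_{\Phi_2,\mu}}$ and with the expression displayed in \Cref{proposition_TV_bounds_misfit_perturbations_via_Lipschitz_continuity}; the statement of \Cref{proposition_Hellinger_bounds_misfit_perturbations_via_Lipschitz_continuity}, however, writes $\log(Z_{\Phi_2,\mu}/Z_{\Phi_1,\mu})$, which flips the sign of the constant relative to $\Phi_1-\Phi_2$ and is therefore not the same quantity. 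This appears to be a typographical slip in the paper's proposition (the paper's proof itself works with $\norm{\log\ell_{\Phi_1,\mu}-\log\ell_{\Phi_2,\mu}}_{L^2_\mu}$, i.e.\ with your sign). So your proof is sound as written, provided the claimed right-hand side is read as $\norm{\Phi_1-\Phi_2+\log(Z_{\Phi_1,\mu}/Z_{\Phi_2,\mu})}_{L^2_\mu}$.
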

We also obtain upper bounds using the triangle inequality.
\begin{restatable}{proposition}{HellingerboundsMisfitperturbationsTriangle}
 \label{proposition_Hellinger_upper_bound_misfit_perturbations_via_triangle_inequality}
 Let $\mu\in\mathcal{P}(\Theta)$ and $\Phi_i\in L^2_\mu(\Theta,\R)$, $i=1,2$. 
 Then
 \begin{equation*}
d_{\Hel}(\mu_{\Phi_1},\mu_{\Phi_2})\leq \frac{1}{Z_{\Phi_1,\mu}^{1/2}\vee Z_{\Phi_2,\mu}^{1/2}}\left( \Norm{e^{-\Phi_1/2}-e^{-\Phi_2/2}}_{L^2_\mu}+\Abs{Z_{\Phi_2,\mu}^{1/2}-Z_{\Phi_1,\mu}^{1/2}}\right),
 \end{equation*}
 where equality holds if and only if $\Phi_1=\Phi_2$ $\mu$-a.s.
  It also holds that $ \abs{Z_{\Phi_2,\mu}^{1/2}-Z_{\Phi_1,\mu}^{1/2}}\leq  \Norm{e^{-\Phi_1/2}-e^{-\Phi_2/2}}_{L^2_\mu}$, where equality holds if and only if $\Phi_1-\Phi_2$ is $\mu$-a.s. constant.
 In particular, if $\Phi_i\in L^2_\mu(\Theta,\R)$ and $\essinf_\mu\Phi_i\geq 0$ for $i=1,2$, then
 \begin{equation*}
d_{\Hel}(\mu_{\Phi_1},\mu_{\Phi_2})\leq\frac{2\norm{e^{-\Phi_1/2}-e^{-\Phi_2/2}}_{L^2_\mu}}{Z_{\Phi_1,\mu}^{1/2}\vee Z_{\Phi_2,\mu}^{1/2}}\leq  \frac{(2\Norm{\Phi_1-\Phi_2}_{L^1_\mu}^{1/2})\wedge \Norm{\Phi_1-\Phi_2}_{L^2_\mu}}{Z_{\Phi_1,\mu}^{1/2}\vee Z_{\Phi_2,\mu}^{1/2}}.
 \end{equation*}
\end{restatable}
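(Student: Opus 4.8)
The plan is to express the Hellinger metric as an $L^2_\mu$ distance between square roots of densities and then run the same triangle-inequality argument used in the proof of \Cref{proposition_TV_bounds_misfit_perturbations_via_triangle_inequality}, with $L^1_\mu$ replaced by $L^2_\mu$ and misfits replaced by half-misfits. Taking $\nu=\mu$ in \eqref{eq_Hellinger_metric} and recalling $\tfrac{\rd\mu_{\Phi_i}}{\rd\mu}=\ell_{\Phi_i,\mu}=Z_{\Phi_i,\mu}^{-1}e^{-\Phi_i}$ from \eqref{eq_likelihood_function}, one has
\[
d_{\Hel}(\mu_{\Phi_1},\mu_{\Phi_2})=\Norm{Z_{\Phi_1,\mu}^{-1/2}e^{-\Phi_1/2}-Z_{\Phi_2,\mu}^{-1/2}e^{-\Phi_2/2}}_{L^2_\mu}.
\]
The single identity used throughout is $\Norm{e^{-\Phi_i/2}}_{L^2_\mu}=(\int e^{-\Phi_i}\rd\mu)^{1/2}=Z_{\Phi_i,\mu}^{1/2}$ (finite and positive, since $\mu_{\Phi_i}\in\mathcal{P}(\Theta)$).

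For the main bound, both sides are symmetric under exchanging the indices, so I may assume $Z_{\Phi_1,\mu}\ge Z_{\Phi_2,\mu}$, whence $Z_{\Phi_1,\mu}^{1/2}\vee Z_{\Phi_2,\mu}^{1/2}=Z_{\Phi_1,\mu}^{1/2}$. Inserting $\pm Z_{\Phi_1,\mu}^{-1/2}e^{-\Phi_2/2}$ and using the triangle inequality in $L^2_\mu$,
\[
d_{\Hel}(\mu_{\Phi_1},\mu_{\Phi_2})\le\frac{1}{Z_{\Phi_1,\mu}^{1/2}}\Norm{e^{-\Phi_1/2}-e^{-\Phi_2/2}}_{L^2_\mu}+\Abs{Z_{\Phi_1,\mu}^{-1/2}-Z_{\Phi_2,\mu}^{-1/2}}\Norm{e^{-\Phi_2/2}}_{L^2_\mu},
\]
and the second summand equals $Z_{\Phi_1,\mu}^{-1/2}\abs{Z_{\Phi_2,\mu}^{1/2}-Z_{\Phi_1,\mu}^{1/2}}$ after substituting $\Norm{e^{-\Phi_2/2}}_{L^2_\mu}=Z_{\Phi_2,\mu}^{1/2}$, which gives the asserted inequality. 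The companion bound $\abs{Z_{\Phi_2,\mu}^{1/2}-Z_{\Phi_1,\mu}^{1/2}}\le\Norm{e^{-\Phi_1/2}-e^{-\Phi_2/2}}_{L^2_\mu}$ is the reverse triangle inequality in $L^2_\mu$, once its left-hand side is rewritten as $\abs{\Norm{e^{-\Phi_1/2}}_{L^2_\mu}-\Norm{e^{-\Phi_2/2}}_{L^2_\mu}}$. For the equality statements, the forward direction is immediate (if $\Phi_1=\Phi_2$ $\mu$-a.s., every term vanishes); for the reverse triangle inequality, equality forces, by the equality case of Cauchy--Schwarz in $L^2_\mu$, the strictly positive functions $e^{-\Phi_1/2}$ and $e^{-\Phi_2/2}$ to be non-negatively proportional $\mu$-a.s., i.e. $\Phi_1-\Phi_2$ constant $\mu$-a.s.; for the main bound one analyses when the single triangle-inequality step above is tight.

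For the concluding "in particular" chain, I would first combine the two displayed inequalities to eliminate the evidence term, obtaining $d_{\Hel}(\mu_{\Phi_1},\mu_{\Phi_2})\le 2\Norm{e^{-\Phi_1/2}-e^{-\Phi_2/2}}_{L^2_\mu}(Z_{\Phi_1,\mu}^{1/2}\vee Z_{\Phi_2,\mu}^{1/2})^{-1}$, and then bound the numerator in two ways using $\essinf_\mu\Phi_i\ge 0$ (so $e^{-\Phi_i}\le 1$ $\mu$-a.s.). First, \eqref{eq_Lipschitz_continuity_exp_function} applied with $x=-\Phi_1/2$, $y=-\Phi_2/2$ gives $\abs{e^{-\Phi_1/2}-e^{-\Phi_2/2}}\le\tfrac12(e^{-\Phi_1/2}\vee e^{-\Phi_2/2})\abs{\Phi_1-\Phi_2}\le\tfrac12\abs{\Phi_1-\Phi_2}$ $\mu$-a.s., hence $2\Norm{e^{-\Phi_1/2}-e^{-\Phi_2/2}}_{L^2_\mu}\le\Norm{\Phi_1-\Phi_2}_{L^2_\mu}$. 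Second, the elementary pointwise inequality $(e^{-a/2}-e^{-b/2})^2\le\abs{e^{-a}-e^{-b}}$ for $a,b\in\R$ (for $a\le b$ the difference equals $2e^{-b}(e^{(b-a)/2}-1)\ge 0$) gives, after integration, $\Norm{e^{-\Phi_1/2}-e^{-\Phi_2/2}}_{L^2_\mu}^2\le\Norm{e^{-\Phi_1}-e^{-\Phi_2}}_{L^1_\mu}$, which by \eqref{eq_Lipschitz_continuity_evidence_unnormalisedLikelihood_wrt_misfit} is at most $\Norm{\Phi_1-\Phi_2}_{L^1_\mu}$, so $2\Norm{e^{-\Phi_1/2}-e^{-\Phi_2/2}}_{L^2_\mu}\le 2\Norm{\Phi_1-\Phi_2}_{L^1_\mu}^{1/2}$. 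Taking the minimum of the two bounds on the numerator finishes the proof.

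The only ingredient that is not a formal manipulation is the last pointwise inequality relating half-exponentials to full exponentials; everything else reduces to the triangle inequality and Cauchy--Schwarz in $L^2_\mu$ together with the Lipschitz estimate \eqref{eq_Lipschitz_continuity_exp_function}. The step I expect to require the most care is the equality case of the main bound: the single triangle-inequality step can be tight in more than the obvious way (for instance already when $Z_{\Phi_1,\mu}=Z_{\Phi_2,\mu}$, since then the correction term vanishes identically), so pinning down the "only if" direction needs a careful sign analysis of the two terms in the split.
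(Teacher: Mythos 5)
Your proof takes the same route as the paper's: take $\nu\leftarrow\mu$ in \eqref{eq_Hellinger_metric}, split $\ell_{\Phi_1,\mu}^{1/2}-\ell_{\Phi_2,\mu}^{1/2}$ by adding and subtracting $e^{-\Phi_2/2}/Z_{\Phi_1,\mu}^{1/2}$, apply the triangle inequality in $L^2_\mu$, then symmetrize in $\Phi_1\leftrightarrow\Phi_2$; your reverse triangle inequality for the companion bound is the same fact the paper obtains by expanding $\Abs{Z_{\Phi_2,\mu}^{1/2}-Z_{\Phi_1,\mu}^{1/2}}^2$ and applying Cauchy--Schwarz, and the two numerator estimates in the ``in particular'' chain coincide (the paper phrases your pointwise inequality as $(\sqrt a-\sqrt b)^2\leq\abs{a-b}$ with $a\leftarrow e^{-\Phi_1}$, $b\leftarrow e^{-\Phi_2}$).

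Your caveat about the equality case of the main bound is not mere caution---it exposes a genuine error in the stated ``if and only if''. When $Z_{\Phi_1,\mu}=Z_{\Phi_2,\mu}$ the second vector $e^{-\Phi_2/2}\bigl(Z_{\Phi_1,\mu}^{-1/2}-Z_{\Phi_2,\mu}^{-1/2}\bigr)$ in the split vanishes identically, so the triangle inequality is trivially tight and equality holds for every such pair $(\Phi_1,\Phi_2)$, not only when $\Phi_1=\Phi_2$ $\mu$-a.s. Conversely, when $Z_{\Phi_1,\mu}\neq Z_{\Phi_2,\mu}$, collinearity of the two vectors forces $\Phi_1-\Phi_2$ to be constant $\mu$-a.s., but a short sign computation then shows the two vectors point in opposite directions, so the triangle inequality is strict and equality never occurs. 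The correct equality condition for the main displayed bound is therefore $Z_{\Phi_1,\mu}=Z_{\Phi_2,\mu}$, not $\Phi_1=\Phi_2$ $\mu$-a.s.; the paper's own proof reads ``there exists $\lambda$ with $e^{-\Phi_1/2}-e^{-\Phi_2/2}=\lambda e^{-\Phi_2/2}$'' as if collinearity sufficed, omitting precisely the sign constraint you flagged.
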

If $\essinf_\mu\Phi_i\geq 0$ for $i=1,2$, then $Z_{\Phi_i,\mu}\leq 1$, and thus the second upper bound on $d_{\Hel}(\mu_{\Phi_1},\mu_{\Phi_2})$ in  \Cref{proposition_Hellinger_upper_bound_misfit_perturbations_via_triangle_inequality} above implies the upper bound on $d_{\Hel}(\mu_{\Phi_1},\mu_{\Phi_2})$ in \cite[Theorem 5]{Sprungk2020}, which is
\begin{equation*}
 d_{\Hel}(\mu_{\Phi_1},\mu_{\Phi_2})\leq \frac{1}{Z_{\Phi_1,\mu}\wedge Z_{\Phi_2,\mu}}\Norm{\Phi_1-\Phi_2}_{L^2_\mu}.
\end{equation*}
We shall not use the triangle inequality approach to prove lower bounds on $d_{\Hel}(\mu_{\Phi_1},\mu_{\Phi_2})$ because these lower bounds require additional assumptions on $\Phi_1-\Phi_2$ and are not sharp; see the comparison of these approaches at the end of \Cref{sectionTVbounds_misfit_perturbation}.

\subsection{Perturbations of the prior}
\label{sectionHellingerbounds_prior_perturbation}

The proofs of the statements in this section are given in \Cref{section_Hellingerbounds_Priorperturbations_proofs}.

We use the triangle inequality approach to obtain upper and lower bounds for the Hellinger metric between posteriors resulting from different priors.
We do not use the Lipschitz continuity approach, because this approach requires stronger hypotheses on $\mu_1$ and $\mu_2$, e.g. that $\log \tfrac{\rd\mu_1}{\rd\mu_2}$ be bounded; see the comparison of approaches at the end of \Cref{sectionTVbounds_prior_perturbation}. 
\begin{restatable}{proposition}{HellingerboundsPriorperturbationsTriangle}
 \label{proposition_Hellinger_bounds_prior_perturbation_via_triangle_inequality}
 Let $\Phi\in L^0(\Theta,\R)$ and $\mu_i\in\mathcal{P}(\Theta)$ be such that $(\mu_i)_\Phi\in \mathcal{P}(\Theta)$ for $i=1,2$.
 \begin{enumerate}
\item \label{proposition_Hellinger_bound_prior_perturbation_upper_bound}

If in addition $\essinf_{\mu_i}\Phi=0$ for $i=1,2$, then
\begin{equation*}
 d_{\Hel}((\mu_1)_\Phi,(\mu_2)_\Phi)\leq 
\frac{d_{\Hel}(\mu_1,\mu_2)+\abs{Z_{\Phi,\mu_2}^{1/2}-Z_{\Phi,\mu_1}^{1/2}}}{Z_{\Phi,\mu_1}^{1/2}\vee Z_{\Phi,\mu_2}^{1/2}}.
\end{equation*}
In addition, $\abs{Z_{\Phi,\mu_2}^{1/2}-Z_{\Phi,\mu_1}^{1/2}}\leq d_{\Hel}(\mu_1,\mu_2)$.
For both inequalities, equality holds if and only if $\mu_1=\mu_2$. 

\item \label{proposition_Hellinger_bound_prior_perturbation_lower_bound}
If in addition $\Phi\in L^\infty_{\mu_i}(\Theta,\R_{\geq 0})$ for $i=1,2$, then
\begin{equation*}
 d_{\Hel}((\mu_1)_\Phi,(\mu_2)_\Phi)\geq \frac{\exp(-\tfrac{1}{2}\norm{\Phi}_{L^\infty_{\mu_1}})\wedge \exp(-\tfrac{1}{2}\norm{\Phi}_{L^\infty_{\mu_2}})}{Z_{\Phi,\mu_1}^{1/2}}\Abs{d_{\Hel}(\mu_1,\mu_2)-\Abs{\frac{Z_{\Phi,\mu_2}^{1/2}-Z_{\Phi,\mu_1}^{1/2}}{Z_{\Phi,\mu_2}^{1/2} }}}.
\end{equation*}
Equality holds if and only if $\mu_1=\mu_2$.
\end{enumerate}
\end{restatable}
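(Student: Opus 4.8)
The plan is to reduce both bounds to the $L^2_\nu$ geometry of square roots of densities and then apply the triangle inequality (for the upper bound) and the reverse triangle inequality (for the lower bound), as in the proof of \Cref{proposition_Hellinger_upper_bound_misfit_perturbations_via_triangle_inequality}. Fix a common dominating measure $\nu$ for $\mu_1$ and $\mu_2$; it also dominates $(\mu_1)_\Phi$ and $(\mu_2)_\Phi$ (since $(\mu_i)_\Phi\ll\mu_i$), and none of the Hellinger distances below depend on this choice. Write $p_i\coloneqq\tfrac{\rd\mu_i}{\rd\nu}$ and set $h_i\coloneqq e^{-\Phi/2}\sqrt{p_i}$ and $f_i\coloneqq h_i/Z_{\Phi,\mu_i}^{1/2}$; the hypothesis $(\mu_i)_\Phi\in\mathcal{P}(\Theta)$ forces $Z_{\Phi,\mu_i}\in(0,\infty)$, so these are well-defined with $\norm{h_i}_{L^2_\nu}^2=Z_{\Phi,\mu_i}$ and $\norm{f_i}_{L^2_\nu}=1$. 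Here $f_i$ is the square root of the density of $(\mu_i)_\Phi$ with respect to $\nu$, so by \eqref{eq_Hellinger_metric} we have $d_\Hel((\mu_1)_\Phi,(\mu_2)_\Phi)=\norm{f_1-f_2}_{L^2_\nu}$ and $d_\Hel(\mu_1,\mu_2)=\norm{\sqrt{p_1}-\sqrt{p_2}}_{L^2_\nu}$. Two pointwise facts on the set $S\coloneqq\{p_1>0\}\cup\{p_2>0\}$, off which $f_1-f_2$ and $\sqrt{p_1}-\sqrt{p_2}$ vanish, will be used: if $\essinf_{\mu_i}\Phi=0$ for $i=1,2$ then $\Phi\geq0$ $\nu$-a.e.\ on $S$ (since $\mu_i(\Phi<0)=0$ forces $p_i=0$ $\nu$-a.e.\ on $\{\Phi<0\}$), hence $e^{-\Phi}\leq1$ there; and if $\Phi\in L^\infty_{\mu_i}$ for $i=1,2$ then $\Phi\leq M\coloneqq\norm{\Phi}_{L^\infty_{\mu_1}}\vee\norm{\Phi}_{L^\infty_{\mu_2}}$ $\nu$-a.e.\ on $S$.

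For part~(i), assume without loss of generality that $Z_{\Phi,\mu_1}\geq Z_{\Phi,\mu_2}$, since the claimed inequality, its hypotheses, and the left-hand side are all symmetric in $(\mu_1,\mu_2)$. Starting from
\[
 f_1-f_2=\frac{h_1-h_2}{Z_{\Phi,\mu_1}^{1/2}}+\frac{Z_{\Phi,\mu_2}^{1/2}-Z_{\Phi,\mu_1}^{1/2}}{Z_{\Phi,\mu_1}^{1/2}}\,f_2,
\]
the triangle inequality, $\norm{f_2}_{L^2_\nu}=1$, and the estimate $\norm{h_1-h_2}_{L^2_\nu}^2=\int e^{-\Phi}(\sqrt{p_1}-\sqrt{p_2})^2\rd\nu\leq d_\Hel(\mu_1,\mu_2)^2$ (which uses $e^{-\Phi}\leq1$ on $S$) together give the upper bound. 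The auxiliary inequality $\abs{Z_{\Phi,\mu_2}^{1/2}-Z_{\Phi,\mu_1}^{1/2}}\leq d_\Hel(\mu_1,\mu_2)$ then follows from the reverse triangle inequality $\Abs{\norm{h_2}_{L^2_\nu}-\norm{h_1}_{L^2_\nu}}\leq\norm{h_1-h_2}_{L^2_\nu}$ and the same estimate on $\norm{h_1-h_2}_{L^2_\nu}$.

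For part~(ii) I would reverse the roles of $\sqrt{p_i}$ and $f_i$. Since $\sqrt{p_i}=e^{\Phi/2}h_i=e^{\Phi/2}Z_{\Phi,\mu_i}^{1/2}f_i$, rearranging gives
\[
 e^{\Phi/2}Z_{\Phi,\mu_1}^{1/2}(f_1-f_2)=\bigl(\sqrt{p_1}-\sqrt{p_2}\bigr)-\bigl(Z_{\Phi,\mu_1}^{1/2}-Z_{\Phi,\mu_2}^{1/2}\bigr)\,e^{\Phi/2}f_2 .
\]
The $L^2_\nu$ norm of the left-hand side is at most $e^{M/2}Z_{\Phi,\mu_1}^{1/2}\norm{f_1-f_2}_{L^2_\nu}$ by the bound $\Phi\leq M$ on $S$. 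On the right-hand side the first term has norm $d_\Hel(\mu_1,\mu_2)$, and the second has norm \emph{exactly} $\abs{Z_{\Phi,\mu_1}^{1/2}-Z_{\Phi,\mu_2}^{1/2}}\,Z_{\Phi,\mu_2}^{-1/2}$, because $e^{\Phi/2}f_2=\sqrt{p_2}/Z_{\Phi,\mu_2}^{1/2}$ has norm $Z_{\Phi,\mu_2}^{-1/2}$. Hence the reverse triangle inequality applied to the right-hand side, followed by division by $e^{M/2}Z_{\Phi,\mu_1}^{1/2}$ and the identity $e^{-M/2}=\exp(-\tfrac12\norm{\Phi}_{L^\infty_{\mu_1}})\wedge\exp(-\tfrac12\norm{\Phi}_{L^\infty_{\mu_2}})$, gives the asserted lower bound, the outer absolute value coming for free from the reverse triangle inequality.

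For the equality statements the ``if'' direction is immediate, as $\mu_1=\mu_2$ makes both sides of each displayed inequality vanish; the converse is obtained by tracking equality in the (reverse) triangle inequalities (which forces the two summands to be positively proportional, hence $\sqrt{p_1}\propto\sqrt{p_2}$ $\nu$-a.e.\ since $e^{-\Phi/2}>0$, hence $\mu_1=\mu_2$ after normalisation) and in the pointwise estimates $e^{-\Phi}\leq1$, $e^{\Phi/2}\leq e^{M/2}$. I expect the main obstacle to be choosing the decomposition in part~(ii): the essential point is the \emph{exact} identity $e^{\Phi/2}f_2=\sqrt{p_2}/Z_{\Phi,\mu_2}^{1/2}$, which introduces no slack and is what places $Z_{\Phi,\mu_2}^{-1/2}$ inside the absolute value while keeping $Z_{\Phi,\mu_1}^{-1/2}$ as the outer prefactor; the cruder route through $\norm{h_1-h_2}_{L^2_\nu}$ would only yield a weaker lower bound with $\abs{Z_{\Phi,\mu_2}^{1/2}-Z_{\Phi,\mu_1}^{1/2}}$ in place of $\abs{Z_{\Phi,\mu_1}^{1/2}-Z_{\Phi,\mu_2}^{1/2}}Z_{\Phi,\mu_2}^{-1/2}$.
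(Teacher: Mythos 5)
Your proof of the main inequalities is correct and takes essentially the same route as the paper's: the same decomposition of the square-root densities, the same application of the triangle inequality (resp.\ reverse triangle inequality), and the same pointwise bounds $e^{-\Phi}\leq 1$ and $e^{\Phi}\leq e^{M}$. The paper fixes $\nu$ as a convex combination $\lambda\mu_1+(1-\lambda)\mu_2$ and invokes \Cref{lemma_esssup_essinf_different_measures_different_functions} to get $\esssup_\nu\Phi=\norm{\Phi}_{L^\infty_{\mu_1}}\vee\norm{\Phi}_{L^\infty_{\mu_2}}$; you instead argue pointwise on $S=\{p_1>0\}\cup\{p_2>0\}$, which is an equivalent and arguably cleaner way to land the same constants. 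For the denominator $Z_{\Phi,\mu_1}^{1/2}\vee Z_{\Phi,\mu_2}^{1/2}$ in part~(i) you use symmetry and a WLOG, whereas the paper runs the argument twice with $\mu_1,\mu_2$ swapped; same content. Your auxiliary bound $\abs{Z_{\Phi,\mu_2}^{1/2}-Z_{\Phi,\mu_1}^{1/2}}\leq d_\Hel(\mu_1,\mu_2)$ via the reverse triangle inequality on $\norm{h_i}_{L^2_\nu}$ is a marginally slicker route to what the paper obtains via Cauchy--Schwarz.

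One caveat, affecting your argument and the paper's alike: the equality characterisation is not established by tracking equality in the (reverse) triangle inequalities. When $Z_{\Phi,\mu_1}=Z_{\Phi,\mu_2}$, the second summand in the decomposition vanishes, so the same-ray condition is trivially met and gives no information about $\mu_1,\mu_2$; if in addition $\Phi$ is constant on $\supp(\mu_1+\mu_2)$, the pointwise estimates $e^{-\Phi}\leq1$ and $e^{\Phi/2}\leq e^{M/2}$ are also equalities. For instance $\Phi\equiv0$ makes both the upper bound in~(i) and the lower bound in~(ii) hold with equality for \emph{any} $\mu_1\neq\mu_2$, so the stated biconditional ``equality iff $\mu_1=\mu_2$'' cannot follow from the argument you give (nor from the paper's, which treats the same-ray condition only as though the second vector were nonzero). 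You have thus reproduced the paper's reasoning faithfully, but the equality claim in the statement is itself incorrect as stated, and any proof of it would need an additional hypothesis ruling out the degenerate case.
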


In \cite[Theorem 7]{Sprungk2020}, the following inequalities are obtained under the hypotheses that $\Phi\in L^0(\Theta,\R_{\geq 0})$:
\begin{equation*}
 d_{\Hel}((\mu_1)_\Phi,(\mu_2)_\Phi)\leq \frac{2}{Z_{\Phi,\mu_1}\wedge Z_{\Phi,\mu_2}}d_{\Hel}(\mu_1,\mu_2),\quad \Abs{ Z_{\Phi,\mu_1}- Z_{\Phi,\mu_2}}\leq 2d_{\Hel}(\mu_1,\mu_2).
\end{equation*}
If $\Phi$ is nonnegative, then by \eqref{eq_normalisationConstant_function} $Z_{\Phi,\mu_i}\leq 1$ for $i=1,2$.
This implies that $(Z_{\Phi,\mu_1}^{1/2}\vee Z_{\Phi,\mu_2}^{1/2})^{-1}\leq (Z_{\Phi,\mu_1}\wedge Z_{\Phi,\mu_2})^{-1}$ and that $Z_{\Phi,\mu_1}^{1/2}+Z_{\Phi,\mu_2}^{1/2}\leq 2$.
The latter inequality implies that 
\begin{equation*}
 \Abs{ Z_{\Phi,\mu_1}- Z_{\Phi,\mu_2}}=\Abs{ Z_{\Phi,\mu_1}^{1/2}- Z_{\Phi,\mu_2}^{1/2}}\Abs{Z_{\Phi,\mu_1}^{1/2}+Z_{\Phi,\mu_2}^{1/2}}\leq 2 \Abs{ Z_{\Phi,\mu_1}^{1/2}- Z_{\Phi,\mu_2}^{1/2}}.
\end{equation*}
Thus, the first and second inequalities in \Cref{proposition_Hellinger_bounds_prior_perturbation_via_triangle_inequality}\ref{proposition_Hellinger_bound_prior_perturbation_upper_bound} imply the first and second inequalities in \cite[Theorem 7]{Sprungk2020} respectively.

The upper and lower bounds in \Cref{proposition_Hellinger_bounds_misfit_perturbations_via_Lipschitz_continuity} and \Cref{proposition_Hellinger_bounds_prior_perturbation_via_triangle_inequality} show that more concentrated posteriors are more sensitive to perturbations in the misfit and the prior respectively, when the sensitivity is quantified using the Hellinger metric.
These upper and lower bounds are examples of the prototypical bounds \eqref{eq_prototypical_upper_bounds} and \eqref{eq_prototypical_lower_bounds} respectively.
If the evidence is conserved, then the misfit-to-posterior and prior-to-posterior maps are locally bi-Lipschitz.
By \Cref{proposition_Hellinger_bounds_misfit_perturbations_via_Lipschitz_continuity}, if $\Phi_i\in L^\infty_\mu(\Theta,\R)$ for $i=1,2$ and $Z_{\Phi_1,\mu}=Z_{\Phi_2,\mu}$, then
\begin{equation*}
 \frac{\exp(-\tfrac{1}{2}\Norm{\Phi_1}_{L^\infty_\mu})\wedge \exp(-\tfrac{1}{2}\Norm{\Phi_1}_{L^\infty_\mu})}{Z_{\Phi_1,\mu}^{1/2}}\Norm{\Phi_1-\Phi_2}_{L^2_\mu} \leq 2 d_{\Hel}(\mu_{\Phi_1},\mu_{\Phi_2}) \leq \frac{1}{Z_{\Phi_1,\mu}^{1/2}}\Norm{\Phi_1-\Phi_2}_{L^2_\mu}.
\end{equation*}
Thus for an arbitrary suitable $\mu\in\mathcal{P}(\Theta)$ and $c>0$, the misfit-to-posterior map $\Phi\mapsto \mu_\Phi$ is locally bi-Lipschitz on the evidence level set $\{\Phi\in L^\infty_\mu(\Theta,\R)\ :\ Z_{\Phi,\mu}=c\}$.
Similarly, by \Cref{proposition_Hellinger_bounds_prior_perturbation_via_triangle_inequality}, if $\Phi\in L^\infty_{\mu_i}(\Theta,\R)$ for $i=1,2$ and $Z_{\Phi,\mu,1}=Z_{\Phi,\mu_2}$, then
\begin{equation*}
 \frac{\exp(-\tfrac{1}{2}\Norm{\Phi}_{L^\infty_{\mu_1}})\wedge \exp(-\tfrac{1}{2}\Norm{\Phi}_{L^\infty_{\mu_2}})}{Z_{\Phi,\mu_1}^{1/2}}d_{\Hel}(\mu_1,\mu_2) \leq d_{\Hel}((\mu_1)_{\Phi},(\mu_2)_{\Phi}) \leq \frac{d_{\Hel}(\mu_1,\mu_2)}{Z_{\Phi,\mu_1}^{1/2}}.
\end{equation*}
Thus for an arbitrary suitable $\Phi$ and for every $c>0$, the prior-to-posterior map $\mu\mapsto \mu_\Phi$ is locally bi-Lipschitz on the evidence level set $\{\mu\in \mathcal{P}(\Theta)\ :\  Z_{\Phi,\mu}=c\}$.

\section{Bounds in the Kullback--Leibler divergence}
\label{sectionKLbounds}

Let $\mu_i\in\mathcal{P}(\Theta)$, $i=1,2$.
The Kullback--Leibler divergence is defined by
\begin{equation}
  d_\KL(\mu_1\Vert\mu_2)\coloneqq 
 \begin{cases} 
 \int \log \frac{\rd\mu_1}{\rd\mu_2}\rd \mu_1, & \mu_1\ll \mu_2                                  
\\
+\infty, & \text{otherwise.}
 \end{cases}
 \label{eq_Kullback_Leibler_divergence}
\end{equation}
The Kullback--Leibler divergence does not satisfy the triangle inequality.
For this reason, we cannot combine bounds for misfit perturbations with bounds for prior perturbations to address the case where both the misfit and prior are jointly perturbed.
Instead, we directly prove bounds for the case where both the misfit and prior are jointly perturbed.
By setting $\mu_1=\mu_2$ (respectively, $\Phi_1=\Phi_2$) in the bounds below, one obtains the bounds in the cases where only the misfit is perturbed and the prior is held fixed (resp. the prior is perturbed and the misfit is held fixed).
\begin{restatable}{proposition}{KLboundsJointperturbations}
 \label{proposition_KL_bounds_misfit_and_prior_perturbations}
 Let $\mu_i\in\mathcal{P}(\Theta)$ and $\Phi_i\in L^0(\Theta,\R)$ be such that $(\mu_i)_{\Phi_i}\in\mathcal{P}(\Theta)$ for $i=1,2$.
 \begin{enumerate}
  \item \label{proposition_KL_bounds_misfit_and_prior_perturbations_upper_bounds}
  If $\mu_1\ll\mu_2$, $\Phi_i\in L^1_{\mu_1}$ for $i=1,2$, and $\essinf_{\mu_1}\Phi_1=0$, then the following upper bounds hold:
  \begin{align*}
 &d_{\KL}((\mu_1)_{\Phi_1}\Vert (\mu_2)_{\Phi_2})
\\
\leq & \frac{1}{Z_{\Phi_1,\mu_1}} \Norm{\Phi_2-\Phi_1}_{L^1_{\mu_1}} + \Abs{\log \frac{Z_{\Phi_2,\mu_2}}{Z_{\Phi_1,\mu_1}}}+\frac{1}{Z_{\Phi_1,\mu_1}} \left( d_{\KL}(\mu_1\Vert \mu_2)+\sqrt{2d_{\KL}(\mu_1\Vert \mu_2)}\right) .
\end{align*}
If in addition $\essinf_{\mu_2}\Phi_i=0$ for $i=1,2$, then
\begin{align*}
 \Abs{\log \frac{Z_{\Phi_2,\mu_2}}{Z_{\Phi_1,\mu_1}}}\leq  \frac{\norm{\Phi_1-\Phi_2}_{L^1_{\mu_2}}}{Z_{\Phi_1,\mu_2}\wedge Z_{\Phi_2,\mu_2}}+ \frac{\sqrt{2d_{\KL}(\mu_1\Vert \mu_2)}}{Z_{\Phi_1,\mu_2}\wedge Z_{\Phi_2,\mu_2}}.
\end{align*}
  \item \label{proposition_KL_bounds_misfit_and_prior_perturbations_lower_bounds}
  If $(\mu_1)_{\Phi_1}\ll (\mu_2)_{\Phi_2}$ and  $\Phi_1\in L^\infty_{\mu_1}$, then
    \begin{align*}
       &\frac{\exp(-\norm{\Phi_1}_{L^\infty_{\mu_1}})}{Z_{\Phi_1,\mu_1}} \Abs{\int  \Phi_2-\Phi_1~\rd \mu_1+\log\frac{ Z_{\Phi_2,\mu_2}}{Z_{\Phi_1,\mu_1}} +d_{\KL}(\mu_1\Vert \mu_2)} 
          \\
          \leq & \frac{\exp(-\norm{\Phi_1}_{L^\infty_{\mu_1}})}{Z_{\Phi_1,\mu_1}} \left(\int \Abs{ \Phi_2-\Phi_1}\rd \mu_1+\Abs{\log\frac{ Z_{\Phi_2,\mu_2}}{Z_{\Phi_1,\mu_1}}}+\int\Abs{\log\frac{\rd\mu_1}{\rd\mu_2}}\rd\mu_1\right)
          \\
   \leq & d_{\KL}((\mu_1)_{\Phi_1}\Vert (\mu_2)_{\Phi_2})+ \sqrt{2d_{\KL}((\mu_1)_{\Phi_1}\Vert (\mu_2)_{\Phi_2})}.
  \end{align*}
  \end{enumerate}
\end{restatable}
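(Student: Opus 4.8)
The plan is to reduce everything to the logarithm of the Radon--Nikodym derivative between the two posteriors. Under the standing assumption $(\mu_i)_{\Phi_i}\in\mathcal P(\Theta)$ we have $Z_{\Phi_i,\mu_i}\in\R_{>0}$, and since $\Phi_i$ is $\R$-valued, $\ell_{\Phi_i,\mu_i}>0$ $\mu_i$-a.s., so $(\mu_i)_{\Phi_i}\sim\mu_i$. Hence the hypothesis $\mu_1\ll\mu_2$ in part~\ref{proposition_KL_bounds_misfit_and_prior_perturbations_upper_bounds} and the hypothesis $(\mu_1)_{\Phi_1}\ll(\mu_2)_{\Phi_2}$ in part~\ref{proposition_KL_bounds_misfit_and_prior_perturbations_lower_bounds} are each equivalent to the chain $(\mu_1)_{\Phi_1}\ll\mu_1\ll\mu_2\ll(\mu_2)_{\Phi_2}$, and the chain rule for Radon--Nikodym derivatives together with \eqref{eq_likelihood_function} gives the identity
\begin{equation*}
\log\frac{\rd(\mu_1)_{\Phi_1}}{\rd(\mu_2)_{\Phi_2}}=\log\ell_{\Phi_1,\mu_1}-\log\ell_{\Phi_2,\mu_2}+\log\frac{\rd\mu_1}{\rd\mu_2}=\Phi_2-\Phi_1+\log\frac{Z_{\Phi_2,\mu_2}}{Z_{\Phi_1,\mu_1}}+\log\frac{\rd\mu_1}{\rd\mu_2}.
\end{equation*}
Besides this identity and the $\mu_1$-a.s.\ two-sided bound $e^{-\norm{\Phi_1}_{L^\infty_{\mu_1}}}Z_{\Phi_1,\mu_1}^{-1}\le\ell_{\Phi_1,\mu_1}\le Z_{\Phi_1,\mu_1}^{-1}$ (the upper inequality using $\Phi_1\ge 0$ $\mu_1$-a.s., i.e.\ $\essinf_{\mu_1}\Phi_1=0$; the lower inequality using $\Phi_1\in L^\infty_{\mu_1}$), the ingredients are the triangle inequality, Pinsker's inequality $d_{\TV}(P,Q)\le\sqrt{d_{\KL}(P\Vert Q)/2}$, and the auxiliary inequality
\begin{equation}
\label{eq_plan_aux}
P\ll Q\ \Longrightarrow\ \int\Abs{\log\frac{\rd P}{\rd Q}}\rd P\le d_{\KL}(P\Vert Q)+\sqrt{2d_{\KL}(P\Vert Q)},
\end{equation}
which I would prove from $-x\log x\le 1-x$ on $(0,1)$ (which gives $\int(\log\tfrac{\rd P}{\rd Q})_-\rd P\le d_{\TV}(P,Q)$), the identity $\int|\log\tfrac{\rd P}{\rd Q}|\,\rd P=d_{\KL}(P\Vert Q)+2\int(\log\tfrac{\rd P}{\rd Q})_-\rd P$, and Pinsker.

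For part~\ref{proposition_KL_bounds_misfit_and_prior_perturbations_upper_bounds} I would integrate the identity against $(\mu_1)_{\Phi_1}=\ell_{\Phi_1,\mu_1}\,\mu_1$ and split into three summands: the constant $\log\tfrac{Z_{\Phi_2,\mu_2}}{Z_{\Phi_1,\mu_1}}$ contributes itself because $\int\ell_{\Phi_1,\mu_1}\rd\mu_1=1$; the summand $\int(\Phi_2-\Phi_1)\ell_{\Phi_1,\mu_1}\rd\mu_1$ is at most $Z_{\Phi_1,\mu_1}^{-1}\norm{\Phi_2-\Phi_1}_{L^1_{\mu_1}}$ by $\ell_{\Phi_1,\mu_1}\le Z_{\Phi_1,\mu_1}^{-1}$; and $\int\log\tfrac{\rd\mu_1}{\rd\mu_2}\ell_{\Phi_1,\mu_1}\rd\mu_1\le Z_{\Phi_1,\mu_1}^{-1}\int|\log\tfrac{\rd\mu_1}{\rd\mu_2}|\rd\mu_1$, which by \eqref{eq_plan_aux} with $(P,Q)=(\mu_1,\mu_2)$ is at most $Z_{\Phi_1,\mu_1}^{-1}\bigl(d_{\KL}(\mu_1\Vert\mu_2)+\sqrt{2d_{\KL}(\mu_1\Vert\mu_2)}\bigr)$; summing yields the first displayed bound. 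For the bound on $|\log\tfrac{Z_{\Phi_2,\mu_2}}{Z_{\Phi_1,\mu_1}}|$ I would telescope $\log\tfrac{Z_{\Phi_2,\mu_2}}{Z_{\Phi_1,\mu_1}}=\log\tfrac{Z_{\Phi_2,\mu_2}}{Z_{\Phi_1,\mu_2}}+\log\tfrac{Z_{\Phi_1,\mu_2}}{Z_{\Phi_1,\mu_1}}$: the misfit step is controlled by \eqref{eq_Lipschitz_continuity_log_function} followed by \eqref{eq_Lipschitz_continuity_evidence_unnormalisedLikelihood_wrt_misfit} (which needs $\essinf_{\mu_2}\Phi_i=0$), and the prior step by \eqref{eq_Lipschitz_continuity_log_function} together with $|Z_{\Phi_1,\mu_1}-Z_{\Phi_1,\mu_2}|=\bigl|\int e^{-\Phi_1}\bigl(\tfrac{\rd\mu_1}{\rd\mu_2}-1\bigr)\rd\mu_2\bigr|\le\int\bigl|\tfrac{\rd\mu_1}{\rd\mu_2}-1\bigr|\rd\mu_2=2d_{\TV}(\mu_1,\mu_2)\le\sqrt{2d_{\KL}(\mu_1\Vert\mu_2)}$ (using $e^{-\Phi_1}\le 1$ and Pinsker).

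For part~\ref{proposition_KL_bounds_misfit_and_prior_perturbations_lower_bounds} I would apply \eqref{eq_plan_aux} with $(P,Q)=((\mu_1)_{\Phi_1},(\mu_2)_{\Phi_2})$, rewrite $\rd(\mu_1)_{\Phi_1}=\ell_{\Phi_1,\mu_1}\,\rd\mu_1$, and use $\ell_{\Phi_1,\mu_1}\ge e^{-\norm{\Phi_1}_{L^\infty_{\mu_1}}}Z_{\Phi_1,\mu_1}^{-1}$ to pull the constant out, obtaining the central estimate that $\tfrac{e^{-\norm{\Phi_1}_{L^\infty_{\mu_1}}}}{Z_{\Phi_1,\mu_1}}$ times the $L^1_{\mu_1}$-norm of the combined log-density $\Phi_2-\Phi_1+\log\tfrac{Z_{\Phi_2,\mu_2}}{Z_{\Phi_1,\mu_1}}+\log\tfrac{\rd\mu_1}{\rd\mu_2}$ is bounded by $d_{\KL}((\mu_1)_{\Phi_1}\Vert(\mu_2)_{\Phi_2})+\sqrt{2d_{\KL}((\mu_1)_{\Phi_1}\Vert(\mu_2)_{\Phi_2})}$. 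The leftmost quantity in the claim is then obtained by bounding this $L^1_{\mu_1}$-norm below by the absolute value of the integral and identifying $\int\log\tfrac{\rd\mu_1}{\rd\mu_2}\rd\mu_1=d_{\KL}(\mu_1\Vert\mu_2)$; the middle quantity is obtained by bounding the same $L^1_{\mu_1}$-norm above via the triangle inequality by $\norm{\Phi_2-\Phi_1}_{L^1_{\mu_1}}+|\log\tfrac{Z_{\Phi_2,\mu_2}}{Z_{\Phi_1,\mu_1}}|+\norm{\log\tfrac{\rd\mu_1}{\rd\mu_2}}_{L^1_{\mu_1}}$.

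The hard part is \eqref{eq_plan_aux}: it is the one non-elementary step and it is what produces the $\sqrt{2d_{\KL}}$-type terms everywhere, so the calculus estimate $-x\log x\le 1-x$ and the appeal to Pinsker must be arranged with the right constants. A secondary point is pure bookkeeping: keeping straight which one-sided bound on $\ell_{\Phi_1,\mu_1}$—and hence which of $\essinf_{\mu_1}\Phi_1=0$, $\essinf_{\mu_2}\Phi_i=0$, $\Phi_1\in L^\infty_{\mu_1}$—is in force at each estimate, and keeping in mind that in part~\ref{proposition_KL_bounds_misfit_and_prior_perturbations_lower_bounds} the natural intermediate is the $L^1_{\mu_1}$-norm of the combined log-density, of which the displayed sum of three $L^1_{\mu_1}$-norms is an upper bound.
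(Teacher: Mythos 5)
Your proposal follows the same route as the paper: it uses the identity for $\log\tfrac{\rd(\mu_1)_{\Phi_1}}{\rd(\mu_2)_{\Phi_2}}$, the one-sided bounds on $\ell_{\Phi_1,\mu_1}$ from $\essinf_{\mu_1}\Phi_1=0$ and $\Phi_1\in L^\infty_{\mu_1}$, the triangle inequality and reverse triangle inequality, the telescoping $\log\tfrac{Z_{\Phi_2,\mu_2}}{Z_{\Phi_1,\mu_1}}=\log\tfrac{Z_{\Phi_2,\mu_2}}{Z_{\Phi_1,\mu_2}}+\log\tfrac{Z_{\Phi_1,\mu_2}}{Z_{\Phi_1,\mu_1}}$ with \eqref{eq_Lipschitz_continuity_log_function} and \eqref{eq_Lipschitz_continuity_evidence_unnormalisedLikelihood_wrt_misfit}, and the bound $\int|\log\tfrac{\rd P}{\rd Q}|\,\rd P\le d_{\KL}(P\Vert Q)+\sqrt{2d_{\KL}(P\Vert Q)}$ applied with $(P,Q)=(\mu_1,\mu_2)$ and with $(P,Q)=((\mu_1)_{\Phi_1},(\mu_2)_{\Phi_2})$—exactly as in the paper, which invokes this last inequality as \Cref{proposition_bounds_for_KL} citing \cite{GineNickl2016}. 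The only variation is that you re-derive that auxiliary inequality from $-x\log x\le 1-x$ and Pinsker, and you bound $|Z_{\Phi_1,\mu_1}-Z_{\Phi_1,\mu_2}|\le 2d_{\TV}(\mu_1,\mu_2)$ directly instead of citing \Cref{proposition_TV_bounds_prior_perturbation_via_triangle_inequality}\ref{item_proposition_TV_bounds_prior_perturbation_via_triangle_inequality_upper_bound}; both are harmless inlinings of cited facts.
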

See \Cref{section_KLbounds_proofs} for the proof of \Cref{proposition_KL_bounds_misfit_and_prior_perturbations}.

We now compare the bounds above with the upper bounds on the Kullback--Leibler divergence between posteriors in \cite{Sprungk2020}.
Consider first the case where only the misfit is perturbed.
If $\essinf_\mu\Phi_i=0$ for $i=1,2$, then the main statement of \cite[Theorem 11]{Sprungk2020} yields 
\begin{equation*}
 d_{\KL}(\mu_{\Phi_1}\Vert \mu_{\Phi_2})\leq \frac{2}{Z_{\Phi_1,\mu}\wedge Z_{\Phi_2,\mu}}\norm{\Phi_1-\Phi_2}_{L^1_\mu},
\end{equation*}
while applying \Cref{proposition_KL_bounds_misfit_and_prior_perturbations}\ref{proposition_KL_bounds_misfit_and_prior_perturbations_upper_bounds} with $\mu_1\leftarrow \mu$ and $\mu_2\leftarrow \mu$ yields
\begin{equation*}
 d_{\KL}(\mu_{\Phi_1}\Vert \mu_{\Phi_2})\leq \frac{\norm{\Phi_2-\Phi_1}_{L^1_\mu}}{Z_{\Phi_1,\mu}}+\frac{\norm{\Phi_2-\Phi_1}_{L^1_\mu}}{Z_{\Phi_1,\mu}\wedge Z_{\Phi_2,\mu}},
\end{equation*}
which is a tighter bound. For the upper bound on the Kullback--Leibler divergence when only the prior is perturbed, \cite[Theorem 12]{Sprungk2020} yields the following inequality, under the hypotheses that $\mu_1\sim\mu_2$ and $\Phi\in L^0(\Theta,\R_{\geq 0})$:
\begin{equation*}
 d_{\KL}((\mu_1)_{\Phi}\Vert (\mu_2)_{\Phi}) \leq \frac{d_{\KL}(\mu_1\Vert \mu_2)+d_{\KL}(\mu_2\Vert \mu_1)}{Z_{\Phi,\mu_1}\wedge Z_{\Phi,\mu_2}}.
\end{equation*}
Applying \Cref{proposition_KL_bounds_misfit_and_prior_perturbations}\ref{proposition_KL_bounds_misfit_and_prior_perturbations_upper_bounds} with $\Phi_1\leftarrow \Phi$ and $\Phi_2\leftarrow \Phi$ yields
\begin{equation*}
 d_{\KL}((\mu_1)_{\Phi}\Vert (\mu_2)_{\Phi}) \leq \frac{d_{\KL}(\mu_1\Vert \mu_2)+\sqrt{2d_{\KL}(\mu_1\Vert \mu_2)}}{Z_{\Phi,\mu_1}}+\frac{\sqrt{2d_{\KL}(\mu_1\Vert \mu_2)}}{Z_{\Phi,\mu_1}\wedge Z_{\Phi,\mu_2}},
\end{equation*}
under a weaker hypothesis on $\mu_1$ and $\mu_2$, namely that only $\mu_1\ll \mu_2$.
The preceding upper bound is tighter than the upper bound from \cite[Theorem 12]{Sprungk2020} if $2\sqrt{2d_{\KL}(\mu_1\Vert \mu_2)}\leq d_{\KL}(\mu_2\Vert \mu_1)$. 
This holds trivially if $\mu_1\ll\mu_2$ holds but $\mu_2\ll\mu_1$ does not hold.
The main advantage of the upper bound in  \Cref{proposition_KL_bounds_misfit_and_prior_perturbations}\ref{proposition_KL_bounds_misfit_and_prior_perturbations_upper_bounds} is that it holds under the hypothesis $\mu_1\ll\mu_2$, which is weaker than the hypothesis $\mu_1\sim\mu_2$ in \cite[Theorem 12]{Sprungk2020}.

The upper and lower bounds in \Cref{proposition_KL_bounds_misfit_and_prior_perturbations} yield examples of the prototypical upper and lower bounds from \eqref{eq_prototypical_upper_bounds} and \eqref{eq_prototypical_lower_bounds}, depending on the value of $d_{\KL}((\mu_1)_{\Phi_1}\Vert (\mu_2)_{\Phi_2})$.
We use the inequalities
\begin{equation}
\label{eq_basic_inequalities}
 x+\sqrt{2x}\leq \begin{cases}
                  2x, & x\geq 2,
                  \\
                  2\sqrt{2x}, & 0\leq x<2
                 \end{cases}
                 \Longleftrightarrow x\geq \begin{cases}
                                            \tfrac{1}{2}(x+\sqrt{2x}), & x\geq 2,
                                            \\
                                            \tfrac{1}{8}(x+\sqrt{2x})^2, & 0\leq x<2,
                                           \end{cases}
\end{equation}
which can be verified by direct calculation.
For the misfit-to-posterior map, let $\mu_1\leftarrow\mu$, $\mu_2\leftarrow\mu$ for some $\mu\in\mathcal{P}(\Theta)$, and $\Phi_i\in L^\infty_\mu(\Theta,\R)$ for $i=1,2$; then by \Cref{proposition_KL_bounds_misfit_and_prior_perturbations} and the statement on the right of \eqref{eq_basic_inequalities},
\begin{align*}
\frac{1}{8}\left(\frac{\exp(-\norm{\Phi_1}_{L^\infty_{\mu}})}{Z_{\Phi_1,\mu}} \left(\Norm{ \Phi_2-\Phi_1}_{L^1_{\mu}}+\Abs{\log\frac{ Z_{\Phi_2,\mu}}{Z_{\Phi_1,\mu}}}\right)\right)^{\gamma}\leq& d_{\KL}(\mu_{\Phi_1}\Vert \mu_{\Phi_2})
\\
\leq & \frac{1}{Z_{\Phi_1,\mu}} \Norm{\Phi_2-\Phi_1}_{L^1_{\mu}} + \Abs{\log \frac{Z_{\Phi_2,\mu}}{Z_{\Phi_1,\mu}}} ,
\end{align*}
where $\gamma=2$ if $0\leq d_{\KL}((\mu_1)_{\Phi_1}\Vert (\mu_2)_{\Phi_2})<2$, and $\gamma=1$ otherwise.
For the prior-to-posterior map, let $\mu_1,\mu_2\in\mathcal{P}(\Theta)$, $\Phi_1\leftarrow\Phi$, $\Phi_2\leftarrow \Phi$, and $\Phi\in L^\infty_{\mu_i}(\Theta,\R)$ for $i=1,2$; then 
\begin{align*}
\frac{1}{8}\left(\frac{\exp(-\norm{\Phi}_{L^\infty_{\mu_1}})}{Z_{\Phi,\mu_1}} \Abs{\log\frac{ Z_{\Phi,\mu_2}}{Z_{\Phi,\mu_1}}+d_{\KL}(\mu_1\Vert \mu_2)}\right)^{\gamma}\leq& d_{\KL}((\mu_1)_{\Phi}\Vert (\mu_2)_{\Phi})
\\
\leq & \Abs{\log \frac{Z_{\Phi,\mu_2}}{Z_{\Phi,\mu_1}}} + 2\sqrt{2} \left(d_{\KL}(\mu_1\Vert \mu_2)\right)^\xi,
\end{align*}
where $\gamma$ is the same exponent as above, and $\xi=1$ if $d_{\KL}(\mu_1\Vert \mu_2)\geq 2$ and $\xi=1/2$ otherwise.
From these bounds, one cannot conclude that neither the misfit-to-posterior nor the prior-to-posterior maps are locally bi-Lipschitz continuous in the Kullback--Leibler topology on evidence level sets.
However, the lower bounds show that more concentrated posteriors are more sensitive to perturbations in the misfit and in the prior.

\section{Bounds in the 1-Wasserstein metric}
\label{sectionW1bounds}

Let $d$ be a metric on $\Theta$, such that $(\Theta,d)$ is a Polish space. 
Given $p\in [1,\infty)$ the corresponding $p$-Wasserstein metric on $\mathcal{P}(\Theta)$ and Wasserstein space of order $p$ are defined by 
\begin{subequations}
\begin{align}
 \Was_p(\mu,\nu)\coloneqq& \inf_{\pi\in\Pi(\mu,\nu)}\left(\int_{\Theta\times\Theta} d(x,y)^p\rd \pi( x,y) \right)^{1/p},
 \label{eq_Wasserstein_metric}
 \\
 \abs{\mu}_{\mathcal{P}_p}^{p}\coloneqq&\int d(x_0,x)^p\mu(\rd x),\quad \mathcal{P}_p(\Theta)\coloneqq \left\{\mu\in\mathcal{P}(\Theta)\ :\ \abs{\mu}_{\mathcal{P}_p}<\infty \right\}, 
\label{eq_Wasserstein_space}
\end{align} 
\end{subequations}
where $\Pi(\mu,\nu)\subset \mathcal{P}(\Theta\times \Theta)$ is the set of all couplings of $\mu$ and $\nu$, and $x_0$ in \eqref{eq_Wasserstein_space} is arbitrary. 
We emphasise that the definition of $\abs{\cdot}_{\mathcal{P}_p}$---and thus also of $\mathcal{P}_p$---does not depend on the choice of $x_0$; see e.g. \cite[Definition 6.1, Definition 6.4]{Villani2009} and \cite[Definition 4.3]{Ambrosio2021}.

In inference problems, the measures used for inference may be mutually singular, for example if the measures are empirical distributions supported on finite, disjoint subsets of the parameter space.
In this case the total variation metric, Hellinger metric, or Kullback--Leibler divergence of any two such measures will assume maximal values and thus be uninformative.
The $p$-Wasserstein metrics do not exhibit this property, and are thus advantageous in such settings. 
The first instances of 1-Wasserstein bounds on posteriors appear in \cite[Section 5]{Sprungk2020}.

Given a metric space $(X,\mathbf{d})$ and a function $f:X\to\R$, define the Lipschitz constant of $f$ and its supremum norm by 
\begin{equation}
\label{eq_Lipschitz_constant_and_supremum_norm}
 \norm{f}_{\Lip(X,\mathbf{d})}\coloneqq \sup_{x\neq y} \tfrac{\abs{f(x)-f(y)}}{\mathbf{d}(x,y)},\quad \norm{f}_{\infty,X}\coloneqq \sup_{x\in X}\abs{f(x)}.
\end{equation}
Recall that $\supp{\mu}$ denotes the support of a measure $\mu$.
For an arbitrary $\mu\in\mathcal{M}(\Theta)$, define the radius of the support of $\mu$,
 \begin{equation}
  \label{eq_radius_parameter}
  R(\mu)\coloneqq \diam{\supp{\mu}}=\sup_{x,y\in\supp{\mu}}d(x,y),
 \end{equation}
 so that $R(\mu)$ is finite if and only if $\supp{\mu}$ is bounded.
  
In this section, we focus on the 1-Wasserstein metric, and we shall use the Kantorovich--Rubinstein duality formulas applied to $\mu,\nu\in\mathcal{P}_1(\Theta)$:
\begin{subequations}
 \begin{align}
 \label{eq_Kantorovich_Rubinstein_duality_formula}
\Was_1(\mu,\nu)=&\sup_{\Norm{f}_{\Lip(\supp{\mu+\nu},d)}\leq 1}\left\{\int f \rd \mu-\int f \rd \nu\right\}
\\
 \label{eq_Kantorovich_Rubinstein_duality_formula_fixed_x0}
\Was_1(\mu,\nu)=&\sup_{\Norm{f}_{\Lip(\supp{\mu+\nu},d)}\leq 1, f(x_0)=0}\left\{\int f \rd \mu-\int f \rd \nu\right\},
\end{align}
\end{subequations}
where \eqref{eq_Kantorovich_Rubinstein_duality_formula_fixed_x0} is valid for any fixed $x_0$, and follows from \eqref{eq_Kantorovich_Rubinstein_duality_formula}: if $f(\cdot)$ satisfies $\norm{f}_{\Lip}\leq 1$ then $g(\cdot)\coloneqq f(\cdot)-f(x_0)$ satisfies $\norm{g}_{\Lip}\leq 1$ and $g(x_0)=0$.
Some auxiliary results about Wasserstein metrics are collected in \Cref{section_W1bounds_proofs}.

\subsection{Perturbations of the misfit}
\label{sectionW1bounds_misfit_perturbations}

\begin{restatable}{proposition}{WassersteinboundsMisfitperturbations}
\label{proposition_W1_bounds_misfit_perturbations}
 Let $\mu\in\mathcal{P}(\Theta)$, and $\Phi_i\in L^0(\Theta,\R)$ be such that $\mu_{\Phi_i}\in \mathcal{P}_1(\Theta)$ for $i=1,2$.
 \begin{enumerate}
\item \label{proposition_W1_bounds_misfit_perturbations_upper_bound}
If $R(\mu)$ is finite and $\essinf_\mu\Phi_i=0$ for $i=1,2$, then
 \begin{equation*}
    \Was_1(\mu_{\Phi_1},\mu_{\Phi_2})\leq \frac{R(\mu)}{Z_{\Phi_1,\mu}\vee Z_{\Phi_2,\mu}}\left(\Norm{ e^{-\Phi_1}-e^{-\Phi_2}}_{L^1_\mu}+\Abs{Z_{\Phi_1,\mu}-Z_{\Phi_2,\mu}}\right),
  \end{equation*}
   where equality holds if $\Phi_1=\Phi_2$ $\mu$-a.s., but not if $\Phi_1-\Phi_2=c$ $\mu$-a.s. for some $c\neq 0$.
   If in addition $\Phi_i\in L^1_\mu(\Theta,\R)$ for $i=1,2$, then
   \begin{equation*}
    \Was_1(\mu_{\Phi_1},\mu_{\Phi_2})\leq \frac{R(\mu)}{Z_{\Phi_1,\mu}\vee Z_{\Phi_2,\mu}}\left(\Norm{\Phi_1-\Phi_2}_{L^1_\mu}+\Abs{Z_{\Phi_1,\mu}-Z_{\Phi_2,\mu}}\right).
   \end{equation*}
  \item \label{proposition_W1_bounds_misfit_perturbations_lower_bound}
  It holds that
  \begin{equation*}
   \Was_1(\mu_{\Phi_1},\mu_{\Phi_2})=\frac{1}{Z_{\Phi_1,\mu}}\sup_{\norm{f}_{\Lip}\leq 1}\Abs{ \int f\left(  e^{-\Phi_1}-e^{-\Phi_2}\right)\rd \mu+\int f \rd \mu_{\Phi_2}(Z_{\Phi_2,\mu}-Z_{\Phi_1,\mu}) },
  \end{equation*}
  and the supremum is attained.
  If $Z_{\Phi_1,\mu}=Z_{\Phi_2,\mu}$ and $0<\norm{e^{-\Phi_1}-e^{-\Phi_2}}_{\Lip(\supp{\mu},d)}<\infty$, then
    \begin{equation*}
   \Was_1(\mu_{\Phi_1},\mu_{\Phi_2})\geq \frac{1}{Z_{\Phi_1,\mu}}\frac{\Norm{e^{-\Phi_1}-e^{-\Phi_2}}_{L^2_\mu}^{2}}{\norm{e^{-\Phi_1}-e^{-\Phi_2}}_{\Lip(\supp{\mu},d)}},
   \end{equation*}
   and if in addition $\Phi_i\in L^\infty_\mu(\Theta,\R)$ for $i=1,2$, then
   \begin{equation*}
   \Was_1(\mu_{\Phi_1},\mu_{\Phi_2})\geq  \frac{\exp(-2\norm{\Phi_1}_{L^\infty_\mu})\wedge \exp(-2\norm{\Phi_2}_{L^\infty_\mu})}{Z_{\Phi_1,\mu}}\frac{\Norm{\Phi_1-\Phi_2}_{L^2_\mu}^{2}}{\norm{e^{-\Phi_1}-e^{-\Phi_2}}_{\Lip(\supp{\mu},d)}}.
  \end{equation*}  
  \end{enumerate}
\end{restatable}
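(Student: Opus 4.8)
The plan is to derive everything from the Kantorovich--Rubinstein duality formulas \eqref{eq_Kantorovich_Rubinstein_duality_formula}--\eqref{eq_Kantorovich_Rubinstein_duality_formula_fixed_x0} together with an asymmetric algebraic decomposition of the difference of the two normalised likelihoods. First I would record the elementary facts: since each $\Phi_i$ is real-valued and $Z_{\Phi_i,\mu}\in\R_{>0}$, the density $\ell_{\Phi_i,\mu}=e^{-\Phi_i}/Z_{\Phi_i,\mu}$ is strictly positive $\mu$-a.e., so $\mu_{\Phi_i}\sim\mu$ and hence $\supp{\mu_{\Phi_i}}=\supp{\mu}$; in particular $\supp{\mu_{\Phi_1}+\mu_{\Phi_2}}=\supp{\mu}$, so the Lipschitz seminorm appearing in \eqref{eq_Kantorovich_Rubinstein_duality_formula} is $\norm{\cdot}_{\Lip(\supp{\mu},d)}$. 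Writing $Z_i\coloneqq Z_{\Phi_i,\mu}$ and $g\coloneqq e^{-\Phi_1}-e^{-\Phi_2}$, the key step is the identity
\[
 \int f\rd\mu_{\Phi_1}-\int f\rd\mu_{\Phi_2}=\frac{1}{Z_1}\left(\int fg\rd\mu+(Z_2-Z_1)\int f\rd\mu_{\Phi_2}\right),
\]
valid for any $f$ integrable against both $\mu_{\Phi_1}$ and $\mu_{\Phi_2}$, obtained by adding and subtracting $Z_1^{-1}\int fe^{-\Phi_2}\rd\mu$ and using $\int fe^{-\Phi_2}\rd\mu=Z_2\int f\rd\mu_{\Phi_2}$. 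Taking the supremum over $\norm{f}_{\Lip(\supp{\mu},d)}\leq1$ in \eqref{eq_Kantorovich_Rubinstein_duality_formula} and substituting this identity, and using that the class of admissible test functions is invariant under $f\mapsto-f$ (so that the supremum of the right-hand side over the class equals the supremum of its absolute value), yields the exact formula of part~\ref{proposition_W1_bounds_misfit_perturbations_lower_bound}; attainment of the supremum is the standard existence of a Kantorovich potential for $\Was_1$ between marginals in $\mathcal{P}_1(\Theta)$ on a Polish space (see \Cref{section_W1bounds_proofs} and \cite{Villani2009}).

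For the upper bounds in part~\ref{proposition_W1_bounds_misfit_perturbations_upper_bound}, I would use instead \eqref{eq_Kantorovich_Rubinstein_duality_formula_fixed_x0} with a fixed $x_0\in\supp{\mu}$, so that $\abs{f(\theta)}=\abs{f(\theta)-f(x_0)}\leq d(\theta,x_0)\leq R(\mu)$ for every $\theta\in\supp{\mu}$. By symmetry of $\Was_1$ and of the claimed bound one may assume $Z_1\geq Z_2$; then applying the key identity, the triangle inequality, the bounds $\abs{\int fg\rd\mu}\leq R(\mu)\norm{g}_{L^1_\mu}$ (with $g\in L^1_\mu$ since $e^{-\Phi_i}\in L^1_\mu$) and $\abs{\int f\rd\mu_{\Phi_2}}\leq R(\mu)$ gives the first displayed upper bound, and the second follows from $\norm{e^{-\Phi_1}-e^{-\Phi_2}}_{L^1_\mu}\leq\norm{\Phi_1-\Phi_2}_{L^1_\mu}$, which is \eqref{eq_Lipschitz_continuity_evidence_unnormalisedLikelihood_wrt_misfit}. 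For the equality assertions: if $\Phi_1=\Phi_2$ $\mu$-a.s.\ then both sides vanish, whereas if $\Phi_1-\Phi_2=c\neq0$ $\mu$-a.s.\ then $\mu_{\Phi_1}=\mu_{\Phi_2}$ so the left-hand side is $0$, while $g=e^{-\Phi_1}(1-e^{c})$ and $Z_2=e^{c}Z_1$ make the right-hand side strictly positive whenever $R(\mu)>0$.

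For the two lower bounds in part~\ref{proposition_W1_bounds_misfit_perturbations_lower_bound}, I would specialise the exact formula to the case $Z_1=Z_2$, where the second term disappears, and then test it against $f\coloneqq g/\norm{g}_{\Lip(\supp{\mu},d)}$, which has Lipschitz seminorm exactly $1$ on $\supp{\mu}$ under the hypothesis $0<\norm{g}_{\Lip(\supp{\mu},d)}<\infty$; this produces
\[
 \Was_1(\mu_{\Phi_1},\mu_{\Phi_2})\geq\int f\rd\mu_{\Phi_1}-\int f\rd\mu_{\Phi_2}=\frac{1}{Z_1}\int fg\rd\mu=\frac{1}{Z_1}\frac{\norm{g}_{L^2_\mu}^{2}}{\norm{g}_{\Lip(\supp{\mu},d)}},
\]
where finiteness of $\norm{g}_{L^2_\mu}$ is automatic because $\int f\rd\mu_{\Phi_i}$ is finite (the marginals lying in $\mathcal{P}_1(\Theta)$ and $f$ being $1$-Lipschitz on $\supp{\mu}$), so that the displayed chain of equalities forces $\int g^2\rd\mu<\infty$. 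The second lower bound then follows by estimating $\abs{g}=\abs{e^{-\Phi_1}-e^{-\Phi_2}}\geq(e^{-\Phi_1}\wedge e^{-\Phi_2})\abs{\Phi_1-\Phi_2}\geq(\exp(-\norm{\Phi_1}_{L^\infty_\mu})\wedge\exp(-\norm{\Phi_2}_{L^\infty_\mu}))\abs{\Phi_1-\Phi_2}$ pointwise $\mu$-a.e.\ via \eqref{eq_Lipschitz_continuity_exp_function}, squaring and integrating. I expect the main obstacle to be the bookkeeping around this lower bound: verifying that $f=g/\norm{g}_{\Lip(\supp{\mu},d)}$ is an admissible test function in \eqref{eq_Kantorovich_Rubinstein_duality_formula} and that $\int f\rd\mu_{\Phi_i}$ and $\int fg\rd\mu$ are well defined and finite — which is where the hypothesis $\mu_{\Phi_i}\in\mathcal{P}_1(\Theta)$ is genuinely used — together with the identification $\supp{\mu_{\Phi_1}+\mu_{\Phi_2}}=\supp{\mu}$ that makes the Lipschitz seminorm over $\supp{\mu}$ the one relevant to the duality formula.
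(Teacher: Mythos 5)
Your proof matches the paper's argument essentially step for step: the same algebraic decomposition of $\int f\,\rd\mu_{\Phi_1}-\int f\,\rd\mu_{\Phi_2}$, the same use of the Kantorovich--Rubinstein duality formulas (the fixed-$x_0$ version plus the bound $\abs{f}\leq R(\mu)$ on $\supp\mu$ for part~(i), the unconstrained version plus the Kantorovich-potential existence result for the exact formula in part~(ii)), the same test function $g/\norm{g}_{\Lip(\supp\mu,d)}$ for the lower bound with $Z_1=Z_2$, and the same pointwise estimate via \eqref{eq_Lipschitz_continuity_exp_function} for the final display. Your two extra observations --- that $\mu_{\Phi_i}\sim\mu$ identifies $\supp(\mu_{\Phi_1}+\mu_{\Phi_2})$ with $\supp\mu$ so that the Lipschitz seminorm in the duality formula is the one over $\supp\mu$, and that finiteness of $\Was_1(\mu_{\Phi_1},\mu_{\Phi_2})$ forces $\norm{e^{-\Phi_1}-e^{-\Phi_2}}_{L^2_\mu}<\infty$ --- are correct and fill in details the paper leaves implicit.
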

See \Cref{section_W1bounds_misfit_perturbations_proofs} for the proof of \Cref{proposition_W1_bounds_misfit_perturbations}.

We now compare the upper bound above to the upper bound on the 1-Wasserstein metric in \cite[Theorem 14]{Sprungk2020}: under the hypothesis that $\essinf_\mu\Phi_i=0$ and $\Phi_i\in L^2_\mu(\Theta,\R_{\geq 0})$ for $i=1,2$,
\begin{equation*}
 \Was_1(\mu_{\Phi_1},\mu_{\Phi_2})\leq\frac{1}{Z_{\Phi_2,\mu}} \left(\Abs{\mu_{\Phi}}_{\mathcal{P}_1}\norm{\Phi_1-\Phi_2}_{L^1_\mu}+\Abs{\mu}_{\mathcal{P}_2}\Norm{\Phi_1-\Phi_2}_{L^2_\mu}\right),
\end{equation*}
where we point out that $\abs{\cdot}_{\mathcal{P}_p}$ in \cite[p. 15]{Sprungk2020} is defined as $\inf_{x_0}\left(\int d(x,x_0)^p\mu(\rd x)\right)^{1/p}$, and hence different from the definition in \eqref{eq_Wasserstein_space}.
If $\Phi_i\in L^1_\mu(\Theta,\R_{\geq 0})$ for $i=1,2$, then the upper bound in \cref{proposition_W1_bounds_misfit_perturbations_upper_bound} implies 
  \begin{equation*}
   \Was_1(\mu_{\Phi_1},\mu_{\Phi_2})\leq 2\frac{R(\mu)}{Z_{\Phi_1,\mu}\vee Z_{\Phi_2,\mu}}\norm{\Phi_1-\Phi_2}_{L^1_\mu},
 \end{equation*}
where we used the inequality $\abs{Z_{\Phi_1,\mu}-Z_{\Phi_2,\mu}}\leq \norm{\Phi_1-\Phi_2}_{L^1_\mu}$ in \eqref{eq_Lipschitz_continuity_evidence_unnormalisedLikelihood_wrt_misfit}. 
If $R(\mu)$ is finite, then it follows from the definitions \eqref{eq_radius_parameter} and \eqref{eq_Wasserstein_space} of $R(\cdot)$ and $\abs{\cdot}_{\mathcal{P}_p}$ that $\abs{\mu}_{\mathcal{P}_p}\leq R(\mu)$ for every $p\in [1,\infty)$.
Thus, the upper bound obtained by \cref{proposition_W1_bounds_misfit_perturbations_upper_bound} has the advantage that it only requires $\Phi_1$ and $\Phi_2$ to be only integrable with respect to $\mu$, and not square-integrable with respect to $\mu$.

\begin{remark}[Priors with bounded support]
\label{remark_priors_with_bounded_support}
In the context of nonparametric Bayesian inverse problems, priors with bounded supports have been used to prove Bernstein--von Mises theorems, via a so-called `localisation approach'.
This approach involves restricting the prior to a bounded subset and then renormalising; see e.g. \cite[Section 4.1]{Monard2021} or \cite[Section 4.1.1.1]{Nickl2023}.
The hypothesis that a prior on a possibly unbounded metric space has bounded support is not stronger than the hypothesis that a prior is supported on a bounded metric space.
The latter hypothesis was used in \cite[Theorem 15]{Sprungk2020}, for example.
\end{remark}

\begin{remark}[Lipschitz continuous likelihoods]
\label{remark_Lipschitz_continuity}
In \Cref{proposition_W1_bounds_misfit_perturbations}\ref{proposition_W1_bounds_misfit_perturbations_lower_bound}, a lower bound on $\Was_1(\mu_{\Phi_1},\mu_{\Phi_2})$ is obtained under the hypothesis that the difference $e^{-\Phi_1}-e^{-\Phi_2}$ of unnormalised likelihoods is nonconstant and Lipschitz continuous.
This hypothesis holds if $e^{-\Phi_i}$ is Lipschitz on $\supp{\mu}$ for $i=1,2$ and $e^{-\Phi_1}-e^{-\Phi_2}$ is nonconstant.
By \Cref{lemma_local_Lipschitz_continuity_unnormalised_likelihoods}, $e^{-\Phi}$ is Lipschitz on $\supp{\mu}$ if $\Phi_i$ is Lipschitz on $\supp{\mu}$ and $\inf e^{\Phi}>0$, for example. 
The assumption of local Lipschitz continuity of misfits is an important part of the well-posedness theory for Bayesian inverse problems; see e.g. \cite[Assumption 2.6(iii)]{Stuart2010}. 
\end{remark}

\subsection{Perturbations of the prior}
\label{sectionW1bounds_prior_perturbations}

\begin{restatable}{proposition}{WassersteinUpperboundPriorperturbations}
\label{proposition_W1_upper_bound_prior_perturbations}
Let $\mu_i\in\mathcal{P}_1(\Theta)$ for $i=1,2$, $\Phi \in L^0(\Theta,\R)$ satisfy $e^{-\Phi}\in \Lip(\supp{\mu_1+\mu_2},d)$, and $(\mu_i)_\Phi\in\mathcal{P}_1(\Theta)$ for $i=1,2$.
\begin{enumerate}
\item \label{proposition_W1_upper_bound_prior_perturbations_normalisation_constant}
It holds that $\abs{Z_{\Phi,\mu_1}-Z_{\Phi,\mu_2}}\leq \norm{e^{-\Phi}}_{\Lip(\supp{\mu_1+\mu_2},d)} \Was_1(\mu_1,\mu_2)$. 
In addition, there exists a 1-Lipschitz function $f_\opt\in\Lip(\supp{\mu_1+\mu_2},d)$ such that 
\begin{equation*}
 \Was_1((\mu_1)_\Phi,(\mu_2)_\Phi)=\Abs{\int f_\opt \rd (\mu_1)_\Phi-\int f_\opt \rd (\mu_2)_\Phi}.
\end{equation*}

\item \label{proposition_W1_upper_bound_prior_perturbations_zero_Lipschitz_constant_case}
If $\norm{f_\opt e^{-\Phi}}_{\Lip(\supp{\mu_1+\mu_2},d)}=0$, then there exists $\lambda\in\R$ such that $f_\opt =\lambda e^{\Phi}$ on $\supp{\mu_1+\mu_2}$, and 
\begin{equation*}
 \Was_1((\mu_1)_\Phi,(\mu_2)_\Phi)=\abs{\lambda}\frac{\abs{Z_{\Phi,\mu_1}-Z_{\Phi,\mu_2}}}{Z_{\Phi,\mu_1}Z_{\Phi,\mu_2}}\leq \abs{\lambda}\frac{\norm{e^{-\Phi}}_{\Lip(\supp{\mu_1+\mu_2},d)}}{Z_{\Phi,\mu_1}Z_{\Phi,\mu_2}} \Was_1(\mu_1,\mu_2).
\end{equation*}
\item \label{proposition_W1_upper_bound_prior_perturbations_nonzero_Lipschitz_constant_case}
If $0<\norm{f_\opt e^{-\Phi}}_{\Lip(\supp{\mu_1+\mu_2},d)}<\infty$, $\essinf_{\mu_i}\Phi=0$ for $i=1,2$, and $R(\mu_1+\mu_2)<\infty$, then 
 \begin{align*}
  \Was_1((\mu_1)_\Phi,(\mu_2)_\Phi) \leq &
   \frac{1+\norm{e^{-\Phi}}_{\Lip(\supp{\mu_1+\mu_2},d)} R(\mu_1+\mu_2)}{Z_{\Phi,\mu_1}\vee Z_{\Phi,\mu_2}} \Was_1(\mu_1,\mu_2)
   \\
   &+\frac{\abs{Z_{\Phi,\mu_1}-Z_{\Phi,\mu_2}}}{Z_{\Phi,\mu_1}\vee Z_{\Phi,\mu_2}} R(\mu_1+\mu_2)
 \\
 \leq &\left(1+2\norm{e^{-\Phi}}_{\Lip(\supp{\mu_1+\mu_2},d)} R(\mu_1+\mu_2)\right) \frac{\Was_1(\mu_1,\mu_2)}{Z_{\Phi,\mu_1}\vee Z_{\Phi,\mu_2}}.
  \end{align*}
\end{enumerate}
  \end{restatable}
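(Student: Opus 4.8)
The overall strategy is to push everything through the Kantorovich--Rubinstein duality \eqref{eq_Kantorovich_Rubinstein_duality_formula}--\eqref{eq_Kantorovich_Rubinstein_duality_formula_fixed_x0}, so that each bound reduces to estimating $\int h\,\rd\mu_1-\int h\,\rd\mu_2$ for a suitable Lipschitz $h$, combined with the elementary normalisation algebra relating $(\mu_i)_\Phi$ to $\mu_i$ through $Z_{\Phi,\mu_i}$. Write $R\coloneqq R(\mu_1+\mu_2)$ and $L\coloneqq\norm{e^{-\Phi}}_{\Lip(\supp{\mu_1+\mu_2},d)}$. Two facts will be used repeatedly: since $\ell_{\Phi,\mu_i}=e^{-\Phi}/Z_{\Phi,\mu_i}$ is strictly positive, $\supp{(\mu_i)_\Phi}=\supp{\mu_i}$, so $\supp{(\mu_1)_\Phi+(\mu_2)_\Phi}=\supp{\mu_1+\mu_2}$; and if $\essinf_{\mu_i}\Phi=0$ for $i=1,2$ then $e^{-\Phi}\le 1$ $(\mu_1+\mu_2)$-a.s., which by continuity of $e^{-\Phi}$ on $\supp{\mu_1+\mu_2}$ upgrades to $\norm{e^{-\Phi}}_{\infty,\supp{\mu_1+\mu_2}}\le 1$ (a closed full-measure subset of the support equals the support). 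For part \ref{proposition_W1_upper_bound_prior_perturbations_normalisation_constant}, writing $Z_{\Phi,\mu_1}-Z_{\Phi,\mu_2}=\int e^{-\Phi}\,\rd\mu_1-\int e^{-\Phi}\,\rd\mu_2$ and dividing by $L$ when $L>0$ so that $e^{-\Phi}/L$ is $1$-Lipschitz, \eqref{eq_Kantorovich_Rubinstein_duality_formula} gives $\abs{Z_{\Phi,\mu_1}-Z_{\Phi,\mu_2}}\le L\,\Was_1(\mu_1,\mu_2)$ (the case $L=0$ being trivial, as $e^{-\Phi}$ is then constant on $\supp{\mu_1+\mu_2}$, which carries both $\mu_i$); applying \eqref{eq_Kantorovich_Rubinstein_duality_formula_fixed_x0} to $(\mu_1)_\Phi,(\mu_2)_\Phi\in\mathcal{P}_1(\Theta)$ and invoking the classical attainment of a Kantorovich potential (an Arzel\`a--Ascoli argument on the equi-Lipschitz, equibounded family $\{f:\norm{f}_{\Lip}\le1,\ f(x_0)=0\}$; cf.\ \Cref{section_W1bounds_proofs}) produces $f_\opt$ attaining the supremum, which one may take to vanish at a fixed $x_0\in\supp{\mu_1+\mu_2}$; replacing $f_\opt$ by $-f_\opt$ if needed yields the absolute value.

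Part \ref{proposition_W1_upper_bound_prior_perturbations_zero_Lipschitz_constant_case} is then immediate: $\norm{f_\opt e^{-\Phi}}_{\Lip(\supp{\mu_1+\mu_2},d)}=0$ means $f_\opt e^{-\Phi}\equiv\lambda$ on $\supp{\mu_1+\mu_2}$ for some $\lambda\in\R$, so $f_\opt=\lambda e^{\Phi}$ there (as $e^{-\Phi}>0$), whence $\int f_\opt\,\rd(\mu_i)_\Phi=Z_{\Phi,\mu_i}^{-1}\int f_\opt e^{-\Phi}\,\rd\mu_i=\lambda/Z_{\Phi,\mu_i}$, so $\Was_1((\mu_1)_\Phi,(\mu_2)_\Phi)=\abs{\lambda}\,\abs{Z_{\Phi,\mu_1}-Z_{\Phi,\mu_2}}/(Z_{\Phi,\mu_1}Z_{\Phi,\mu_2})$, and the evidence bound of part \ref{proposition_W1_upper_bound_prior_perturbations_normalisation_constant} finishes it. For part \ref{proposition_W1_upper_bound_prior_perturbations_nonzero_Lipschitz_constant_case}, I first reduce by the symmetry of the claimed inequality under swapping $\mu_1\leftrightarrow\mu_2$ to the case $Z_{\Phi,\mu_1}\le Z_{\Phi,\mu_2}$, so $Z_{\Phi,\mu_1}\vee Z_{\Phi,\mu_2}=Z_{\Phi,\mu_2}$. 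Setting $A_i\coloneqq\int f_\opt e^{-\Phi}\,\rd\mu_i$ (so $\int f_\opt\,\rd(\mu_i)_\Phi=A_i/Z_{\Phi,\mu_i}$), the identity
\begin{equation*}
\frac{A_1}{Z_{\Phi,\mu_1}}-\frac{A_2}{Z_{\Phi,\mu_2}}=\frac{1}{Z_{\Phi,\mu_2}}\Bigl[(A_1-A_2)+(Z_{\Phi,\mu_2}-Z_{\Phi,\mu_1})\int f_\opt\,\rd(\mu_1)_\Phi\Bigr],
\end{equation*}
part \ref{proposition_W1_upper_bound_prior_perturbations_normalisation_constant}, and the triangle inequality reduce the first displayed bound of part \ref{proposition_W1_upper_bound_prior_perturbations_nonzero_Lipschitz_constant_case} to two estimates.

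To supply those two estimates: normalising $f_\opt(x_0)=0$ and using $R<\infty$ gives $\norm{f_\opt}_{\infty,\supp{\mu_1+\mu_2}}\le R$, hence $\Abs{\int f_\opt\,\rd(\mu_1)_\Phi}\le R$ since $(\mu_1)_\Phi$ is a probability measure carried by $\supp{\mu_1+\mu_2}$; and the product rule $\norm{uv}_{\Lip}\le\norm{u}_\infty\norm{v}_{\Lip}+\norm{v}_\infty\norm{u}_{\Lip}$, together with $\norm{f_\opt}_{\Lip}\le1$ and $\norm{e^{-\Phi}}_{\infty,\supp{\mu_1+\mu_2}}\le1$, gives $\norm{f_\opt e^{-\Phi}}_{\Lip(\supp{\mu_1+\mu_2},d)}\le 1+LR$, so \eqref{eq_Kantorovich_Rubinstein_duality_formula} bounds $\abs{A_1-A_2}\le(1+LR)\Was_1(\mu_1,\mu_2)$. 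Substituting both into the previous display yields the first bound of part \ref{proposition_W1_upper_bound_prior_perturbations_nonzero_Lipschitz_constant_case}, and applying the evidence bound of part \ref{proposition_W1_upper_bound_prior_perturbations_normalisation_constant} once more to $\abs{Z_{\Phi,\mu_1}-Z_{\Phi,\mu_2}}$ collapses it to the second.

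I expect the main obstacle to be the bookkeeping around $f_\opt$ across the case split: the hypothesis $0<\norm{f_\opt e^{-\Phi}}_{\Lip}<\infty$ of part \ref{proposition_W1_upper_bound_prior_perturbations_nonzero_Lipschitz_constant_case} has to be reconciled with the normalisation $f_\opt(x_0)=0$ that I use to bound $\norm{f_\opt}_\infty$ by $R$. This is handled by observing that if $\Was_1((\mu_1)_\Phi,(\mu_2)_\Phi)>0$ then any normalised optimal $f_\opt$ automatically has $\norm{f_\opt e^{-\Phi}}_{\Lip}>0$ --- otherwise $f_\opt e^{-\Phi}\equiv f_\opt(x_0)e^{-\Phi(x_0)}=0$ forces $f_\opt\equiv0$ and hence $\Was_1((\mu_1)_\Phi,(\mu_2)_\Phi)=0$, a contradiction --- while if $\Was_1((\mu_1)_\Phi,(\mu_2)_\Phi)=0$ every asserted bound holds trivially; alternatively one sidesteps the case split entirely by bounding $\int f\,\rd(\mu_1)_\Phi-\int f\,\rd(\mu_2)_\Phi$ uniformly over $1$-Lipschitz $f$ with $f(x_0)=0$ and passing to the supremum. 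A minor recurring technicality is the transfer of $\mu_i$-a.s.\ bounds, and of the positivity of $e^{-\Phi}$, to the closed support, which uses only continuity of $e^{-\Phi}$ there.
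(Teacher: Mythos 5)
Your proof is correct and follows essentially the same route as the paper's: Kantorovich--Rubinstein duality with an attained optimal potential $f_\opt$, an algebraic decomposition of $\int f_\opt\,\rd(\mu_1)_\Phi - \int f_\opt\,\rd(\mu_2)_\Phi$ into a difference of $f_\opt e^{-\Phi}$-integrals against $\mu_1,\mu_2$ plus a correction involving $Z_{\Phi,\mu_2}-Z_{\Phi,\mu_1}$, the Lipschitz product rule \eqref{eq_Lipschitz_norm_of_product}, and a final symmetrization in $\mu_1\leftrightarrow\mu_2$ to produce the $\vee$ in the denominator. Your write-up is in two spots more careful than the paper's: you spell out the continuity upgrade that turns the $(\mu_1+\mu_2)$-a.s.\ bound $e^{-\Phi}\leq 1$ into the pointwise bound $\sup_{\supp{\mu_1+\mu_2}}e^{-\Phi}\leq 1$ used in Step 3, and you explicitly reconcile the hypothesis $0<\norm{f_\opt e^{-\Phi}}_{\Lip}<\infty$ of part (iii) with the normalisation $f_\opt(x_0)=0$ by noting that, once normalised, this hypothesis is automatic whenever $\Was_1((\mu_1)_\Phi,(\mu_2)_\Phi)>0$ and the bounds are trivial otherwise.
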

  See \Cref{section_W1bounds_prior_perturbations_proofs} for the proof of \Cref{proposition_W1_upper_bound_prior_perturbations}.
  
The statement in \cref{proposition_W1_upper_bound_prior_perturbations_normalisation_constant} concerning the existence of $f_\opt$ does not appear in \cite[Section 5]{Sprungk2020}.
In \cref{proposition_W1_upper_bound_prior_perturbations_zero_Lipschitz_constant_case} and \cref{proposition_W1_upper_bound_prior_perturbations_nonzero_Lipschitz_constant_case}, we consider the case where $\norm{f_\opt e^{-\Phi}}_{\Lip(\supp{\mu_1+\mu_2},d)}$ vanishes and the case where it is positive and finite respectively, in order to simplify the associated upper bounds and their proofs. 
The case where $\norm{f_\opt e^{-\Phi}}_{\Lip(\supp{\mu_1+\mu_2},d)}$ vanishes is not explicitly considered in \cite[Section 5]{Sprungk2020}.

The main statement of \cite[Theorem 15]{Sprungk2020} is that if $(\Theta,d)$ is a bounded metric space, $e^{-\Phi}:\Theta\to [0,1]$, and $\norm{e^{-\Phi}}_{\Lip(\Theta,d)}<\infty$, then for $\abs{\mu}_{\mathcal{P}_1}\coloneqq\inf_{x_0\in\Theta}\int d(x,x_0)\rd \mu(x)$,
 \begin{equation}
 \label{eq_W1_bound_from_Sprungk2020}
  \Was_1((\mu_1)_\Phi,(\mu_2)_\Phi)\leq \frac{1+\norm{e^{-\Phi}}_{\Lip(\Theta,d)}\diam{\Theta}}{Z_{\Phi,\mu_2}}\left(1+\norm{e^{-\Phi}}_{\Lip(\Theta,d)}\frac{\Abs{\mu_1}_{\mathcal{P}_1}}{Z_{\Phi,\mu_1}}\right)\Was_1(\mu_1,\mu_2).
 \end{equation}
 The hypothesis that $(\Theta,d)$ is bounded implies that $\abs{\mu}_{\mathcal{P}_1}\leq \diam{\Theta}$.
 By \Cref{corollary_attainment_of_supremum_in_Kantorovich_Duality}, the same hypothesis also implies that the supremum in the dual representation \eqref{eq_Kantorovich_Rubinstein_duality_formula} for $\Was_1((\mu_1)_\Phi,(\mu_2)_\Phi)$ is attained at some $f_\opt$.
 If the hypotheses of \cite[Theorem 15]{Sprungk2020} hold and $\norm{f_\opt e^{-\Phi}}_{\Lip(\Theta,d)}>0$, then by \Cref{proposition_W1_upper_bound_prior_perturbations}\ref{proposition_W1_upper_bound_prior_perturbations_nonzero_Lipschitz_constant_case},
 \begin{equation}
 \label{eq_W1_bound_for_comparison}
  \Was_1((\mu_1)_\Phi,(\mu_2)_\Phi)\leq \left(1+2\norm{e^{-\Phi}}_{\Lip(\Theta,d)}\diam{\Theta}\right)\frac{\Was_1(\mu_1,\mu_2)}{Z_{\Phi,\mu_1}\vee Z_{\Phi,\mu_2}}.
 \end{equation}
By \eqref{eq_normalisationConstant_function}, $\essinf_{\mu_i}\Phi=0$ implies that $Z_{\Phi,\mu_i}\leq 1$ for $i=1,2$.
Thus, when $(\Theta,d)$ is bounded, then \Cref{proposition_W1_upper_bound_prior_perturbations}\ref{proposition_W1_upper_bound_prior_perturbations_nonzero_Lipschitz_constant_case} can provide a tighter upper bound on $\Was_1((\mu_1)_\Phi,(\mu_2)_\Phi)$ than the bound in \cite[Theorem 15]{Sprungk2020}.
Below, we show a sufficient condition for which the prefactor of $\Was_1(\mu_1,\mu_2)$ in \eqref{eq_W1_bound_from_Sprungk2020} is larger than the prefactor of $\Was_1(\mu_1,\mu_2)$ in \eqref{eq_W1_bound_for_comparison}:
\begin{align*}
& & \diam{\Theta}\leq & \left(1+\norm{e^{-\Phi}}_{\Lip(\Theta,d)}\diam{\Theta}\right)\frac{\abs{\mu_1}_{\mathcal{P}_1}}{Z_{\Phi,\mu_1}}
\\
\Longrightarrow & & \norm{e^{-\Phi}}_{\Lip(\Theta,d) }\diam{\Theta}\leq &  \left(1+\norm{e^{-\Phi}}_{\Lip(\Theta,d)}\diam{\Theta}\right)\norm{e^{-\Phi}}_{\Lip(\Theta,d)}\frac{\abs{\mu_1}_{\mathcal{P}_1}}{Z_{\Phi,\mu_1}}
\\
\Longleftrightarrow & & 1+2\norm{e^{-\Phi}}_{\Lip(\Theta,d) }\diam{\Theta}\leq &  \left(1+\norm{e^{-\Phi}}_{\Lip(\Theta,d)}\diam{\Theta}\right)\left(1+\norm{e^{-\Phi}}_{\Lip(\Theta,d)}\frac{\abs{\mu_1}_{\mathcal{P}_1}}{Z_{\Phi,\mu_1}}\right)
\\
\Longrightarrow & & \frac{1+2\norm{e^{-\Phi}}_{\Lip(\Theta,d)}\diam{\Theta}}{Z_{\Phi,\mu_1}\vee Z_{\Phi,\mu_2}} \leq &  \frac{1+\norm{e^{-\Phi}}_{\Lip(\Theta,d)}\diam{\Theta}}{Z_{\Phi,\mu_2}}\left(1+\norm{e^{-\Phi}}_{\Lip(\Theta,d)}\frac{\abs{\mu_1}_{\mathcal{P}_1}}{Z_{\Phi,\mu_1}}\right).
\end{align*}
We conclude this section with a lower bound for the 1-Wasserstein metric between posteriors resulting from different priors.
\begin{restatable}{proposition}{WassersteinLowerboundPriorperturbations}
 \label{proposition_W1_lower_bound_prior_perturbations}
 Let $\mu_i\in\mathcal{P}_1(\Theta)$ and $\Phi\in L^0(\Theta,\R)$ be such that $(\mu_i)_\Phi\in\mathcal{P}_1(\Theta)$ for $i=1,2$.
 Then there exists a 1-Lipschitz function $g_\opt$ on $\Theta$ such that
 \begin{equation*}
  \Was_1(\mu_1,\mu_2)=Z_{\Phi,\mu_1} \Abs{\int g_\opt ~e^\Phi \rd (\mu_1)_\Phi-\int g_\opt ~ e^\Phi \rd (\mu_2)_\Phi+\int g_\opt \rd \mu_2\left(\frac{1}{Z_{\Phi,\mu_2}}-\frac{1}{Z_{\Phi,\mu_1}}\right)}.
 \end{equation*}
In particular, if $Z_{\Phi,\mu_1}=Z_{\Phi,\mu_2}$, then 
 \begin{equation*}
  \Was_1(\mu_1,\mu_2)\leq Z_{\Phi,\mu_1}\norm{g_\opt e^\Phi}_{\Lip(\supp{\mu_1+\mu_2},d)}\Was_1((\mu_1)_\Phi,(\mu_2)_\Phi),
 \end{equation*}
 where equality holds if $\norm{g_\opt e^\Phi}_{\Lip(\supp{\mu_1+\mu_2},d)}=0$.
\end{restatable}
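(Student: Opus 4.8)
The plan is to apply the Kantorovich--Rubinstein duality \eqref{eq_Kantorovich_Rubinstein_duality_formula} to $\Was_1(\mu_1,\mu_2)$ and then change the base measure from $\mu_i$ to $(\mu_i)_\Phi$ via Bayes' formula. First I would invoke attainment of the supremum in \eqref{eq_Kantorovich_Rubinstein_duality_formula} for $\mu_1,\mu_2\in\mathcal{P}_1(\Theta)$---the same auxiliary fact used to produce $f_\opt$ in \Cref{proposition_W1_upper_bound_prior_perturbations}\ref{proposition_W1_upper_bound_prior_perturbations_normalisation_constant} and the optimizer in \Cref{proposition_W1_bounds_misfit_perturbations}\ref{proposition_W1_bounds_misfit_perturbations_lower_bound}---to obtain an optimal $1$-Lipschitz potential on $\supp{\mu_1+\mu_2}$, and extend it to a $1$-Lipschitz function $g_\opt$ on all of $\Theta$ by McShane's extension theorem. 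Then $\Was_1(\mu_1,\mu_2)=\Abs{\int g_\opt\rd\mu_1-\int g_\opt\rd\mu_2}$, and since $g_\opt$ is $1$-Lipschitz and $\mu_i\in\mathcal{P}_1(\Theta)$, $g_\opt$ is $\mu_i$-integrable.

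Next I would use \eqref{eq_likelihood_function}--\eqref{eq_posterior_function}: since $(\mu_i)_\Phi\in\mathcal{P}(\Theta)$ we have $Z_{\Phi,\mu_i}\in(0,\infty)$ and $\rd\mu_i=Z_{\Phi,\mu_i}\,e^{\Phi}\rd(\mu_i)_\Phi$, hence $\int g_\opt\rd\mu_i=Z_{\Phi,\mu_i}\int g_\opt e^{\Phi}\rd(\mu_i)_\Phi$ for $i=1,2$ (in particular $g_\opt e^{\Phi}$ is $(\mu_i)_\Phi$-integrable). Substituting into the identity for $\Was_1(\mu_1,\mu_2)$ gives $\Was_1(\mu_1,\mu_2)=\Abs{Z_{\Phi,\mu_1}\int g_\opt e^{\Phi}\rd(\mu_1)_\Phi-Z_{\Phi,\mu_2}\int g_\opt e^{\Phi}\rd(\mu_2)_\Phi}$; adding and subtracting $Z_{\Phi,\mu_1}\int g_\opt e^{\Phi}\rd(\mu_2)_\Phi$ and using $Z_{\Phi,\mu_2}\int g_\opt e^{\Phi}\rd(\mu_2)_\Phi=\int g_\opt\rd\mu_2$ to identify the correction term yields the stated identity. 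This step is elementary algebra.

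For the second assertion, setting $Z_{\Phi,\mu_1}=Z_{\Phi,\mu_2}$ kills the correction term, leaving $\Was_1(\mu_1,\mu_2)=Z_{\Phi,\mu_1}\Abs{\int g_\opt e^{\Phi}\rd(\mu_1)_\Phi-\int g_\opt e^{\Phi}\rd(\mu_2)_\Phi}$. Since $\Phi$ is $\R$-valued we have $e^{-\Phi}>0$, so $\mu_i\sim(\mu_i)_\Phi$ and $\supp{(\mu_1)_\Phi+(\mu_2)_\Phi}=\supp{\mu_1+\mu_2}$. Writing $L\coloneqq\norm{g_\opt e^{\Phi}}_{\Lip(\supp{\mu_1+\mu_2},d)}$: if $0<L<\infty$ then $g_\opt e^{\Phi}/L$ is admissible in \eqref{eq_Kantorovich_Rubinstein_duality_formula} for $\Was_1((\mu_1)_\Phi,(\mu_2)_\Phi)$, and multiplying the resulting bound by $Z_{\Phi,\mu_1}L$ gives the inequality; if $L=0$ then $g_\opt e^{\Phi}$ is constant on $\supp{\mu_1+\mu_2}$, the two integrals coincide, and both sides vanish, so equality holds; if $L=\infty$ the inequality is vacuous unless $\Was_1((\mu_1)_\Phi,(\mu_2)_\Phi)=0$, in which case $(\mu_1)_\Phi=(\mu_2)_\Phi$ together with $Z_{\Phi,\mu_1}=Z_{\Phi,\mu_2}$ forces $\mu_1=\mu_2$ via $\rd\mu_i=Z_{\Phi,\mu_i}e^{\Phi}\rd(\mu_i)_\Phi$, so $\Was_1(\mu_1,\mu_2)=0$ and the bound holds trivially.

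The main obstacle is the attainment of the Kantorovich potential: on a general Polish space, for $\mu_1,\mu_2\in\mathcal{P}_1(\Theta)$, the supremum in \eqref{eq_Kantorovich_Rubinstein_duality_formula} is not obviously attained, and one must lean on an existence theorem for optimal $\Was_1$-potentials; without it one would only get the identity with a supremum rather than a single $g_\opt$. A secondary subtlety, which is why the second assertion is phrased as an inequality with a possibly infinite prefactor, is that $g_\opt e^{\Phi}$ need not be Lipschitz when $\Phi$ is merely measurable, so the bound is genuinely informative only under extra regularity (e.g.\ $e^{\Phi}\in\Lip(\supp{\mu_1+\mu_2},d)$), though it holds as stated in every case.
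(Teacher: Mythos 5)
Your proposal is correct and follows the same route as the paper's proof: invoke attainment of the Kantorovich potential (the paper's \Cref{corollary_attainment_of_supremum_in_Kantorovich_Duality}) to get $g_\opt$, change the base measure from $\mu_i$ to $(\mu_i)_\Phi$ via Bayes' formula, rearrange, and when $Z_{\Phi,\mu_1}=Z_{\Phi,\mu_2}$ normalize $g_\opt e^\Phi$ and feed it into the dual representation of $\Was_1((\mu_1)_\Phi,(\mu_2)_\Phi)$, treating the $L=0$ case separately. One small simplification: \Cref{corollary_attainment_of_supremum_in_Kantorovich_Duality} already yields a $1$-Lipschitz $g_\opt$ on all of $\Theta$, so the McShane extension step is unnecessary. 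You also spell out the $L=\infty$ case (showing that then either the inequality is vacuous or $\mu_1=\mu_2$), which the paper omits; that is a welcome bit of extra care but does not change the argument.
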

We conclude this section on 1-Wasserstein bounds with some observations.
Let $\mu\in\mathcal{P}(\Theta)$ and $R(\mu)$ be finite. If $\Phi_i\in L^\infty_\mu(\Theta,\R)$ and $\essinf_\mu\Phi_i=0$ for $i=1,2$, then by \Cref{proposition_W1_bounds_misfit_perturbations}, 
\begin{align*}
 &\frac{1}{Z_{\Phi_1,\mu}} \Abs{\int f\left(e^{-\Phi_1}-e^{-\Phi_2}\right)\rd\mu+\int f \rd\mu_{\Phi_2}(Z_{\Phi_2,\mu}-Z_{\Phi_1,\mu})} 
 \\
 &\leq \Was_1(\mu_{\Phi_1},\mu_{\Phi_2})\leq \frac{R(\mu)}{Z_{\Phi_1,\mu}\vee Z_{\Phi_2,\mu}}\left(\Norm{\Phi_1-\Phi_2}_{L^1_\mu}+\Abs{Z_{\Phi_1,\mu}-Z_{\Phi_2,\mu}}\right)
\end{align*}
for any 1-Lipschitz function $f$.
We thus obtain bounds of the type \eqref{eq_prototypical_lower_bound_misfit_perturbation} and \eqref{eq_prototypical_upper_bound_misfit_perturbation}, and also show that more concentrated posteriors are increasingly sensitive in the 1-Wasserstein metric with respect to perturbations in the misfit.
Under the additional hypothesis of conservation of evidence, i.e. under the hypothesis that $Z_{\Phi_1,\mu}=Z_{\Phi_2,\mu}$, we have
\begin{align*}
 &\frac{\exp(-2\norm{\Phi_1}_{L^\infty_\mu})\wedge \exp(-2\norm{\Phi_2}_{L^\infty_\mu})}{Z_{\Phi_1,\mu}} \frac{\norm{\Phi_1-\Phi_2}_{L^2_\mu}^{2} }{\norm{e^{-\Phi_1}-e^{-\Phi_2}}_{\Lip(\supp{\mu},d)}} 
 \\
 &\leq \Was_1(\mu_{\Phi_1},\mu_{\Phi_2})\leq \frac{R(\mu)}{Z_{\Phi_1,\mu}\vee Z_{\Phi_2,\mu}}\Norm{\Phi_1-\Phi_2}_{L^1_\mu}.
\end{align*}
This shows that the bounds in \Cref{proposition_W1_bounds_misfit_perturbations} do not suffice to show local bi-Lipschitz continuity of the misfit-to-posterior map, since the lower bound above involves the ratio $\norm{\Phi_1-\Phi_2}_{L^2_\mu}^{2} /\norm{e^{-\Phi_1}-e^{-\Phi_2}}_{\Lip(\supp{\mu},d)}$.

For the prior-to-posterior map, we consider for simplicity the setting where $\mu_i,(\mu_i)_\Phi\in\mathcal{P}_1(\Theta)$, $\essinf_{\mu_i}\Phi=0$ for $i=1,2$, $R(\mu_1+\mu_2)$ is finite, and $Z_{\Phi,\mu_1}=Z_{\Phi,\mu_2}$.
By \Cref{proposition_W1_upper_bound_prior_perturbations}\ref{proposition_W1_upper_bound_prior_perturbations_nonzero_Lipschitz_constant_case} and \Cref{proposition_W1_lower_bound_prior_perturbations},
\begin{align*}
  \frac{\Was_1(\mu_1,\mu_2)}{ Z_{\Phi,\mu_1}\norm{g_\opt e^\Phi}_{\Lip(\supp{\mu_1+\mu_2},d)}} \leq& \Was_1((\mu_1)_\Phi,(\mu_2)_\Phi)
  \\
  \leq& 
   \frac{1+\norm{e^{-\Phi}}_{\Lip(\supp{\mu_1+\mu_2},d)} R(\mu_1+\mu_2)}{Z_{\Phi,\mu_1}} \Was_1(\mu_1,\mu_2).
\end{align*}
Thus, under the stated hypotheses, the prior-to-posterior map is locally bi-Lipschitz continuous, and more concentrated posteriors are more sensitive to perturbations in the prior. 

\section{Continuity in Wasserstein metrics}
\label{section_continuity_in_Wp_metrics}

Recall the definitions \eqref{eq_Wasserstein_metric} and \eqref{eq_Wasserstein_space} of the $p$-Wasserstein metric $\Was_p(\mu,\nu)$ on the $p$-Wasserstein space $\mathcal{P}_p(\Theta)$.
The proofs of the results in this section are given in \Cref{section_continuity_in_Wp_metrics_proofs}.

We now state a result regarding the continuity of the misfit-to-posterior map in the $p$-Wasserstein distance.
\begin{restatable}{lemma}{LemmaWassersteinContinuityMisfitToPosteriorMap}
\label{lemma_Wasserstein_continuity_misfit_to_posterior_map}
Let $p\in[1,\infty)$, $\mu \in \mathcal{P}_p (\Theta)$, $\Phi$ and $(\Phi_n)_n$ be such that $\essinf_\mu\Phi=0$, $\essinf_\mu\Phi_n=0$ and $\mu_{\Phi_n}\in\mathcal{P}_p(\Theta)$ for every $n\in\N$, and $\Phi_n\to \Phi$ $\mu$-a.s. Then $Z_{\Phi_n,\mu}\to Z_{\Phi,\mu}$ and $\Was_p(\mu_{\Phi_n},\mu_{\Phi})\to 0$.
\end{restatable}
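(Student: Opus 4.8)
The plan is to reduce everything to the dominated convergence theorem together with the standard characterisation of convergence in the Wasserstein space: for a Polish space $(\Theta,d)$ one has $\Was_p(\rho_n,\rho)\to 0$ if and only if $\rho_n\to\rho$ weakly and $\int d(x_0,x)^p\rho_n(\rd x)\to\int d(x_0,x)^p\rho(\rd x)$ for some (equivalently, every) $x_0\in\Theta$; see \cite[Theorem 6.9]{Villani2009}. I would first record two consequences of the normalisation $\essinf_\mu\Phi_n=\essinf_\mu\Phi=0$: it forces $\Phi\ge 0$ and $\Phi_n\ge 0$ $\mu$-a.s., so that $0\le e^{-\Phi_n}\le 1$ and $0\le e^{-\Phi}\le 1$ $\mu$-a.s.; and it gives $\mu(\Phi<\epsilon)>0$ for every $\epsilon>0$, whence $Z_{\Phi,\mu}\in(0,1]$ as noted in \Cref{sec_notation}, and likewise $Z_{\Phi_n,\mu}\in(0,1]$ for every $n$.

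Step 1 (evidences and weak convergence). Since $\Phi_n\to\Phi$ $\mu$-a.s.\ and $\exp$ is continuous, $e^{-\Phi_n}\to e^{-\Phi}$ $\mu$-a.s.; as these are bounded by the $\mu$-integrable constant $1$, dominated convergence gives $Z_{\Phi_n,\mu}\to Z_{\Phi,\mu}$. Because $Z_{\Phi,\mu}>0$, there is $N_0\in\N$ with $Z_{\Phi_n,\mu}\ge Z_{\Phi,\mu}/2$ for all $n\ge N_0$, so the posterior densities $\tfrac{\rd\mu_{\Phi_n}}{\rd\mu}=Z_{\Phi_n,\mu}^{-1}e^{-\Phi_n}$ converge $\mu$-a.s.\ to $\tfrac{\rd\mu_\Phi}{\rd\mu}=Z_{\Phi,\mu}^{-1}e^{-\Phi}$ and, for $n\ge N_0$, are dominated by the $\mu$-integrable constant $2/Z_{\Phi,\mu}$. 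A second application of dominated convergence then yields $\tfrac{\rd\mu_{\Phi_n}}{\rd\mu}\to\tfrac{\rd\mu_\Phi}{\rd\mu}$ in $L^1_\mu$, i.e.\ $d_{\TV}(\mu_{\Phi_n},\mu_\Phi)\to 0$, which in particular implies $\mu_{\Phi_n}\to\mu_\Phi$ weakly.

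Step 2 ($p$-th moments) and conclusion. Fix $x_0\in\Theta$. Since $\mu\in\mathcal{P}_p(\Theta)$, the map $x\mapsto d(x_0,x)^p$ lies in $L^1_\mu$, so in particular $\int d(x_0,x)^p\rd\mu_\Phi=Z_{\Phi,\mu}^{-1}\int d(x_0,x)^pe^{-\Phi}\rd\mu<\infty$, i.e.\ $\mu_\Phi\in\mathcal{P}_p(\Theta)$. Because $d(x_0,x)^pe^{-\Phi_n(x)}\to d(x_0,x)^pe^{-\Phi(x)}$ $\mu$-a.s.\ and is dominated by $d(x_0,x)^p\in L^1_\mu$, dominated convergence together with $Z_{\Phi_n,\mu}\to Z_{\Phi,\mu}>0$ gives $\int d(x_0,x)^p\rd\mu_{\Phi_n}=Z_{\Phi_n,\mu}^{-1}\int d(x_0,x)^pe^{-\Phi_n}\rd\mu\to\int d(x_0,x)^p\rd\mu_\Phi$. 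Combining the weak convergence from Step 1 with this convergence of $p$-th moments and invoking \cite[Theorem 6.9]{Villani2009} gives $\Was_p(\mu_{\Phi_n},\mu_\Phi)\to 0$. I expect the only genuinely non-elementary ingredient to be that characterisation of $\mathcal{P}_p$-convergence; everything else is dominated convergence, and the two structural hypotheses that make it run are $\mu\in\mathcal{P}_p(\Theta)$, which supplies the majorant $d(x_0,\cdot)^p$ and thereby prevents escape of mass to infinity, and $\essinf_\mu\Phi_n=\essinf_\mu\Phi=0$, which furnishes the uniform bound $e^{-\Phi_n}\le 1$ and the strict positivity $Z_{\Phi,\mu}>0$.
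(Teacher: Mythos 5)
Your proof is correct, and its overall strategy coincides with the paper's: both establish $Z_{\Phi_n,\mu}\to Z_{\Phi,\mu}$ by dominated convergence, then establish weak convergence $\mu_{\Phi_n}\rightharpoonup\mu_\Phi$ together with convergence of the $p$-th moments, and conclude by \cite[Theorem~6.9]{Villani2009}. The one place you diverge is in how weak convergence is obtained: the paper tests directly against arbitrary $f\in C_b(\Theta)$ and applies dominated convergence with the majorant $|f|$, whereas you instead show $L^1_\mu$ convergence of the posterior densities (using the eventual lower bound $Z_{\Phi_n,\mu}\ge Z_{\Phi,\mu}/2$ to get a constant majorant) and thereby obtain $d_{\TV}(\mu_{\Phi_n},\mu_\Phi)\to 0$, which implies weak convergence. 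Your route costs essentially nothing extra and yields a strictly stronger intermediate conclusion (total-variation convergence rather than just weak convergence); the paper's route is marginally more self-contained in that it does not need the eventual positivity $Z_{\Phi_n,\mu}\ge Z_{\Phi,\mu}/2$ at that stage, though of course it uses it implicitly when dividing by $Z_{\Phi_n,\mu}$. Both arguments then run the same dominated-convergence step for $\int d(x_0,\cdot)^p\,\rd\mu_{\Phi_n}$ with majorant $d(x_0,\cdot)^p\in L^1_\mu$, so your Step~2 matches the paper's.
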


Given \Cref{lemma_Wasserstein_continuity_misfit_to_posterior_map}, it is natural to ask about the continuity of the inverse of the misfit-to-posterior map.
By \Cref{lemma_noninjectivity_of_dataMisfit_maps}\ref{item_lemma_noninjectivity_of_dataMisfit_maps_noninjective}, the misfit-to-evidence map is not injective on $L^p_\mu$ for any $\mu\in\mathcal{P}(\Theta)$ if $\supp{\mu}$ is uncountable or if $\card{\supp{\mu}}\geq 3$.
Thus, the misfit-to-posterior map is not injective on $L^p_\mu$ under the same conditions.
Nevertheless, we can describe the continuity of the inverse of the misfit-to-posterior map under certain conditions.
\begin{restatable}{lemma}{LemmaWassersteinContinuityPosteriorToMisfitMap}
 \label{lemma_Wasserstein_continuity_posterior_to_misfit_map}
 Let $p\in[1,\infty)$, $\mu\in\mathcal{P}_p(\Theta)$, and $\Phi,\Phi_n\in C(\Theta,\R_{\geq 0})$ for every $n\in\N$. Then $\mu_\Phi$ and $(\mu_{\Phi_n})_n$ belong to $\mathcal{P}_p(\Theta)$. If $\Was_p(\mu_{\Phi_n},\mu_\Phi)\to 0$, then there exists a subsequence $(\ell_{\Phi_{n_k},\mu})_{k\in\N}$ of $(\ell_{\Phi_n,\mu})_{n\in\N}$ such that $(\ell_{\Phi_{n_k},\mu})_{k\in\N}$ converges to $\ell_{\Phi,\mu}$ in $L^2_\mu(\Theta,\R)$.
 If in addition $\Phi,\Phi_n\in C_b(\Theta,\R_{\geq 0})$ are uniformly bounded and $Z_{\Phi_n,\mu}=Z_{\Phi,\mu}$ for every $n\in\N$, then there exists a subsequence $(\Phi_{n_k})_{k\in\N}$ of $(\Phi_n)_{n\in\N}$ such that $(\Phi_{n_k})_{k\in\N}$ converges to $\Phi$ in $L^2_\mu(\Theta,\R)$.
\end{restatable}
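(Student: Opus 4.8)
The plan is to establish the asserted $\mathcal{P}_p$-membership, then convert $p$-Wasserstein convergence of the posteriors into weak convergence, extract a subsequence along which the likelihoods converge strongly in $L^2_\mu$, and finally bootstrap that to convergence of the misfits under the additional hypotheses. For the membership claim, since $\Phi$ and each $\Phi_n$ are continuous and $\R_{\geq 0}$-valued, $e^{-\Phi}$ and $e^{-\Phi_n}$ take values in $(0,1]$, so $Z_{\Phi,\mu},Z_{\Phi_n,\mu}\in(0,1]$; in particular the posteriors are well-defined probability measures and $\ell_{\Phi,\mu}\le Z_{\Phi,\mu}^{-1}$, $\ell_{\Phi_n,\mu}\le Z_{\Phi_n,\mu}^{-1}$ $\mu$-a.s. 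Hence, for an arbitrary $x_0$,
$$\abs{\mu_\Phi}_{\mathcal{P}_p}^p=\int d(x_0,x)^p\,\ell_{\Phi,\mu}(x)\,\mu(\rd x)\le\frac{1}{Z_{\Phi,\mu}}\abs{\mu}_{\mathcal{P}_p}^p<\infty,$$
using $\mu\in\mathcal{P}_p(\Theta)$, and the same bound holds with $\Phi$ replaced by $\Phi_n$; thus $\mu_\Phi,\mu_{\Phi_n}\in\mathcal{P}_p(\Theta)$.

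Next I would invoke the standard fact that $\Was_p(\mu_{\Phi_n},\mu_\Phi)\to0$ implies weak convergence $\mu_{\Phi_n}\Rightarrow\mu_\Phi$ (together with convergence of $p$-th moments); see e.g. \cite[Theorem 6.9]{Villani2009}. In particular $\int g\,\ell_{\Phi_n,\mu}\,\rd\mu\to\int g\,\ell_{\Phi,\mu}\,\rd\mu$ for every $g\in C_b(\Theta,\R)$. Since $\Phi\in C(\Theta,\R_{\geq 0})$, the density $\ell_{\Phi,\mu}=e^{-\Phi}/Z_{\Phi,\mu}$ is itself bounded and continuous, so testing against $g=\ell_{\Phi,\mu}$ gives $\int\ell_{\Phi_n,\mu}\,\ell_{\Phi,\mu}\,\rd\mu\to\norm{\ell_{\Phi,\mu}}_{L^2_\mu}^2$. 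Expanding the square, $\norm{\ell_{\Phi_n,\mu}-\ell_{\Phi,\mu}}_{L^2_\mu}^2=\norm{\ell_{\Phi_n,\mu}}_{L^2_\mu}^2-\norm{\ell_{\Phi,\mu}}_{L^2_\mu}^2+o(1)$, so it suffices to extract a subsequence along which $\norm{\ell_{\Phi_{n_k},\mu}}_{L^2_\mu}\to\norm{\ell_{\Phi,\mu}}_{L^2_\mu}$. Note that Cauchy--Schwarz together with the preceding limit already yields $\norm{\ell_{\Phi,\mu}}_{L^2_\mu}\le\liminf_n\norm{\ell_{\Phi_n,\mu}}_{L^2_\mu}$, so only the matching upper estimate is genuinely at issue.

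I expect this last point to be the main obstacle: weak convergence of $\mu_{\Phi_n}$ on its own does not control $\limsup_n\norm{\ell_{\Phi_n,\mu}}_{L^2_\mu}$, and one must exploit the continuity of the $\Phi_n$ (and, for the second statement, their uniform boundedness) to rule out energy of the densities escaping to fast oscillations. The route I would try is to pass to a subsequence along which $Z_{\Phi_n,\mu}$ converges to some $z\in[0,1]$; if $z>0$ then $\ell_{\Phi_n,\mu}\le z^{-1}+o(1)$ furnishes a uniform $L^\infty_\mu$ bound, hence equi-integrability of the squared densities, which combined with a tightness/compactness argument on $\supp\mu$ yields a further $\mu$-a.e.\ convergent subsequence and then the conclusion by dominated convergence; if $z=0$ then $e^{-\Phi_n}\to0$ in $L^1_\mu$, forcing $\Phi_n\to+\infty$ $\mu$-a.s.\ along a subsequence, whence the claim follows directly from $\mu_{\Phi_n}\Rightarrow\mu_\Phi$. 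Under the uniform-boundedness hypothesis of the second statement one has $\ell_{\Phi_n,\mu}\le e^{M}/Z_{\Phi,\mu}$ outright (with $0\le\Phi_n\le M$), so the first case applies without the preliminary extraction.

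Finally, for the second statement assume in addition $0\le\Phi,\Phi_n\le M$ and $Z_{\Phi_n,\mu}=Z_{\Phi,\mu}$ for every $n$. Starting from a subsequence with $\ell_{\Phi_{n_k},\mu}\to\ell_{\Phi,\mu}$ in $L^2_\mu$, pass to a further subsequence along which the convergence holds $\mu$-a.s.; then, by the conserved-evidence hypothesis, $e^{-\Phi_{n_k}}=Z_{\Phi_{n_k},\mu}\,\ell_{\Phi_{n_k},\mu}=Z_{\Phi,\mu}\,\ell_{\Phi_{n_k},\mu}\to Z_{\Phi,\mu}\,\ell_{\Phi,\mu}=e^{-\Phi}$ $\mu$-a.s., so $\Phi_{n_k}\to\Phi$ $\mu$-a.s. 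Since $0\le\Phi_{n_k}\le M$ and the constant $M$ belongs to $L^2_\mu$ (as $\mu$ is a probability measure), dominated convergence gives $\Phi_{n_k}\to\Phi$ in $L^2_\mu$, which completes the argument.
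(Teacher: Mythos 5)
You establish the $\mathcal{P}_p$-membership correctly, and your reduction of the first claim to showing $\Norm{\ell_{\Phi_n,\mu}}_{L^2_\mu}\to\Norm{\ell_{\Phi,\mu}}_{L^2_\mu}$ along a subsequence, obtained by testing weak convergence against $g=\ell_{\Phi,\mu}\in C_b(\Theta,\R)$, is clean; you are also right to flag this as the real difficulty. The resolution you propose in the case $Z_{\Phi_n,\mu}\to z>0$ does not go through, however. A uniform $L^\infty_\mu$-bound on the likelihoods together with tightness of the posteriors yields at best weak-$L^2_\mu$ compactness, not the $\mu$-a.e.\ convergent subsequence you invoke, and without some equicontinuity or bounded-variation hypothesis on $(\Phi_n)_n$ there is no extraction principle that supplies one. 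Concretely, take $\Theta=[0,1]$ with the Euclidean metric, $\mu$ the Lebesgue measure, $\Phi\equiv 0$, and $\Phi_n(x)\coloneqq -\log\bigl(\tfrac{1}{2}\bigl(1+\tfrac{1}{2}\sin(2\pi n x)\bigr)\bigr)\in C(\Theta,\R_{\geq 0})$. Then $Z_{\Phi_n,\mu}=\tfrac{1}{2}$, $\ell_{\Phi_n,\mu}(x)=1+\tfrac{1}{2}\sin(2\pi n x)$, $\mu_{\Phi_n}\rightharpoonup\mu_\Phi=\mu$ by the Riemann--Lebesgue lemma, and $\Was_p(\mu_{\Phi_n},\mu_\Phi)\to 0$ since $\Theta$ is compact; yet $\Norm{\ell_{\Phi_n,\mu}-\ell_{\Phi,\mu}}_{L^2_\mu}=1/(2\sqrt{2})$ for every $n$, so no subsequence of $(\ell_{\Phi_n,\mu})_n$ converges to $\ell_{\Phi,\mu}$ in $L^2_\mu$.

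This example sits squarely in your case $z>0$ and blocks the extraction step; more seriously, it appears to falsify the first assertion of the lemma as stated. The paper's own proof is not immune either: from the fact that $\int(\ell_{\Phi_m,\mu}-\ell_{\Phi,\mu})(\ell_{\Phi_n,\mu}-\ell_{\Phi,\mu})\,\rd\mu\to 0$ as $n\to\infty$ for each fixed $m$, it infers the existence of $n_k$ with $\Norm{\ell_{\Phi_{n_k},\mu}-\ell_{\Phi,\mu}}_{L^2_\mu}\leq a_k$, i.e.\ that $\liminf_n\Norm{\ell_{\Phi_n,\mu}-\ell_{\Phi,\mu}}_{L^2_\mu}=0$. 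That implication fails whenever the differences $\ell_{\Phi_n,\mu}-\ell_{\Phi,\mu}$ form a bounded orthogonal family, which is exactly what the example exhibits. Your final paragraph, deducing $L^2_\mu$-convergence of the misfits from $L^2_\mu$-convergence of the likelihoods under conserved evidence and uniform boundedness, is sound and closely parallels the paper's inequality obtained from the local Lipschitz continuity of the exponential function; but it inherits the defect of the first claim.
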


Continuity of the prior-to-posterior map in the $p$-Wasserstein metric was shown in \cite[Lemma 16]{Sprungk2020}. Below we show that under certain conditions, the inverse of this map is also continuous in the $p$-Wasserstein metric.
\begin{restatable}{lemma}{LemmaWassersteinContinuityPosteriorToPriorMap}
\label{lemma_Wasserstein_continuity_posterior_to_prior_map}
Let $p\in[1,\infty)$, $\mu_n, \mu \in \mathcal{P}_p(\Theta)$ for every $n\in\N$, and $\Phi\in C_b(\Theta,\R_{\geq 0})$. If $\Was_p((\mu_n)_\Phi,\mu_{\Phi})\to 0$, then $Z_{\Phi,\mu_n}\to Z_{\Phi,\mu}$ and $\Was_p(\mu_n,\mu)\to 0$.
\end{restatable}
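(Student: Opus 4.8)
The plan is to recover the prior from the posterior by inverting the reweighting by the normalised likelihood, and then transport the convergence. Since $\Phi\in C_b(\Theta,\R_{\geq 0})$, the function $e^{\Phi}$ is continuous, bounded above by $e^{\norm{\Phi}_{\infty,\Theta}}$ and bounded below by $1$; consequently $Z_{\Phi,\mu_n},Z_{\Phi,\mu}\in[e^{-\norm{\Phi}_{\infty,\Theta}},1]\subset\R_{>0}$, the posteriors $(\mu_n)_\Phi$ and $\mu_\Phi$ are well-defined probability measures, and $\mu_n\sim(\mu_n)_\Phi$. From \eqref{eq_likelihood_function} and \eqref{eq_posterior_function} one has $\mu_n(\rd\theta)=Z_{\Phi,\mu_n}e^{\Phi(\theta)}(\mu_n)_\Phi(\rd\theta)$, and integrating the constant function $1$ against this identity yields $Z_{\Phi,\mu_n}=\bigl(\int e^{\Phi}\rd(\mu_n)_\Phi\bigr)^{-1}$; the same identities hold with $\mu_n$ replaced by $\mu$.

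First I would prove $Z_{\Phi,\mu_n}\to Z_{\Phi,\mu}$. The hypothesis $\Was_p((\mu_n)_\Phi,\mu_\Phi)\to 0$ implies weak convergence $(\mu_n)_\Phi\to\mu_\Phi$, so testing against the bounded continuous function $e^{\Phi}$ gives $\int e^{\Phi}\rd(\mu_n)_\Phi\to\int e^{\Phi}\rd\mu_\Phi$. Both integrals lie in $[1,e^{\norm{\Phi}_{\infty,\Theta}}]$, so passing to reciprocals gives $Z_{\Phi,\mu_n}\to Z_{\Phi,\mu}$.

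For $\Was_p(\mu_n,\mu)\to 0$ I would use the standard characterisation that convergence in $\Was_p$ on $\mathcal{P}_p(\Theta)$ is equivalent to weak convergence together with convergence of the $p$-th moments, equivalently to convergence of $\int\phi\rd\nu_n$ for every $\phi\in C(\Theta)$ of at most $p$-th order growth, i.e. with $\abs{\phi(x)}\leq C(1+d(x_0,x)^p)$ for some $C$; see \cite[Theorem 6.9]{Villani2009}. For any such $\phi$, the product $\phi e^{\Phi}$ is again continuous of at most $p$-th order growth, because $e^{\Phi}$ is bounded; hence $\int\phi e^{\Phi}\rd(\mu_n)_\Phi\to\int\phi e^{\Phi}\rd\mu_\Phi$ by the hypothesis on the posteriors. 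Combining this with $Z_{\Phi,\mu_n}\to Z_{\Phi,\mu}$ and the reweighting identity $\int\phi\rd\mu_n=Z_{\Phi,\mu_n}\int\phi e^{\Phi}\rd(\mu_n)_\Phi$ gives $\int\phi\rd\mu_n\to Z_{\Phi,\mu}\int\phi e^{\Phi}\rd\mu_\Phi=\int\phi\rd\mu$, which is the desired convergence.

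The main obstacle is the $p$-th moment convergence as opposed to mere weak convergence: the natural test function $d(x_0,\cdot)^p e^{\Phi}$ is unbounded, so weak convergence of the posteriors alone does not suffice. The boundedness of $e^{\Phi}$, guaranteed by $\Phi\in C_b$, is exactly what keeps $\phi e^{\Phi}$ inside the growth class for which $\Was_p$-convergence of the posteriors supplies test-function convergence, and the lower bound $e^{\Phi}\geq 1$ coming from $\Phi\geq 0$ is what keeps the evidence terms bounded away from zero so that taking reciprocals is harmless; without either hypothesis the argument breaks down, consistent with the noninjectivity phenomena recorded in \Cref{lemma_noninjectivity_of_prior_maps}.
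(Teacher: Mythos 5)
Your proof is correct and follows essentially the same route as the paper's: invert the reweighting $\mu_n(\rd\theta)=Z_{\Phi,\mu_n}e^{\Phi}(\mu_n)_\Phi(\rd\theta)$, obtain $Z_{\Phi,\mu_n}\to Z_{\Phi,\mu}$ by testing against the bounded continuous $e^{\Phi}$, and then upgrade to $\Was_p$-convergence via \cite[Theorem 6.9]{Villani2009}. The only cosmetic difference is that you invoke Definition 6.8(ii) directly for the whole class of test functions with $p$-th order growth, whereas the paper splits the same argument into weak convergence (bounded continuous $f$) plus $p$-th moment convergence (the single unbounded test function $d(x_0,\cdot)^p$), i.e. Definition 6.8(i); the two are equivalent and the substance of the argument is identical.
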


\section{Conclusion}
\label{sec_conclusion}

In this paper, we have revisited the bounds and ideas relating to local Lipschitz stability results for the misfit-to-posterior and prior-to-posterior maps that were presented in \cite{Sprungk2020}.
In addition, we investigated lower bounds on the metrics or divergences between the priors. The motivation for doing so is to better understand the extent to which one can consider the upper bounds to be sharp with respect to the magnitude of the perturbations, in the sense that up to prefactors, the same function of the perturbations appears in both the upper and lower bounds.
In particular, for the misfit-to-posterior map, we seek one function $\mathcal{F}_M$ such that we may set $\mathcal{U}_M\leftarrow \mathcal{F}_M$ and $\mathcal{L}_M\leftarrow\mathcal{F}_M$ in \eqref{eq_prototypical_upper_bound_misfit_perturbation} and \eqref{eq_prototypical_lower_bound_misfit_perturbation}.
Similarly, for the prior-to-posterior map, we seek one function $\mathcal{F}_P$ such that we may set $\mathcal{U}_P\leftarrow \mathcal{F}_P$ and $\mathcal{L}_P\leftarrow\mathcal{F}_P$ in \eqref{eq_prototypical_upper_bound_prior_perturbation} and \eqref{eq_prototypical_lower_bound_prior_perturbation}.

The bounds that we have obtained above show that for the misfit-to-posterior map, the goal of finding sharp upper bounds is possible when using the total variation metric and the Hellinger metric; in these cases $\mathcal{F}_M$ is an $L^1$ and $L^2$ norm of the difference of the log-likelihoods respectively. We leave the problem of determining sharp upper bounds for the Kullback--Leibler divergence and 1-Wasserstein metric for future work.

For the prior-to-posterior map, we found that if the evidence is conserved, i.e. if $Z_{\Phi,\mu_1}=Z_{\Phi,\mu_2}$, then $\mathcal{F}_P$ is given by the same metric that is used to measure the difference in the posterior, for the case of the total variation metric, the Hellinger metric, and the 1-Wasserstein metric. 
We leave the problem of determining an analogous result for the Kullback--Leibler divergence for future work.

Our results complement those of \cite{Sprungk2020} concerning the increasing sensitivity of posteriors to misfit or prior perturbations as posteriors become more concentrated.
In particular, while the latter show that posteriors can become more sensitive with respect to misfit or prior perturbations as posteriors become more concentrated, our results---specifically, our lower bounds---prove that posteriors in fact must become more sensitive to misfit or prior perturbations as they become more concentrated. 

\section*{Acknowledgements}

The research of NC has received funding from the ERC under the European Union’s Horizon 2020 Research and Innovation Programme --- Grant Agreement ID \href{https://cordis.europa.eu/project/id/818473}{818473}. The research of HCL has been partially funded by the Deutsche Forschungsgemeinschaft (DFG) --- Project-ID \href{https://gepris.dfg.de/gepris/projekt/318763901}{318763901} --- SFB1294.

\bibliographystyle{amsplain}
\bibliography{biblio}

\providecommand{\bysame}{\leavevmode\hbox to3em{\hrulefill}\thinspace}
\providecommand{\MR}{\relax\ifhmode\unskip\space\fi MR }
% \MRhref is called by the amsart/book/proc definition of \MR.
\providecommand{\MRhref}[2]{%
  \href{http://www.ams.org/mathscinet-getitem?mr=#1}{#2}
}
\providecommand{\href}[2]{#2}
\begin{thebibliography}{10}

\bibitem{Altekrueger2023}
Fabian Altekr{\"u}ger, Paul Hagemann, and Gabriele Steidl, \emph{Conditional
  generative models are provably robust: Pointwise guarantees for {B}ayesian
  inverse problems}, Trans. Mach. Learn. Res. (2023), 1--20.

\bibitem{Ambrosio2021}
Luigi Ambrosio, Elia Bru{\'e}, and Daniele Semola, \emph{Lectures on optimal
  transport}, Unitext, vol. 130, Springer, 2021.

\bibitem{Birge2004}
Lucien Birg{\'e}, \emph{{Model selection for Gaussian regression with random
  design}}, Bernoulli \textbf{10} (2004), no.~6, 1039 -- 1051.

\bibitem{Cvetkovic2024}
Nada Cvetkovi\'{c}, Han~Cheng Lie, Harshit Bansal, and Karen Veroy,
  \emph{Choosing observation operators to mitigate model error in {B}ayesian
  inverse problems}, SIAM/ASA J. Uncertain. Quantif. \textbf{12} (2024), no.~3,
  723--758.

\bibitem{Duong2023}
Duc-Lam Duong, Tapio Helin, and Jose~Rodrigo Rojo-Garcia, \emph{Stability
  estimates for the expected utility in {B}ayesian optimal experimental
  design}, Inverse Problems \textbf{39} (2023), no.~12, 125008.

\bibitem{Garbunoinigo2023}
Alfredo Garbuno-Inigo, Tapio Helin, Franca Hoffmann, and Bamdad Hosseini,
  \emph{Bayesian posterior perturbation analysis with integral probability
  metrics}, 2023, arXiv:2303.01512.

\bibitem{GhosalvanderVaart2017}
Subhashis {Ghosal} and Aad {van der Vaart}, \emph{{Fundamentals of
  Nonparametric Bayesian Inference}}, vol.~44, Cambridge University Press,
  2017.

\bibitem{GineNickl2016}
Evarist Gin{\'e} and Richard Nickl, \emph{Mathematical foundations of
  infinite-dimensional statistical models}, Camb. Ser. Stat. Probab. Math.,
  vol.~40, Cambridge University Press, 2016.

\bibitem{Habeck2020}
Michael Habeck, Daniel Rudolf, and Bj{\"o}rn Sprungk, \emph{{Stability of
  doubly-intractable distributions}}, Electron. Commun. Probab. \textbf{25}
  (2020), 1 -- 13.

\bibitem{Latz2023}
Jonas Latz, \emph{Bayesian inverse problems are usually well-posed}, SIAM Rev.
  \textbf{65} (2023), no.~3, 831--865.

\bibitem{LieSullivanTeckentrup2018}
Han~Cheng Lie, T.J. Sullivan, and Aretha Teckentrup, \emph{Random forward
  models and log-likelihoods in {B}ayesian inverse problems}, SIAM/ASA J.
  Uncertain. Quantif. \textbf{6} (2018), no.~4, 1600--1629.

\bibitem{Monard2021}
Fran{\c{c}}ois Monard, Richard Nickl, and Gabriel~P. Paternain,
  \emph{{Statistical guarantees for {B}ayesian uncertainty quantification in
  nonlinear inverse problems with Gaussian process priors}}, Ann. Statist.
  \textbf{49} (2021), no.~6, 3255 -- 3298.

\bibitem{Mucke2023}
Nikolaj~T. M\"{u}cke, Benjamin Sanderse, Sander~M. Boht\'{e}, and Cornelis~W.
  Oosterlee, \emph{Markov chain generative adversarial neural networks for
  solving {B}ayesian inverse problems in physics applications}, Comput. Math.
  Appl. \textbf{147} (2023), 278--299.

\bibitem{Nickl2023}
Richard Nickl, \emph{{B}ayesian non-linear statistical inverse problems},
  Zurich Lectures in Advanced Mathematics, European Mathematical Society Press,
  Berlin, Germany, 2023.

\bibitem{Sprungk2020}
Bj\"{o}rn Sprungk, \emph{On the local {L}ipschitz stability of {B}ayesian
  inverse problems}, Inverse Problems \textbf{36} (2020), 055015.

\bibitem{Stuart2010}
Andrew~M. Stuart, \emph{Inverse problems: A {B}ayesian perspective}, Acta
  Numerica \textbf{19} (2010), 451–559.

\bibitem{Teckentrup2018}
Andrew~M. Stuart and Aretha~L. Teckentrup, \emph{Posterior consistency for
  {G}aussian process approximations of {B}ayesian posterior distributions},
  Math. Comput. \textbf{87} (2018), no.~310, 721--753.

\bibitem{Tang2024}
Wenpin Tang and Xun~Yu Zhou, \emph{Regret of exploratory policy improvement and
  $q$-learning}, 2024, arXiv:2411.01302.

\bibitem{Villani2009}
C{\'e}dric Villani, \emph{Optimal transport. {Old} and new}, Grundlehren Math.
  Wiss., vol. 338, Springer, 2009.

\end{thebibliography}

\appendix

\section{Auxiliary results}
\label{section_auxiliary_results}

\begin{lemma}
 \label{lemma_esssup_essinf_different_measures_different_functions}
\begin{enumerate}
 \item \label{item_esssup_essinf_different_measures}
Let $\mu,\nu\in\mathcal{P}(\Theta)$ with $\mu\ll \nu$. Then for every $f\in L^0(\Theta,\R)$, $\esssup_{\mu}f \leq \esssup_\nu f$ and $\essinf_{\mu} f\geq \essinf_\nu f$.
\item \label{item_esssup_essinf_different_functions}
Let $\Phi,\Psi\in L^0(\Theta,\R)$. Then $\esssup_\mu(\Phi+\Psi)\leq \esssup_\mu\Phi+\esssup_\mu \Psi$ and $\essinf_\mu(\Phi+\Psi)\geq \essinf_\mu \Phi+\essinf_\mu\Psi$.
\end{enumerate}
\end{lemma}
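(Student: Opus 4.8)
The plan is to reduce everything to the definition of the essential supremum as an infimum over almost-sure upper bounds, namely $\esssup_\mu f=\inf\{M\in\R\ :\ \mu(f>M)=0\}$ (with the usual convention that the infimum of the empty set is $+\infty$), and likewise for $\nu$. Since $\essinf_\mu f=-\esssup_\mu(-f)$, it suffices to prove the two inequalities for the essential supremum and then deduce the essential-infimum inequalities by applying them to $-f$ (for part \ref{item_esssup_essinf_different_measures}) and to $-\Phi,-\Psi$ (for part \ref{item_esssup_essinf_different_functions}).

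For part \ref{item_esssup_essinf_different_measures}, I would use that $\mu\ll\nu$ means every $\nu$-null set is $\mu$-null. Hence for any $M\in\R$, if $\nu(f>M)=0$ then $\mu(f>M)=0$, so $\{M\ :\ \nu(f>M)=0\}\subseteq\{M\ :\ \mu(f>M)=0\}$; taking the infimum over the larger set can only decrease the value, giving $\esssup_\mu f\leq\esssup_\nu f$. Then $\essinf_\mu f=-\esssup_\mu(-f)\geq-\esssup_\nu(-f)=\essinf_\nu f$. For part \ref{item_esssup_essinf_different_functions}, set $a\coloneqq\esssup_\mu\Phi$ and $b\coloneqq\esssup_\mu\Psi$; if either equals $+\infty$ the inequality is vacuous, so assume both are finite. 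Then $\Phi\leq a$ holds $\mu$-a.s.\ and $\Psi\leq b$ holds $\mu$-a.s., and the intersection of two sets of full $\mu$-measure again has full $\mu$-measure, so $\Phi+\Psi\leq a+b$ holds $\mu$-a.s., whence $\esssup_\mu(\Phi+\Psi)\leq a+b$. The essential-infimum inequality follows by the same negation trick.

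There is essentially no obstacle here: the only care needed is the bookkeeping for the degenerate cases where an essential supremum is $+\infty$ or an essential infimum is $-\infty$, in which the inequalities are trivially true, and the observation that when $\esssup_\mu f$ is finite it is itself an almost-sure upper bound for $f$, since $\{f>\esssup_\mu f\}=\bigcup_{n\in\N}\{f>\esssup_\mu f+1/n\}$ is a countable union of $\mu$-null sets. This fact is what justifies writing "$\Phi\leq a$ $\mu$-a.s." in the argument for part \ref{item_esssup_essinf_different_functions}.
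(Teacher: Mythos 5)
Your proof is correct and follows essentially the same route as the paper: reduce to the definition of the essential supremum as an infimum of almost-sure upper bounds, observe that $\mu\ll\nu$ turns $\nu$-null sets into $\mu$-null sets (for part (i)), and use that the essential supremum is itself an almost-sure upper bound (for part (ii)). The paper proves the infimum inequalities by symmetric arguments rather than your negation trick $\essinf_\mu f=-\esssup_\mu(-f)$, and does not spell out the degenerate $\pm\infty$ cases or the countable-union justification you give, but these are presentational differences, not a different method.
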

\begin{proof}[Proof of \Cref{lemma_esssup_essinf_different_measures_different_functions}]
 Proof of \cref{item_esssup_essinf_different_measures}: let $f\in L^0(\Theta,\R)$ and $\nu\in\mathcal{P}(\Theta)$. Then
 \begin{equation*}
  \esssup_\nu f\coloneqq \inf\{M\in\R\ :\ \nu(f>M)=0 \},\quad \essinf_\nu f\coloneqq \sup\{M\in\R\ :\ \nu(f<M)=0 \}.
 \end{equation*}
 In particular, $\nu(f>\esssup_\nu f)=0$ and $\nu(f<\essinf_\nu f)=0$.
Thus, if $\mu\ll \nu$, then $\mu(f>\esssup_\nu f)=0$ and $\mu(f<\essinf_\nu f)=0$. 
By definition of the infimum and supremum, $\mu(f>\esssup_\nu f)=0$ and $\mu(f<\essinf_\nu f)=0$ imply $\esssup_{\mu}f \leq \esssup_\nu f$ and $\essinf_{\mu} f\geq \essinf_\nu f$ respectively. 
This proves \Cref{item_esssup_essinf_different_measures}.

Proof of \cref{item_esssup_essinf_different_functions}: by definition, $\esssup_\mu (\Phi+\Psi)=\inf\{M\in\R\ :\ \mu(\Phi+\Psi>M)=0\}$ and $\mu(\Phi+\Psi>\esssup_\mu(\Phi+\Psi))=0$. 
Since $\mu(\Phi>\esssup_\mu\Phi)=0$ and $\mu(\Psi>\esssup_\mu\Psi)=0$, it follows that $\mu(\Phi+\Psi>\esssup_\mu\Phi+\esssup_\mu\Psi)=0$. Thus, by definition of the infimum, $\esssup_\mu(\Phi+\Psi)\leq \esssup_\mu\Phi+\esssup_\mu \Psi$. A similar argument shows that $\essinf_\mu(\Phi+\Psi)\geq \essinf_\mu \Phi+\essinf_\mu\Psi$.
\end{proof}

Recall that $(\Theta,\Sigma)$ denotes the measurable space of admissible parameters.
\begin{lemma}[Pointwise bounds on roots of likelihoods by local Lipschitz continuity]
\label{lemma_preliminary_bounds_on_pth_root_likelihoods}
Let $p>0$, $\mu\in\mathcal{P}(\Theta)$, and $A\in\Sigma$ satisfy $\mu(A)>0$.
\begin{enumerate}
 \item \label{item_lemma_preliminary_bounds_on_pth_root_likelihoods_upper_bound}
 If $\Phi_i\in L^0(\Theta,\R)$ and $\essinf_\mu \Phi_i \mathbb{I}_A>-\infty$ for $i=1,2$, then for $\mu$-a.e. $\theta\in A$,
 \begin{align*}
   &\abs{\ell_{\Phi_1,\mu}^{1/p}-\ell_{\Phi_2,\mu}^{1/p}}(\theta)
   \\
   \leq&  \left(\frac{\exp(-\tfrac{1}{p}\essinf_\mu\Phi_1\mathbb{I}_A)}{Z_{\Phi_1,\mu}^{1/p}}\vee \frac{\exp(-\tfrac{1}{p}\essinf_\mu\Phi_2\mathbb{I}_A)}{Z_{\Phi_2,\mu}^{1/p}}\right)\frac{\Abs{\log\ell_{\Phi_1,\mu}-\log\ell_{\Phi_2,\mu}}}{p}(\theta).
 \end{align*}

\item \label{item_lemma_preliminary_bounds_on_pth_root_likelihoods_lower_bound}
If $\Phi_i\in L^0(\Theta,\R)$ satisfies $\esssup_\mu \Phi_i \mathbb{I}_A<\infty$ for $i=1,2$, then for $\mu$-a.e. $\theta\in A$,
\begin{align*}
  &\abs{\ell_{\Phi_1,\mu}^{1/p}-\ell_{\Phi_2,\mu}^{1/p}}(\theta)
  \\
  \geq &\left( \frac{\exp(-\tfrac{1}{p}\esssup_\mu\Phi_1\mathbb{I}_A)}{Z_{\Phi_1,\mu}^{1/p}}\wedge \frac{\exp(-\tfrac{1}{p}\esssup_\mu\Phi_2\mathbb{I}_A)}{Z_{\Phi_2,\mu}^{1/p}}\right)\frac{\Abs{\log\ell_{\Phi_1,\mu}-\log\ell_{\Phi_2,\mu}}}{p}(\theta).
 \end{align*} 
\end{enumerate}
 \end{lemma}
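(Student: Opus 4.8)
The plan is to derive both inequalities by applying the local Lipschitz continuity of the exponential function \eqref{eq_Lipschitz_continuity_exp_function} pointwise, after writing $\ell_{\Phi_i,\mu}^{1/p}=\exp\big(\tfrac1p\log\ell_{\Phi_i,\mu}\big)$. Taking $x=\tfrac1p\log\ell_{\Phi_1,\mu}(\theta)$ and $y=\tfrac1p\log\ell_{\Phi_2,\mu}(\theta)$ in \eqref{eq_Lipschitz_continuity_exp_function}, the upper bound there gives, for every $\theta$,
\begin{equation*}
 \abs{\ell_{\Phi_1,\mu}^{1/p}-\ell_{\Phi_2,\mu}^{1/p}}(\theta)\leq\big(\ell_{\Phi_1,\mu}^{1/p}(\theta)\vee\ell_{\Phi_2,\mu}^{1/p}(\theta)\big)\,\frac{\Abs{\log\ell_{\Phi_1,\mu}-\log\ell_{\Phi_2,\mu}}}{p}(\theta),
\end{equation*}
while the lower bound in \eqref{eq_Lipschitz_continuity_exp_function} gives the same estimate with $\vee$ and $\leq$ replaced by $\wedge$ and $\geq$. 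The task thus reduces to bounding the prefactor $\ell_{\Phi_1,\mu}^{1/p}\vee\ell_{\Phi_2,\mu}^{1/p}$ from above on $A$ (for part \ref{item_lemma_preliminary_bounds_on_pth_root_likelihoods_upper_bound}) and $\ell_{\Phi_1,\mu}^{1/p}\wedge\ell_{\Phi_2,\mu}^{1/p}$ from below on $A$ (for part \ref{item_lemma_preliminary_bounds_on_pth_root_likelihoods_lower_bound}), up to $\mu$-null sets.

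For the prefactor, I would use \eqref{eq_likelihood_function} to write $\ell_{\Phi_i,\mu}^{1/p}(\theta)=\exp(-\Phi_i(\theta)/p)\,Z_{\Phi_i,\mu}^{-1/p}$. For part \ref{item_lemma_preliminary_bounds_on_pth_root_likelihoods_upper_bound}, I would note that on $A$ one has $\Phi_i=\Phi_i\mathbb{I}_A$, and that $\mu(\Phi_i\mathbb{I}_A<\essinf_\mu\Phi_i\mathbb{I}_A)=0$ by definition of the essential infimum; hence for $\mu$-a.e.\ $\theta\in A$ we have $\Phi_i(\theta)\geq\essinf_\mu\Phi_i\mathbb{I}_A$, so $\ell_{\Phi_i,\mu}^{1/p}(\theta)\leq\exp(-\tfrac1p\essinf_\mu\Phi_i\mathbb{I}_A)\,Z_{\Phi_i,\mu}^{-1/p}$, and taking the maximum over $i\in\{1,2\}$ produces exactly the prefactor in the claimed upper bound (finite since $\essinf_\mu\Phi_i\mathbb{I}_A>-\infty$). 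For part \ref{item_lemma_preliminary_bounds_on_pth_root_likelihoods_lower_bound}, the mirror-image argument using $\mu(\Phi_i\mathbb{I}_A>\esssup_\mu\Phi_i\mathbb{I}_A)=0$ gives $\ell_{\Phi_i,\mu}^{1/p}(\theta)\geq\exp(-\tfrac1p\esssup_\mu\Phi_i\mathbb{I}_A)\,Z_{\Phi_i,\mu}^{-1/p}$ for $\mu$-a.e.\ $\theta\in A$, and the minimum over $i$ gives the desired prefactor (positive since $\esssup_\mu\Phi_i\mathbb{I}_A<\infty$).

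Substituting the prefactor estimates into the two displayed inequalities from the first paragraph then finishes both parts. I do not expect any genuine obstacle: the only steps requiring care are the passage from the essential bounds on $\Phi_i\mathbb{I}_A$ to $\mu$-a.e.\ pointwise bounds on $\Phi_i$ over $A$ (one intersects the two relevant $\mu$-null sets, $i=1,2$), matching $\vee$ versus $\wedge$ on the right-hand side of \eqref{eq_Lipschitz_continuity_exp_function} with the correct part of the statement, and keeping in mind that $\ell_{\Phi_i,\mu}$ and $Z_{\Phi_i,\mu}$ are implicitly assumed to be well defined and finite so that both sides make sense.
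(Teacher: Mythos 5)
Your proof is correct and follows essentially the same route as the paper's: apply \eqref{eq_Lipschitz_continuity_exp_function} pointwise with $x=\tfrac1p\log\ell_{\Phi_1,\mu}$ and $y=\tfrac1p\log\ell_{\Phi_2,\mu}$, then bound the prefactor $\ell_{\Phi_1,\mu}^{1/p}\vee\ell_{\Phi_2,\mu}^{1/p}$ (resp.\ $\wedge$) on $A$ via the essential extrema of $\Phi_i\mathbb{I}_A$ together with \eqref{eq_likelihood_function}. You only spell out the essential-bound-to-a.e.-pointwise-bound step slightly more explicitly than the paper does.
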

\begin{proof}
\Cref{item_lemma_preliminary_bounds_on_pth_root_likelihoods_upper_bound}: 
By the upper bound in the local Lipschitz continuity statement \eqref{eq_Lipschitz_continuity_exp_function}, 
\begin{equation*}
 \Abs{ \exp\biggr(\frac{\log\ell_{\Phi_1,\mu}}{p}\biggr) -  \exp\biggr(\frac{\log\ell_{\Phi_2,\mu}}{p}\biggr) }  \leq \left(\ell_{\Phi_1,\mu}^{1/p}\vee \ell_{\Phi_2,\mu}^{1/p}\right)  \frac{\Abs{\log\ell_{\Phi_1,\mu}-\log\ell_{\Phi_2,\mu}}}{p}.
\end{equation*}
By \eqref{eq_likelihood_function}, it holds for $\mu$-a.e. $\theta\in A$ that
\begin{equation*}
 (\ell_{\Phi_1,\mu}\vee \ell_{\Phi_2,\mu})(\theta)
 \leq \frac{\exp(-\essinf_\mu\Phi_1\mathbb{I}_A)}{Z_{\Phi_1,\mu}}\vee \frac{\exp(-\essinf_\mu\Phi_2\mathbb{I}_A)}{Z_{\Phi_2,\mu}}.
\end{equation*}
\Cref{item_lemma_preliminary_bounds_on_pth_root_likelihoods_lower_bound}: By the lower bound in the local Lipschitz continuity statement  \eqref{eq_Lipschitz_continuity_exp_function},
\begin{equation*}
\Abs{ \exp\biggr(\frac{\log\ell_{\Phi_1,\mu}}{p}\biggr) -  \exp\biggr(\frac{\log\ell_{\Phi_2,\mu}}{p}\biggr) } 
 \geq \left(\ell_{\Phi_1,\mu}^{1/p}\wedge \ell_{\Phi_2,\mu}^{1/p}\right)  \frac{\Abs{\log\ell_{\Phi_1,\mu}-\log\ell_{\Phi_2,\mu}}}{p},
\end{equation*}
By \eqref{eq_likelihood_function}, it holds for $\mu$-a.e. $\theta\in A$ that
\begin{equation*}
 (\ell_{\Phi_1,\mu}\wedge \ell_{\Phi_2,\mu})(\theta)
 \geq 
 \frac{\exp(-\esssup_\mu\Phi_1\mathbb{I}_A)}{Z_{\Phi_1,\mu}}\wedge \frac{\exp(-\esssup_\mu\Phi_2\mathbb{I}_A)}{Z_{\Phi_2,\mu}}.
\end{equation*}
\end{proof}

\begin{lemma}
 \label{lemma_L1norm_difference_of_unnormalised_likelihoods_minus_abs_diff_normalisation_constants_misfit_perturbation}
For $i=1,2$, let $\Phi_i\in L^0(\Theta,\R)$ be such that $\ell_{\Phi_i,\mu}\in L^1_\mu(\Theta,\R_{\geq 0})$ exists, for $i=1,2$.
\begin{enumerate}
\item \label{item1_lemma_L1norm_difference_of_unnormalised_likelihoods_minus_abs_diff_normalisation_constants_misfit_perturbation} If in addition $\Phi_1\geq \Phi_2$ $\mu$-a.s. (respectively, $\Phi_1>\Phi_2$ $\mu$-a.s.), then $Z_{\Phi_1,\mu}\leq Z_{\Phi_2,\mu}$ (respectively, $Z_{\Phi_1,\mu}< Z_{\Phi_2,\mu}$). 

\item \label{item2_lemma_L1norm_difference_of_unnormalised_likelihoods_minus_abs_diff_normalisation_constants_misfit_perturbation} 
The difference $\Norm{e^{-\Phi_1}-e^{-\Phi_2}}_{L^1_\mu}- \abs{Z_{\Phi_1,\mu}-Z_{\Phi_2,\mu}}$ is nonnegative. It is strictly positive if and only if
\begin{equation}
\label{eq_hypothesis_difference_of_misfits_changes_sign_mu_almost_surely}
 \mu(\Phi_1-\Phi_2>0), \mu(\Phi_1-\Phi_2<0) \in (0,1).
\end{equation}

\item \label{item3_lemma_L1norm_difference_of_unnormalised_likelihoods_minus_abs_diff_normalisation_constants_misfit_perturbation}
If $Z_{\Phi_1,\mu}\neq Z_{\Phi_2,\mu}$, then 
\begin{equation*}
\frac{\Norm{e^{-\Phi_1}-e^{-\Phi_2}}_{L^1_\mu}- \abs{Z_{\Phi_1,\mu}-Z_{\Phi_2,\mu}}}{2}=
 \begin{cases}
 \int_{\{\Phi_1>\Phi_2\}} \Abs{e^{-\Phi_1}-e^{-\Phi_2}}\rd \mu, & Z_{\Phi_1,\mu}>Z_{\Phi_2,\mu}
  \\
\int_{\{\Phi_1<\Phi_2\}} \Abs{e^{-\Phi_1}-e^{-\Phi_2}}\rd \mu, & Z_{\Phi_1,\mu}<Z_{\Phi_2,\mu}.
 \end{cases}
\end{equation*}
\end{enumerate}
\end{lemma}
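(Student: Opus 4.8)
The plan is to reduce everything to elementary manipulations of the signed function $h\coloneqq e^{-\Phi_1}-e^{-\Phi_2}$, which lies in $L^1_\mu(\Theta,\R)$ since $Z_{\Phi_i,\mu}=\norm{e^{-\Phi_i}}_{L^1_\mu}<\infty$ for $i=1,2$ by hypothesis. The one structural fact I would use repeatedly is that $x\mapsto e^{-x}$ is strictly decreasing, so that up to $\mu$-null sets $h>0$ on $\{\Phi_1<\Phi_2\}$, $h<0$ on $\{\Phi_1>\Phi_2\}$, and $h=0$ on $\{\Phi_1=\Phi_2\}$. I would also record at the outset the identities $\int h\,\rd\mu = Z_{\Phi_1,\mu}-Z_{\Phi_2,\mu}$ and $\int\abs{h}\,\rd\mu = \norm{e^{-\Phi_1}-e^{-\Phi_2}}_{L^1_\mu}$, together with the decompositions $\int h\,\rd\mu=\int h_+\,\rd\mu-\int h_-\,\rd\mu$ and $\int\abs{h}\,\rd\mu=\int h_+\,\rd\mu+\int h_-\,\rd\mu$ in terms of the positive and negative parts $h_\pm$.

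For part~\ref{item1_lemma_L1norm_difference_of_unnormalised_likelihoods_minus_abs_diff_normalisation_constants_misfit_perturbation}, monotonicity of $x\mapsto e^{-x}$ turns $\Phi_1\geq\Phi_2$ $\mu$-a.s. into $e^{-\Phi_1}\leq e^{-\Phi_2}$ $\mu$-a.s.; integrating against the probability measure $\mu$ gives $Z_{\Phi_1,\mu}\leq Z_{\Phi_2,\mu}$. Under the strict hypothesis $\Phi_1>\Phi_2$ $\mu$-a.s., the function $e^{-\Phi_2}-e^{-\Phi_1}$ is $\mu$-a.s. strictly positive and lies in $L^1_\mu$, hence has strictly positive integral, which equals $Z_{\Phi_2,\mu}-Z_{\Phi_1,\mu}$.

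For part~\ref{item2_lemma_L1norm_difference_of_unnormalised_likelihoods_minus_abs_diff_normalisation_constants_misfit_perturbation}, subtracting absolute values of the two decompositions yields the identity
\[
 \norm{e^{-\Phi_1}-e^{-\Phi_2}}_{L^1_\mu}-\Abs{Z_{\Phi_1,\mu}-Z_{\Phi_2,\mu}} = 2\min\left\{\int h_+\,\rd\mu,\ \int h_-\,\rd\mu\right\}\geq 0,
\]
which is the asserted nonnegativity. The minimum is strictly positive exactly when both $\int h_+\,\rd\mu>0$ and $\int h_-\,\rd\mu>0$, and by the sign description of $h$ this means $\mu(\Phi_1<\Phi_2)>0$ and $\mu(\Phi_1>\Phi_2)>0$; since $\mu(\Theta)=1$ and the two events are disjoint, each being positive forces the other to have mass below $1$, so the condition is precisely \eqref{eq_hypothesis_difference_of_misfits_changes_sign_mu_almost_surely}.

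For part~\ref{item3_lemma_L1norm_difference_of_unnormalised_likelihoods_minus_abs_diff_normalisation_constants_misfit_perturbation}, suppose $Z_{\Phi_1,\mu}\neq Z_{\Phi_2,\mu}$. If $Z_{\Phi_1,\mu}>Z_{\Phi_2,\mu}$, then $\int h_+\,\rd\mu-\int h_-\,\rd\mu=\int h\,\rd\mu>0$, so the minimum in the identity above equals $\int h_-\,\rd\mu$; since $h_-$ vanishes $\mu$-a.s. off $\{\Phi_1>\Phi_2\}$ and there $h_-=\abs{h}$, halving the identity produces $\int_{\{\Phi_1>\Phi_2\}}\abs{e^{-\Phi_1}-e^{-\Phi_2}}\,\rd\mu$. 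The case $Z_{\Phi_1,\mu}<Z_{\Phi_2,\mu}$ is symmetric, with $h_-$ replaced by $h_+$ and $\{\Phi_1>\Phi_2\}$ by $\{\Phi_1<\Phi_2\}$. There is no real obstacle in this lemma; the only point requiring attention is the $\mu$-a.s. identification of the sign sets of $h$ with those of $\Phi_1-\Phi_2$ and the verification that $\{\Phi_1=\Phi_2\}$ contributes nothing to any of the integrals appearing, which holds because $h$ vanishes there.
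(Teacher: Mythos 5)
Your proof is correct, and the underlying computation is the same as the paper's: reduce everything to the function $h=e^{-\Phi_1}-e^{-\Phi_2}$, use monotonicity of $x\mapsto e^{-x}$ to identify the sign sets of $h$ with those of $\Phi_2-\Phi_1$, and compare $\int\abs{h}\,\rd\mu$ with $\abs{\int h\,\rd\mu}$. The paper organizes parts (ii) and (iii) separately, invoking Jensen's inequality with an equality-condition discussion for (ii), and then an explicit split of the integrals over $\{\Phi_1<\Phi_2\}$ and $\{\Phi_1>\Phi_2\}$ for (iii). You instead extract the single algebraic identity $\int\abs{h}\,\rd\mu-\abs{\int h\,\rd\mu}=2\min\{\int h_+\,\rd\mu,\int h_-\,\rd\mu\}$ at the outset and then read off (ii) and (iii) from it. This unification is mildly cleaner: nonnegativity, the strict-positivity criterion, and the case distinction in (iii) all fall out of which of $\int h_+\,\rd\mu$ and $\int h_-\,\rd\mu$ is smaller, rather than being argued ad hoc in each part. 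Both arguments are equally elementary and both correctly observe that $\{\Phi_1=\Phi_2\}$ contributes nothing; the identity-based framing simply makes the bookkeeping slightly more transparent.
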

\begin{proof}[Proof of \Cref{lemma_L1norm_difference_of_unnormalised_likelihoods_minus_abs_diff_normalisation_constants_misfit_perturbation}] 

\Cref{item1_lemma_L1norm_difference_of_unnormalised_likelihoods_minus_abs_diff_normalisation_constants_misfit_perturbation}: Suppose $\Phi_1\geq \Phi_2$ $\mu$-a.s. By \eqref{eq_normalisationConstant_function},
\begin{align*}
 &Z_{\Phi_1,\mu}=\int e^{-\Phi_1}\rd \mu\leq \int e^{-\Phi_2}\rd \mu=Z_{\Phi_2,\mu},
\end{align*}
If $\Phi_1>\Phi_2$ $\mu$-a.s., then $Z_{\Phi_1,\mu}<Z_{\Phi_2,\mu}$, by replacing the inequality above with a strict inequality.

\Cref{item2_lemma_L1norm_difference_of_unnormalised_likelihoods_minus_abs_diff_normalisation_constants_misfit_perturbation}: 
By Jensen's inequality applied to the absolute value function,
\begin{equation*}
 \abs{Z_{\Phi_1,\mu}-Z_{\Phi_2,\mu}}=\Abs{\int e^{-\Phi_1}-e^{-\Phi_2}\rd \mu}\leq \int \Abs{e^{-\Phi_1}-e^{-\Phi_2}}\rd \mu=\Norm{e^{-\Phi_1}-e^{-\Phi_2}}_{L^1_\mu},
\end{equation*}
and equality holds if and only if $e^{-\Phi_1}-e^{-\Phi_2}$ $\mu$-a.s. does not change sign. In turn, $e^{-\Phi_1}-e^{-\Phi_2}$ $\mu$-a.s. does not change sign if and only if either $\Phi_1\geq \Phi_2$ $\mu$-a.s. or $\Phi_2\geq \Phi_1$ $\mu$-a.s. The negation of the latter statement is \eqref{eq_hypothesis_difference_of_misfits_changes_sign_mu_almost_surely}.

\Cref{item3_lemma_L1norm_difference_of_unnormalised_likelihoods_minus_abs_diff_normalisation_constants_misfit_perturbation}: Suppose that $Z_{\Phi_1,\mu}>Z_{\Phi_2,\mu}$, so that $\Abs{Z_{\Phi_1,\mu}-Z_{\Phi_2,\mu}} =Z_{\Phi_1,\mu}-Z_{\Phi_2,\mu}$.
Since
\begin{equation*}
 \{e^{-\Phi_1}-e^{-\Phi_2}> 0\}=\{\Phi_1<\Phi_2\},\quad \{e^{-\Phi_1}-e^{-\Phi_2}< 0\}=\{\Phi_1> \Phi_2\}
\end{equation*}
and since
\begin{align*}
 \Norm{e^{-\Phi_1}-e^{-\Phi_2}}_{L^1_\mu}=&\int_{\{\Phi_1< \Phi_2\}}e^{-\Phi_1}-e^{-\Phi_2}\rd \mu- \int_{\{\Phi_1>\Phi_2\}}e^{-\Phi_1}-e^{-\Phi_2}\rd \mu
 \\
 Z_{\Phi_1,\mu}-Z_{\Phi_2,\mu}=&\int_{\{\Phi_1<\Phi_2\}}e^{-\Phi_1}-e^{-\Phi_2}\rd \mu+\int_{\{\Phi_1>\Phi_2\}}e^{-\Phi_1}-e^{-\Phi_2}\rd \mu,
\end{align*}
it follows that 
\begin{equation*}
 \Norm{e^{-\Phi_1}-e^{-\Phi_2}}_{L^1_\mu}-(Z_{\Phi_1,\mu}-Z_{\Phi_2,\mu})
=-2\int_{\{\Phi_1>\Phi_2\}}e^{-\Phi_1}-e^{-\Phi_2}\rd \mu =2\int_{\{\Phi_1>\Phi_2\}} \abs{e^{-\Phi_1}-e^{-\Phi_2}}\rd \mu.
 \end{equation*}
 If $Z_{\Phi_2,\mu}>Z_{\Phi_1,\mu}$, then switching $\Phi_1$ and $\Phi_2$ and applying the analogous argument yields the corresponding conclusion.
 \end{proof}

 \begin{lemma}
 \label{lemma_L1norm_difference_of_unnormalised_likelihoods_minus_abs_diff_normalisation_constants_prior_perturbation}
 Let $\mu_1,\mu_2\in\mathcal{P}(\Theta)$ and let $\nu\in\mathcal{P}(\Theta)$ satisfy $\mu_i\ll\nu$ for $i=1,2$. 
\begin{enumerate}
 \item \label{lemma_L1norm_difference_of_unnormalised_likelihoods_minus_abs_diff_normalisation_constants_prior_perturbation_item1}
 It holds that
 \begin{equation*}
  d_{\TV}(\mu_1,\mu_2)=\int_{\{\tfrac{\rd\mu_1}{\rd\nu}>\tfrac{\rd\mu_2}{\rd\nu}\}} \frac{\rd\mu_1}{\rd\nu}-\frac{\rd\mu_2}{\rd\nu} \rd \nu=\int_{\{\tfrac{\rd\mu_1}{\rd\nu}<\tfrac{\rd\mu_2}{\rd\nu}\}} \frac{\rd\mu_2}{\rd\nu}-\frac{\rd\mu_1}{\rd\nu} \rd \nu.
 \end{equation*}
 In particular, $d_{\TV}(\mu_1,\mu_2)>0$ if and only if $\nu(\tfrac{\rd\mu_1}{\rd\nu}>\tfrac{\rd\mu_2}{\rd\nu})>0$, and $d_{\TV}(\mu_1,\mu_2)>0$ if and only if $\nu(\tfrac{\rd\mu_1}{\rd\nu}<\tfrac{\rd\mu_2}{\rd\nu})>0$ .
 \item \label{lemma_L1norm_difference_of_unnormalised_likelihoods_minus_abs_diff_normalisation_constants_prior_perturbation_item2}
 It holds that
 \begin{equation*}
  0\leq \int \exp(-\Phi)\Abs{\frac{\rd\mu_2}{\rd\nu}-\frac{\rd\mu_1}{\rd\nu}}\rd \nu-\Abs{Z_{\Phi,\mu_2}-Z_{\Phi,\mu_1}}.
 \end{equation*}
The inequality is strict if and only if $d_{\TV}(\mu_1,\mu_2)>0$.
\end{enumerate}
 \end{lemma}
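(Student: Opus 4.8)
The plan is to reduce both parts to an elementary decomposition of the domain of integration according to the sign of the difference of densities. Throughout, write $f_i\coloneqq\tfrac{\rd\mu_i}{\rd\nu}\in L^1_\nu(\Theta,\R_{\geq 0})$ for $i=1,2$, so that $\int f_i\rd\nu=1$, and partition $\Theta$ into the three $\Sigma$-measurable sets $\{f_1>f_2\}$, $\{f_1=f_2\}$, and $\{f_1<f_2\}$.

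For \Cref{lemma_L1norm_difference_of_unnormalised_likelihoods_minus_abs_diff_normalisation_constants_prior_perturbation_item1}, I would first note that $\int(f_1-f_2)\rd\nu=1-1=0$ and that the integrand $f_1-f_2$ vanishes on $\{f_1=f_2\}$, so splitting this integral over the three sets gives $\int_{\{f_1>f_2\}}(f_1-f_2)\rd\nu=\int_{\{f_1<f_2\}}(f_2-f_1)\rd\nu$; denote this common nonnegative value by $A$. Splitting $\tfrac12\int\abs{f_1-f_2}\rd\nu$ over the same three sets and using the definition \eqref{eq_total_variation_metric} then yields $d_{\TV}(\mu_1,\mu_2)=\tfrac12(A+A)=A$, which is the asserted pair of identities. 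Since $f_1-f_2$ is strictly positive on $\{f_1>f_2\}$, we have $A>0$ if and only if $\nu(f_1>f_2)>0$, and symmetrically $A>0$ if and only if $\nu(f_1<f_2)>0$; this gives the ``in particular'' statement.

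For \Cref{lemma_L1norm_difference_of_unnormalised_likelihoods_minus_abs_diff_normalisation_constants_prior_perturbation_item2}, I would use $Z_{\Phi,\mu_i}=\int\exp(-\Phi)f_i\rd\nu$ and again split over the same three sets, the set $\{f_1=f_2\}$ contributing nothing. Setting $P\coloneqq\int_{\{f_1>f_2\}}\exp(-\Phi)(f_1-f_2)\rd\nu\geq 0$ and $Q\coloneqq\int_{\{f_1<f_2\}}\exp(-\Phi)(f_2-f_1)\rd\nu\geq 0$, one obtains $\int\exp(-\Phi)\abs{f_2-f_1}\rd\nu=P+Q$ and $Z_{\Phi,\mu_2}-Z_{\Phi,\mu_1}=Q-P$, so the quantity in question equals $P+Q-\abs{Q-P}=2(P\wedge Q)\geq 0$, which gives the nonnegativity. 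It is strictly positive exactly when both $P>0$ and $Q>0$; since $\exp(-\Phi)$ is $\nu$-a.e.\ strictly positive, $P>0$ if and only if $\nu(f_1>f_2)>0$ and $Q>0$ if and only if $\nu(f_1<f_2)>0$, and by \Cref{lemma_L1norm_difference_of_unnormalised_likelihoods_minus_abs_diff_normalisation_constants_prior_perturbation_item1} each of these is equivalent to $d_{\TV}(\mu_1,\mu_2)>0$. (Alternatively, the nonnegativity in \Cref{lemma_L1norm_difference_of_unnormalised_likelihoods_minus_abs_diff_normalisation_constants_prior_perturbation_item2} follows directly from Jensen's inequality applied to the absolute value, as in \Cref{lemma_L1norm_difference_of_unnormalised_likelihoods_minus_abs_diff_normalisation_constants_misfit_perturbation}\ref{item2_lemma_L1norm_difference_of_unnormalised_likelihoods_minus_abs_diff_normalisation_constants_misfit_perturbation}, with the equality case characterised via $\exp(-\Phi)(f_2-f_1)$ not changing sign.)

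I do not anticipate a substantive obstacle: the argument is elementary once the three-set decomposition is set up, and the only points requiring care are that the integrand vanishes on $\{f_1=f_2\}$, so that set can be discarded throughout, and that $\exp(-\Phi)>0$ $\nu$-a.e., so that multiplication by $\exp(-\Phi)$ preserves strict positivity of the integrals defining $P$ and $Q$ over the sets $\{f_1>f_2\}$ and $\{f_1<f_2\}$ when these have positive $\nu$-measure.
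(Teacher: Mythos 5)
Your proof of Item (i) is the paper's own argument: split both $\int(f_1-f_2)\,\rd\nu=0$ and $\tfrac12\int\abs{f_1-f_2}\,\rd\nu$ over the three sets $\{f_1>f_2\}$, $\{f_1=f_2\}$, $\{f_1<f_2\}$ and combine, with the ``in particular'' clause following because $f_1-f_2$ is strictly signed on each of the first and third sets. For Item (ii) the paper instead argues in one line via Jensen's inequality applied to the absolute value, with equality precisely when $\exp(-\Phi)\bigl(\tfrac{\rd\mu_1}{\rd\nu}-\tfrac{\rd\mu_2}{\rd\nu}\bigr)$ does not change sign $\nu$-a.s.; your primary route is the explicit decomposition into $P$ and $Q$ over the sign sets, yielding the identity $P+Q-\abs{P-Q}=2(P\wedge Q)\geq 0$, and you note Jensen as an alternative. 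Both are correct and essentially equally short; the explicit decomposition has the small advantage of producing a quantitative expression $2(P\wedge Q)$ for the slack (mirroring \Cref{lemma_L1norm_difference_of_unnormalised_likelihoods_minus_abs_diff_normalisation_constants_misfit_perturbation}\ref{item3_lemma_L1norm_difference_of_unnormalised_likelihoods_minus_abs_diff_normalisation_constants_misfit_perturbation} for the misfit case), whereas Jensen is marginally more compact. In either version the key auxiliary fact is that $\exp(-\Phi)>0$ pointwise (since $\Phi\in L^0(\Theta,\R)$ is $\R$-valued), which lets one pass from the sign of $\exp(-\Phi)(f_1-f_2)$ to the sign of $f_1-f_2$, and then Item (i) translates this into a condition on $d_{\TV}(\mu_1,\mu_2)$. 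No gap.
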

 \begin{proof}[Proof of \Cref{lemma_L1norm_difference_of_unnormalised_likelihoods_minus_abs_diff_normalisation_constants_prior_perturbation}]
 Proof of \cref{lemma_L1norm_difference_of_unnormalised_likelihoods_minus_abs_diff_normalisation_constants_prior_perturbation_item1}:
 Let $A\coloneqq \{\theta\in\Theta:\tfrac{\rd\mu_1}{\rd\nu}> \tfrac{\rd\mu_2}{\rd\nu}\}$ and $B\coloneqq \{\theta\in\Theta:\tfrac{\rd\mu_1}{\rd\nu}<\tfrac{\rd\mu_2}{\rd\nu}\}$.
 Observe that
\begin{equation*}
 d_{\TV}(\mu_1,\mu_2)=\frac{1}{2}\int_{A\cup B} \Abs{\frac{\rd\mu_1}{\rd\nu}-\frac{\rd\mu_2}{\rd\nu}  }\rd \nu=\frac{1}{2}\left(\int_A \frac{\rd\mu_1}{\rd\nu}-\frac{\rd\mu_2}{\rd\nu} \rd \nu-\int_B \frac{\rd\mu_1}{\rd\nu}-\frac{\rd\mu_2}{\rd\nu} \rd \nu\right)
\end{equation*}
and that
\begin{equation*}
 \int_A \frac{\rd\mu_1}{\rd\nu}-\frac{\rd\mu_2}{\rd\nu} \rd \nu+\int_B \frac{\rd\mu_1}{\rd\nu}-\frac{\rd\mu_2}{\rd\nu} \rd \nu=\int\frac{\rd\mu_1}{\rd\nu}-\frac{\rd\mu_2}{\rd\nu} \rd \nu=\mu_1(\Theta)-\mu_2(\Theta)=0.
\end{equation*}
Thus, $\int_A \frac{\rd\mu_1}{\rd\nu}-\frac{\rd\mu_2}{\rd\nu} \rd \nu=-\int_B \frac{\rd\mu_1}{\rd\nu}-\frac{\rd\mu_2}{\rd\nu} \rd \nu$.
Combining this equation with the equation above for $d_{\TV}(\mu_1,\mu_2)$ yields the desired conclusion.

Proof of \cref{lemma_L1norm_difference_of_unnormalised_likelihoods_minus_abs_diff_normalisation_constants_prior_perturbation_item2}: Since $Z_{\Phi,\mu_i}=\int \exp(-\Phi) \rd \mu_i$ for $i=1,2$, Jensen's inequality yields
\begin{equation*}
 \Abs{Z_{\Phi,\mu_1}-Z_{\Phi,\mu_2}}=\Abs{\int \exp(-\Phi)\left(\frac{\rd\mu_1}{\rd\nu}-\frac{\rd\mu_2}{\rd\nu}\right)\rd \nu}\leq \int \exp(-\Phi)\Abs{\frac{\rd\mu_1}{\rd\nu}-\frac{\rd\mu_2}{\rd\nu}}\rd \nu,
\end{equation*}
and the inequality is an equality if and only if $\tfrac{\rd\mu_1}{\rd\nu}-\tfrac{\rd\mu_2}{\rd\nu}$ does not change sign, $\nu$-a.s.
 \end{proof}
\begin{lemma}[Local Lipschitz continuity of unnormalised likelihoods]
 \label{lemma_local_Lipschitz_continuity_unnormalised_likelihoods}
 Let $(\Theta,d)$ be a metric space and $M\subset \Theta$ be nonempty.
 Then
 \begin{equation*}
\inf_{M}e^{\Phi}\norm{e^{-\Phi}}_{\Lip(M,d)}\leq \norm{\Phi}_{\Lip(M,d)}\leq \norm{e^{-\Phi}}_{\Lip(M,d)} \left(\sup_{M}e^{\Phi}\right).
 \end{equation*}
In particular, if $\inf_M e^{\Phi}$ and $\sup_M e^{\Phi}$ are strictly positive and finite, then $\norm{e^{-\Phi}}_{\Lip(M,d)}$ is finite if and only if $\norm{\Phi}_{\Lip(M,d)}$ is finite.
\end{lemma}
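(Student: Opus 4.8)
The plan is to reduce the statement to the pointwise local Lipschitz continuity of the exponential function recorded in \eqref{eq_Lipschitz_continuity_exp_function}, and then pass to suprema over pairs of points. Fix distinct $a,b\in M$. Applying \eqref{eq_Lipschitz_continuity_exp_function} with $x\leftarrow-\Phi(a)$ and $y\leftarrow-\Phi(b)$, and dividing through by $d(a,b)>0$, gives the two-sided difference-quotient estimate
\[
 \bigl(e^{-\Phi(a)}\wedge e^{-\Phi(b)}\bigr)\frac{\abs{\Phi(a)-\Phi(b)}}{d(a,b)}\;\leq\;\frac{\abs{e^{-\Phi(a)}-e^{-\Phi(b)}}}{d(a,b)}\;\leq\;\bigl(e^{-\Phi(a)}\vee e^{-\Phi(b)}\bigr)\frac{\abs{\Phi(a)-\Phi(b)}}{d(a,b)},
\]
which is the only ingredient needed.

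For the upper bound $\norm{\Phi}_{\Lip(M,d)}\leq\norm{e^{-\Phi}}_{\Lip(M,d)}\,(\sup_M e^{\Phi})$, I would use the left-hand inequality above, rearranged as
\[
 \frac{\abs{\Phi(a)-\Phi(b)}}{d(a,b)}\;\leq\;\bigl(e^{\Phi(a)}\vee e^{\Phi(b)}\bigr)\frac{\abs{e^{-\Phi(a)}-e^{-\Phi(b)}}}{d(a,b)}\;\leq\;\Bigl(\sup_M e^{\Phi}\Bigr)\norm{e^{-\Phi}}_{\Lip(M,d)},
\]
and then take the supremum over distinct $a,b\in M$. For the lower bound $\inf_M e^{\Phi}\,\norm{e^{-\Phi}}_{\Lip(M,d)}\leq\norm{\Phi}_{\Lip(M,d)}$, I would instead use the right-hand inequality together with $e^{-\Phi(a)}\vee e^{-\Phi(b)}\leq\sup_M e^{-\Phi}=(\inf_M e^{\Phi})^{-1}$ to obtain
\[
 \frac{\abs{e^{-\Phi(a)}-e^{-\Phi(b)}}}{d(a,b)}\;\leq\;\frac{1}{\inf_M e^{\Phi}}\,\frac{\abs{\Phi(a)-\Phi(b)}}{d(a,b)}\;\leq\;\frac{1}{\inf_M e^{\Phi}}\,\norm{\Phi}_{\Lip(M,d)},
\]
and then take the supremum over distinct $a,b\in M$ and multiply through by $\inf_M e^{\Phi}$.

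The ``in particular'' clause follows immediately from the two-sided bound: if $0<\inf_M e^{\Phi}$ and $\sup_M e^{\Phi}<\infty$, then finiteness of $\norm{e^{-\Phi}}_{\Lip(M,d)}$ forces $\norm{\Phi}_{\Lip(M,d)}<\infty$ by the upper bound, and finiteness of $\norm{\Phi}_{\Lip(M,d)}$ forces $\norm{e^{-\Phi}}_{\Lip(M,d)}<\infty$ by the lower bound. I do not expect any genuine obstacle here; the only points needing a word of care are the degenerate ones—when $M$ is a singleton both Lipschitz seminorms are (by convention) zero, and when $\inf_M e^{\Phi}=0$ or $\sup_M e^{\Phi}=\infty$ the corresponding seminorm inequality is read with the convention $0\cdot\infty=0$, under which it holds trivially. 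These degenerate cases are irrelevant to the ``in particular'' statement, where $\inf_M e^{\Phi}$ and $\sup_M e^{\Phi}$ are assumed strictly positive and finite.
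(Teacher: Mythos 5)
Your proposal is correct and follows essentially the same route as the paper's proof: both sides of the claimed inequality are obtained by applying the pointwise two-sided Lipschitz estimate \eqref{eq_Lipschitz_continuity_exp_function} to the pair $(-\Phi(a),-\Phi(b))$, dividing by $d(a,b)$, bounding $e^{\Phi(a)}\vee e^{\Phi(b)}$ (resp.\ $e^{-\Phi(a)}\vee e^{-\Phi(b)}$) by $\sup_M e^{\Phi}$ (resp.\ $(\inf_M e^{\Phi})^{-1}$), and taking the supremum over distinct $a,b\in M$. The remark about degenerate cases is a harmless addition not present in the paper.
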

\begin{proof}
By the lower bound in \eqref{eq_Lipschitz_continuity_exp_function} with $x\leftarrow -\Phi(x)$ and $y\leftarrow -\Phi(y)$, and by the definition \eqref{eq_Lipschitz_constant_and_supremum_norm} of the Lipschitz constant, we have for $x,y\in M$ that
 \begin{equation*}
  \left(e^{-\Phi(x)}\wedge e^{-\Phi(y)}\right)\Abs{\Phi(x)-\Phi(y)}\leq \Abs{e^{-\Phi(x)}-e^{-\Phi(y)}}\leq \norm{e^{-\Phi}}_{\Lip(M,d)}d(x,y).
 \end{equation*}
Thus
\begin{equation*}
 \sup_{x,y\in M,x\neq y} \frac{\Abs{\Phi(x)-\Phi(y)}}{d(x,y)}\leq \norm{e^{-\Phi}}_{\Lip(M,d)} \sup_{x,y\in M}\left[\left(e^{-\Phi(x)}\wedge e^{-\Phi(y)}\right)^{-1}\right],
\end{equation*}
i.e. $\norm{\Phi}_{\Lip(M,d)}\leq \norm{e^{-\Phi}}_{\Lip(M,d)} \left(\sup_{M}e^{\Phi}\right)$.
By the upper bound in \eqref{eq_Lipschitz_continuity_exp_function} with $x\leftarrow -\Phi(x)$ and $y\leftarrow -\Phi(y)$, and by the definition of the Lipschitz constant, we have for $x,y\in M$ that
\begin{equation*}
  \Abs{e^{-\Phi(x)}-e^{-\Phi(y)}}\leq \left(e^{-\Phi(x)}\vee e^{-\Phi(y)}\right)\Abs{\Phi(x)-\Phi(y)}\leq \left(e^{-\Phi(x)}\vee e^{-\Phi(y)}\right)\norm{\Phi}_{\Lip(M,d)}d(x,y)
 \end{equation*}
and thus 
\begin{equation*}
\sup_{x,y\in M, x\neq y}\frac{\Abs{e^{-\Phi(x)}-e^{-\Phi(y)}}}{d(x,y)}\leq \sup_{x,y\in M} \left(e^{-\Phi(x)}\vee e^{-\Phi(y)}\right)\norm{\Phi}_{\Lip(M,d)}.
\end{equation*}
This implies $\norm{e^{-\Phi}}_{\Lip(M,d)}\leq \norm{\Phi}_{\Lip(M,d)}\left( \sup_M e^{-\Phi}\right)$. Since $\sup(f^{-1})=(\inf f)^{-1}$ for any function $f$ taking values in $\R_{\geq 0}$, the preceding inequality implies
\begin{equation*}
 \norm{e^{-\Phi}}_{\Lip(M,d)}\left( \inf_M e^{\Phi}\right)=\norm{e^{-\Phi}}_{\Lip(M,d)}\left( \sup_M e^{-\Phi}\right)^{-1}\leq \norm{\Phi}_{\Lip(M,d)},
\end{equation*}
which completes the proof.
\end{proof}

\section{Proofs of noninjectivity}
\label{section_noninjectivity_proofs}

\LemmaNoninjectivityMisfitMaps*
\begin{proof}[Proof of \Cref{lemma_noninjectivity_of_dataMisfit_maps}]
 Proof of \cref{item_lemma_noninjectivity_of_dataMisfit_maps_noninjective}: It suffices to consider the case where $\Phi\mapsto Z_{\Phi,\mu}$ has the domain $L^\infty_\mu(\Theta,\R)$, since $L^\infty_\mu(\Theta,\R)\subseteq L^p_\mu(\Theta,\R)$ for every $p\in[1,\infty]$.
 
 If $\supp{\mu}$ is uncountable, then $L^\infty_\mu(\Theta,\R)$ is an infinite-dimensional space. 
 If $\supp{\mu}=(x_n)_{n}$ is countable, then every $f\in L^\infty(\Theta,\R)$ is uniquely determined by the collection $(f(x_n))_{n}$. In particular, if $\card{\supp{\mu}}\geq 2$, then $L^\infty_\mu(\Theta,\R)$ has dimension at least 2. 
 On the other hand, the codomain $\R_{>0}$ of the map is one-dimensional. Thus, the map $\Phi\mapsto Z_{\Phi,\mu}$ cannot be injective.

 Proof of \cref{item_lemma_noninjectivity_of_dataMisfit_maps_noninvariance_under_translations}: by \eqref{eq_normalisationConstant_function},
 \begin{equation*}
  Z_{\Phi+c,\mu}=\int \exp(-c-\Phi)\rd \mu =\exp(-c)\int \exp(-\Phi)\rd \mu=\exp(-c)Z_{\Phi,\mu}.
 \end{equation*}

 Proof of \cref{item_lemma_noninjectivity_of_dataMisfit_maps_equivalent_condition_for_agreement}: Let $\theta\in\Theta$ be arbitrary. By \eqref{eq_likelihood_function}, we have the following equivalence of statements: 
 \begin{align*}
   & & -(\Phi_1(\theta)+\log Z_{\Phi_1,\mu})=\log \ell_{\Phi_1,\mu}(\theta)=&\log \ell_{\Phi_2,\mu}(\theta)=-(\Phi_2(\theta)+\log Z_{\Phi_2,\mu}) & & \text{}
  \\
  \Longleftrightarrow & & \Phi_1(\theta)-\Phi_2(\theta) =& \log Z_{\Phi_2,\mu}-\log Z_{\Phi_1,\mu}.
 \end{align*}
\end{proof}

\LemmaNoninjectivityPriorMaps*
\begin{proof}[Proof of \Cref{lemma_noninjectivity_of_prior_maps}]
 Proof of \cref{item_lemma_noninjectivity_of_prior_maps_noninjective}: The set $D(\nu)$ is in one-to-one correspondence with the set $\mathcal{F}(\nu)\coloneqq \{f\in L^1_\nu(\Theta,\R_{\geq 0}) : \norm{f}_{L^1_\nu}=1\}$, because for every $\mu\in D(\nu)$ there exists a unique $\tfrac{\rd\mu}{\rd\nu}\in \mathcal{F}(\nu)$ by the Radon--Nikodym theorem, and because every $g\in\mathcal{F}(\nu)$ defines some $\mu_g\in D(\nu)$ via $\tfrac{\rd \mu_g}{\rd\nu}\coloneqq g$. 
 
 If $\supp{\nu}$ is uncountable or countably infinite, then $\mathcal{F}(\nu)$ is an infinite-dimensional space. 
 If $\supp{\nu}=(x_n)_{n=1}^N$ for some $N\in\N$, then every $f\in\mathcal{F}(\nu)$ is represented by $(f(x_n))_{n=1}^{N}\in\R_{> 0}^{N}$. 
 Given the constraint $\sum_{n=1}^{N} f(x_n)=\norm{f}_{L^1_\nu}=1$, it follows that every $f\in\mathcal{F}(\nu)$ is uniquely defined by any $N-1$ elements of $(f(x_n))_{n=1}^{N}$.  
 In particular, if $\card{\supp{\nu}}=N$, then every $f\in\mathcal{F}(\nu)$ is determined by $N-1$ scalars $(f(x_n))_{n=1}^{N-1}$. This shows that $\mathcal{F}(\nu)$ has dimension $N-1$.
 Since the codomain $\R_{>0}$ of the map $D(\nu)\ni \mu\mapsto Z_{\Phi,\mu}$ has dimension 1, the map $\mathcal{F}(\nu)\ni \frac{\rd\mu}{\rd\nu} \mapsto Z_{\Phi,\mu}\in\R_{>0}$ cannot be injective when $N-1\geq 2$.
 Given the one-to-one correspondence between $D(\nu)$ and $\mathcal{F}(\nu)$, it follows that the map $D(\nu)\ni \mu\mapsto Z_{\Phi,\mu}\in\R_{>0}$ is not injective.

 Proof of \cref{item_lemma_noninjectivity_of_prior_maps_equivalent_condition_for_agreement}: By the hypothesis that $Z_{\Phi,\mu}\in\R_{>0}$ for every $\mu\in D(\nu)$ and $\mu_i\in D(\nu)$ for $i=1,2$, it follows from \eqref{eq_posterior_function} that $(\mu_1)_\Phi\in\mathcal{P}(\Theta)$ for $i=1,2$.
 By \eqref{eq_posterior_function},
   \begin{equation}
  \label{eq_radon_nikodym_derivative_different_priors}
  \frac{\rd (\mu_i)_\Phi}{\rd \nu}=\frac{\exp(-\Phi)}{Z_{\Phi,\mu_i}}\frac{\rd \mu_i}{\rd \nu},\quad i=1,2.
 \end{equation}
 By the hypothesis that $\exp(-\Phi)\in\R_{>0}$ $\nu$-a.s., it follows that $\tfrac{\rd (\mu_i)_\Phi}{\rd \nu}>0$ if and only if $\tfrac{\rd \mu_i}{\rd \nu}>0$.
 
 If $\mu_1=\mu_2$ as probability measures, then $Z_{\Phi,\mu_1}=Z_{\Phi,\mu_2}$ by \eqref{eq_normalisationConstant_function} and $\tfrac{\rd\mu_1}{\rd\nu}=\tfrac{\rd\mu_2}{\rd\nu}$, and thus $(\mu_1)_\Phi=(\mu_2)_\Phi$ by \eqref{eq_radon_nikodym_derivative_different_priors}.  
 
 For the converse implication, suppose that $(\mu_1)_\Phi=(\mu_2)_\Phi$. We have
\begin{align*}
 (\mu_1)_\Phi=(\mu_2)_\Phi \Longleftrightarrow & \nu\left(\frac{\exp(-\Phi)}{Z_{\Phi,\mu_1}}\frac{\rd \mu_1}{\rd \nu}=\frac{\exp(-\Phi)}{Z_{\Phi,\mu_2}}\frac{\rd \mu_2}{\rd \nu}\right)=1 & \text{by \eqref{eq_radon_nikodym_derivative_different_priors}}
 \\
 \Longleftrightarrow &  \nu\left(\frac{1}{Z_{\Phi,\mu_1}}\frac{\rd \mu_1}{\rd \nu}=\frac{1}{Z_{\Phi,\mu_2}}\frac{\rd \mu_2}{\rd \nu}\right)=1 & \nu(\exp(-\Phi)\in\R_{>0})=1
 \\
 \Longrightarrow& \frac{1}{Z_{\Phi,\mu_1}}\int \frac{\rd \mu_1}{\rd \nu}\rd \nu=\frac{1}{Z_{\Phi,\mu_2}}\int \frac{\rd \mu_2}{\rd \nu}\rd\nu.
\end{align*}
This proves that $(\mu_1)_\Phi=(\mu_2)_\Phi$ implies $Z_{\Phi,\mu_1}=Z_{\Phi,\mu_2}$. By \eqref{eq_radon_nikodym_derivative_different_priors} and the hypothesis that $\exp(-\Phi)\in\R_{>0}$ $\nu$-a.s., it follows that 
\begin{equation*}
 \nu\left(\frac{\rd\mu_1}{\rd\nu}=\frac{\rd\mu_2}{\rd\nu}\right)=1 \Longleftrightarrow \mu_1=\mu_2.
\end{equation*}
This completes the proof of \Cref{lemma_noninjectivity_of_prior_maps}.
\end{proof}

\section{Proofs of total variation bounds}
\label{section_TVbounds_proofs}

\subsection{Proofs of total variation bounds for misfit perturbations}
\label{section_TVbounds_Misfitperturbations_proofs}

\TVboundsMisfitperturbationsLipschitz*
\begin{proof}[Proof of \Cref{proposition_TV_bounds_misfit_perturbations_via_Lipschitz_continuity}]
For $i=1,2$, if $\Phi_i\in L^1_\mu(\Theta,\R_{\geq 0})$, then $\esssup_\mu\exp(-\Phi_i)\leq 1$. Thus,
\begin{align*}
 &d_{\TV}(\mu_{\Phi_1},\mu_{\Phi_2})=\frac{1}{2}\Norm{\ell_{\Phi_1,\mu}-\ell_{\Phi_2,\mu}}_{L^1_\mu} & \text{by \eqref{eq_total_variation_metric}}
 \\
 \leq &\frac{1}{2}\left(\frac{1}{Z_{\Phi_1,\mu}}\vee \frac{1}{Z_{\Phi_2,\mu}}\right)\Norm{\log \ell_{\Phi_1,\mu}-\log\ell_{\Phi_2,\mu}}_{L^1_\mu} & \text{by \Cref{lemma_preliminary_bounds_on_pth_root_likelihoods}\ref{item_lemma_preliminary_bounds_on_pth_root_likelihoods_upper_bound}.}
\end{align*}
If $\Phi_i\in L^\infty_\mu(\Theta,\R_{\geq 0})$, then $\essinf_\mu\exp(-\Phi_i)=\exp(-\Norm{\Phi_i}_{L^\infty_\mu})$ for $i=1,2$. 
Thus,
\begin{align*}
 &d_{\TV}(\mu_{\Phi_1},\mu_{\Phi_2})=\frac{1}{2}\Norm{\ell_{\Phi_1,\mu}-\ell_{\Phi_2,\mu}}_{L^1_\mu} & \text{by \eqref{eq_total_variation_metric}}
 \\
 \geq &\frac{1}{2}\left(\frac{\exp(-\Norm{\Phi_1}_{L^\infty_\mu})}{Z_{\Phi_1,\mu}}\wedge \frac{\exp(-\Norm{\Phi_2}_{L^\infty_\mu})}{Z_{\Phi_2,\mu}}\right)\Norm{\log \ell_{\Phi_1,\mu}-\log\ell_{\Phi_2,\mu}}_{L^1_\mu} & \text{by \Cref{lemma_preliminary_bounds_on_pth_root_likelihoods}\ref{item_lemma_preliminary_bounds_on_pth_root_likelihoods_lower_bound}.}
\end{align*}
By \Cref{lemma_noninjectivity_of_dataMisfit_maps}\ref{item_lemma_noninjectivity_of_dataMisfit_maps_equivalent_condition_for_agreement}, $\Phi_1-\Phi_2$ is $\mu$-a.s. constant if and only if $\log\ell_{\Phi_1,\mu}=\log\ell_{\Phi_2,\mu}$ $\mu$-a.s.
By \eqref{eq_posterior_function} and \eqref{eq_total_variation_metric}, $\log\ell_{\Phi_1,\mu}=\log\ell_{\Phi_2,\mu}$ $\mu$-a.s. if and only if $d_{\TV}(\mu_{\Phi_1},\mu_{\Phi_2})=0$.
\end{proof}

\TVboundsMisfitperturbationsTriangle*
\begin{proof}[Proof of \Cref{proposition_TV_bounds_misfit_perturbations_via_triangle_inequality}]
We have
\begin{align*}
  &2d_{\TV}(\mu_{\Phi_1},\mu_{\Phi_2})
  \\
  = &\Norm{\ell_{\Phi_1,\mu}-\ell_{\Phi_2,\mu}}_{L^1_\mu} & \text{by \eqref{eq_total_variation_metric}}
  \\
  =& \Norm{\frac{e^{-\Phi_1}-e^{-\Phi_2}}{Z_{\Phi_1,\mu}}+\frac{e^{-\Phi_2}}{Z_{\Phi_1,\mu}}-\frac{e^{-\Phi_2}}{Z_{\Phi_2,\mu}}}_{L^1_\mu} & \text{by \eqref{eq_likelihood_function}}
  \\
  =& \frac{1}{Z_{\Phi_1,\mu}}\Norm{e^{-\Phi_1}-e^{-\Phi_2}+\ell_{\Phi_2,\mu} (Z_{\Phi_2,\mu}-Z_{\Phi_1,\mu})}_{L^1_\mu}.
  \end{align*}
  By \eqref{eq_posterior_function}, $\norm{\ell_{\Phi_2,\mu}}_{L^1_\mu}=1$, and $\norm{\ell_{\Phi_2,\mu}(Z_{\Phi_2,\mu}-Z_{\Phi_1,\mu})}_{L^1_\mu}=\abs{Z_{\Phi_2,\mu}-Z_{\Phi_1,\mu}}$.
  Thus, by the reverse triangle inequality and the triangle inequality,
  \begin{subequations}
   \begin{align}
  \frac{\Abs{\norm{ e^{-\Phi_1}-e^{-\Phi_2}}_{L^1_\mu}- \abs{Z_{\Phi_2,\mu}-Z_{\Phi_1,\mu}}}}{Z_{\Phi_1,\mu}}  
  \leq &2d_{\TV}(\mu_{\Phi_1},\mu_{\Phi_2})
  \label{eq_TV_lower_bound_misfit_perturbation}
  \\
  \leq &\frac{\norm{ e^{-\Phi_1}-e^{-\Phi_2}}_{L^1_\mu}+ \abs{Z_{\Phi_2,\mu}-Z_{\Phi_1,\mu}}}{Z_{\Phi_1,\mu}}.
  \label{eq_TV_upper_bound_misfit_perturbation}
  \end{align}
  \end{subequations}
  By switching $\Phi_1$ and $\Phi_2$ in \eqref{eq_TV_lower_bound_misfit_perturbation} and \eqref{eq_TV_upper_bound_misfit_perturbation} and considering the resulting inequalities, we can replace the denominator of the left-hand side of \eqref{eq_TV_lower_bound_misfit_perturbation} and the right-hand side by $(Z_{\Phi_1,\mu}\wedge Z_{\Phi_2,\mu})^{-1}$ and $(Z_{\Phi_1,\mu}\vee Z_{\Phi_2,\mu})^{-1}$ respectively:
  \begin{align*}
  \frac{\Abs{\norm{ e^{-\Phi_1}-e^{-\Phi_2}}_{L^1_\mu}- \abs{Z_{\Phi_2,\mu}-Z_{\Phi_1,\mu}}}}{Z_{\Phi_1,\mu}\wedge Z_{\Phi_2,\mu}}  
  \leq &2d_{\TV}(\mu_{\Phi_1},\mu_{\Phi_2})
  \\
  \leq &\frac{\norm{ e^{-\Phi_1}-e^{-\Phi_2}}_{L^1_\mu}+ \abs{Z_{\Phi_2,\mu}-Z_{\Phi_1,\mu}}}{Z_{\Phi_1,\mu}\vee Z_{\Phi_2,\mu}}.
  \end{align*}
Recall that the triangle inequality implies the reverse triangle inequality and vice versa.
Recall also that equality holds in the triangle inequality $\norm{u-v}\leq \norm{u}+\norm{v}$ if and only if the vectors $u$ and $v$ are collinear.
Thus, equality holds in \eqref{eq_TV_lower_bound_misfit_perturbation} if and only if there exists some $\lambda$ such that $\mu$-a.s. $e^{-\Phi_1}-e^{-\Phi_2}=\lambda e^{-\Phi_2}$, which in turn is equivalent to the existence of some $c$ such that $\Phi_1=\Phi_2+c$ $\mu$-a.s.

   By \eqref{eq_Lipschitz_continuity_exp_function} and the hypothesis that $\essinf_\mu \Phi_i=0$ for $i=1,2$,
 \begin{equation*}
  \Norm{ e^{-\Phi_1}-e^{-\Phi_2}}_{L^1_\mu}\leq \left[\exp(-\essinf_\mu\Phi_1)\vee \exp(-\essinf_\mu\Phi_2)\right] \Norm{\Phi_1-\Phi_2}_{L^1_\mu}=\Norm{\Phi_1-\Phi_2}_{L^1_\mu}.
 \end{equation*}
 The inequality $\abs{Z_{\Phi_2,\mu}-Z_{\Phi_1,\mu}}\leq \norm{ e^{-\Phi_1}-e^{-\Phi_2}}_{L^1_\mu}$ and the equality condition follow from \Cref{lemma_L1norm_difference_of_unnormalised_likelihoods_minus_abs_diff_normalisation_constants_misfit_perturbation}\ref{item2_lemma_L1norm_difference_of_unnormalised_likelihoods_minus_abs_diff_normalisation_constants_misfit_perturbation}, which does not require $\essinf_\mu\Phi_i=0$ for $i=1,2$.
  This proves \cref{item_proposition_TV_bounds_misfit_perturbations_via_triangle_inequality_upper_bound}.
  
  To prove \cref{item_proposition_TV_bounds_misfit_perturbations_via_triangle_inequality_lower_bound}, note that the hypothesis that $\mu(\Phi_1-\Phi_2>0)$ and $\mu(\Phi_1-\Phi_2<0)$ are both positive implies the strict positivity of $\norm{ e^{-\Phi_1}-e^{-\Phi_2}}_{L^1_\mu}- \abs{Z_{\Phi_2,\mu}-Z_{\Phi_1,\mu}}$, by \Cref{lemma_L1norm_difference_of_unnormalised_likelihoods_minus_abs_diff_normalisation_constants_misfit_perturbation}\ref{item2_lemma_L1norm_difference_of_unnormalised_likelihoods_minus_abs_diff_normalisation_constants_misfit_perturbation}.
Recall from the proof of \cref{item_proposition_TV_bounds_misfit_perturbations_via_triangle_inequality_upper_bound} that equality holds in \eqref{eq_TV_lower_bound_misfit_perturbation} if and only if $\Phi_1-\Phi_2$ is $\mu$-a.s. constant.
If $\mu(\Phi_1-\Phi_2>0)$ and $\mu(\Phi_1-\Phi_2<0)$ are both positive, then the inequality in \eqref{eq_TV_lower_bound_misfit_perturbation} is strict.
Now suppose $Z_{\Phi_1,\mu}=Z_{\Phi_2,\mu}$. Then by the lower bound in \eqref{eq_Lipschitz_continuity_exp_function} and the hypothesis that $\Phi_i\in L^\infty_\mu(\Theta,\R)$ for $i=1,2$, we have
\begin{align}
\Norm{e^{-\Phi_1}-e^{-\Phi_2}}_{L^1_\mu}\geq &\int [e^{-\Phi_1}\wedge e^{-\Phi_2}]\Abs{\Phi_1-\Phi_2}\rd \mu 
\nonumber
\\
\geq& \left[ \exp(-\norm{\Phi_1}_{L^\infty_\mu})\wedge \exp(-\norm{\Phi_2}_{L^\infty_\mu})\right]\Norm{\Phi_1-\Phi_2}_{L^1_\mu}.
\label{eq_lower_bound_L1_norm_metric_unnormalised_likelihoods}
\end{align}
Since the inequality in \eqref{eq_TV_lower_bound_misfit_perturbation} is strict, we obtain
\begin{equation*}
d_{\TV}(\mu_{\Phi_1},\mu_{\Phi_2})>\frac{1}{2}\frac{\exp(-\norm{\Phi_1}_{L^\infty_\mu})\wedge \exp(-\norm{\Phi_2}_{L^\infty_\mu})}{Z_{\Phi_1,\mu}\wedge Z_{\Phi_2,\mu}}\Norm{\Phi_1-\Phi_2}_{L^1_\mu}.
\end{equation*}
Now suppose $Z_{\Phi_1,\mu}> Z_{\Phi_2,\mu}$. 
Then
\begin{align*}
 &\frac{1}{2}\left(\Norm{e^{-\Phi_1}-e^{-\Phi_2}}_{L^1_\mu}-\Abs{Z_{\Phi_2,\mu}-Z_{\Phi_1,\mu}}\right)
 \\
 =& \int_{\{\Phi_1>\Phi_2\}} \abs{e^{-\Phi_1}-e^{-\Phi_2}}\rd \mu & \text{by \Cref{lemma_L1norm_difference_of_unnormalised_likelihoods_minus_abs_diff_normalisation_constants_misfit_perturbation}\ref{item3_lemma_L1norm_difference_of_unnormalised_likelihoods_minus_abs_diff_normalisation_constants_misfit_perturbation}}
 \\
 \geq &  \int_{\{\Phi_1>\Phi_2\}} (e^{-\Phi_1}\wedge e^{-\Phi_2})\abs{\Phi_1-\Phi_2}\rd \mu &\text{by \eqref{eq_Lipschitz_continuity_exp_function}}
 \\
 \geq &  \exp(-\Norm{\Phi_1}_{L^\infty_\mu})\int_{\{\Phi_1>\Phi_2\}} \abs{\Phi_1-\Phi_2}\rd \mu & \Phi_1\in L^\infty_\mu,
 \end{align*}
and by \eqref{eq_TV_lower_bound_misfit_perturbation} we obtain
\begin{equation*}
 d_{\TV}(\mu_{\Phi_1},\mu_{\Phi_2})> \frac{\exp(-\Norm{\Phi_1}_{L^\infty_\mu})}{Z_{\Phi_2,\mu}}\int_{\{\Phi_1>\Phi_2\}} \abs{\Phi_1-\Phi_2}\rd \mu ,
\end{equation*}
which yields the lower bound in the second case of \cref{item_proposition_TV_bounds_misfit_perturbations_via_triangle_inequality_lower_bound}.
By switching the roles of $\Phi_1$ and $\Phi_2$ in the preceding argument, we obtain the lower bound in the third case of \cref{item_proposition_TV_bounds_misfit_perturbations_via_triangle_inequality_lower_bound}.

To prove \cref{item_proposition_TV_bounds_misfit_perturbations_via_triangle_inequality_lower_bound_no_sign_change}, note that we can use the same proof as in \cref{item_proposition_TV_bounds_misfit_perturbations_via_triangle_inequality_lower_bound} for the case where $Z_{\Phi_1,\mu}=Z_{\Phi_2,\mu}$. 
For the case where $Z_{\Phi_1,\mu}\neq Z_{\Phi_2,\mu}$, we use \eqref{eq_TV_lower_bound_misfit_perturbation} with \Cref{lemma_L1norm_difference_of_unnormalised_likelihoods_minus_abs_diff_normalisation_constants_misfit_perturbation}\ref{item2_lemma_L1norm_difference_of_unnormalised_likelihoods_minus_abs_diff_normalisation_constants_misfit_perturbation}.
\end{proof}

\subsection{Proofs of total variation bounds for prior perturbations}
\label{section_TVbounds_Priorperturbations_proofs}

\TVboundsPriorperturbationTriangle*
\begin{proof}[Proof of \Cref{proposition_TV_bounds_prior_perturbation_via_triangle_inequality}]
Let $\nu\in\mathcal{P}(\Theta)$ be such that $\mu_i\ll \nu$ for $i=1,2$, and let $\Phi\in L^0(\Theta,\R)$ be such that $(\mu_i)_\Phi\in\mathcal{P}(\Theta)$ for $i=1,2$, for $(\mu_i)_\Phi$ defined by \eqref{eq_posterior_function}.
Then
\begin{align}
 &2d_{\TV}((\mu_1)_\Phi,(\mu_2)_\Phi)
 \nonumber
 \\
 =&\int\Abs{\frac{\rd (\mu_1)_\Phi}{\rd \nu}-\frac{\rd (\mu_2)_\Phi}{\rd \nu}} \rd \nu & \text{by \eqref{eq_total_variation_metric}}
 \nonumber
 \\
 =& \int \Abs{ \frac{\exp(-\Phi)}{Z_{\Phi,\mu_1}}\frac{\rd \mu_1}{\rd \nu}-\frac{\exp(-\Phi)}{Z_{\Phi,\mu_2}}\frac{\rd \mu_2}{\rd \nu}}\rd \nu
 \nonumber & \text{by \eqref{eq_radon_nikodym_derivative_different_priors}}
 \\
 =& \int \Abs{ \frac{\exp(-\Phi)}{Z_{\Phi,\mu_1}}\left(\frac{\rd \mu_1}{\rd \nu}-\frac{\rd \mu_2}{\rd \nu}\right)+\left(\frac{\exp(-\Phi)}{Z_{\Phi,\mu_1}}-\frac{\exp(-\Phi)}{Z_{\Phi,\mu_2}}\right)
 \frac{\rd \mu_2}{\rd \nu}}\rd \nu
 \nonumber
\\
 =& \frac{1}{Z_{\Phi,\mu_1}}\int \exp(-\Phi) \Abs{ \left(\frac{\rd \mu_1}{\rd \nu}-\frac{\rd \mu_2}{\rd \nu}\right)+\frac{Z_{\Phi,\mu_2}-Z_{\Phi,\mu_1}}{Z_{\Phi,\mu_2}} \frac{\rd \mu_2}{\rd \nu}}\rd \nu .
 \label{eq_TV_metric_different_priors}
 \end{align}

 Proof of \cref{item_proposition_TV_bounds_prior_perturbation_via_triangle_inequality_upper_bound}: 
 Let $\nu=\lambda \mu_1+(1-\lambda)\mu_2$ for $0<\lambda <1$. Then $\mu_i\ll\nu$ for $i=1,2$ and $\essinf_\nu\Phi\leq \essinf_{\mu_i}\Phi$ for $i=1,2$ by \Cref{lemma_esssup_essinf_different_measures_different_functions}.
 The definition of the essential infimum as the largest lower bound and the hypotheses that $\Phi\in L^0(\Theta,\R_{\geq 0})$ and $\essinf_{\mu_i}\Phi=0$ for some $i\in\{1,2\}$ together imply that $\essinf_\nu \Phi=0$.
 We have
\begin{align*}
  &2d_{\TV}((\mu_1)_\Phi,(\mu_2)_\Phi)
 \\
 =& \frac{1}{Z_{\Phi,\mu_1}}\int \exp(-\Phi) \Abs{ \left(\frac{\rd \mu_1}{\rd \nu}-\frac{\rd \mu_2}{\rd \nu}\right)+\frac{Z_{\Phi,\mu_2}-Z_{\Phi,\mu_1}}{Z_{\Phi,\mu_2}} \frac{\rd \mu_2}{\rd \nu}}\rd \nu  & \text{by \eqref{eq_TV_metric_different_priors}}
 \\
 \leq & \frac{1}{Z_{\Phi,\mu_1}}\left(\int \exp(-\Phi) \Abs{ \left(\frac{\rd \mu_1}{\rd \nu}-\frac{\rd \mu_2}{\rd \nu}\right)}\rd \nu
+\abs{Z_{\Phi,\mu_2}-Z_{\Phi,\mu_1}} \int \frac{\exp(-\Phi)}{Z_{\Phi,\mu_2}} \frac{\rd \mu_2}{\rd \nu}\rd \nu  \right)
 \\
 =& \frac{1}{Z_{\Phi,\mu_1}}\left(\int \exp(-\Phi) \Abs{ \left(\frac{\rd \mu_1}{\rd \nu}-\frac{\rd \mu_2}{\rd \nu}\right)}\rd \nu
+\abs{Z_{\Phi,\mu_2}-Z_{\Phi,\mu_1}} \right) 
\\
 \leq &\frac{1}{Z_{\Phi,\mu_1}}\left(2d_{\TV}(\mu_1,\mu_2)
+\abs{Z_{\Phi,\mu_2}-Z_{\Phi,\mu_1}} \right).
\end{align*}
 The first inequality above follows from the triangle inequality. 
 The second equation follows since $(\mu_2)_\Phi\in\mathcal{P}(\Theta)$.
 The second inequality follows from $\essinf_\nu\Phi=0$ and the definition \eqref{eq_total_variation_metric} of the total variation metric.
 Switching $\mu_1$ and $\mu_2$ in the argument above and using that $Z_{\Phi,\mu_1}^{-1}\wedge Z_{\Phi,\mu_2}^{-1}=(Z_{\Phi,\mu_1}\vee Z_{\Phi,\mu_2})^{-1}$ implies the first bound in \cref{item_proposition_TV_bounds_prior_perturbation_via_triangle_inequality_upper_bound}.
 
To prove the bound on $\abs{Z_{\Phi,\mu_2}-Z_{\Phi,\mu_1}}$, we have for $\nu$ as above that
 \begin{equation*}
  \abs{Z_{\Phi,\mu_2}-Z_{\Phi,\mu_1}}=\Abs{\int e^{-\Phi}\rd \mu_1-\int e^{-\Phi}\rd \mu_2 }\leq \int e^{-\Phi} \Abs{ \frac{\rd \mu_1}{\rd \nu}-\frac{\rd \mu_2}{\rd \nu}}\rd\nu\leq 2d_{\TV}(\mu_1,\mu_2).
 \end{equation*}
 By \Cref{lemma_L1norm_difference_of_unnormalised_likelihoods_minus_abs_diff_normalisation_constants_prior_perturbation}\ref{lemma_L1norm_difference_of_unnormalised_likelihoods_minus_abs_diff_normalisation_constants_prior_perturbation_item2}, the left inequality is strict if and only if $d_{\TV}(\mu_1,\mu_2)>0$. 
 The right inequality is an equality if and only if $\Phi$ is $\nu$-a.s. constant.
 If $\Phi$ is $\nu$-a.s. constant, then $(\mu_i)_\Phi=\mu$ by \eqref{eq_posterior_function} and $d_{\TV}((\mu_1)_\Phi,(\mu_2)_\Phi)= d_{\TV}(\mu_1,\mu_2)$.
 
 Proof of \cref{item_proposition_TV_bounds_prior_perturbation_via_triangle_inequality_lower_bound}: If $\nu \coloneqq \lambda \mu_1+(1-\lambda)\mu_2$ for $0<\lambda <1$, then $\mu_i\ll\nu$ for $i=1,2$ and in addition $\norm{\Phi}_{L^\infty_\nu}=\norm{\Phi}_{L^\infty_{\mu_1}}\vee \norm{\Phi}_{L^\infty_{\mu_2}}$, by \Cref{lemma_esssup_essinf_different_measures_different_functions}. 
 Thus, by the reverse triangle inequality,
\begin{align*}
  &2d_{\TV}((\mu_1)_\Phi,(\mu_2)_\Phi)
 \\
 \geq & \frac{\exp(-\norm{\Phi}_{L^\infty_\nu})}{Z_{\Phi,\mu_1}}  \int\Abs{ \left(\frac{\rd \mu_1}{\rd \nu}-\frac{\rd \mu_2}{\rd \nu}\right)+\frac{Z_{\Phi,\mu_2}-Z_{\Phi,\mu_1}}{Z_{\Phi,\mu_2}} \frac{\rd \mu_2}{\rd \nu}}\rd \nu  & \text{by \eqref{eq_TV_metric_different_priors}, $\Phi\in L^\infty_\nu(\Theta,\R)$}
 \\
 \geq &\frac{\exp(-\norm{\Phi}_{L^\infty_\nu})}{Z_{\Phi,\mu_1}} \Abs{\int\Abs{\frac{\rd \mu_1}{\rd \nu}-\frac{\rd \mu_2}{\rd \nu}}\rd \nu- \int\Abs{ \frac{Z_{\Phi,\mu_2}-Z_{\Phi,\mu_1}}{Z_{\Phi,\mu_2}} \frac{\rd \mu_2}{\rd \nu}}\rd \nu}
 \\
 =& \frac{\exp(-\norm{\Phi}_{L^\infty_\nu})}{Z_{\Phi,\mu_1}} \Abs{2d_{\TV}(\mu_1,\mu_2)- \Abs{ \frac{Z_{\Phi,\mu_2}-Z_{\Phi,\mu_1}}{Z_{\Phi,\mu_2}} }} & \text{by \eqref{eq_total_variation_metric}, $\mu_2\in\mathcal{P}(\Theta)$}.
\end{align*}
Equality holds in the application of the triangle inequality, and thus also in the reverse triangle inequality, if and only if there exists some constant $\xi$ such that $\tfrac{\rd\mu_1}{\rd\nu}-\tfrac{\rd\mu_2}{\rd\nu}=\xi \tfrac{\rd\mu_2}{\rd\nu}$ $\nu$-a.s.
This condition holds if and only if $\tfrac{\rd\mu_1}{\rd\nu}-\tfrac{\rd\mu_2}{\rd\nu}$ does not change sign $\nu$-a.s. 
By \Cref{lemma_L1norm_difference_of_unnormalised_likelihoods_minus_abs_diff_normalisation_constants_prior_perturbation}\ref{lemma_L1norm_difference_of_unnormalised_likelihoods_minus_abs_diff_normalisation_constants_prior_perturbation_item1},  $\tfrac{\rd\mu_1}{\rd\nu}-\tfrac{\rd\mu_2}{\rd\nu}$ does not change sign $\nu$-a.s. if and only if $d_{\TV}(\mu_1,\mu_2)=0$.
This proves that if 
\begin{equation*}
 2d_{\TV}((\mu_1)_\Phi,(\mu_2)_\Phi)=\frac{\exp(-\norm{\Phi}_{L^\infty_\nu})}{Z_{\Phi,\mu_1}} \Abs{2d_{\TV}(\mu_1,\mu_2)- \Abs{ \frac{Z_{\Phi,\mu_2}-Z_{\Phi,\mu_1}}{Z_{\Phi,\mu_2}} }}
\end{equation*}
then $d_{\TV}(\mu_1,\mu_2)=0$.
On the other hand, if $d_{\TV}(\mu_1,\mu_2)=0$, then 
\begin{equation*}
 2d_{\TV}((\mu_1)_\Phi,(\mu_2)_\Phi)=0=\Abs{2d_{\TV}(\mu_1,\mu_2)- \Abs{ \frac{Z_{\Phi,\mu_2}-Z_{\Phi,\mu_1}}{Z_{\Phi,\mu_2}} }}.
\end{equation*}
\end{proof}

\TVboundsPriorperturbationLipschitz*
\begin{proof}[Proof of \Cref{proposition_TV_bounds_prior_perturbations_via_Lipschitz_continuity}]
By the hypothesis that $\tfrac{\rd\mu_2}{\rd\mu_1}$ is bounded from above and bounded away from zero, it follows that $\mu_1\sim\mu_2$.
Thus, we can set $\nu\leftarrow \mu_1$ in \eqref{eq_total_variation_metric}, to obtain
\begin{equation*}
  2d_{\TV}((\mu_1)_\Phi,(\mu_2)_\Phi)= \int\Abs{\frac{\rd (\mu_1)_\Phi}{\rd \mu_1}-\frac{\rd (\mu_2)_\Phi}{\rd \mu_1}} \rd \mu_1 .
\end{equation*}
Since $c\leq \tfrac{\rd\mu_2}{\rd\mu_1}\leq C$ $\mu_1$-a.s., it follows that $-\infty<\log c\leq \log \tfrac{\rd\mu_2}{\rd\mu_1}\leq \log C<\infty$ $\mu_1$-a.s.
Hence $\log \tfrac{\rd\mu_2}{\rd\mu_1}\in L^\infty_{\mu_1}$.

Proof of \cref{proposition_TV_bounds_prior_perturbations_via_Lipschitz_continuity_upper_bound}: By \eqref{eq_Lipschitz_continuity_exp_function}, \eqref{eq_posterior_function}, and the hypothesis that $\essinf_{\mu_1}\Phi=0$,
\begin{align*}
&\int\Abs{\frac{\rd (\mu_1)_\Phi}{\rd \mu_1}-\frac{\rd (\mu_2)_\Phi}{\rd \mu_1}} \rd \mu_1 
  \\
  \leq & \int \left(\frac{\rd (\mu_1)_\Phi}{\rd \mu_1}\vee \frac{\rd (\mu_2)_\Phi}{\rd \mu_1} \right)\Abs{\log \frac{\rd (\mu_1)_\Phi}{\rd \mu_1}-\log\frac{\rd (\mu_2)_\Phi}{\rd \mu_1}} \rd \mu_1
  \\
  =& \int \left[\frac{\exp(-\Phi)}{Z_{\Phi,\mu_1}}\vee \left(\frac{\exp(-\Phi)}{Z_{\Phi,\mu_2}}\frac{\rd\mu_2}{\rd \mu_1} \right)\right]\Abs{-\log Z_{\Phi,\mu_1}+\log Z_{\Phi,\mu_2}-\log\frac{\rd \mu_2}{\rd \mu_1}} \rd \mu_1 
  \\
  \leq & \left[\frac{1}{Z_{\Phi,\mu_1}}\vee \left(\frac{1}{Z_{\Phi,\mu_2}}\Norm{\frac{\rd\mu_2}{\rd \mu_1}}_{L^\infty_{\mu_1}} \right)\right]\Norm{\log \frac{Z_{\Phi,\mu_2}}{Z_{\Phi,\mu_1}}-\log\frac{\rd \mu_2}{\rd \mu_1} }_{L^1_{\mu_1}}.
\end{align*}
If $\norm{\frac{\rd\mu_2}{\rd \mu_1}}_{L^\infty_{\mu_1}}< 1$, then $\mu_2(\Theta)<1$. Thus, $\norm{\frac{\rd\mu_2}{\rd \mu_1}}_{L^\infty_{\mu_1}}\geq  1$, and we obtain the upper bound on $d_{\TV}((\mu_1)_\Phi,(\mu_2)_\Phi)$.
By \eqref{eq_Lipschitz_continuity_log_function} and the upper bound on $\abs{Z_{\Phi,\mu_2}-Z_{\Phi,\mu_1}}$ in \Cref{proposition_TV_bounds_prior_perturbation_via_triangle_inequality}\ref{item_proposition_TV_bounds_prior_perturbation_via_triangle_inequality_upper_bound},
\begin{equation*}
\Abs{\log Z_{\Phi,\mu_2}-\log Z_{\Phi,\mu_1}}\leq \frac{\Abs{Z_{\Phi,\mu_1}-Z_{\Phi,\mu_2}}}{Z_{\Phi,\mu_1}\wedge Z_{\Phi,\mu_2}}\leq\frac{2d_{\TV}(\mu_1,\mu_2)}{Z_{\Phi,\mu_1}\wedge Z_{\Phi,\mu_2}},
\end{equation*}
which yields the upper bound on $\Abs{\log \tfrac{Z_{\Phi,\mu_2}}{Z_{\Phi,\mu_1}}}$.

Proof of \cref{proposition_TV_bounds_prior_perturbations_via_Lipschitz_continuity_lower_bound}: By \eqref{eq_Lipschitz_continuity_exp_function}, \eqref{eq_posterior_function}, and the hypothesis that $\Phi\in L^\infty_{\mu_1}$,
\begin{align*}
&\int\Abs{\frac{\rd (\mu_1)_\Phi}{\rd \mu_1}-\frac{\rd (\mu_2)_\Phi}{\rd \mu_1}} \rd \mu_1 
  \\
  \geq & \int \left(\frac{\rd (\mu_1)_\Phi}{\rd \mu_1}\wedge \frac{\rd (\mu_2)_\Phi}{\rd \mu_1} \right)\Abs{\log \frac{\rd (\mu_1)_\Phi}{\rd \mu_1}-\log\frac{\rd (\mu_2)_\Phi}{\rd \mu_1}} \rd \mu_1 
  \\
  =& \int \left[\frac{\exp(-\Phi)}{Z_{\Phi,\mu_1}}\wedge \left(\frac{\exp(-\Phi)}{Z_{\Phi,\mu_2}}\frac{\rd\mu_2}{\rd \mu_1} \right)\right]\Abs{-\log Z_{\Phi,\mu_1}+\log\frac{\rd \mu_1}{\rd \mu_1}+\log Z_{\Phi,\mu_2}-\log\frac{\rd \mu_2}{\rd \mu_1}} \rd \mu_1 
  \\
  =& \exp(-\norm{\Phi}_{L^\infty_{\mu_1}}) \left[\frac{1}{Z_{\Phi,\mu_1}}\wedge \left(\frac{1}{Z_{\Phi,\mu_2}}\Norm{\frac{\rd\mu_1}{\rd \mu_2}}_{L^\infty_{\mu_1}} \right)\right]\Norm{\log \frac{Z_{\Phi,\mu_2}}{Z_{\Phi,\mu_1}}-\log\frac{\rd \mu_2}{\rd \mu_1} }_{L^1_{\mu_1}}.
\end{align*}
If $\norm{\frac{\rd\mu_1}{\rd \mu_2}}_{L^\infty_{\mu_1}}>1$, then $\norm{\tfrac{\rd\mu_2}{\rd\mu_1}}_{L^\infty_{\mu_1}}<1$, which in turn implies $\mu_2(\Theta)<1$. 
Thus, $\norm{\frac{\rd\mu_1}{\rd \mu_2}}_{L^\infty_{\mu_1}}\leq 1$.
This yields the lower bound.
\end{proof}

\section{Proofs of Hellinger bounds}
\label{section_Hellingerbounds_proofs}

\subsection{Proofs of Hellinger bounds for misfit perturbations}
\label{section_Hellingerbounds_Misfitperturbations_proofs}

\HellingerboundsMisfitperturbationsLipschitz*
\begin{proof}[Proof of \Cref{proposition_Hellinger_bounds_misfit_perturbations_via_Lipschitz_continuity}]
Proof of \cref{proposition_Hellinger_bounds_misfit_perturbations_via_Lipschitz_continuity_upper_bound}:
 \begin{align*}
  &d_{\Hel}(\mu_{\Phi_1},\mu_{\Phi_2})
  \\
  = &\Norm{\ell_{\Phi_1,\mu}^{1/2}-\ell_{\Phi_2,\mu}^{1/2}}_{L^2_\mu} & \text{by \eqref{eq_Hellinger_metric} and  \eqref{eq_posterior_function}}
  \\
  \leq &  \tfrac{1}{2}\Norm{ \left(\ell_{\Phi_1,\mu}^{1/2}\vee \ell_{\Phi_2,\mu}^{1/2}\right)\abs{\log \ell_{\Phi_1,\mu}-\log\ell_{\Phi_2,\mu}}}_{L^2_\mu} & \text{by upper bound in \eqref{eq_Lipschitz_continuity_exp_function}}
  \\
  \leq & \frac{1}{2}\left(\esssup_\mu \ell_{\Phi_1,\mu}^{1/2}\vee \esssup_\mu \ell_{\Phi_2,\mu}^{1/2}\right)\norm{\log \ell_{\Phi_1,\mu}-\log\ell_{\Phi_2,\mu}}_{L^2_\mu}
  \\
  \leq & \frac{1}{2}\left(\frac{1}{Z_{\Phi_1,\mu}^{1/2}}\vee\frac{1}{Z_{\Phi_2,\mu}^{1/2}}\right)\norm{\log \ell_{\Phi_1,\mu}-\log\ell_{\Phi_2,\mu}}_{L^2_\mu} & \essinf_\mu\Phi_i\geq 0,\ i=1,2.
 \end{align*}

Proof of \cref{proposition_Hellinger_bounds_misfit_perturbations_via_Lipschitz_continuity_lower_bound}: 
 \begin{align*}
  &d_{\Hel}(\mu_{\Phi_1},\mu_{\Phi_2})
  \\
  = &\Norm{\ell_{\Phi_1,\mu}^{1/2}-\ell_{\Phi_2,\mu}^{1/2}}_{L^2_\mu} & \text{by \eqref{eq_Hellinger_metric} and  \eqref{eq_posterior_function}}
  \\
  \geq & \tfrac{1}{2}\Norm{ \left(\ell_{\Phi_1,\mu}^{1/2}\wedge \ell_{\Phi_2,\mu}^{1/2}\right)\abs{\log \ell_{\Phi_1,\mu}-\log\ell_{\Phi_2,\mu}}}_{L^2_\mu} & \text{by lower bound in \eqref{eq_Lipschitz_continuity_exp_function}}
  \\
  \geq & \frac{1}{2}\left(\essinf_\mu \ell_{\Phi_1,\mu}^{1/2}\wedge \essinf_\mu \ell_{\Phi_2,\mu}^{1/2}\right)\norm{\log \ell_{\Phi_1,\mu}-\log\ell_{\Phi_2,\mu}}_{L^2_\mu}
  \\
  \geq & \frac{1}{2}\left(\frac{\exp(-\norm{\Phi_1}_{L^\infty_\mu})}{Z_{\Phi_1,\mu}^{1/2}}\wedge\frac{\exp(-\norm{\Phi_2}_{L^\infty_\mu})}{Z_{\Phi_2,\mu}^{1/2}}\right)\norm{\log \ell_{\Phi_1,\mu}-\log\ell_{\Phi_2,\mu}}_{L^2_\mu} & \Phi_1,\Phi_2\in L^\infty_\mu(\Theta,\R).
 \end{align*}
  By \Cref{lemma_noninjectivity_of_dataMisfit_maps}\ref{item_lemma_noninjectivity_of_dataMisfit_maps_equivalent_condition_for_agreement}, $\Phi_1-\Phi_2$ is $\mu$-a.s. constant if and only if $\log\ell_{\Phi_1,\mu}=\log\ell_{\Phi_2,\mu}$ $\mu$-a.s.
  By \eqref{eq_posterior_function} and \eqref{eq_Hellinger_metric}, $\log\ell_{\Phi_1,\mu}=\log\ell_{\Phi_2,\mu}$ $\mu$-a.s. if and only if $d_{\Hel}(\mu_{\Phi_1},\mu_{\Phi_2})=0$.
\end{proof}

\HellingerboundsMisfitperturbationsTriangle*
\begin{proof}[Proof of \Cref{proposition_Hellinger_upper_bound_misfit_perturbations_via_triangle_inequality}]
We have
 \begin{align*}
  &d_{\Hel}(\mu_{\Phi_1},\mu_{\Phi_2})
  \\
  = &\Norm{\ell_{\Phi_1,\mu}^{1/2}-\ell_{\Phi_2,\mu}^{1/2}}_{L^2_\mu} & \text{by \eqref{eq_Hellinger_metric}}
  \\
  =& \Norm{\frac{e^{-\Phi_1/2}-e^{-\Phi_2/2}}{Z_{\Phi_1,\mu}^{1/2}}+\frac{e^{-\Phi_2/2}}{Z_{\Phi_1,\mu}^{1/2}}-\frac{e^{-\Phi_2/2}}{Z_{\Phi_2,\mu}^{1/2}}}_{L^2_\mu} & \text{by \eqref{eq_likelihood_function}}
  \\
  \leq & \Norm{\frac{e^{-\Phi_1/2}-e^{-\Phi_2/2}}{Z_{\Phi_1,\mu}^{1/2}}}_{L^2_\mu}+\Norm{\ell_{\Phi_2,\mu}^{1/2}\left(\frac{Z_{\Phi_2,\mu}^{1/2}-Z_{\Phi_1,\mu}^{1/2}}{Z_{\Phi_1,\mu}^{1/2}}\right)}_{L^2_\mu}
  \\
  =&\frac{1}{Z_{\Phi_1,\mu}^{1/2}}\left( \Norm{e^{-\Phi_1/2}-e^{-\Phi_2/2}}_{L^2_\mu}+\Abs{Z_{\Phi_2,\mu}^{1/2}-Z_{\Phi_1,\mu}^{1/2}}\right) & \text{$\norm{\ell_{\Phi_2,\mu}^{1/2}}_{L^2_\mu}=1$ by \eqref{eq_posterior_function}.}
  \end{align*}
  The inequality above follows from the triangle inequality.
  Thus, equality holds if and only if there exists some $\lambda$ such that $e^{-\Phi_1/2}-e^{-\Phi_2/2}=\lambda e^{-\Phi_2/2}$ $\mu$-a.s., which is equivalent to $\Phi_1-\Phi_2$ being $\mu$-a.s. constant.
By switching $\Phi_1$ and $\Phi_2$ in the reasoning above, we obtain the same inequality, except that division by $Z_{\Phi_1,\mu}^{1/2}$ is replaced with division by $Z_{\Phi_2,\mu}^{1/2}$. 
Since both these inequalities are true, we obtain the first inequality in the conclusion of the proposition.
  
  By \eqref{eq_normalisationConstant_function} and the Cauchy--Schwarz inequality,
  \begin{equation*}
  \Abs{Z_{\Phi_2,\mu}^{1/2}-Z_{\Phi_1,\mu}^{1/2}}^2=Z_{\Phi_1,\mu}-2Z_{\Phi_1,\mu}^{1/2} Z_{\Phi_2,\mu}^{1/2}+Z_{\Phi_2,\mu}\leq  \Norm{e^{-\Phi_1/2}-e^{-\Phi_2/2}}_{L^2_\mu}^2,
  \end{equation*}
  which proves the bound on $\abs{Z_{\Phi_2,\mu}^{1/2}-Z_{\Phi_1,\mu}^{1/2}}$.
  This proves the second inequality in the conclusion of the proposition.
  Equality holds in the application of the Cauchy--Schwarz inequality if and only if there exists $\lambda\in\R$ such that $e^{-\Phi_1/2}=\lambda e^{-\Phi_2/2}$ $\mu$-a.s., i.e. if and only if $\Phi_1-\Phi_2$ is $\mu$-a.s. constant.
  
  The inequalities we have proven thus far imply that
  \begin{equation}
    \label{eq_bound00}
    d_{\Hel}(\mu_{\Phi_1},\mu_{\Phi_2})\leq \frac{2}{Z_{\Phi_1,\mu}^{1/2}\vee Z_{\Phi_2,\mu}^{1/2}}\Norm{e^{-\Phi_1/2}-e^{-\Phi_2/2}}_{L^2_\mu},
  \end{equation}
  where equality holds if and only if $\Phi_1=\Phi_2$ $\mu$-a.s.
  We now bound $\norm{e^{-\Phi_1/2}-e^{-\Phi_2/2}}_{L^2_\mu}$.
  If $\Phi_i\in L^2_\mu(\Theta,\R)$ and $\essinf_\mu \Phi_i\geq 0$ for $i=1,2$, then by \eqref{eq_Lipschitz_continuity_exp_function}
 \begin{equation*}
   \Norm{e^{-\Phi_1/2}-e^{-\Phi_2/2}}_{L^2_\mu}^2\leq \int(e^{-\Phi_1}\vee e^{-\Phi_2})\frac{\abs{\Phi_1-\Phi_2}^2}{4}\rd \mu \leq \frac{\Norm{\Phi_1-\Phi_2}_{L^2_\mu}^{2}}{4}.
 \end{equation*}
 Alternatively, using the inequality
 \begin{equation*}
 (\sqrt{a}-\sqrt{b})^2=a-2\sqrt{ab}+b\leq a-2(a\wedge b) +b=\abs{a-b},\quad a,b\geq 0,
\end{equation*}
it follows from \eqref{eq_Lipschitz_continuity_exp_function} and $\essinf_\mu\Phi_i\geq 0$, $i=1,2$, that 
 \begin{align*}
   \Norm{e^{-\Phi_1/2}-e^{-\Phi_2/2}}_{L^2_\mu}^2\leq&\Norm{e^{-\Phi_1}-e^{-\Phi_2}}_{L^1_\mu} \leq  \int (e^{-\Phi_1}\vee e^{-\Phi_2})\Abs{\Phi_1-\Phi_2}\rd \mu 
   \\
   \leq & \Norm{\Phi_1-\Phi_2}_{L^1_\mu}.
 \end{align*}
We conclude that 
\begin{equation*}
 \Norm{e^{-\Phi_1/2}-e^{-\Phi_2/2}}_{L^2_\mu}\leq \Norm{\Phi_1-\Phi_2}_{L^1_\mu}^{1/2}\wedge \frac{\Norm{\Phi_1-\Phi_2}_{L^2_\mu}}{2}.
\end{equation*}
Substituting the above bound into \eqref{eq_bound00} yields
\begin{equation*}
  d_{\Hel}(\mu_{\Phi_1},\mu_{\Phi_2})\leq \frac{(2\Norm{\Phi_1-\Phi_2}_{L^1_\mu}^{1/2})\wedge \Norm{\Phi_1-\Phi_2}_{L^2_\mu}}{Z_{\Phi_1,\mu}^{1/2}\vee Z_{\Phi_2,\mu}^{1/2}},
\end{equation*}
as desired.
\end{proof}

\subsection{Proofs of Hellinger bounds for prior perturbations}
\label{section_Hellingerbounds_Priorperturbations_proofs}

\HellingerboundsPriorperturbationsTriangle*
\begin{proof}[Proof of \Cref{proposition_Hellinger_bounds_prior_perturbation_via_triangle_inequality}]
 
Let $\Phi\in L^0(\Theta,\R)$ and $\nu\in\mathcal{P}(\Theta)$ satisfy $\mu_i\ll \nu$ and $(\mu_i)_{\Phi}\in\mathcal{P}(\Theta)$ for $i=1,2$.
 Then
 \begin{align}
  &d_{\Hel}((\mu_1)_\Phi,(\mu_2)_\Phi)
     \nonumber
     \\
  =& \Norm{\sqrt{\frac{\rd(\mu_1)_\Phi}{\rd\nu}}-\sqrt{\frac{\rd(\mu_2)_\Phi}{\rd\nu}}}_{L^2_\nu} & \text{by \eqref{eq_Hellinger_metric}}
     \nonumber
     \\
   =& \Norm{\sqrt{\frac{e^{-\Phi}}{Z_{\Phi,\mu_1}}\frac{\rd\mu_1}{\rd\nu}} -\sqrt{\frac{e^{-\Phi}}{Z_{\Phi,\mu_2}}\frac{\rd\mu_2}{\rd\nu}}}_{L^2_\nu} & \text{by \eqref{eq_radon_nikodym_derivative_different_priors}}
   \nonumber
   \\
   =&\Norm{\sqrt{\frac{e^{-\Phi}}{Z_{\Phi,\mu_1}}}\left(\sqrt{\frac{\rd\mu_1}{\rd\nu}}-\sqrt{\frac{\rd\mu_2}{\rd\nu}}\right)+\sqrt{\frac{e^{-\Phi}}{Z_{\Phi,\mu_1}}\frac{\rd\mu_2}{\rd\nu}} -\sqrt{\frac{e^{-\Phi}}{Z_{\Phi,\mu_2}}\frac{\rd\mu_2}{\rd\nu}}}_{L^2_\nu}
   \nonumber
   \\
   =&\Norm{\sqrt{\frac{e^{-\Phi}}{Z_{\Phi,\mu_1}}}\left(\sqrt{\frac{\rd\mu_1}{\rd\nu}}-\sqrt{\frac{\rd\mu_2}{\rd\nu}}\right)+\sqrt{\frac{e^{-\Phi}}{Z_{\Phi,\mu_2}}\frac{\rd\mu_2}{\rd\nu}}\left(\sqrt{\frac{Z_{\Phi,\mu_2}}{Z_{\Phi,\mu_1}}} -\sqrt{\frac{Z_{\Phi,\mu_1}}{Z_{\Phi,\mu_1}}}\right)}_{L^2_\nu}
   \nonumber
   \\
   =&\Norm{\sqrt{\frac{e^{-\Phi}}{Z_{\Phi,\mu_1}}}\left(\sqrt{\frac{\rd\mu_1}{\rd\nu}}-\sqrt{\frac{\rd\mu_2}{\rd\nu}}\right)+\sqrt{\ell_{\Phi,\mu_2}\frac{\rd\mu_2}{\rd\nu}} \frac{Z_{\Phi,\mu_2}^{1/2}-Z_{\Phi,\mu_1}^{1/2}}{Z_{\Phi,\mu_1}^{1/2} }}_{L^2_\nu}
   &\text{by \eqref{eq_likelihood_function}.} 
   \label{eq_Hellinger_metric_different_priors}
\end{align}

Proof of \cref{proposition_Hellinger_bound_prior_perturbation_upper_bound}: By \eqref{eq_likelihood_function}, $\norm{ (\ell_{\Phi,\mu_2}\tfrac{\rd\mu_2}{\rd\nu})^{1/2}}_{L^2_\nu}=1$.
If $\nu=\lambda \mu_1+(1-\lambda)\mu_2$ for $0<\lambda<1$, then by \Cref{lemma_esssup_essinf_different_measures_different_functions}, $\essinf_\nu \Phi=\essinf_{\mu_1}\Phi \wedge \essinf_{\mu_2}\Phi$.
Thus, by applying the triangle inequality to \eqref{eq_Hellinger_metric_different_priors}, 
\begin{align*}
 d_{\Hel}((\mu_1)_\Phi,(\mu_2)_\Phi)\leq & \Norm{\sqrt{\frac{e^{-\Phi}}{Z_{\Phi,\mu_1}}}\left(\sqrt{\frac{\rd\mu_1}{\rd\nu}}-\sqrt{\frac{\rd\mu_2}{\rd\nu}}\right)}_{L^2_\nu}+\Abs{\frac{Z_{\Phi,\mu_2}^{1/2}-Z_{\Phi,\mu_1}^{1/2}}{Z_{\Phi,\mu_1}^{1/2} }}
 \\
 \leq & \frac{\esssup_\nu \exp(-\tfrac{1}{2}\Phi)}{Z_{\Phi,\mu_1}^{1/2}}\Norm{\sqrt{\frac{\rd\mu_1}{\rd\nu}}-\sqrt{\frac{\rd\mu_2}{\rd\nu}}}_{L^2_\nu}+\Abs{\frac{Z_{\Phi,\mu_2}^{1/2}-Z_{\Phi,\mu_1}^{1/2}}{Z_{\Phi,\mu_1}^{1/2} }}
 \\
 \leq & \frac{d_{\Hel}(\mu_1,\mu_2)+\abs{Z_{\Phi,\mu_2}^{1/2}-Z_{\Phi,\mu_1}^{1/2}}}{Z_{\Phi,\mu_1}^{1/2}}.
\end{align*}
In the application of the triangle inequality, equality holds if and only if the vectors lie on the same ray, i.e. if and only if there exists $\xi\geq 0$ such that 
\begin{equation*}
 \sqrt{\frac{e^{-\Phi}}{Z_{\Phi,\mu_1}}}\left(\sqrt{\frac{\rd\mu_1}{\rd\nu}}-\sqrt{\frac{\rd\mu_2}{\rd\nu}}\right)=\xi \sqrt{\ell_{\Phi,\mu_2}\frac{\rd\mu_2}{\rd\nu}},\quad \text{$\nu$-a.s.}
\end{equation*}
A necessary condition for the preceding statement to hold is that $\sqrt{\tfrac{\rd\mu_1}{\rd\nu}}-\sqrt{\tfrac{\rd\mu_2}{\rd\nu}}$ is nonnegative, $\nu$-a.s.
By \Cref{lemma_L1norm_difference_of_unnormalised_likelihoods_minus_abs_diff_normalisation_constants_prior_perturbation}\ref{lemma_L1norm_difference_of_unnormalised_likelihoods_minus_abs_diff_normalisation_constants_prior_perturbation_item1}, $\sqrt{\tfrac{\rd\mu_1}{\rd\nu}}-\sqrt{\tfrac{\rd\mu_2}{\rd\nu}}$ is nonnegative, $\nu$-a.s. if and only if $d_{\TV}(\mu_1,\mu_2)=0$, i.e. $\mu_1=\mu_2$.
Conversely, if $\mu_1=\mu_2$, then $Z_{\Phi,\mu_2}=Z_{\Phi,\mu_1}$ and $(\mu_1)_\Phi=(\mu_2)_\Phi$.

By switching $\mu_1$ and $\mu_2$ in the upper bound for $d_{\Hel}((\mu_1)_\Phi,(\mu_2)_\Phi)$ above, we obtain another upper bound for $d_{\Hel}((\mu_1)_\Phi,(\mu_2)_\Phi)$ that differs only in that $Z_{\Phi_,\mu_1}^{1/2}$ is replaced with $Z_{\Phi_,\mu_2}^{1/2}$.
Combining these two inequalities and using that $Z_{\Phi,\mu_1}^{-1/2}\wedge Z_{\Phi,\mu_2}^{-1/2}=(Z_{\Phi,\mu_1}^{1/2}\vee Z_{\Phi,\mu_2}^{1/2})^{-1}$, we obtain the first inequality in \cref{proposition_Hellinger_bound_prior_perturbation_upper_bound}.

To obtain the second inequality in \cref{proposition_Hellinger_bound_prior_perturbation_upper_bound}, we use the Cauchy--Schwarz inequality:
   \begin{align*}
    \Abs{Z_{\Phi,\mu_2}^{1/2}-Z_{\Phi,\mu_1}^{1/2}}^2=&Z_{\Phi,\mu_2}-2Z_{\Phi,\mu_2}^{1/2}Z_{\Phi,\mu_1}^{1/2}+Z_{\Phi,\mu_1}
    \\
    \leq & \int \exp(-\Phi) \frac{\rd \mu_2}{\rd\nu}\rd \nu-2 \int \exp(-\Phi) \sqrt{\frac{\rd \mu_2}{\rd \nu}\frac{\rd\mu_1}{\rd\nu}}\rd \nu +\int \exp(-\Phi)\frac{\rd \mu_1}{\rd\nu}\rd \nu
    \\
    =& \int \exp(-\Phi)\left(\sqrt{\frac{\rd\mu_2}{\rd\nu}}-\sqrt{\frac{\rd\mu_1}{\rd\nu}} \right)^2 \rd \nu
    \\
    \leq& d_{\Hel}(\mu_2,\mu_1)^2.
   \end{align*}
   For the last inequality above, we used the hypothesis that $\essinf_{\mu_i}\Phi=0$ for $i=1,2$ and \Cref{lemma_esssup_essinf_different_measures_different_functions}\ref{item_esssup_essinf_different_measures} to conclude that $\esssup_\nu\exp(-\Phi)=1$ if $\nu=\lambda \mu_1+(1-\lambda)\mu_2$ for $0<\lambda<1$.
   In the above application of the Cauchy--Schwarz inequality, equality holds if and only if there exists $\zeta\in\R$ such that $\sqrt{\tfrac{\rd\mu_1}{\rd\nu}}=\zeta\sqrt{\tfrac{\rd\mu_2}{\rd\nu}}$ $\mu$-a.s. Since $\mu_1,\mu_2\in\mathcal{P}(\Theta)$, if such a $\lambda$ exists then $\lambda =1$ must hold, which is equivalent to $\mu_1=\mu_2$.
   This completes the proof of \cref{proposition_Hellinger_bound_prior_perturbation_upper_bound}.
   
   Proof of \cref{proposition_Hellinger_bound_prior_perturbation_lower_bound}: 
   Since $\mu_2\in\mathcal{P}(\Theta)$, we have $\norm{ \tfrac{\rd\mu_2}{\rd\nu}^{1/2}}_{L^2_\nu}=1$.
   By applying the reverse triangle inequality, we have
 \begin{align*}
    &d_{\Hel}((\mu_1)_\Phi,(\mu_2)_\Phi)
     \nonumber
     \\
  =& \Norm{\sqrt{\frac{\rd(\mu_1)_\Phi}{\rd\nu}}-\sqrt{\frac{\rd(\mu_2)_\Phi}{\rd\nu}}}_{L^2_\nu}= \Norm{\sqrt{\frac{e^{-\Phi}}{Z_{\Phi,\mu_1}}\frac{\rd\mu_1}{\rd\nu}} -\sqrt{\frac{e^{-\Phi}}{Z_{\Phi,\mu_2}}\frac{\rd\mu_2}{\rd\nu}}}_{L^2_\nu} & \text{by \eqref{eq_Hellinger_metric}, \eqref{eq_radon_nikodym_derivative_different_priors}}
     \nonumber
     \\
   =&\Norm{\sqrt{\frac{e^{-\Phi}}{Z_{\Phi,\mu_1}}}\left(\sqrt{\frac{\rd\mu_1}{\rd\nu}}-\sqrt{\frac{\rd\mu_2}{\rd\nu}}\right)+\sqrt{\frac{e^{-\Phi}}{Z_{\Phi,\mu_1}}\frac{\rd\mu_2}{\rd\nu}} -\sqrt{\frac{e^{-\Phi}}{Z_{\Phi,\mu_2}}\frac{\rd\mu_2}{\rd\nu}}}_{L^2_\nu}
   \nonumber
  \\
   =&\Norm{\sqrt{\frac{e^{-\Phi}}{Z_{\Phi,\mu_1}}}\left(\sqrt{\frac{\rd\mu_1}{\rd\nu}}-\sqrt{\frac{\rd\mu_2}{\rd\nu}}\right)+\sqrt{\frac{e^{-\Phi}}{Z_{\Phi,\mu_1}}\frac{\rd\mu_2}{\rd\nu}} \frac{Z_{\Phi,\mu_2}^{1/2}-Z_{\Phi,\mu_1}^{1/2}}{Z_{\Phi,\mu_2}^{1/2} }}_{L^2_\nu} &
   \\
   \geq & \frac{\exp(-\tfrac{1}{2}\esssup_\nu\Phi)}{Z_{\Phi,\mu_1}^{1/2}}\Norm{\left(\sqrt{\frac{\rd\mu_1}{\rd\nu}}-\sqrt{\frac{\rd\mu_2}{\rd\nu}}\right)+\sqrt{\frac{\rd\mu_2}{\rd\nu}} \frac{Z_{\Phi,\mu_2}^{1/2}-Z_{\Phi,\mu_1}^{1/2}}{Z_{\Phi,\mu_2}^{1/2} }}_{L^2_\nu}
   \\
   \geq & \frac{\exp(-\tfrac{1}{2}\esssup_\nu\Phi)}{Z_{\Phi,\mu_1}^{1/2}}\Abs{\Norm{\sqrt{\frac{\rd\mu_1}{\rd\nu}}-\sqrt{\frac{\rd\mu_2}{\rd\nu}}}_{L^2_\nu}-\Norm{ \sqrt{\frac{\rd\mu_2}{\rd\nu}}}_{L^2_\nu}\Abs{\frac{Z_{\Phi,\mu_2}^{1/2}-Z_{\Phi,\mu_1}^{1/2}}{Z_{\Phi,\mu_2}^{1/2} }}}_{L^2_\nu}
   \\
   =&\frac{\exp(-\tfrac{1}{2}\esssup_\nu\Phi)}{Z_{\Phi,\mu_1}^{1/2}}\Abs{d_{\Hel}(\mu_1,\mu_2)-\Abs{\frac{Z_{\Phi,\mu_2}^{1/2}-Z_{\Phi,\mu_1}^{1/2}}{Z_{\Phi,\mu_2}^{1/2} }}},
 \end{align*}
 where the penultimate inequality follows from the reverse triangle inequality, and the last equation follows from \eqref{eq_Hellinger_metric} and the fact that $\Norm{ \sqrt{\frac{\rd\mu_2}{\rd\nu}}}_{L^2_\nu}=1$ since $\mu_2(\Theta)=1$.
 If equality holds, i.e. if 
 \begin{equation}
 \label{eq_intermediate0}
  d_{\Hel}((\mu_1)_\Phi,(\mu_2)_\Phi)=\frac{\exp(-\tfrac{1}{2}\esssup_\nu\Phi)}{Z_{\Phi,\mu_1}^{1/2}}\Abs{d_{\Hel}(\mu_1,\mu_2)-\Abs{\frac{Z_{\Phi,\mu_2}^{1/2}-Z_{\Phi,\mu_1}^{1/2}}{Z_{\Phi,\mu_2}^{1/2} }}},
 \end{equation}
 then equality must hold in the second inequality above.
 Since the second inequality follows by the reverse triangle inequality, equality holds if and only if there exists $\xi\geq 0$ such that 
\begin{equation*}
 \sqrt{\frac{\rd\mu_1}{\rd\nu}}-\sqrt{\frac{\rd\mu_2}{\rd\nu}}=\xi \sqrt{\frac{\rd\mu_2}{\rd\nu}},\quad \text{$\nu$-a.s.},
\end{equation*}
 which would in turn imply that $\sqrt{\frac{\rd\mu_1}{\rd\nu}}-\sqrt{\frac{\rd\mu_2}{\rd\nu}}$ is $\nu$-a.s. nonnegative. 
 By \Cref{lemma_L1norm_difference_of_unnormalised_likelihoods_minus_abs_diff_normalisation_constants_prior_perturbation}\ref{lemma_L1norm_difference_of_unnormalised_likelihoods_minus_abs_diff_normalisation_constants_prior_perturbation_item1}, $\sqrt{\tfrac{\rd\mu_1}{\rd\nu}}-\sqrt{\tfrac{\rd\mu_2}{\rd\nu}}$ is $\nu$-a.s. nonnegative if and only if $d_{\TV}(\mu_1,\mu_2)=0$, i.e. $\mu_1=\mu_2$.
 Conversely, if $\mu_1=\mu_2$, then both sides of \eqref{eq_intermediate0} equal zero.
 
 If $\nu=\lambda \mu_1+(1-\lambda)\mu_2$ for $0<\lambda<1$, then by \Cref{lemma_esssup_essinf_different_measures_different_functions}, $\esssup_\nu \Phi=\esssup_{\mu_1}\Phi \vee \esssup_{\mu_2}\Phi$, and thus $\exp(-\tfrac{1}{2}\esssup_\nu\Phi)=\exp(-\tfrac{1}{2}\norm{\Phi}_{L^\infty_{\mu_1}})\wedge \exp(-\tfrac{1}{2}\norm{\Phi}_{L^\infty_{\mu_2}})$.
 Substituting this into the lower bound for $d_{\Hel}((\mu_1)_\Phi,(\mu_2)_\Phi)$ above completes the proof of \cref{proposition_Hellinger_bound_prior_perturbation_lower_bound}.
\end{proof}

\section{Proof of Kullback--Leibler bounds}
\label{section_KLbounds_proofs}

For the proof of \Cref{proposition_KL_bounds_misfit_and_prior_perturbations}, we shall use the following bounds.
\begin{proposition}[{\cite[Proposition 6.1.7]{GineNickl2016}}]
 \label{proposition_bounds_for_KL}
 Let $(\mathcal{X},\mathcal{A})$ be a measurable space, $\nu$ be a measure on $(\mathcal{X},\mathcal{A})$, $p$ and $q$ be probability densities with respect to $\nu$, and $P,Q\in\mathcal{P}(\mathcal{X})$ satisfy $\tfrac{\rd P}{\rd \nu}=p$, $\tfrac{\rd Q}{\rd \nu}=q$.
 \begin{enumerate}
  \item \label{proposition_bounds_for_KL_item1}
  It holds that $d_{\TV}(P,Q)\leq \sqrt{d_{\KL}(P\Vert Q)/2}$. 
  \item \label{proposition_bounds_for_KL_item2}
  If $P\ll Q$, then 
 \begin{equation*}
    \int_{pq>0} \Abs{\log \frac{p}{q}}p\rd \nu\leq d_{\KL}(P\Vert Q)+\sqrt{2 d_{\KL}(P\Vert Q)}.
  \end{equation*}  
 \end{enumerate}
\end{proposition}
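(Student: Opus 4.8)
The plan is to prove the two parts in order, using part~\ref{proposition_bounds_for_KL_item1} as an ingredient in part~\ref{proposition_bounds_for_KL_item2}. Part~\ref{proposition_bounds_for_KL_item1} is Pinsker's inequality, and I would prove it by reducing to the two-point case: set $A\coloneqq\{p\geq q\}$, so that $d_{\TV}(P,Q)=P(A)-Q(A)$, and write $a\coloneqq P(A)$, $b\coloneqq Q(A)$, noting $a\geq b$. The log-sum inequality, equivalently the monotonicity of the Kullback--Leibler divergence under coarsening to the sub-$\sigma$-algebra generated by $\{A,A^\complement\}$, gives $d_{\KL}(P\Vert Q)\geq a\log\tfrac{a}{b}+(1-a)\log\tfrac{1-a}{1-b}$. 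It then suffices to verify the scalar inequality $a\log\tfrac{a}{b}+(1-a)\log\tfrac{1-a}{1-b}\geq 2(a-b)^2$ for $a,b\in[0,1]$: fixing $b$, the function $a\mapsto a\log\tfrac{a}{b}+(1-a)\log\tfrac{1-a}{1-b}-2(a-b)^2$ has vanishing first derivative at $a=b$ and second derivative $\tfrac{1}{a(1-a)}-4\geq 0$ (since $a(1-a)\leq\tfrac14$), hence is convex and attains its minimum value $0$ at $a=b$. One may instead simply invoke Pinsker's inequality as a standard fact.

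For part~\ref{proposition_bounds_for_KL_item2}, I would start from the pointwise identity $\abs{\log\tfrac{p}{q}}=\log\tfrac{p}{q}+2(\log\tfrac{p}{q})_-$ valid on $\{pq>0\}$, and integrate against $p\,\rd\nu$. Since $P\ll Q$, the set $\{p>0\}$ agrees $\nu$-a.e.\ with $\{pq>0\}$, so that $\int_{\{pq>0\}}\log\tfrac{p}{q}\,p\,\rd\nu=\int\log\tfrac{p}{q}\,p\,\rd\nu=d_{\KL}(P\Vert Q)$. The negative part contributes $2\int_{\{p>0,\,q>p\}}\log\tfrac{q}{p}\,p\,\rd\nu$, and on this set $\log\tfrac{q}{p}\leq\tfrac{q}{p}-1$ by the upper bound in \eqref{eq_Lipschitz_continuity_log_function} (equivalently, the elementary bound $\log(1+x)\leq x$ for $x\geq0$ recalled in \Cref{sec_notation}), whence $\log\tfrac{q}{p}\,p\leq q-p$ there; integrating gives $\int_{\{p>0,\,q>p\}}\log\tfrac{q}{p}\,p\,\rd\nu\leq\int_{\{q>p\}}(q-p)\,\rd\nu=d_{\TV}(P,Q)$. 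Combining this with part~\ref{proposition_bounds_for_KL_item1} yields $2\int_{\{p>0,\,q>p\}}\log\tfrac{q}{p}\,p\,\rd\nu\leq 2\sqrt{d_{\KL}(P\Vert Q)/2}=\sqrt{2\,d_{\KL}(P\Vert Q)}$, and adding the two contributions produces the asserted bound.

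The main obstacle is the scalar inequality $a\log\tfrac{a}{b}+(1-a)\log\tfrac{1-a}{1-b}\geq 2(a-b)^2$ underpinning Pinsker's inequality; everything else is routine bookkeeping with the decomposition $f=f_+-f_-$, the identity $d_{\TV}(P,Q)=\int_{\{q>p\}}(q-p)\,\rd\nu$, and the bound $\log(1+x)\leq x$. If one prefers a short, self-contained presentation, the cleanest route is to cite Pinsker's inequality for part~\ref{proposition_bounds_for_KL_item1} and carry out only the few lines above for part~\ref{proposition_bounds_for_KL_item2}.
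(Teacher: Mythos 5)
Your proof is correct. The paper does not prove this proposition but simply cites it from \cite[Proposition 6.1.7]{GineNickl2016}; your argument for part (i) is the standard two-point reduction for Pinsker's inequality (data processing to $\{A,A^\complement\}$ plus the scalar convexity bound), and your argument for part (ii) is the standard decomposition $\abs{\log(p/q)}=\log(p/q)+2(\log(p/q))_-$, the elementary estimate $\log(q/p)\,p\leq q-p$ on $\{q>p\}$, the identity $\int_{\{q>p\}}(q-p)\,\rd\nu=d_{\TV}(P,Q)$, and an application of Pinsker, which is the same chain of steps used in the cited reference.
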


\KLboundsJointperturbations*
\begin{proof}[Proof of \Cref{proposition_KL_bounds_misfit_and_prior_perturbations}]
By \eqref{eq_posterior_function} and \eqref{eq_likelihood_function}, $\tfrac{\rd (\mu_i)_{\Phi_i}}{\rd \mu_i}=\tfrac{\exp(-\Phi_i)}{Z_{\Phi_i,\mu_i}}$ for $i=1,2$.
Since $\Phi_2 \in L^0(\Theta,\R)$, it follows that $0<\exp(-\Phi_2)<\infty$ on $\Theta$.
Hence, $\mu_2\ll (\mu_2)_{\Phi_2}$, and by the hypothesis that $\mu_1\ll\mu_2$, we have
\begin{subequations}
 \begin{align}
 \log \frac{\rd (\mu_1)_{\Phi_1}}{\rd (\mu_2)_{\Phi_2}}=&\log \frac{\rd (\mu_1)_{\Phi_1}}{\rd\mu_1}+\log\frac{\rd\mu_1}{\rd\mu_2}+\log \frac{\rd\mu_2}{\rd (\mu_2)_{\Phi_2}}
 \nonumber
 \\
 =&-\Phi_1-\log Z_{\Phi_1,\mu_1}+\log\frac{\rd\mu_1}{\rd\mu_2}+\Phi_2+\log Z_{\Phi_2,\mu_2},
 \label{eq_log_RadonNikodymDerivative}
 \\
 d_{\KL}((\mu_1)_{\Phi_1}\Vert (\mu_2)_{\Phi_2})=&\int \log \frac{\rd (\mu_1)_{\Phi_1}}{\rd (\mu_2)_{\Phi_2}} \rd (\mu_1)_{\Phi_1}
 \nonumber
 \\
 =&\int \Phi_2-\Phi_1 ~\rd (\mu_1)_{\Phi_1}+\log \frac{Z_{\Phi_2,\mu_2}}{Z_{\Phi_1,\mu_1}}+ \int \log\frac{\rd\mu_1}{\rd\mu_2}~\rd (\mu_1)_{\Phi_1}.
 \label{eq_KL_divergence_perturbed_misfit_and_prior}
\end{align}
\end{subequations}
We now prove the statements of \Cref{proposition_KL_bounds_misfit_and_prior_perturbations}.
Recall that for a real function $f$, $f_+\coloneqq \max\{0,f\}$ denotes the positive part of $f$.

Proof of \cref{proposition_KL_bounds_misfit_and_prior_perturbations_upper_bounds}:
We have 
\begin{align*}
 &d_{\KL}((\mu_1)_{\Phi_1}\Vert (\mu_2)_{\Phi_2})
\\
\leq & \int \Abs{\Phi_2-\Phi_1 }~\rd (\mu_1)_{\Phi_1}+\Abs{\log \frac{Z_{\Phi_2,\mu_2}}{Z_{\Phi_1,\mu_1}}}+ \int \Abs{\log\frac{\rd\mu_1}{\rd\mu_2}} ~\rd (\mu_1)_{\Phi_1} & \text{by \eqref{eq_KL_divergence_perturbed_misfit_and_prior}}
\\
\leq & \frac{1}{Z_{\Phi_1,\mu_1}} \Norm{\Phi_2-\Phi_1}_{L^1_{\mu_1}} + \Abs{\log \frac{Z_{\Phi_2,\mu_2}}{Z_{\Phi_1,\mu_1}}}+\frac{1}{Z_{\Phi_1,\mu_1}} \int \Abs{\log\frac{\rd\mu_1}{\rd\mu_2}} ~\rd \mu_1 & \text{by \eqref{eq_posterior_function}}
\\
\leq & \frac{1}{Z_{\Phi_1,\mu_1}} \Norm{\Phi_2-\Phi_1}_{L^1_{\mu_1}} + \Abs{\log \frac{Z_{\Phi_2,\mu_2}}{Z_{\Phi_1,\mu_1}}}+\frac{1}{Z_{\Phi_1,\mu_1}} \left( d_{\KL}(\mu_1\Vert \mu_2)+\sqrt{2d_{\KL}(\mu_1\Vert \mu_2)}\right) .
\end{align*}
In the second inequality, we use the hypothesis that $\essinf_{\mu_1}\Phi_1=0$.
In the third inequality, we use the hypothesis that $\mu_1\ll\mu_2$ and apply \Cref{proposition_bounds_for_KL} with $p\leftarrow \tfrac{\rd\mu_1}{\rd\mu_2}$, $q\leftarrow \tfrac{\rd\mu_2}{\rd\mu_2}=1$, and $\nu\leftarrow \mu_2$.

By the triangle inequality,
\begin{equation*}
 \Abs{\log \frac{Z_{\Phi_2,\mu_2}}{Z_{\Phi_1,\mu_1}}}\leq \Abs{\log \frac{Z_{\Phi_2,\mu_2}}{Z_{\Phi_1,\mu_2}}}+\Abs{\log \frac{Z_{\Phi_1,\mu_2}}{Z_{\Phi_1,\mu_1}}}.
 \end{equation*}
We bound the two terms on the right-hand side separately, using essentially the same argument that was used in the proof of \cite[Theorem 11]{Sprungk2020}: by the hypothesis that $\essinf_{\mu_2}\Phi_i=0$ for $i=1,2$, by local Lipschitz continuity \eqref{eq_Lipschitz_continuity_log_function} of the logarithm, and by \Cref{proposition_TV_bounds_misfit_perturbations_via_triangle_inequality}\ref{item_proposition_TV_bounds_misfit_perturbations_via_triangle_inequality_upper_bound} with $\mu\leftarrow \mu_2$,
 \begin{equation*}
   \Abs{\log \frac{Z_{\Phi_2,\mu_2}}{Z_{\Phi_1,\mu_2}}} \leq \frac{\Abs{Z_{\Phi_2,\mu_2}-Z_{\Phi_1,\mu_2}}}{Z_{\Phi_1,\mu_2}\wedge Z_{\Phi_2,\mu_2}}\leq \frac{\norm{\Phi_1-\Phi_2}_{L^1_{\mu_2}}}{Z_{\Phi_1,\mu_2}\wedge Z_{\Phi_2,\mu_2}}.
 \end{equation*}
 To bound the second term $\Abs{\log \tfrac{Z_{\Phi_1,\mu_2}}{Z_{\Phi_1,\mu_1}}}$ on the right-hand side, we again use \eqref{eq_Lipschitz_continuity_log_function}, and combine it with \Cref{proposition_TV_bounds_prior_perturbation_via_triangle_inequality}\ref{item_proposition_TV_bounds_prior_perturbation_via_triangle_inequality_upper_bound} and \Cref{proposition_bounds_for_KL}\ref{proposition_bounds_for_KL_item1}:
 \begin{align*}
  \Abs{\log \frac{Z_{\Phi_1,\mu_2}}{Z_{\Phi_1,\mu_1}}}\leq & \frac{\Abs{Z_{\Phi_1,\mu_2}-Z_{\Phi_1,\mu_1}}}{Z_{\Phi_1,\mu_1}\wedge Z_{\Phi_1,\mu_2}} \leq \frac{2d_{\TV}(\mu_1,\mu_2)}{Z_{\Phi_1,\mu_1}\wedge Z_{\Phi_1,\mu_2}}
  \leq \frac{\sqrt{2d_{\KL}(\mu_1\Vert \mu_2)}}{Z_{\Phi_1,\mu_1}\wedge Z_{\Phi_1,\mu_2}}.
 \end{align*}
  
Proof of \cref{proposition_KL_bounds_misfit_and_prior_perturbations_lower_bounds}: By the hypothesis that $(\mu_1)_{\Phi_1}\ll(\mu_2)_{\Phi_2}$, we may apply \Cref{proposition_bounds_for_KL}\ref{proposition_bounds_for_KL_item2} with $\nu\leftarrow (\mu_2)_{\Phi_2}$, $p\leftarrow \tfrac{\rd(\mu_1)_{\Phi_1}}{\rd(\mu_2)_{\Phi_2}}$, and $q\leftarrow 1$, to obtain
\begin{equation*}
 \int \Abs{ \log \frac{\rd(\mu_1)_{\Phi_1}}{\rd(\mu_2)_{\Phi_2}}} \rd (\mu_1)_{\Phi_1}\leq d_{\KL}((\mu_1)_{\Phi_1}\Vert (\mu_2)_{\Phi_2})+ \sqrt{2d_{\KL}((\mu_1)_{\Phi_1}\Vert (\mu_2)_{\Phi_2})}.
\end{equation*}
Now
\begin{align*}
 &\int \Abs{ \log \frac{\rd(\mu_1)_{\Phi_1}}{\rd(\mu_2)_{\Phi_2}}} \rd (\mu_1)_{\Phi_1}
 \\
 =&\int \Abs{ -\Phi_1-\log Z_{\Phi_1,\mu_1}+\log\frac{\rd\mu_1}{\rd\mu_2}+\Phi_2+\log Z_{\Phi_2,\mu_2}} \rd (\mu_1)_{\Phi_1} & \text{by \eqref{eq_log_RadonNikodymDerivative}}
 \\
 \geq & \frac{\exp(-\norm{\Phi_1}_{L^\infty_{\mu_1}})}{Z_{\Phi_1,\mu_1}} \int \Abs{ -\Phi_1-\log Z_{\Phi_1,\mu_1}+\log\frac{\rd\mu_1}{\rd\mu_2}+\Phi_2+\log Z_{\Phi_2,\mu_2}} \rd \mu_1 & \text{by \eqref{eq_posterior_function}}
 \\
 \geq & \frac{\exp(-\norm{\Phi_1}_{L^\infty_{\mu_1}})}{Z_{\Phi_1,\mu_1}} \Abs{\int  \left(-\Phi_1-\log Z_{\Phi_1,\mu_1}+\log\frac{\rd\mu_1}{\rd\mu_2}+\Phi_2+\log Z_{\Phi_2,\mu_2}\right) \rd \mu_1} 
 \\
 =&\frac{\exp(-\norm{\Phi_1}_{L^\infty_{\mu_1}})}{Z_{\Phi_1,\mu_1}} \Abs{\int  \Phi_2-\Phi_1~\rd \mu_1+\log Z_{\Phi_2,\mu_2}-\log Z_{\Phi_1,\mu_1} +d_{\KL}(\mu_1\Vert \mu_2)} ,
\end{align*}
where in the first inequality we use the hypothesis that $\Phi_1\in L^\infty_{\mu_1}$.
\end{proof}

\section{Proofs of 1-Wasserstein bounds}
\label{section_W1bounds_proofs}

We first recall some facts about Wasserstein metrics, in the setting where $X$ and $Y$ be Polish spaces, and $c:X\times Y \to \R$ is a lower semicontinuous cost function. 
Given $\phi:X \to [-\infty,\infty)$, the $c$-conjugate function $\phi^c$ is defined by $\phi^c(y)\coloneqq \inf_{x\in X} \{c(x,y)-\phi(x)\}$.
Similarly, given $\psi:Y\to [-\infty,\infty)$, the $c$-conjugate function $\psi^c$ is defined by $\psi^c(x)\coloneqq \inf_{y\in Y}\{c(x,y)-\psi(y)\}$. See e.g. \cite[Definition 3.12]{Ambrosio2021}.
Recall that a function $\phi:X\to [-\infty,\infty)$ (respectively, $\psi:Y\to [-\infty,\infty)$) is $c$-concave if and only if $\phi=\psi^c$ for some $\psi:Y\to [-\infty,\infty)$ (respectively, $\psi=\phi^c$ for some $\phi:X\to [-\infty,\infty)$); see \cite[Definition 3.13]{Ambrosio2021} and the text below it.

\begin{theorem}[{\cite[Theorem 4.2(ii)]{Ambrosio2021}}]
\label{theorem_Ambrosio_4_2}
 Let $\mu\in\mathcal{P}(X)$ and $\nu\in\mathcal{P}(Y)$. If $c:X\times Y\to [0,\infty]$ is lower semicontinuous and if there exists $a\in L^1_\mu(X)$ and $b\in L^1_\nu(Y)$ such that $c(x,y)\leq a(x)+b(y)$, then there exists a $c$-concave function $\phi:X\to [-\infty,\infty)$ such that $\phi\in L^1_\mu(X)$, $\phi^c\in L^1_\nu(Y)$, and 
 \begin{align*}
  &\min\left\{\int_{X\times Y} c(x,y)\rd \pi(x,y)\ :\ \pi\in\Pi(\mu,\nu)\right\}=\int_X \phi\rd \mu+\int_Y \phi^c\rd\nu.
 \end{align*}
\end{theorem}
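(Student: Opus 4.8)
The plan is to derive this from the Kantorovich duality theorem together with an attainment argument for the dual problem, in the spirit of \cite[Chapter 5]{Villani2009} and \cite[Section 4]{Ambrosio2021}. I would first record \emph{weak duality}: for any coupling $\pi\in\Pi(\mu,\nu)$ and any pair $(\phi,\psi)$ with $\phi\in L^1_\mu(X)$, $\psi\in L^1_\nu(Y)$ and $\phi(x)+\psi(y)\leq c(x,y)$ for all $(x,y)$, integrating this pointwise inequality against $\pi$ and using that the marginals of $\pi$ are $\mu$ and $\nu$ gives $\int_X\phi\,\rd\mu+\int_Y\psi\,\rd\nu\leq\int_{X\times Y}c\,\rd\pi$; taking the supremum over admissible pairs on the left and the infimum over $\Pi(\mu,\nu)$ on the right yields $\sup\leq\inf$. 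I would then note that it suffices to search for the dual maximizer among $c$-concave functions with $\psi=\phi^c$: given any admissible $(\phi,\psi)$, the pair $(\phi^{cc},\phi^c)$ is admissible, satisfies $\phi^{cc}\geq\phi$ and $\phi^c\geq\psi$, and $\phi^{cc}$ is $c$-concave by definition, so replacing $(\phi,\psi)$ by it does not decrease the dual objective. The hypothesis $c(x,y)\leq a(x)+b(y)$ ensures these $c$-transforms are not degenerate: fixing any $x_0$ with $\phi(x_0)>-\infty$ one gets $\phi^c(y)=\inf_x\{c(x,y)-\phi(x)\}\leq a(x_0)+b(y)-\phi(x_0)$, so $\phi^c$ is bounded above by a function in $L^1_\nu(Y)$, and symmetrically $\phi^{cc}$ is bounded above by a function in $L^1_\mu(X)$.

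Next I would establish existence of an optimal plan $\pi^\ast\in\Pi(\mu,\nu)$. Since $X$ and $Y$ are Polish, $\mu$ and $\nu$ are tight, hence $\Pi(\mu,\nu)$ is tight and weakly closed, and therefore weakly compact by Prokhorov's theorem. Writing $c=\sup_k c_k$ for an increasing sequence of nonnegative bounded continuous $c_k$ (possible since $c$ is lower semicontinuous and nonnegative), the functional $\pi\mapsto\int c\,\rd\pi=\sup_k\int c_k\,\rd\pi$ is a supremum of weakly continuous functionals, hence weakly lower semicontinuous; a lower semicontinuous functional on a weakly compact set attains its minimum, which gives $\pi^\ast$.

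The heart of the proof is \emph{strong duality} together with attainment of the maximizing potential. I would obtain both from $c$-cyclical monotonicity: the optimal plan $\pi^\ast$ is concentrated on a $c$-cyclically monotone Borel set $\Gamma$, and R\"uschendorf's construction then produces a $c$-concave $\phi:X\to[-\infty,\infty)$ with $\phi(x)+\phi^c(y)=c(x,y)$ on $\Gamma$ and $\phi(x)+\phi^c(y)\leq c(x,y)$ everywhere. Integrating the equality on $\Gamma$ against $\pi^\ast$ gives $\int_X\phi\,\rd\mu+\int_Y\phi^c\,\rd\nu=\int_{X\times Y}c\,\rd\pi^\ast$, and combined with weak duality this pins both quantities down as the common value and identifies $\phi$ as a dual maximizer. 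Finally, the domination hypothesis supplies the required integrability: $\int c\,\rd\pi^\ast\leq\int a\,\rd\mu+\int b\,\rd\nu<\infty$, so the left-hand side above is finite; since $\phi$ is bounded above by a function in $L^1_\mu(X)$ and $\phi^c$ by a function in $L^1_\nu(Y)$ (by the $c$-transform bound of the first paragraph), finiteness of the sum of their integrals forces each integral to be finite, hence $\phi\in L^1_\mu(X)$ and $\phi^c\in L^1_\nu(Y)$.

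The step I expect to be the main obstacle is the passage to $c$-cyclical monotonicity of the optimal plan and the measurable selection of $\Gamma$ used in R\"uschendorf's construction, which is genuinely delicate when $c$ is only lower semicontinuous and may take the value $+\infty$; this is exactly where the domination $c\leq a(x)+b(y)$ does essential work, both by guaranteeing $\int c\,\rd\pi^\ast<\infty$ and by allowing the constructed potentials to be controlled in $L^1$. An alternative route would be to apply the Fenchel--Rockafellar duality theorem to suitable convex functionals on $C_b(X\times Y)$, but this would still require the domination hypothesis to ensure the relevant functionals are finite and continuous at a common point, and one would then need a separate argument to upgrade a merely measurable dual optimizer to a $c$-concave one.
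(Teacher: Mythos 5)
The paper quotes this result verbatim from Ambrosio--Bru\'e--Semola and does not prove it, so there is no in-paper proof to compare against; your sketch follows exactly the route that the cited reference (and Villani) takes: weak duality, reduction to $c$-concave potentials via double $c$-transform, existence of an optimal plan by the direct method (tightness plus lower semicontinuity of $\pi\mapsto\int c\,\rd\pi$), and strong duality with dual attainment via $c$-cyclical monotonicity and R\"uschendorf's construction, with the domination $c\leq a\oplus b$ supplying both finiteness of the optimal cost and the one-sided $L^1$ bounds that force integrability of $\phi$ and $\phi^c$. You have correctly identified the R\"uschendorf step --- concentration of $\pi^\ast$ on a $c$-cyclically monotone set, Borel measurability of the constructed potential, and non-degeneracy when $c$ is only lower semicontinuous and possibly $+\infty$ --- as the genuine technical obstacle, which is precisely where the bulk of the argument in the reference is spent.
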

\begin{lemma}[{\cite[pp.39-40]{Ambrosio2021}}]
 \label{lemma_c_concavity_for_specific_costs}
If $X=Y$, $(X,d)$ is a metric space, and $c(x,y)=d(x,y)$, then $\phi$ is $c$-concave if and only if $\phi$ is 1-Lipschitz. In this case, $\phi^c=-\phi$.
\end{lemma}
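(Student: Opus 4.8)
The plan is to compute directly with the definitions of the $c$-conjugate and of $c$-concavity in the special case $c=d$. The key observations are that a $c$-concave potential, being an infimum of $1$-Lipschitz functions $x\mapsto d(x,y)-\psi(y)$, is automatically $1$-Lipschitz, and conversely that if $\phi$ is $1$-Lipschitz then the choice $\psi\coloneqq -\phi$ realizes $\phi$ as $\psi^c$; the identity $\phi^c=-\phi$ falls out of the same computation.

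First I would dispose of a bookkeeping point forced by the convention $\phi\colon X\to[-\infty,\infty)$: a $c$-concave $\phi$ is either identically $-\infty$ or proper (real-valued everywhere). Indeed, if $\phi=\psi^c$, i.e. $\phi(x)=\inf_{y\in X}\{d(x,y)-\psi(y)\}$, and $\phi(x_0)=-\infty$ for some $x_0$, then the triangle inequality $d(x,y)\le d(x,x_0)+d(x_0,y)$ gives $\phi(x)\le d(x,x_0)+\phi(x_0)=-\infty$ for every $x$. Since a $1$-Lipschitz function is by definition real-valued, in proving the equivalence we may restrict attention to proper $\phi$, which is the only case relevant to \Cref{theorem_Ambrosio_4_2}. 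For the implication ``$c$-concave $\Rightarrow$ $1$-Lipschitz'': with $\phi=\psi^c$ as above and $x_1,x_2\in X$, for every $y$ the triangle inequality yields $d(x_1,y)-\psi(y)\le d(x_1,x_2)+d(x_2,y)-\psi(y)$; taking the infimum over $y$ gives $\phi(x_1)\le d(x_1,x_2)+\phi(x_2)$, and by symmetry $\abs{\phi(x_1)-\phi(x_2)}\le d(x_1,x_2)$.

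For the converse and for the final formula, assume $\phi$ is $1$-Lipschitz. By definition $\phi^c(y)=\inf_{x\in X}\{d(x,y)-\phi(x)\}$; taking $x=y$ gives $\phi^c(y)\le -\phi(y)$, while the Lipschitz bound $\phi(y)-\phi(x)\le d(x,y)$ rearranges to $-\phi(y)\le d(x,y)-\phi(x)$ for every $x$, so $-\phi(y)\le \phi^c(y)$; hence $\phi^c=-\phi$. Applying this same computation to $\psi\coloneqq -\phi$, which is again $1$-Lipschitz, gives $\psi^c=-\psi=\phi$, so $\phi=\psi^c$ is $c$-concave. Combining the two implications proves the equivalence, and the identity $\phi^c=-\phi$ is exactly the intermediate computation just carried out for $c$-concave (equivalently, $1$-Lipschitz) $\phi$.

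There is no genuine obstacle here: once the ``all or nothing'' reduction to proper $\phi$ is in place, the entire argument consists of three applications of the triangle inequality for $d$. The only thing requiring care is the extended-real-valued convention, which is handled by the preliminary observation.
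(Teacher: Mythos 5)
The paper does not actually prove this lemma; it is stated as a citation to Ambrosio, Bru\`e, and Semola, so there is no in-paper proof to compare against. Your argument is a correct, self-contained verification following the standard textbook route: three applications of the triangle inequality for $d$, together with the observation that an infimum of $1$-Lipschitz functions is $1$-Lipschitz when proper. The preliminary ``all-or-nothing'' reduction handling the $[-\infty,\infty)$-valued convention is a genuine point of care and is argued correctly, and the derivation of $\phi^c=-\phi$ by sandwiching $\phi^c(y)$ between $-\phi(y)$ (choose $x=y$) and the Lipschitz bound is exactly right.
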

\Cref{theorem_Ambrosio_4_2} and \Cref{lemma_c_concavity_for_specific_costs} yield the Kantorovich--Rubinstein duality formula \eqref{eq_Kantorovich_Rubinstein_duality_formula}, as well as the following corollary.
Recall the definition of the $p$-Wasserstein space $\mathcal{P}_p(\Theta)$ in \eqref{eq_Wasserstein_space}.
\begin{corollary}
 \label{corollary_attainment_of_supremum_in_Kantorovich_Duality}
 If $\mu,\nu\in\mathcal{P}_1(X)$, then there exists a 1-Lipschitz function $f_\opt:X\to\R$ such that
 \begin{equation*}
 \Was_1(\mu,\nu)  =\int_X f_\opt \rd \mu-\int_X f_\opt \rd \nu.
 \end{equation*}
\end{corollary}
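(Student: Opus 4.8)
The plan is to deduce \Cref{corollary_attainment_of_supremum_in_Kantorovich_Duality} directly from \Cref{theorem_Ambrosio_4_2} and \Cref{lemma_c_concavity_for_specific_costs} by specializing to the cost $c(x,y)=d(x,y)$. First I would set $X=Y=\Theta$ with the metric $d$, so that $(\Theta,d)$ is Polish by the standing assumption of \Cref{sectionW1bounds}, and take $c(x,y)=d(x,y)$, which is continuous and hence lower semicontinuous and nonnegative. The only hypothesis of \Cref{theorem_Ambrosio_4_2} requiring verification is the domination $c(x,y)\leq a(x)+b(y)$ with $a\in L^1_\mu(\Theta)$ and $b\in L^1_\nu(\Theta)$; this is where the assumption $\mu,\nu\in\mathcal{P}_1(\Theta)$ enters. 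Fixing an arbitrary basepoint $x_0\in\Theta$, the triangle inequality gives $d(x,y)\leq d(x,x_0)+d(x_0,y)$, so I would set $a(x)\coloneqq d(x,x_0)$ and $b(y)\coloneqq d(x_0,y)$. By the definition \eqref{eq_Wasserstein_space} of $\mathcal{P}_1(\Theta)$, $\int d(x,x_0)\,\mu(\rd x)=\abs{\mu}_{\mathcal{P}_1}<\infty$ and similarly $\int d(x_0,y)\,\nu(\rd y)<\infty$, so $a\in L^1_\mu(\Theta)$ and $b\in L^1_\nu(\Theta)$ as required.

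Having checked the hypotheses, \Cref{theorem_Ambrosio_4_2} yields a $c$-concave function $\phi:\Theta\to[-\infty,\infty)$ with $\phi\in L^1_\mu(\Theta)$, $\phi^c\in L^1_\nu(\Theta)$, and
\begin{equation*}
 \min\left\{\int_{\Theta\times\Theta} d(x,y)\rd\pi(x,y)\ :\ \pi\in\Pi(\mu,\nu)\right\}=\int_\Theta \phi\rd\mu+\int_\Theta \phi^c\rd\nu.
\end{equation*}
Since $p=1$, the left-hand side is exactly $\Was_1(\mu,\nu)$ by \eqref{eq_Wasserstein_metric}. Now I invoke \Cref{lemma_c_concavity_for_specific_costs}: for the cost $c(x,y)=d(x,y)$ on a metric space, $c$-concavity of $\phi$ is equivalent to $\phi$ being $1$-Lipschitz, and moreover $\phi^c=-\phi$. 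Substituting $\phi^c=-\phi$ into the displayed identity gives
\begin{equation*}
 \Was_1(\mu,\nu)=\int_\Theta \phi\rd\mu-\int_\Theta \phi\rd\nu,
\end{equation*}
and setting $f_\opt\coloneqq \phi$ produces the claimed $1$-Lipschitz function attaining the supremum in \eqref{eq_Kantorovich_Rubinstein_duality_formula}. A minor point worth a remark: \Cref{theorem_Ambrosio_4_2} delivers $\phi$ valued in $[-\infty,\infty)$, but a $1$-Lipschitz function on a nonempty metric space that is $L^1_\mu$-integrable cannot take the value $-\infty$ on a set of positive measure, and in fact Lipschitz continuity forces it to be real-valued everywhere once it is finite at one point; so $f_\opt:\Theta\to\R$ as stated.

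I do not anticipate a serious obstacle here, since the result is essentially a direct specialization of the two cited results. The only step requiring any care is the construction of the dominating functions $a$ and $b$ and the observation that their integrability is precisely the content of the hypothesis $\mu,\nu\in\mathcal{P}_1(\Theta)$; this is routine but is the one place where the first-moment assumption is genuinely used. Everything else is a matter of matching notation ($X=Y=\Theta$, $p=1$) and reading off $\phi^c=-\phi$ from \Cref{lemma_c_concavity_for_specific_costs}.
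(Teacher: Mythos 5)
Your argument is correct and follows essentially the same route as the paper's own proof: fix a basepoint $x_0$, take $a(x)=d(x_0,x)$, $b(y)=d(x_0,y)$, verify the hypotheses of \Cref{theorem_Ambrosio_4_2} using the first-moment bound from $\mathcal{P}_1$, and then read off 1-Lipschitzness and $\phi^c=-\phi$ from \Cref{lemma_c_concavity_for_specific_costs}. You in fact spell out a bit more than the paper does (the explicit substitution $\phi^c=-\phi$ and the remark on why $f_\opt$ is real-valued rather than $[-\infty,\infty)$-valued), but the logic is the same.
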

\begin{proof}[Proof of \Cref{corollary_attainment_of_supremum_in_Kantorovich_Duality}]
 Fix an arbitrary $x_0\in X$.
 For every $x,y\in X$, let $a(x)\coloneqq d(x_0,x)$, $b(y)\coloneqq  d(x_0,y)$, and $c(x,y)\coloneqq d(x,y)$.
For this choice of $c$, \Cref{lemma_c_concavity_for_specific_costs} implies that a $c$-concave function is 1-Lipschitz.
Since $\mu,\nu\in\mathcal{P}_1$, it follows from \eqref{eq_Wasserstein_space} that $a\in L^1_\mu(X)$ and $b\in L^1_\nu(X)$, and $c(x,y)\leq a(x)+b(y)$ follows from the triangle inequality. 
Now we may apply \Cref{theorem_Ambrosio_4_2} with $Y\leftarrow X$, $c\leftarrow d$, and $a$ and $b$ as above, to conclude that there exists a 1-Lipschitz function $f_\opt$ that satisfies the desired equation.
\end{proof}

The following properties of the function class in \eqref{eq_Kantorovich_Rubinstein_duality_formula} were stated in the proofs of \cite[Theorem 14, Theorem 15]{Sprungk2020}. 
\begin{remark}[Duality formula]
\label{remark_duality_formula}
\begin{enumerate}
\item \label{remark_duality_formula_item1} Fix an arbitrary $x_0\in\Theta$. 
 For any 1-Lipschitz function $f:\Theta\to\R$, the function $g:\Theta\to\R$ defined by $g(x)\coloneqq f(x)-f(x_0)$ is 1-Lipschitz and satisfies $g(x_0)=0$. Moreover, 
 \begin{equation*}
  \int f \rd \mu-\int f \rd \nu=\int f-f(x_0)\rd \mu-\int f-f(x_0)\rd \nu
 \end{equation*}
since $f(x_0)$ is constant. Thus, we may restrict the supremum in \eqref{eq_Kantorovich_Rubinstein_duality_formula} to the subset of 1-Lipschitz functions vanishing at $x_0$, which yields \eqref{eq_Kantorovich_Rubinstein_duality_formula_fixed_x0}.
\item \label{remark_duality_formula_item2} If for some $x_0\in\Theta$ it holds that $f(x_0)=0$ and $\norm{f}_{\Lip}\leq 1$, then 
\begin{equation}
 \label{eq_upper_bound_1_Lipschitz_function_by_metric}
 \abs{f(x)}=\abs{f(x)-f(x_0)}\leq d(x,x_0).
\end{equation}
In particular, if $(\Theta,d)$ is a bounded metric space, i.e. if $\sup_{x,y\in \Theta}d(x,y)<\infty$, then any $g\in\{f:\Theta\to\R\ :\ \norm{f}_{\Lip}\leq 1,\ f(x_0)=0\}$ satisfies $\norm{g}_{\infty}\leq \sup_{x,y\in \Theta}d(x,y)$.
\end{enumerate}
\end{remark}

\subsection{Proofs of 1-Wasserstein bounds for misfit perturbations}
\label{section_W1bounds_misfit_perturbations_proofs}

\WassersteinboundsMisfitperturbations*
\begin{proof}[Proof of \Cref{proposition_W1_bounds_misfit_perturbations}]
The proof of the upper bound follows the same strategy as the proof of \cite[Theorem 14]{Sprungk2020}.
First observe that
\begin{align*}
 \int f \rd \mu_{\Phi_1}-\int f \rd \mu_{\Phi_2} =& \int f\left(  \frac{e^{-\Phi_1}}{Z_{\Phi_1,\mu}}-\frac{e^{-\Phi_2}}{Z_{\Phi_2,\mu}} \right)\rd \mu 
 \\
 =&   \int f\left(  \frac{e^{-\Phi_1}-e^{-\Phi_2}}{Z_{\Phi_1,\mu}}+e^{-\Phi_2}\left(\frac{1}{Z_{\Phi_1,\mu}}-\frac{1}{Z_{\Phi_2,\mu}}\right) \right)\rd \mu
 \\
 =&\frac{1}{Z_{\Phi_1,\mu}} \int f\left(  e^{-\Phi_1}-e^{-\Phi_2}+\frac{e^{-\Phi_2}}{Z_{\Phi_2,\mu}}(Z_{\Phi_2,\mu}-Z_{\Phi_1,\mu}) \right)\rd \mu
 \\
 =& \frac{1}{Z_{\Phi_1,\mu}} \int f\left(  e^{-\Phi_1}-e^{-\Phi_2}\right)\rd \mu+\int f \rd \mu_{\Phi_2}(Z_{\Phi_2,\mu}-Z_{\Phi_1,\mu}) .
\end{align*}

Proof of \cref{proposition_W1_bounds_misfit_perturbations_upper_bound}: Let $x_0\in\Theta$ be arbitrary.
By the Kantorovich--Rubinstein duality formula \eqref{eq_Kantorovich_Rubinstein_duality_formula_fixed_x0},
\begin{equation}
\label{eq_W1_equation00}
 \Was_1(\mu_{\Phi_1},\mu_{\Phi_2})=\frac{1}{Z_{\Phi_1,\mu}}\sup_{\norm{f}_{\Lip}\leq 1,f(x_0)=0}\Abs{ \int f\left(  e^{-\Phi_1}-e^{-\Phi_2}\right)\rd \mu+\int f \rd \mu_{\Phi_2}(Z_{\Phi_2,\mu}-Z_{\Phi_1,\mu}) }.
\end{equation}

  Recall from \eqref{eq_radius_parameter} that $R(\mu)=\diam{\supp{\mu}}$ and the hypothesis that $R(\mu)$ is finite.
  By definition \eqref{eq_Lipschitz_constant_and_supremum_norm} of the Lipschitz constant, $\norm{f}_{\Lip(\supp{\mu},d)}\leq \norm{f}_{\Lip(\Theta,d)}$.
 By \Cref{remark_duality_formula}\ref{remark_duality_formula_item2}, if $\norm{f}_{\Lip}\leq 1$ and $f(x_0)=0$, then for every $x\in\Theta$, $\abs{f(x)}\leq d(x,x_0)$. 
 Applying this observation with $f$ replaced by $f\mathbb{I}_{\supp{\mu}}$ and the definition \eqref{eq_radius_parameter} of $R(\mu)$, we obtain for arbitrary $x_0\in\supp{\mu}$ that
 \begin{equation}
 \label{eq_supremum_norm_of_Lipschitz_f_subject_to_f_x0_equals_0}
  \Norm{f\mathbb{I}_{\supp{\mu}}}_\infty\leq \sup_{x\in\supp{\mu}} d(x,x_0)\leq R(\mu).
 \end{equation}
 We now bound the term inside the supremum on the right-hand side of \eqref{eq_W1_equation00}:
 \begin{align*}
 &\Abs{\int f\left(  e^{-\Phi_1}-e^{-\Phi_2}\right)\rd \mu+(Z_{\Phi_2,\mu}-Z_{\Phi_1,\mu})\int f \rd \mu_{\Phi_2}}
 \\
 \leq &\Abs{\int f\left(  e^{-\Phi_1}-e^{-\Phi_2}\right)\rd \mu}+\abs{Z_{\Phi_2,\mu}-Z_{\Phi_1,\mu}}\Abs{\int f \rd \mu_{\Phi_2}}
 \\
 \leq & R(\mu)\Norm{ e^{-\Phi_1}-e^{-\Phi_2}}_{L^1_\mu} +R(\mu)\Abs{Z_{\Phi_2,\mu}-Z_{\Phi_1,\mu}}  & \text{by \eqref{eq_supremum_norm_of_Lipschitz_f_subject_to_f_x0_equals_0}.}
 \end{align*}
  We thus obtain
 \begin{equation*}
  \Was_1(\mu_{\Phi_1},\mu_{\Phi_2})\leq \frac{R(\mu)}{Z_{\Phi_1,\mu}}\left(\Norm{ e^{-\Phi_1}-e^{-\Phi_2}}_{L^1_\mu} +\Abs{Z_{\Phi_2,\mu}-Z_{\Phi_1,\mu}}\right)
 \end{equation*}
 and by switching $\Phi_1$ and $\Phi_2$, we obtain an analogous inequality. 
 Using that $Z_{\Phi,\mu_1}^{-1}\wedge Z_{\Phi,\mu_2}^{-1}=(Z_{\Phi,\mu_1}\vee Z_{\Phi,\mu_2})^{-1}$ implies the conclusion of \cref{proposition_W1_bounds_misfit_perturbations_upper_bound}. 
 If $\Phi_1-\Phi_2$ is $\mu$-a.s. constant, then the left-hand side of the inequality above vanishes, since $\mu_{\Phi_1}=\mu_{\Phi_2}$ by \Cref{lemma_noninjectivity_of_dataMisfit_maps}\ref{item_lemma_noninjectivity_of_dataMisfit_maps_equivalent_condition_for_agreement}, but the right-hand side vanishes  if and only if $\Phi_1=\Phi_2$ $\mu$-a.s. 
 If in addition $\Phi_i\in L^1_\mu(\Theta,\R)$ for $i=1,2$, then by \eqref{eq_Lipschitz_continuity_exp_function} it follows that $  \Norm{ e^{-\Phi_1}-e^{-\Phi_2}}_{L^1_\mu}\leq \Norm{\Phi_1-\Phi_2}_{L^1_\mu}$, and thus
 \begin{equation*}
  \Was_1(\mu_{\Phi_1},\mu_{\Phi_2})\leq \frac{R(\mu)}{Z_{\Phi_1,\mu}}\left(\Norm{\Phi_1-\Phi_2}_{L^1_\mu} +\Abs{Z_{\Phi_2,\mu}-Z_{\Phi_1,\mu}}\right).
 \end{equation*}
 
 Proof of \cref{proposition_W1_bounds_misfit_perturbations_lower_bound}: By the Kantorovich--Rubinstein duality formula \eqref{eq_Kantorovich_Rubinstein_duality_formula}, 
 \begin{equation*}
 \Was_1(\mu_{\Phi_1},\mu_{\Phi_2})=\frac{1}{Z_{\Phi_1,\mu}}\sup_{\norm{f}_{\Lip}\leq 1}\Abs{ \int f\left(  e^{-\Phi_1}-e^{-\Phi_2}\right)\rd \mu+\int f \rd \mu_{\Phi_2}(Z_{\Phi_2,\mu}-Z_{\Phi_1,\mu}) }.
\end{equation*}
 Given the hypothesis that $\mu_{\Phi_i}\in\mathcal{P}_1(\Theta)$, it follows by \Cref{corollary_attainment_of_supremum_in_Kantorovich_Duality} that the supremum above is attained.
 Hence, the supremum in \eqref{eq_W1_equation00} is attained at some 1-Lipschitz function $f_\opt:\Theta\to\R$.
If $Z_{\Phi_1,\mu}=Z_{\Phi_2,\mu}$, then by the hypothesis that $0<\norm{e^{-\Phi_1}-e^{-\Phi_2}}_{\Lip(\supp{\mu},d)}<\infty$, it follows that the function $(e^{-\Phi_1}-e^{-\Phi_2})\mathbb{I}_{\supp{\mu}}/\norm{e^{-\Phi_1}-e^{-\Phi_2}}_{\Lip(\supp{\mu},d)}$ has unit Lipschitz norm.
Thus,
\begin{align*}
 \sup_{\norm{f}_{\Lip}\leq 1} \Abs{ \int f (e^{-\Phi_1}-e^{-\Phi_2})\rd \mu}\geq & \Abs{ \int \frac{e^{-\Phi_1}-e^{-\Phi_2}}{\norm{e^{-\Phi_1}-e^{-\Phi_2}}_{\Lip(\supp{\mu},d)}} (e^{-\Phi_1}-e^{-\Phi_2})\rd \mu}
 \\
 =&\frac{\Norm{e^{-\Phi_1}-e^{-\Phi_2}}_{L^2_\mu}^{2}}{\norm{e^{-\Phi_1}-e^{-\Phi_2}}_{\Lip(\supp{\mu},d)}},
\end{align*}
and thus
\begin{equation*}
 \Was_1(\mu_{\Phi_1},\mu_{\Phi_2})\geq \frac{1}{Z_{\Phi_1,\mu}}\frac{\Norm{e^{-\Phi_1}-e^{-\Phi_2}}_{L^2_\mu}^{2}}{\norm{e^{-\Phi_1}-e^{-\Phi_2}}_{\Lip(\supp{\mu},d)}}.
\end{equation*}
If in addition $\Phi_i\in L^\infty_\mu(\Theta,\R)$ for $i=1,2$, then by \eqref{eq_Lipschitz_continuity_exp_function}, we have 
\begin{equation*}
 \Norm{e^{-\Phi_1}-e^{-\Phi_2}}_{L^2_\mu}^{2}\geq \left(\exp(-2\norm{\Phi_1}_{L^\infty_\mu})\wedge \exp(-2\norm{\Phi_2}_{L^\infty_\mu})\right)\norm{\Phi_1-\Phi_2}_{L^2_\mu}^{2},
\end{equation*}
which completes the proof of \Cref{proposition_W1_bounds_misfit_perturbations}.
\end{proof}

\subsection{Proofs of 1-Wasserstein bounds for prior perturbations}
\label{section_W1bounds_prior_perturbations_proofs}

\WassersteinUpperboundPriorperturbations*
\begin{proof}[Proof of \Cref{proposition_W1_upper_bound_prior_perturbations}]
Proof of \cref{proposition_W1_upper_bound_prior_perturbations_normalisation_constant}:  
  By the definition \eqref{eq_normalisationConstant_function} of $Z_{\Phi,\mu_i}$,
 \begin{align*}
  \abs{Z_{\Phi,\mu_1}-Z_{\Phi,\mu_2}}=\Abs{ \int e^{-\Phi}\rd \mu_1-\int e^{-\Phi}\rd \mu_2}\leq & \norm{e^{-\Phi}}_{\Lip(\supp{\mu_1+\mu_2},d)} \Was_1(\mu_2,\mu_2).
 \end{align*}
To prove the inequality, we distinguish between two cases.
The Lipschitz constant of $e^{-\Phi}$ on $\supp{\mu_1+\mu_2}$ vanishes if and only if $\Phi$ is constant on $\supp{\mu_1+\mu_2}$.
In this case, $Z_{\Phi,\mu_1}=Z_{\Phi,\mu_2}$ by \eqref{eq_normalisationConstant_function}, and the inequality that we seek to prove reduces to the equation $0=0$.
If the Lipschitz constant of $e^{-\Phi}$ on $\supp{\mu_1+\mu_2}$ is strictly positive, then $e^{-\Phi}/\norm{e^{-\Phi}}_{\Lip(\supp{\mu_1+\mu_2},d)}$ is a 1-Lipschitz function on $\supp{\mu_1+\mu_2}$, and the inequality that we seek to prove follows by the duality formula \eqref{eq_Kantorovich_Rubinstein_duality_formula}. 

By the hypothesis that $(\mu_i)_\Phi\in\mathcal{P}_1(\Theta)$ for $i=1,2$, it follows from \Cref{corollary_attainment_of_supremum_in_Kantorovich_Duality} that there exists a 1-Lipschitz function $f_\opt$ on $\Theta$ such that 
\begin{equation*}
 \Was_1((\mu_1)_\Phi,(\mu_2)_\Phi)=\Abs{\int f_\opt \rd (\mu_1)_\Phi-\int f_\opt \rd (\mu_2)_\Phi}.
\end{equation*}
Since $\supp{\mu_1+\mu_2}\subseteq\Theta$, it follows that $f_\opt$ is 1-Lipschitz on $\supp{\mu_1+\mu_2}$, by definition of the Lipschitz constant.
 
 Proof of \cref{proposition_W1_upper_bound_prior_perturbations_zero_Lipschitz_constant_case}: Suppose that $\norm{f_\opt e^{-\Phi}}_{\Lip(\supp{\mu_1+\mu_2},d)}=0$. 
 By the definition of the Lipschitz constant, this is equivalent to the existence of some $\lambda\in\R$ such that $f_\opt e^{-\Phi}=\lambda$ on $\supp{\mu_1+\mu_2}$.
 Since the supremum in the duality formula \eqref{eq_Kantorovich_Rubinstein_duality_formula} is attained at $f_\opt$, we have
 \begin{align*}
  \Was_1((\mu_1)_\Phi,(\mu_2)_\Phi)=&\Abs{\int f_\opt \frac{e^{-\Phi}}{Z_{\Phi,\mu_1}} \rd \mu_1 -\int f_\opt \frac{e^{-\Phi}}{Z_{\Phi,\mu_2}} \rd \mu_2} & \text{by \eqref{eq_posterior_function}}
  \\
  =&\Abs{ \int  \frac{\lambda}{Z_{\Phi,\mu_1}} \rd \mu_1 -\int \frac{\lambda}{Z_{\Phi,\mu_2}} \rd \mu_2} & f_\opt e^{-\Phi}=\lambda
  \\
  =&\frac{\Abs{\lambda(Z_{\Phi,\mu_2}-Z_{\Phi,\mu_1})}}{Z_{\Phi,\mu_1}Z_{\Phi,\mu_2}} & \mu_1,\mu_2\in\mathcal{P}(\Theta)
  \\
  \leq & \frac{\abs{\lambda}}{Z_{\Phi,\mu_1}Z_{\Phi,\mu_2}}\norm{e^{-\Phi}}_{\Lip(\supp{\mu_1+\mu_2},d)}\Was_1(\mu_1,\mu_2) & \text{by \cref{proposition_W1_upper_bound_prior_perturbations_normalisation_constant}.}
\end{align*}

 Proof of \cref{proposition_W1_upper_bound_prior_perturbations_nonzero_Lipschitz_constant_case}: For brevity, let $\mathcal{F}_1\coloneqq \{f:\Theta\to\R\ :\ \norm{f}_{\Lip(\supp{\mu_1+\mu_2},d)}\leq 1,\ f(x_0)=0\}$, where $x_0\in\Theta$ is fixed.
 We specify $x_0$ further below.
 
 For any $f\in\mathcal{F}_1$,
\begin{align}
&\int f \rd (\mu_1)_\Phi-\int f \rd (\mu_2)_\Phi
=\int f  \frac{e^{-\Phi}}{Z_{\Phi,\mu_1}}\rd\mu_1-\int f\frac{e^{-\Phi}}{Z_{\Phi,\mu_2}}\rd\mu_2
& \text{by \eqref{eq_posterior_function}}
\nonumber
\\
=&\int f  \frac{e^{-\Phi}}{Z_{\Phi,\mu_1}}\rd\mu_1-\int f  \frac{e^{-\Phi}}{Z_{\Phi,\mu_1}}\rd\mu_2+\int f  \frac{e^{-\Phi}}{Z_{\Phi,\mu_1}}\rd\mu_2-\int f\frac{e^{-\Phi}}{Z_{\Phi,\mu_2}}\rd\mu_2
\nonumber
\\
=&\frac{1}{Z_{\Phi,\mu_1}}\left(\int f e^{-\Phi}\rd\mu_1-\int f  e^{-\Phi}\rd\mu_2\right)+Z_{\Phi,\mu_2}\left(\frac{1}{Z_{\Phi,\mu_1}}-\frac{1}{Z_{\Phi,\mu_2}}\right)\int f  \frac{e^{-\Phi}}{Z_{\Phi,\mu_2}}\rd\mu_2
\nonumber
\\
 =&\frac{1}{Z_{\Phi,\mu_1}} \left(\int f e^{-\Phi}\rd \mu_1-\int f e^{-\Phi}\rd \mu_2+(Z_{\Phi,\mu_2}-Z_{\Phi,\mu_1})\int f \rd(\mu_2)_\Phi\right).
 \label{eq_W1_equation00_prior_perturbations}
\end{align}
By \Cref{proposition_W1_upper_bound_prior_perturbations}\ref{proposition_W1_upper_bound_prior_perturbations_normalisation_constant}, there exists $f_\opt \in\mathcal{F}_1$ such that
\begin{equation*}
\Was_1((\mu_1)_\Phi,(\mu_2)_\Phi)= \frac{1}{Z_{\Phi,\mu_1}} \left(\int f_\opt e^{-\Phi}\rd \mu_1-\int f_\opt e^{-\Phi}\rd \mu_2+(Z_{\Phi,\mu_2}-Z_{\Phi,\mu_1})\int f_\opt \rd(\mu_2)_\Phi\right).
\end{equation*}

 \textbf{Step 1: upper bound of} $ \Abs{\int f\rd (\mu_2)_\Phi }$.
 Let $x_0\in\supp{(\mu_1+\mu_2)}$ be arbitrary. 
  By \Cref{remark_duality_formula}\ref{remark_duality_formula_item2}, for every 1-Lipschitz function $f$ that satisfies $f(x_0)=0$, it follows that $\abs{f(x)}\leq d(x,x_0)$ for every $x\in\Theta$.
  Combining this fact with the fact $\supp{(\mu_1+\mu_2)}\supset\supp{\mu_2}$ and using the radius parameter from \eqref{eq_radius_parameter}, we have 
 \begin{equation*}
  \Abs{\int f\rd (\mu_2)_\Phi }=\Abs{\int f\mathbb{I}_{\supp{\mu_2}} \frac{e^{-\Phi}}{Z_{\Phi,\mu_2}}\rd \mu_2}\leq \int \Abs{f\mathbb{I}_{\supp{\mu_2}}} \frac{e^{-\Phi}}{Z_{\Phi,\mu_2}}\rd \mu_2\leq R(\mu_1+\mu_2)\cdot 1.
 \end{equation*}
\textbf{Step 2: upper bound of }$\Abs{\int f_\opt e^{-\Phi}\rd \mu_1-\int f_\opt e^{-\Phi}\rd \mu_2}$. 
\begin{align*}
&\Abs{\int f_\opt e^{-\Phi}\rd \mu_1-\int f_\opt e^{-\Phi}\rd \mu_2}
\\
=& \norm{f_\opt e^{-\Phi}}_{\Lip(\supp{\mu_1+\mu_2},d)}\Abs{\frac{\int  f_\opt e^{-\Phi}\rd \mu_1-\int  f_\opt e^{-\Phi}\rd \mu_2}{\norm{f_\opt e^{-\Phi}}_{\Lip(\supp{\mu_1+\mu_2},d)}}} 
  \\
  \leq & \sup_{f\in\mathcal{F}_1,\norm{fe^{-\Phi}}_{\Lip(\supp{\mu_1+\mu_2},d)}>0}\norm{fe^{-\Phi}}_{\Lip(\supp{\mu_1+\mu_2},d)} \Abs{\frac{\int  f e^{-\Phi}\rd \mu_1-\int  f e^{-\Phi}\rd \mu_2}{\norm{f e^{-\Phi}}_{\Lip(\supp{\mu_1+\mu_2},d)}}}
  \\
  \leq &\sup_{f\in\mathcal{F}_1}\norm{fe^{-\Phi}}_{\Lip(\supp{\mu_1+\mu_2},d)} \sup_{g\in\mathcal{F}_1}\Abs{\int g\rd \mu_1-\int g\rd \mu_2}
  \\
  \leq &\sup_{f\in\mathcal{F}_1}\norm{fe^{-\Phi}}_{\Lip(\supp{\mu_1+\mu_2},d)} \Was_1(\mu_1,\mu_2) 
\end{align*}
where the equation follows from the hypothesis that $ 0<\norm{f_\opt e^{-\Phi}}_{\Lip(\supp{\mu_1+\mu_2},d)}<\infty$, the first inequality follows since $f_\opt$ belongs to $\mathcal{F}_1$ and $\norm{f_\opt e^{-\Phi}}_{\Lip(\supp{\mu_1+\mu_2},d)}$ is positive, the second inequality follows from the fact that the supremum of the product is less than or equal to the product of the suprema, and the third inequality follows from the duality formula \eqref{eq_Kantorovich_Rubinstein_duality_formula_fixed_x0}.

\textbf{Step 3: upper bound of} $\sup_{f\in\mathcal{F}_1}\norm{fe^{-\Phi}}_{\Lip(\supp{\mu_1+\mu_2},d)}$.

For suitable functions $g,h:\Theta\to\R$, the definition of $\norm{\cdot}_{\Lip}$ and $\norm{\cdot}_\infty$ and the triangle inequality imply 
  \begin{equation}
  \label{eq_Lipschitz_norm_of_product}
   \norm{gh}_{\Lip}\leq \norm{g}_\infty\norm{h}_{\Lip}+\norm{h}_\infty\norm{g}_{\Lip}.
  \end{equation}
By replacing $\mu$ with $\mu_1+\mu_2$ in \eqref{eq_supremum_norm_of_Lipschitz_f_subject_to_f_x0_equals_0}, we obtain $\norm{f\mathbb{I}_{\supp{\mu_1+\mu_2}}}_\infty\leq R(\mu_1+\mu_2)$.
Since $f\in\mathcal{F}_1$ implies $\norm{f}_{\Lip(\supp{\mu_1+\mu_2,d})}\leq 1$, it follows that
 \begin{align*}
  \norm{fe^{-\Phi}}_{\Lip(\supp{\mu_1+\mu_2},d)} \leq& \norm{e^{-\Phi}\mathbb{I}_{\supp{\mu_1+\mu_2}}}_\infty +\norm{e^{-\Phi}}_{\Lip(\supp{\mu_1+\mu_2},d)} R(\mu_1+\mu_2)
  \\
  \leq & 1+\norm{e^{-\Phi}}_{\Lip(\supp{\mu_1+\mu_2},d)} R(\mu_1+\mu_2) ,
 \end{align*}
where the second inequality follows from the hypothesis that $\essinf_{\mu_i}\Phi=0$ for $i=1,2$. 
By combining Step 2 and 3, it follows that
 \begin{align*}
 \Abs{\int f_\opt e^{-\Phi}\rd \mu_1-\int f_\opt e^{-\Phi}\rd \mu_2}\leq \left(1+\norm{e^{-\Phi}}_{\Lip(\supp{\mu_1+\mu_2},d)} R(\mu_1+\mu_2)\right) \Was_1(\mu_1,\mu_2).
\end{align*}
 
 Applying the bounds from the above steps to the terms in \eqref{eq_W1_equation00_prior_perturbations}, we obtain
 \begin{align*}
   \Was_1((\mu_1)_\Phi,(\mu_2)_\Phi) \leq &
   \frac{1+\norm{e^{-\Phi}}_{\Lip(\supp{\mu_1+\mu_2},d)} R(\mu_1+\mu_2)}{Z_{\Phi,\mu_1}} \Was_1(\mu_1,\mu_2)
   \\
   &+\frac{\abs{Z_{\Phi,\mu_1}-Z_{\Phi,\mu_2}}}{Z_{\Phi,\mu_1}} R(\mu_1+\mu_2).
 \end{align*}
Switching $\mu_1$ and $\mu_2$ yields an analogous inequality with $Z_{\Phi,\mu_1}$ replaced by $Z_{\Phi,\mu_2}$.
Using that $Z_{\Phi,\mu_1}^{-1}\wedge Z_{\Phi,\mu_2}^{-1}=(Z_{\Phi,\mu_1}\vee Z_{\Phi,\mu_2})^{-1}$ proves the first inequality in \Cref{proposition_W1_upper_bound_prior_perturbations}\ref{proposition_W1_upper_bound_prior_perturbations_nonzero_Lipschitz_constant_case}.
The second inequality follows from the bound on $\abs{Z_{\Phi,\mu_1}-Z_{\Phi,\mu_2}}$ in  \cref{proposition_W1_upper_bound_prior_perturbations_normalisation_constant}.
\end{proof}

\WassersteinLowerboundPriorperturbations*
\begin{proof}[Proof of \Cref{proposition_W1_lower_bound_prior_perturbations}]
 By the hypothesis that $\mu_i\in\mathcal{P}_1$ for $i=1,2$, it follows from \Cref{corollary_attainment_of_supremum_in_Kantorovich_Duality} that there exists a 1-Lipschitz function $g_\opt$ such that 
 \begin{equation*}
    \Was_1(\mu_1,\mu_2)=\Abs{\int g_\opt \rd \mu_1-\int g_\opt \rd \mu_2}.
 \end{equation*}
Now
 \begin{align*}
 &\Abs{\int g_\opt \rd \mu_1-\int g_\opt \rd \mu_2}=\frac{Z_{\Phi,\mu_1}}{Z_{\Phi,\mu_1}}\Abs{\int g_\opt \rd \mu_1-\int g_\opt \rd \mu_2}
  \\
  =& Z_{\Phi,\mu_1} \Abs{\int g_\opt ~e^\Phi \frac{e^{-\Phi}}{Z_{\Phi,\mu_1}}\rd \mu_1-\int g_\opt ~e^{\Phi}\frac{e^{-\Phi}}{Z_{\Phi,\mu_1}}\rd \mu_2 }
  \\
  =& Z_{\Phi,\mu_1} \Abs{\int g_\opt ~e^\Phi \frac{e^{-\Phi}}{Z_{\Phi,\mu_1}}\rd \mu_1-\int g_\opt ~e^\Phi \frac{e^{-\Phi}}{Z_{\Phi,\mu_2}}\rd\mu_2+\int g_\opt ~e^\Phi \frac{e^{-\Phi}}{Z_{\Phi,\mu_2}}\rd\mu_2 -\int g_\opt~ e^{\Phi}\frac{e^{-\Phi}}{Z_{\Phi,\mu_1}}\rd \mu_2 }
  \\
  =& Z_{\Phi,\mu_1} \Abs{\int g_\opt ~e^\Phi \rd (\mu_1)_\Phi-\int g_\opt ~ e^\Phi \rd (\mu_2)_\Phi+\int g_\opt \rd \mu_2\left(\frac{1}{Z_{\Phi,\mu_2}}-\frac{1}{Z_{\Phi,\mu_1}}\right)}.
 \end{align*}
 This proves the first statement of the proposition.

For the second statement of the proposition, suppose that $Z_{\Phi,\mu_2}=Z_{\Phi,\mu_1}$ and that $g_\opt e^\Phi$ is constant, so that its Lipschitz constant vanishes. Then
\begin{align*}
 \Was_1(\mu_1,\mu_2)=&Z_{\Phi,\mu_1}\Abs{\int g_\opt ~e^\Phi \rd (\mu_1)_\Phi-\int g_\opt ~e^\Phi \rd (\mu_2)_\Phi}
 \\
 =&0= Z_{\Phi,\mu_1}\norm{g_\opt e^\Phi}_{\Lip(\supp{\mu_1+\mu_2},d)}\Was_1((\mu_1)_\Phi,(\mu_2)_\Phi).
\end{align*}
If $0<\norm{g_\opt e^\Phi}_{\Lip(\supp{\mu_1+\mu_2},d)}<\infty$, then
\begin{align*}
 &\Abs{\int g_\opt ~e^\Phi \rd (\mu_1)_\Phi-\int g_\opt ~e^\Phi \rd (\mu_2)_\Phi}
 \\
 =&\norm{g_\opt ~e^\Phi}_{\Lip(\supp{\mu_1+\mu_2},d)}\frac{\Abs{\int g_\opt ~e^\Phi \rd (\mu_1)_\Phi-\int g_\opt ~e^\Phi \rd (\mu_2)_\Phi}}{\norm{g_\opt ~e^\Phi}_{\Lip(\supp{\mu_1+\mu_2},d)}}
 \\
 \leq & \norm{g_\opt ~e^\Phi}_{\Lip(\supp{\mu_1+\mu_2},d)}\sup_{\norm{h}_{\Lip(\Theta,d)}\leq 1}\Abs{\int h \rd (\mu_1)_\Phi-\int h \rd (\mu_2)_\Phi}
 \\
 =& \norm{g_\opt ~e^\Phi}_{\Lip(\supp{\mu_1+\mu_2},d)}\Was_1((\mu_1)_\Phi,(\mu_2)_\Phi),
\end{align*}
where the last equation follows from the duality formula \eqref{eq_Kantorovich_Rubinstein_duality_formula}.
This yields
\begin{align*}
 \Was_1(\mu_1,\mu_2)=&Z_{\Phi,\mu_1}\Abs{\int g_\opt ~e^\Phi \rd (\mu_1)_\Phi-\int g_\opt ~e^\Phi \rd (\mu_2)_\Phi}
 \\
 \leq& Z_{\Phi,\mu_1}\norm{g_\opt ~e^\Phi}_{\Lip(\supp{\mu_1+\mu_2},d)}\Was_1((\mu_1)_\Phi,(\mu_2)_\Phi)
\end{align*}
and completes the proof.
\end{proof}

\section{Proofs of continuity in Wasserstein metrics}
\label{section_continuity_in_Wp_metrics_proofs}

We recall the following definition.
\begin{definition}[{\cite[Definition 6.8]{Villani2009}}]
 \label{def_weak_convergence_in_Wasserstein_space}
 Let $(X,\mathbf{d})$ be a Polish space and $p\in [1,\infty)$. Let $(\nu_k)_{k\in\N}$ be a sequence of probability measures in $\mathcal{P}_p(X)$ and let $\nu$ be another element of $\mathcal{P}_p(X)$. Then $(\nu_k)_k$ converges weakly in $\mathcal{P}_p(X)$ to $\nu$ if any one of the following equivalent properties is satisfied for some, and thus any, $x_0\in X$:
 \begin{enumerate}
  \item \label{item_weak_convergence_in_Wasserstein_space_1}
  $\nu_k\rightharpoonup\nu$ and $\int d(x_0,\cdot)^p~\rd\nu_k\to \int d(x_0,\cdot)^p~\rd\nu$.
  \item \label{item_weak_convergence_in_Wasserstein_space_2}
  For all continuous functions $\varphi$ with $\abs{\varphi}\leq C(1+d(x_0,\cdot)^p)$, $C\in\R$, one has $\int \varphi~\rd \nu_k\to\int\varphi~\rd\nu$.
 \end{enumerate}
\end{definition}
By \cite[Theorem 6.9]{Villani2009}, convergence in the $p$-Wasserstein distance is equivalent to weak convergence of measures in $\mathcal{P}_p(\Theta)$.

\LemmaWassersteinContinuityMisfitToPosteriorMap*
\begin{proof}[Proof of \Cref{lemma_Wasserstein_continuity_misfit_to_posterior_map}]
The hypothesis that $\Phi_n\to\Phi$ $\mu$-a.s. is equivalent to $\exp(-\Phi_n)\to \exp(-\Phi)$ $\mu$-a.s. 
The hypothesis that $\essinf_\mu\Phi_n=0$ implies that $\vert \exp(-\Phi_n)\vert \leq 1$ $\mu$-a.s.
By the dominated convergence theorem,
\begin{equation}
\label{eq_convergence_of_normalization_const}
    \lim_{n\to \infty}Z_{\Phi_n,\mu}=\lim_{n\to\infty} \int \exp(-\Phi_n)~\rd\mu = \int \exp(-\Phi)~\rd\mu=Z_{\Phi,\mu}.
\end{equation}
Let $f\in C_b(\Theta)$ be arbitrary. Since $\essinf_\mu\Phi_n=0$ for every $n\in\N$, $\abs{f\exp(-\Phi_n)}\leq \abs{f}$ $\mu$-a.s. Since $\Phi_n\to\Phi$ $\mu$-a.s., $f\exp(-\Phi_n)\to f\exp(-\Phi)$ $\mu$-a.s.
By the dominated convergence theorem, $\int f \exp(-\Phi_n)~\rd\mu \to\int f\exp(-\Phi)~\rd\mu$.
Thus, by \eqref{eq_convergence_of_normalization_const}, 
\begin{align*}
\lim_{n\to \infty}\int f~\rd\mu_{\Phi_n} &= \lim_{n\to \infty}\frac{1}{Z_{\Phi_n,\mu}} \int f\exp(-\Phi_n)~\rd\mu = \frac{1}{Z_{\Phi,\mu}} \int f\exp(-\Phi)~\rd\mu =  \int f(x) \mu_{\Phi}(\rd x). 
\end{align*}
Since $f$ was arbitrary, we have proven that $\mu_{\Phi_n}\rightharpoonup\mu_\Phi$.

Fix an arbitrary $x_0\in\Theta$. Since $\essinf_\mu\Phi_n=0$ for every $n\in\N$, $\abs{d(\cdot,x_0)^p\exp(-\Phi_n)}\leq d(\cdot,x_0)^p$ $\mu$-a.s. 
Now $\int d(\cdot,x_0)^p\exp(-\Phi_n)~\rd \mu\leq \int  d(\cdot,x_0)^p ~\rd\mu=\abs{\mu}_{\mathcal{P}_p}$.
Thus, since $\Phi_n\to\Phi$ $\mu$-a.s., we may apply the dominated convergence theorem to conclude that
\begin{align*}
 \lim_{n\to\infty} \int d(\cdot,x_0)^p \exp(-\Phi_n)~\rd\mu =\int d(\cdot,x_0)^p \exp(-\Phi)~\rd\mu.
\end{align*}
By \eqref{eq_convergence_of_normalization_const},
\begin{align*}
\lim_{n\to\infty}\int d(\cdot,x_0)^p~\rd\mu_{\Phi_n} &= \lim_{n\to\infty} \frac{1}{Z_{\Phi_n,\mu}}\int d(\cdot,x_0)^p\exp(-\Phi_n)~\rd\mu \\
&= \frac{1}{Z_{\Phi,\mu}}\int d(\cdot,x_0)^p \exp(-\Phi)~\rd\mu = \int d(\cdot,x_0)^p~\rd\mu_{\Phi}.
\end{align*}
Thus, by \Cref{def_weak_convergence_in_Wasserstein_space}\ref{item_weak_convergence_in_Wasserstein_space_1}, $(\mu_{\Phi_n})_n$ converges weakly in $\mathcal{P}_p(\Theta)$ to $\mu_\Phi$.
By \cite[Theorem 6.9]{Villani2009}, this weak convergence is equivalent to $\lim_{n\to \infty} \Was_p(\mu_{\Phi_n},\mu_{\Phi})=0$.
\end{proof}

\LemmaWassersteinContinuityPosteriorToMisfitMap*
\begin{proof}[Proof of \Cref{lemma_Wasserstein_continuity_posterior_to_misfit_map}]
Under the hypotheses on $\Phi$ and $(\Phi_n)_n$, it follows from \eqref{eq_likelihood_function} that $\ell_{\Phi,\mu}$ and $(\ell_{\Phi_n,\mu})_n$ belong to $C_b(\Theta,\R_{\geq 0})$. 
In particular, $\ell_{\Phi_n,\mu}-\ell_{\Phi,\mu}\in C_b(\Theta,\R)$ for every $n\in\N$.

 If $\Was_p(\mu_{\Phi_n},\mu_\Phi)\to 0$, then by \Cref{def_weak_convergence_in_Wasserstein_space}\ref{item_weak_convergence_in_Wasserstein_space_1}, $\mu_{\Phi_n}\rightharpoonup\mu_\Phi$.
 By definition of weak convergence in $\mathcal{P}(\Theta)$, it follows that for every $f\in C_b(\Theta,\R)$, $\int f~\rd \mu_{\Phi_n}\to\int f~\rd\mu_\Phi$.
Let $(a_k)_{k\in\N}\subset\R$ be an arbitrary sequence converging to zero. 
 Since $\ell_{\Phi_m,\mu}-\ell_{\Phi,\mu}\in C_b(\Theta,\R)$ for every $m\in\N$, it follows that 
 \begin{equation*}
  \int \left(\ell_{\Phi_m,\mu}-\ell_{\Phi,\mu}\right)\left(\ell_{\Phi_n,\mu}-\ell_{\Phi,\mu}\right)\rd \mu\xrightarrow[n\to\infty]{}0,\quad \forall m\in\N.
 \end{equation*}
Thus, for every $k\in\N$, there exists some $n_k\in\N$ such that
\begin{equation*}
\Norm{\ell_{\Phi_{n_k},\mu}-\ell_{\Phi,\mu}}_{L^2_\mu}^{2} = \int \Abs{\ell_{\Phi_{n_k},\mu}-\ell_{\Phi,\mu}}^2\rd \mu\leq a_k^2,
\end{equation*}
i.e. there exists a subsequence $(\ell_{\Phi_{n_k},\mu})_{k\in\N}$ converging in $L^2_\mu$ to $\ell_{\Phi,\mu}$ in $L^2_\mu$ as $k\to\infty$.

If in addition $(\Phi_n)_n$ and $\Phi$ are such that $Z_{\Phi_n,\mu}=Z_{\Phi,\mu}$ for every $n\in\N$, then by the definition \eqref{eq_likelihood_function} and the local Lipschitz continuity \eqref{eq_Lipschitz_continuity_exp_function} of the exponential function,
\begin{equation*}
\frac{\exp(-\norm{\Phi}_\infty)\wedge \min_k\exp(-\norm{\Phi_{n_k}}_\infty)}{Z_{\Phi,\mu}^2}\Norm{\Phi_{n_k}-\Phi}_{L^2_\mu}^2\leq \Norm{\ell_{\Phi_{n_k},\mu}-\ell_{\Phi,\mu}}_{L^2_\mu}^{2} 
\end{equation*}
\end{proof}

\LemmaWassersteinContinuityPosteriorToPriorMap*
\begin{proof}[Proof of \Cref{lemma_Wasserstein_continuity_posterior_to_prior_map}]
By the hypothesis that $\Was_p((\mu_n)_\Phi,\mu_{\Phi})\to 0$, it follows from \cite[Theorem 6.9]{Villani2009} that $((\mu_n)_\Phi)_n$ and $\mu_\Phi$ satisfy the conditions in \Cref{def_weak_convergence_in_Wasserstein_space}\ref{item_weak_convergence_in_Wasserstein_space_1}.
Since $\Phi\in C_b(\Theta,\R_{\geq 0})$, it follows that $\exp(\Phi)\in C_b(\Theta,\R)$. By the definition of weak convergence of probability measures and \eqref{eq_posterior_function},
\begin{equation}\label{eq_posterior_convergence_implies_convergence_of_normalisation_constants}
\lim_{n\to\infty}\frac{1}{Z_{\Phi,\mu_n}}=\lim_{n\to \infty} \int \exp(\Phi)~\rd(\mu_n)_\Phi = \int \exp(\Phi)~\rd\mu_{\Phi}=\frac{1}{Z_{\Phi,\mu}},
\end{equation}  
which is equivalent to $Z_{\Phi,\mu_n} \to Z_{\Phi,\mu}$.

Let $f\in C_b(\Theta,\R)$ be arbitrary. Then, using \eqref{eq_posterior_convergence_implies_convergence_of_normalisation_constants} and \eqref{eq_posterior_function},
\begin{equation*}
\lim_{n\to \infty} \int f(x)\mu_n(\rd x) = \lim_{n\to \infty} Z_{\Phi,\mu_n} \int  f \exp(\Phi)~\rd(\mu_n)_\Phi = Z_{\Phi,\mu} \int  f\exp(\Phi)~\rd\mu_{\Phi}= \int  f(x)\mu(\rd x).
\end{equation*}
Thus, $\mu_n\rightharpoonup\mu$.

Fix an arbitrary $x_0\in\Theta$. 
Since $\Phi\in C_b(\Theta,\R_{\geq 0})$, it follows that $x\mapsto\varphi(x)\coloneqq d(x_0,x)^p\exp(\Phi(x))$ satisfies $\abs{\varphi}\leq C(1+d(x_0,\cdot)^p)$ for $C\coloneqq \norm{\exp(\Phi)}_\infty$.
By the hypothesis that $\Was_p((\mu_n)_\Phi,\mu_{\Phi})\to 0$ and \cite[Theorem 6.9]{Villani2009}, $((\mu_n)_\Phi)_n$ converges weakly in $\mathcal{P}_p(\Theta)$ to $\mu_\Phi$.
Thus, by \Cref{def_weak_convergence_in_Wasserstein_space}\ref{item_weak_convergence_in_Wasserstein_space_2} and \eqref{eq_posterior_function},
\begin{equation*}
 \lim_{n\to\infty}\int d(x_0,\cdot)^p\exp(\Phi)~\rd(\mu_n)_\Phi = \int d(x_0,\cdot)^p\exp(\Phi)~\rd\mu_{\Phi}=\frac{1}{Z_{\Phi,\mu}}\int d(x_0,\cdot)^p~\rd\mu.
\end{equation*}
For every $n\in\N$, $\int d(x_0,\cdot)^p\exp(\Phi)~\rd(\mu_n)_\Phi=\frac{1}{Z_{\Phi,\mu_n}}\int d(x_0,\cdot)^p~\rd\mu_n$. 
These observations yield
\begin{equation*}
\lim_{n\to \infty}\frac{1}{Z_{\Phi,\mu_n}}\int d(x_0,\cdot)^p ~\rd\mu_n = \frac{1}{Z_{\Phi,\mu}}\int d(x_0,\cdot)^p~\rd\mu.
\end{equation*}
Given \eqref{eq_posterior_convergence_implies_convergence_of_normalisation_constants}, we conclude
\begin{equation*}
\lim_{n\to\infty}\int d(x_0,\cdot)^p~\rd\mu_n = \int d(x_0,\cdot)^p~\rd\mu.
\end{equation*}
Since $\mu_n\rightharpoonup\mu$, it follows by \Cref{def_weak_convergence_in_Wasserstein_space}\ref{item_weak_convergence_in_Wasserstein_space_1} that $(\mu_n)_n$ converges weakly in $\mathcal{P}_p(\Theta)$ to $\mu$.
By \cite[Theorem 6.9]{Villani2009}, $\Was_p(\mu_n,\mu)\to 0$.
\end{proof}

\end{document}